\documentclass[11pt,a4paper,reqno]{amsart}
\usepackage[utf8]{inputenc}

\usepackage{graphicx}
\usepackage[dvipsnames]{xcolor}   
\usepackage{amssymb}
\usepackage{amsmath}
\usepackage{mathtools}
\usepackage{tikz}
\usetikzlibrary{patterns}
\usepackage{enumitem}
\usepackage{mathrsfs}

\usepackage[colorlinks=true,linkcolor=blue,urlcolor=cyan,citecolor=cyan]{hyperref}

\headsep 0cm \evensidemargin 0cm \oddsidemargin 0cm \textheight
24cm \textwidth 16.5cm \topmargin 0pt \headsep 16pt \footskip 27pt
\linespread{1}

\hypersetup{linkcolor=Blue,urlcolor=cyan,citecolor=cyan}

\setlist[enumerate]{label=\emph{(\roman*)}}

\usepackage{environ}

\newtheorem{theorem}{Theorem}[section]
\newtheorem{corollary}[theorem]{Corollary}
\newtheorem{lemma}[theorem]{Lemma}
\newtheorem{proposition}[theorem]{Proposition}

\newtheorem{definition}[theorem]{Definition}
\theoremstyle{definition}
\newtheorem{remark}[theorem]{Remark}
\numberwithin{equation}{section}
\newcommand{\R}{\mathbb{R}}

\def \C {{\mathbb{C}}}
\def \R {{\mathbb{R} }}
\def \d {{\rm{d}}}
\def \pt{\partial_{t}}

\def\Op{\mathrm{Op}^{\mathrm{w}}}
\def\poscals#1#2{\langle#1,#2\rangle}
\def \p {{\partial}}
\newcommand{\T}{{\mathbb T}}
\newcommand{\Z}{{\mathbb Z}}
\newcommand{\Q}{{\mathbb Q}}
\newcommand{\N}{{\mathbb N}}
\newcommand{\ii }{{\rm i} }
\newcommand{\h}{\mathcal{H}}

\def \A {{\mathbf{A}}}
\newcommand{\avg}[2]{\langle #1\rangle_{#2}}
\newcommand{\bigO}[2]{\mathcal{O}_{#2}\mathopen{}\left(#1\right)}
\newcommand{\BigO}[1]{\mathcal{O}\mathopen{}\left(#1\right)}

\newcommand{\supp}{\operatorname{supp}}
\newcommand{\ad}{\operatorname{ad}}

\renewcommand{\Im}{\mathop{\rm Im}\nolimits}
\newcommand{\vh}[2]{{#1}^{(#2)}_h}
\newcommand{\vnh}[2]{{#1}^{(#2)}}
\newcommand{\WFm}[1]{\mathrm{WF}^{#1}_h}
\newcommand{\WF}{\mathrm{WF}_h}

\def\poscals#1#2{\langle#1,#2\rangle}
\def \p {{\partial}}

\def \A {{\mathbf{A}}}

\renewcommand{\Im}{\mathop{\rm Im}\nolimits}

\newcommand{\rr}{\mathbb{R}}
\newcommand{\nn}{\mathbb{N}}

\newcommand{\norme}[1]{\left\lVert#1\right\rVert}

\begin{document}

 	\title[Geometric condition for observability of electromagnetic Schr\"odinger operators]
	{Geometric condition for the observability of electromagnetic Schr\"odinger operators on $\mathbb{T}^2$}
	
 \author[K. Le Balc'h]{K\'evin Le Balc'h}
	\address{Sorbonne Universit\'e, CNRS, Universit\'e Paris Cit\'e, Inria, Laboratoire Jacques-Louis Lions (LJLL), F-75005 Paris, France}
	\email{kevin.le-balc-h@inria.fr}

    \author[J. Niu]{Jingrui Niu}
	\address{Institute for Advanced Study in Mathematics, Harbin Institute of Technology, 150001, Harbin, China}
	\email{jingrui.niu@hit.edu.cn}

    \author[C. Sun]{Chenmin Sun}
	\address{CNRS, Universit\'e Paris-Est Cr\'eteil, Laboratoire d'Analyse et de Math\'ematiques appliqu\'ees, UMR  8050 du CNRS, 
	94010 Cr\'eteil cedex, France.}
	\email{chenmin.sun@cnrs.fr}

\begin{abstract}
In this article we revisit the observability of the Schr\"odinger equation on the two-dimensional torus. In contrast to the Schr\"odinger operator with a purely electric potential, for which any non-empty open set guarantees observability, the presence of a magnetic potential introduces an additional obstruction. We establish a sufficient and almost necessary geometric condition for the observability of electromagnetic Schr\"odinger operators. This condition incorporates the magnetic potential, which can also be characterized by a geometric control condition for the corresponding magnetic field. 
\end{abstract}
\maketitle

\tableofcontents

\section{Introduction}
In this paper, we focus on the Schr\"odinger equation, set on $\T^2 =\R^2/ (2\pi\Z)^2$ the two-dimensional torus, that governs the time evolution of the wave function $u=u(t,z)$ of a charged particle in an electromagnetic field. More precisely, we consider
\begin{equation}
	\label{eq:SchrodingerObs}
		\left\{
			\begin{array}{ll}
			\ii  \partial_t u  =H_{\A,V}(z) u & \text{ in }  (0,+\infty) \times \T^2, \\
				u(0, \cdot) = u_0 & \text{ in } \T^2,
			\end{array}
		\right.
\end{equation}
where $H_{\A,V}$ is the electromagnetic Schr\"odinger operator given by 
\begin{equation}\label{eq: defi-m-sch-op}
H_{\A,V}(z):=\left(\frac{1}{\ii}\nabla-\A(z)\right)^2+V(z),\qquad z\in\T^2,
\end{equation}
with 
\begin{equation}
    \label{eq:regVA}
    V\in C^{\infty}(\T^2,\R),\ \A=(A_1,A_2)\in C^{\infty}(\T^2,\R^2).
\end{equation}

It is standard that $-i H_{\A,V}$ generates an unitary semi-group $(e^{-\ii tH_{\A,V}})_{t \geq 0}$ in $L^2(\T^2)$ so for every $u_0 \in L^2(\T^2)$, there exists a unique mild solution $u \in C([0,+\infty);L^2(\T^2))$ of \eqref{eq:SchrodingerObs}. Moreover the total $L^2$-mass of the solution is preserved along the time, i.e.
\begin{equation}
    \label{eq:conservationmass}
    \|u(t,\cdot)\|_{L^2(\T^2)} =  \|u_0\|_{L^2(\T^2)}\qquad \forall t \geq 0.
\end{equation}

We are interested in the following notion of \textit{observability} of the Schr\"odinger equation \eqref{eq:SchrodingerObs}.
\begin{definition}
Let $T>0$ and $\omega\subset\T^2$ be a nonempty open subset. We say that the observability of \eqref{eq:SchrodingerObs} at time $T$ from $\omega$ holds true if there exists a constant $C=C(\mathbf{A},V,T,\omega)>0$ such that
\begin{equation}\label{eq: ob-magnetic-sch-intro}
\|u_0\|^2_{L^2(\T^2)}\leq C\int_0^T\int_{\omega}\left|e^{-\ii tH_{\A,V}}u_0(z)\right|^2\d z\d t,   \qquad \forall u_0\in L^2(\T^2). 
\end{equation}
\end{definition}
At time $T>0$ and for $\omega \subset \T^2$, the observability inequality \eqref{eq: ob-magnetic-sch-intro} models the possibility of estimating the (whole) $L^2(\T^2)$-norm of the initial wave function $u(0,\cdot)=u_0$ by the (restricted) $L^2((0,T)\times\omega)$-norm of the wave function $u=u(t,z)$. For instance if the initial datum is normalized, $\|u_0\|_{L^2(\T^2)}=1$, the expression $\int_{\omega}\left|e^{-\ii tH_{\A,V}}u_0(z)\right|^2\d z$ is the probability of finding the particle in the set $\omega$ at time $t$. Having $\int_0^T\int_{\omega}\left|e^{-\ii tH_{\A,V}}u_0(z)\right|^2\d z\d t \geq C^{-1} > 0$ for all solutions of \eqref{eq:SchrodingerObs} means that every quantum particle spends a positive fraction of time of the interval $(0,T)$ in the set $\omega$ with positive probability.\\

\textbf{Brief state of the art.} When $\A\equiv 0, V \equiv 0$, it is well-known that \eqref{eq: ob-magnetic-sch-intro} holds for every $T>0$ and nonempty open subset $\omega \subset \T^2$, see \cite{Jaf90} and \cite{KL05}. The pure electric case i.e. when $\A \equiv 0$ has been investigated a lot in the last decade in the papers \cite{BZ12}, \cite{BBZ13}, \cite{BZ19}, \cite{AM14}, \cite{AFKM15}, \cite{Mac21}, \cite{Tao21}, the best result so far is that \eqref{eq: ob-magnetic-sch-intro} holds for all $V \in L^{2}(\T^2)$, $T>0$ and measurable set $\omega \subset \T^2$ with positive Lebesgue measure $|\omega|>0$. When $\A = (\theta_1,\theta_2) \in \R^2$ is a constant and $V \in L^{\infty}(\T^2)$, \cite{LBM23} shows that \eqref{eq: ob-magnetic-sch-intro} holds for every $T>0$ and measurable set $\omega \subset \T^2$ with positive Lebesgue measure $|\omega|>0$.  \cite{macia2014}, \cite{Wunsch-wave} treated the case $\mathbb{T}^d$ and $\omega\subset\mathbb{T}^d$ open, see Remark \ref{remark:main}. The non-constant magnetic case has not been treated yet up to the knowledge of the authors. Nevertheless, it is important to note that when $\omega$ satisfies the so-called Geometric Control Condition (GCC) from \cite{BLR92}, i.e. every closed geodesic $\gamma \subset \T^2$ with a periodic direction $\Vec{\gamma}$ meets the set $\omega$, then \eqref{eq: ob-magnetic-sch-intro} holds for every $V\in C^{\infty}(\T^2,\R)$, $\A\in C^{\infty}(\T^2,\R^2)$ and $T>0$ by using Lebeau's approach, see \cite{Leb92}. \\

In this paper, we mainly investigate the case where $\omega$ does not satisfy (GCC). In this case, there exists at least one closed geodesic $\gamma\subset\T^2$ with a periodic direction $\Vec{\gamma}$ such that $\gamma\cap\omega=\emptyset$. As we will see, we need to impose geometrical assumptions on the open set $\omega$, involving the critical points of $\A$ to prove \eqref{eq: ob-magnetic-sch-intro}. Hence, the electromagnetic case is completely different from the pure electric case.

\subsection{The Magnetic Geometric Control Condition}
For $N \geq 1$ and any $f \in L^1(\T^2;\R^N)$, we first define the following quantity as the average of $f$ along a given direction $\Vec{e}$
\begin{equation}
\label{eq: direction-average}
\avg{f}{\Vec{e}}(z):=\lim_{T\rightarrow\infty}\frac{1}{T}\int_0^Tf(z+t\Vec{e})\d t\qquad \forall z \in \T^2.
\end{equation}
Two important things have to be noted.
\begin{enumerate}
    \item First, the limit defined in \eqref{eq: direction-average} is well-defined by checking its meaning for trigonometric polynomials and by using a density argument. The resulted function $\avg{f}{\Vec{e}}$ belongs to $L^1(\T^2)$.
    \item Secondly, one can check that the directional derivative of $\avg{f}{\Vec{e}}$ along $\Vec{e}$, in the distributional sense, satisfies
    $$ D\avg{f}{\Vec{e}}(z)\cdot \Vec{e} = 0\qquad \forall z  \in \T^2.$$
    Therefore we can identify $\avg{f}{\Vec{e}}$ as a function on the direction $\Vec{e}^\perp$.
\end{enumerate}
For a closed geodesic $\gamma$ of periodic direction $\Vec{\gamma}$, the average of the vector field $\A$ along the direction $\Vec{\gamma}$ is given by
\begin{equation}
\label{eq: direction-averageVector}
\avg{\A}{\Vec{\gamma}}(z)=\lim_{T\rightarrow\infty}\frac{1}{T}\int_0^T\A(z+t\Vec{\gamma})\d t\qquad \forall z \in \T^2.
\end{equation}
We can then define the scalar function 
\begin{equation}\label{eq: defi-A-gamma}
A_{\gamma}:=\avg{\A}{\Vec{\gamma}}\cdot\Vec{\gamma}.
\end{equation}
 So one can identify $A_{\gamma}$ as a scalar function on the direction of $\Vec{\gamma}^{\perp}$. 

\medskip

For a closed geodesic $\gamma$ with a periodic direction $\Vec{\gamma}$, we denote by $\omega_{\Vec{\gamma}^{\perp}}$ the projection of $\omega$ on the direction of $\Vec{\gamma}^{\perp}$, which is also open.

\begin{figure}[h]
    \centering
    \includegraphics[width=0.4\textwidth]{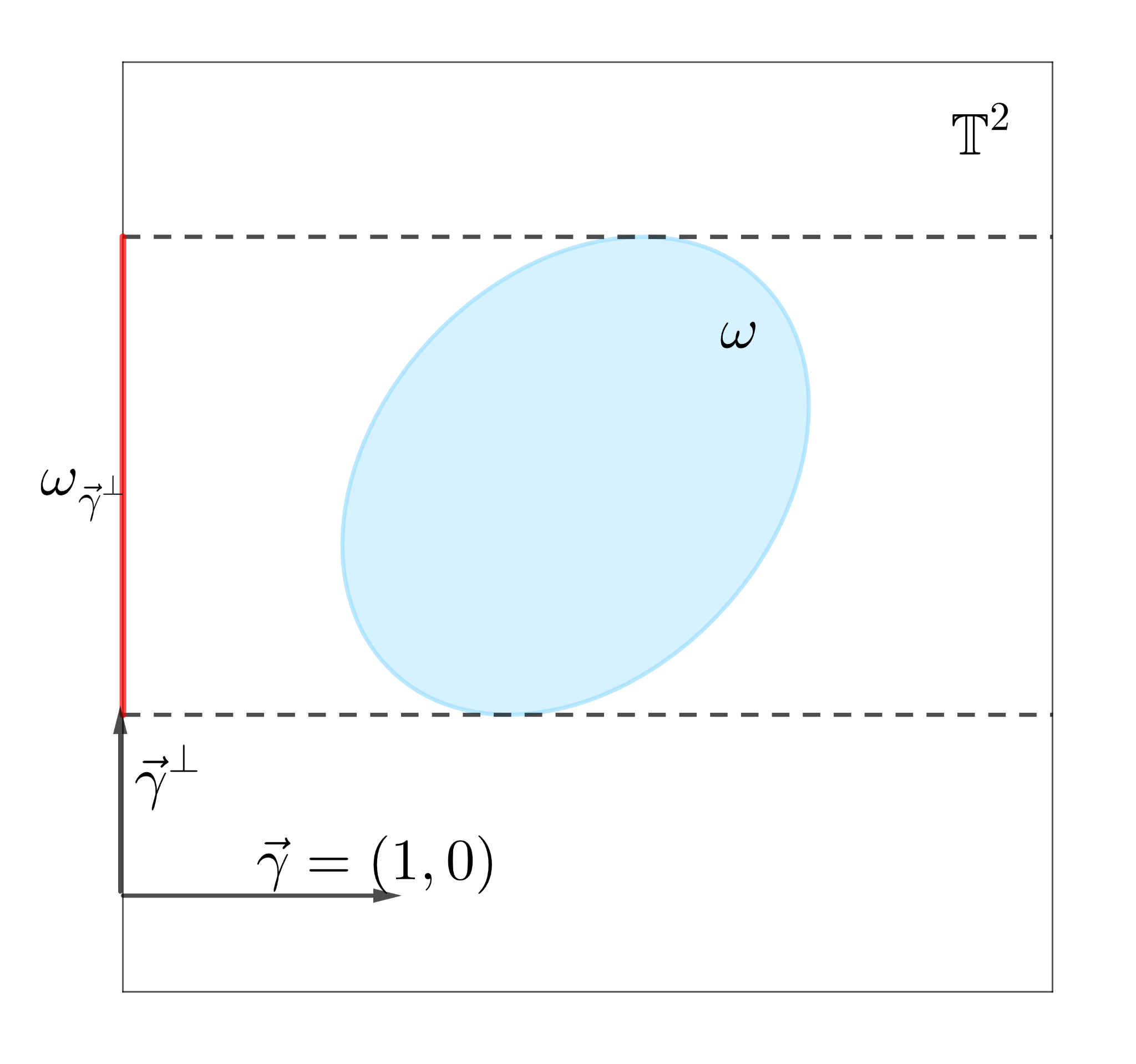}
    \caption{Control region projection}
    \label{fig: control region projection}
\end{figure}

With the concepts above, we introduce the following Magnetic Geometric Control Condition, called {\bf{(MGCC)}} in the next.
\begin{definition}[{\bf{MGCC}}]
\label{def:MGGC}
Let $\A \in C^{\infty}(\T^2;\R^2)$ and $\omega$ be a nonempty open subset of $\T^2$. We say that $\A$ and $\omega$ satisfy the Magnetic Geometric Control Condition {\bf{(MGCC)}} if for every periodic direction $\Vec{\gamma}$, $\omega_{\Vec{\gamma}^{\perp}}$ contains all the critical points of $A_{\gamma}$.
\end{definition}

The condition from Definition \ref{def:MGGC} can be interpreted in an intrinsic way. We identify the magnetic potential $\mathbf{A}$ as a globally defined one-form $\mathcal{A}=A_1\d x+A_2\d y$ on the coordinate system $(x,y)$. We parametrize  $\vec{\gamma}=(\cos\theta,\sin\theta),\vec{\gamma}^{\perp}=(-\sin\theta,\cos\theta)$. Then in the new coordinate system $$(X,Y):=F(x,y)=(x\cos\theta +y\sin\theta, -x\sin\theta+y\cos\theta ),$$ 
 the magnetic potential (identified as the pull-back form $F^{-1*}\mathcal{A}$) is
$$ (\mathbf{A}\cdot\vec{\gamma})\d X + (\mathbf{A}\cdot\vec{\gamma}^{\perp}) \d Y.
$$
Note that the magnetic field  $B=\d \mathcal{A}$, identified as a scalar-function on $\mathbb{T}^2$, in the coordinate system $(X,Y)$, is given by
$$ B=\partial_{X}(\mathbf{A}\cdot\vec{\gamma}^{\perp})-\partial_{Y}(\mathbf{A}\cdot\vec{\gamma}).
$$
Denote 
\begin{equation}
    \label{eq:defBgamma}
    \langle B\rangle_{\gamma}(z,\zeta):=\lim_{T\rightarrow\infty}\frac{1}{T}\int_0^T B(z+t\zeta) \d t, \qquad (z,\zeta) \in S^*\mathbb{T}^2.
\end{equation}
If the direction $\vec{\gamma}$ is periodic, then
the closed geodesic $\gamma$ is parametrized by the equation $Y=0$. So
$$ \langle B\rangle_{\gamma}(Y)=-\partial_{Y}A_{\gamma}(Y).
$$
Hence the following result holds.
\begin{proposition}\label{MGCC:equiv}
Let $\A \in C^{\infty}(\T^2;\R^2)$ and $\omega$ be an open subset of $\T^2$. The following two assertions are equivalent.
\begin{enumerate}
    \item {\bf{(MGCC)}} holds.
    \item For every periodic direction $\Vec{\gamma}$, $\omega_{\Vec{\gamma}^{\perp}}$ contains all the zeros of $\langle B\rangle_{\gamma}$, defined in \eqref{eq:defBgamma}.
\end{enumerate}
\end{proposition}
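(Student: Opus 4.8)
The plan is to reduce everything to the one–variable computation already displayed above, performed one periodic direction at a time. First I would fix an arbitrary periodic direction $\Vec{\gamma}=(\cos\theta,\sin\theta)$, pass to the rotated coordinates $(X,Y)=F(x,y)$ introduced before the proposition, and record the two facts that will be used: in these coordinates the geodesics of direction $\Vec{\gamma}$ are exactly the circles $\{Y=\mathrm{const}\}$, with $\gamma$ itself being $\{Y=0\}$; and the one–form $\mathcal{A}$ pulls back to $(\A\cdot\Vec{\gamma})\,\d X+(\A\cdot\Vec{\gamma}^{\perp})\,\d Y$, so that the magnetic field is $B=\p_X(\A\cdot\Vec{\gamma}^{\perp})-\p_Y(\A\cdot\Vec{\gamma})$. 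I would also note that, for a periodic $\Vec{\gamma}$, the value $\langle B\rangle_{\gamma}(z,\Vec{\gamma})$ depends on $z$ only through its transverse coordinate $Y$, since averaging over a full period of the closed geodesic through $z$ only remembers which geodesic $z$ lies on; thus $\langle B\rangle_{\gamma}(\cdot,\Vec{\gamma})$ is a scalar function on $\Vec{\gamma}^{\perp}$, exactly like $A_{\gamma}$.

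The core step is the identity $\langle B\rangle_{\gamma}(\cdot,\Vec{\gamma})=-\p_Y A_{\gamma}$ as functions on the circle $\Vec{\gamma}^{\perp}$. I would prove it by observing that, because $\Vec{\gamma}$ is a periodic direction, the directional average $\avg{\,\cdot\,}{\Vec{\gamma}}$ is, in the $(X,Y)$ picture, literally the mean over one period of the $X$-variable; hence it commutes with $\p_Y$ and annihilates $\p_X$ of any smooth function. Applying this to the two terms of $B$ yields
$$\langle B\rangle_{\gamma}=\avg{\p_X(\A\cdot\Vec{\gamma}^{\perp})}{\Vec{\gamma}}-\p_Y\avg{\A\cdot\Vec{\gamma}}{\Vec{\gamma}}=-\p_Y A_{\gamma},$$
where we used that $\Vec{\gamma}$ is constant, so $\avg{\A\cdot\Vec{\gamma}}{\Vec{\gamma}}=\avg{\A}{\Vec{\gamma}}\cdot\Vec{\gamma}=A_{\gamma}$. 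I expect the only slightly delicate point to be the rigorous justification of interchanging the average (a Cesàro limit) with the derivative $\p_Y$: in the periodic case this is immediate because the average is a genuine integral over a compact period, but to be safe I would phrase the interchange in the distributional sense, reusing the trigonometric–polynomial–plus–density argument recalled after \eqref{eq: direction-average} together with continuity of $\p_Y$ on distributions.

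Granting the identity, the conclusion is purely formal: a point of $\Vec{\gamma}^{\perp}$ is a critical point of the scalar function $A_{\gamma}$ if and only if $\p_Y A_{\gamma}$ vanishes there, i.e.\ if and only if $\langle B\rangle_{\gamma}(\cdot,\Vec{\gamma})$ vanishes there; so "the set of critical points of $A_{\gamma}$'' and "the set of zeros of $\langle B\rangle_{\gamma}$'' denote the same subset of the $Y$-circle (the degenerate case $\p_Y A_{\gamma}\equiv 0$ being included, both sides then being the whole circle), while $\omega_{\Vec{\gamma}^{\perp}}$ is the same open set in both descriptions. Therefore the inclusion in \textbf{(MGCC)} and the inclusion in the second assertion are equivalent for each fixed periodic $\Vec{\gamma}$, and since both assertions of the proposition are defined by imposing the respective inclusion for \emph{every} periodic direction, taking the conjunction over $\Vec{\gamma}$ gives the stated equivalence. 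Beyond the bookkeeping of these identifications and the limit–derivative interchange noted above, I do not anticipate any substantial obstacle.
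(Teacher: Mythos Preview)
Your proposal is correct and follows exactly the route the paper takes: the paper's entire argument is the identity $\langle B\rangle_{\gamma}(Y)=-\partial_Y A_{\gamma}(Y)$ displayed just before the proposition, from which the equivalence is immediate. If anything, you supply more detail than the paper does—the justification that the $X$-average annihilates the $\partial_X$-term and commutes with $\partial_Y$ is left implicit there—so your write-up would serve well as the proof.
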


\subsection{Observability estimates of the electromagnetic Schr\"odinger equation}
In this part, we present the main results of our paper.\\ 

First, we have the following positive result concerning the the observability \eqref{eq: ob-magnetic-sch-intro} of the electromagnetic Schr\"odinger equation \eqref{eq:SchrodingerObs} under {\bf{(MGCC)}}.
\begin{theorem}\label{thm: main}
Let $\A \in C^{\infty}(\T^2;\R^2)$, $V \in C^{\infty}(\T^2;\R)$ and $\omega$ be a nonempty open subset of $\T^2$. Assume that $\A$ and $\omega$ satisfy {\bf{(MGCC)}}. Then, for every $T>0$, the observability \eqref{eq: ob-magnetic-sch-intro} of the electromagnetic Schr\"odinger equation \eqref{eq:SchrodingerObs} holds.
\end{theorem}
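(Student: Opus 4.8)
The plan is to reduce the observability inequality to a statement about semiclassical measures (Wigner measures) associated to sequences of solutions, following the now-standard Bourgain–Burq–Zworski / Anantharaman–Macià approach, but carefully tracking the magnetic gauge. First I would argue by contradiction: if observability fails at some time $T$, there exists a sequence of normalized initial data $(u_0^n)$ and solutions $u^n(t)=e^{-\ii tH_{\A,V}}u_0^n$ with $\int_0^T\int_\omega|u^n|^2\,\d t\,\d z\to 0$. To extract a limiting object, I would introduce a semiclassical scaling: decompose $u_0^n$ into dyadic frequency shells, so that on each shell one works at a semiclassical parameter $h_n\to 0$, and consider the time-rescaled functions $v^n(s)=u^n(s/h_n)$ solving a semiclassical Schrödinger equation $ih_n\partial_s v^n = \tfrac12 P_{h_n} v^n$ with $P_{h_n}$ the semiclassical magnetic Laplacian, principal symbol $|\zeta|^2$. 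The associated semiclassical defect measures $\mu$ on $S^*\T^2$ (after normalizing energy) are invariant under the geodesic flow, carry positive mass, and — because of the concentration hypothesis — give zero mass to $(0,T)\times(\omega\times S^1)$ in the appropriate averaged sense.

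The heart of the matter is the \emph{second microlocalization} along a rational (periodic) direction $\Vec\gamma$: by the structure of geodesic flow on $\T^2$, any invariant measure decomposes into a part supported over irrational directions (which is automatically controlled, since every such geodesic is dense and meets $\omega$) and pieces concentrated on the family of closed geodesics with a fixed periodic direction $\Vec\gamma$. For the latter piece one performs a two-microlocal analysis transverse to $\Vec\gamma$: averaging the equation along the $\Vec\gamma$-flow produces an effective one-dimensional Schrödinger operator on the transverse circle $\R\Vec\gamma^{\perp}/(2\pi\Z)$ whose potential is precisely built from $\langle\A\rangle_{\Vec\gamma}$ — concretely, after a gauge transformation removing the $\Vec\gamma$-component's mean, the effective operator is $(\tfrac{1}{\ii}\partial_Y - a(Y))^2$ or equivalently, by a further gauge change in one dimension, $-\partial_Y^2$ plus a term coming from $A_\gamma(Y)$ and its derivative $\langle B\rangle_\gamma$. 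The transverse measure $\nu$ on $\Vec\gamma^{\perp}$ is then invariant under the flow of this effective Hamiltonian; its support must be contained in the projected uncontrolled region $\T^2\setminus\omega$ projected to $\Vec\gamma^{\perp}$, i.e. outside $\omega_{\Vec\gamma^\perp}$.

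Now I invoke \textbf{(MGCC)}: the effective one-dimensional classical flow is that of a particle in the potential determined by $A_\gamma$, and its trajectories are confined only near critical points of $A_\gamma$; but \textbf{(MGCC)} forces all such critical points into $\omega_{\Vec\gamma^{\perp}}$, where $\nu$ has no mass. A unique-continuation / propagation argument for the effective 1D operator (the trajectory through any non-critical point is non-stationary and sweeps out an interval meeting $\omega_{\Vec\gamma^\perp}$) then shows $\nu\equiv 0$ for every periodic $\Vec\gamma$. Combining with the irrational-direction control yields $\mu\equiv 0$, contradicting positivity of mass. Summing the contributions over all dyadic shells and over the (countably many) periodic directions — together with the usual argument handling the part of the sequence that does \emph{not} concentrate at high frequency, where a compactness/unique-continuation argument for $H_{\A,V}$ on a fixed compact frequency range applies — completes the proof.

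The main obstacle I anticipate is making the two-microlocal reduction along $\Vec\gamma$ \emph{gauge-covariant and rigorous}: one must show that the averaging procedure producing the effective transverse operator is compatible with the magnetic structure, that the gauge transformation used to normalize $\langle\A\cdot\Vec\gamma\rangle_{\Vec\gamma}$ is globally well defined on $\T^2$ (this is where the Aharonov–Bohm flux through the periodic direction enters, possibly requiring the hypothesis or a harmless modification), and that the second-microlocal measure $\nu$ is genuinely propagated by the \emph{full} effective Hamiltonian including the subprincipal magnetic contribution rather than just the free one. Handling the degenerate case where $A_\gamma$ is constant (no critical points, or all points critical) and the interplay with the low-frequency/compact part of the sequence will also require care.
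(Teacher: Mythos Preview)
Your outline is broadly correct and captures the right architecture: contradiction, semiclassical defect measures invariant under the geodesic flow, elimination of irrational directions by equidistribution, and a transverse one-dimensional reduction along each periodic direction in which the averaged magnetic component $A_\gamma$ plays the role of an effective potential whose critical points are the only possible traps. This is precisely the mechanism behind Theorem~\ref{thm: main}.

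Where you diverge from the paper is in the \emph{implementation} of the transverse reduction. You propose a second-microlocal analysis in the style of Anantharaman--Macià, extracting a two-microlocal measure $\nu$ on the transverse circle and arguing via invariance under the effective classical flow. The paper instead follows a \emph{normal-form} approach in the spirit of Burq--Zworski: it conjugates the Schr\"odinger operator by explicit bounded operators (a gauge transform $e^{\ii g_1}$, then $e^{G_2}$ with $G_2$ a zeroth-order $h$-pseudodifferential operator) so as to average the magnetic potential along the trapped direction, reducing to an explicit model operator $P_h^{(2)}(y,D_x,D_y)$. The key technical point you do not anticipate is that averaging the \emph{first-order} magnetic perturbation requires conjugation by an operator that is \emph{not} close to the identity, and this produces remainders that are not uniformly $o(h^{3/2})$. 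The paper resolves this by splitting the transverse frequency into a high part ($|hD_y|\sim h^{\alpha}$, $\alpha\in(\tfrac14,\tfrac12)$) treated by a commutator/positive-commutator argument, and a low part ($|hD_y|\lesssim h^{\alpha}$) for which one takes a Fourier transform in $x$ and lands on a genuine one-dimensional semiclassical Schr\"odinger equation where \textbf{(MGCC)} is used via an explicit energy estimate. Your two-microlocal route can in principle reach the same conclusion (the paper notes that the approach of Morin--Rivi\`ere is of this type), but you should expect the same bifurcation between transverse scales to resurface when you try to show that the second-microlocal measure is propagated by the \emph{full} effective Hamiltonian including the subprincipal magnetic term.

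A smaller remark: your time rescaling $s=t/h$ would give semiclassical observability on time intervals of length $O(1)$ in $s$, i.e.\ $O(h)$ in $t$; the paper works at the finer scale $t\sim h^{1/2}$ (equivalently $s\sim h^{-1/2}$), which is what yields the sharp resolvent estimate of Theorem~\ref{cor:stationary}. For Theorem~\ref{thm: main} alone either scale suffices after the Littlewood--Paley and compactness--uniqueness argument, but the longer time scale is where the magnetic subprincipal term genuinely affects the dynamics and where the difficulties above become visible.
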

\begin{remark}\label{remark:main}
In the special case where the magnetic field $B=0$, since the de Rham cohomology group $H_{\mathrm{dR}}^1(\mathbb{T}^2)\cong \mathbb{R}\oplus \mathbb{R}$, there exists a gauge transform $g\in C^{\infty}(\mathbb{T}^2,\mathbb{R})$, such that $\mathbf{A}+\nabla g$ is a constant magnetic potential. Then, {\bf{(MGCC)}} implies that $\omega = \T^2$. In such case, Theorem \ref{thm: main} still holds but is not optimal. By the work \cite{LBM23}, the observability of \eqref{eq:SchrodingerObs} holds for any $T>0$ and any measurable set $\omega\subset \mathbb{T}^2$ of positive measure. For such a constant magnetic potential case and for $\omega\subset\mathbb{T}^d$ a non-empty open set, the same observability result holds on all $\mathbb{T}^d$, $d\geq 1$, following the arguments of \cite{AM14} and the Bloch-Floquet theory, see \cite{macia2014},\cite{Wunsch-wave}.
\end{remark}

Our second main result concerns resolvent estimates for the stationary Schr\"odinger operator under {\bf{(MGCC)}}.
\begin{theorem}\label{cor:stationary}
	Let $\A \in C^{\infty}(\T^2;\R^2)$, $V \in C^{\infty}(\T^2;\R)$ and $\omega$ be a nonempty open subset of $\T^2$. Assume that $\A$ and $\omega$ satisfy {\bf{(MGCC)}}. Then there exists $C=C(\A,V,\omega)>0$ such that 
	\begin{equation}
	    \label{eq:resolventestimate}
	     \|u_{\lambda}\|_{L^2(\mathbb{T}^2)}\leq C\left(\frac{1}{1+|\lambda|^{\frac{1}{4}}}\|(H_{\A,V} + \lambda) u_{\lambda}\|_{L^2(\mathbb{T}^2)}+\|u_{\lambda}\|_{L^2(\omega)}\right),\quad \forall \lambda \in \R,\ \forall u_{\lambda} \in H^2(\T^2).
	\end{equation}
\end{theorem}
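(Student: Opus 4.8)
The plan is to reduce \eqref{eq:resolventestimate} to a semiclassical quasimode estimate and then run a contradiction argument based on semiclassical defect measures, which is precisely where {\bf{(MGCC)}} enters. First I would dispose of the easy ranges of $\lambda$. For $\lambda\to+\infty$ the estimate is trivial, since $H_{\A,V}\geq -\|V\|_{L^\infty}$ gives $\|(H_{\A,V}+\lambda)^{-1}\|\leq (\lambda-\|V\|_{L^\infty})^{-1}\leq C/(1+|\lambda|^{1/4})$. For $|\lambda|$ bounded, \eqref{eq:resolventestimate} is equivalent to the loss-free estimate $\|u\|_{L^2}\leq C(\|(H_{\A,V}+\lambda)u\|_{L^2}+\|u\|_{L^2(\omega)})$, which follows from the observability of Theorem~\ref{thm: main}: apply \eqref{eq: ob-magnetic-sch-intro} with $T=1$ to $u_0=u$, write $e^{-\ii tH_{\A,V}}u=e^{\ii t\lambda}u+r(t)$ where $r$ solves $\ii\pt r=H_{\A,V}r+e^{\ii t\lambda}(H_{\A,V}+\lambda)u$, $r(0)=0$, so that $\|r(t)\|_{L^2}\leq\|(H_{\A,V}+\lambda)u\|_{L^2}$ on $[0,1]$, and plug this into the observability inequality. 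Hence only $\lambda=-h^{-2}\to-\infty$ is left, and with $P_h:=h^2H_{\A,V}$ (semiclassical principal symbol $|\xi|^2$, magnetic and electric terms of size $O(h)$) the claim becomes the semiclassical resolvent bound
\begin{equation}
\label{eq:plan-dagger}
\|u\|_{L^2(\T^2)}\leq C\big(h^{-3/2}\|(P_h-1)u\|_{L^2(\T^2)}+\|u\|_{L^2(\omega)}\big),\qquad 0<h\leq h_0,
\end{equation}
which carries all the difficulty.

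To prove \eqref{eq:plan-dagger} I would argue by contradiction: assume there are $h_n\to0$ and $u_n$ with $\|u_n\|_{L^2}=1$, $\|(P_{h_n}-1)u_n\|_{L^2}=o(h_n^{3/2})$ and $\|u_n\|_{L^2(\omega)}\to0$. Semiclassical elliptic regularity concentrates the mass on $\{|\xi|=1\}$, so up to a subsequence $(u_n)$ has a semiclassical defect measure $\mu$ on $S^*\T^2=\T^2\times S^1$ with $\mu(S^*\T^2)=1$, $\mu(\omega\times S^1)=0$, and — since the magnetic and electric terms of $P_h$ are $O(h)$ and $\|(P_{h_n}-1)u_n\|=o(h_n)$ — invariant under the geodesic (straight-line) flow. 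Disintegrating $\mu$ along the conserved direction $\theta$, for a.e.\ $\theta$ the conditional measure $\mu_\theta$ on $\T^2$ is invariant under translation by $(\cos\theta,\sin\theta)$; for irrational slopes unique ergodicity forces $\mu_\theta$ to be Lebesgue, which charges $\omega$, so $\mu$ must be supported over the countably many periodic directions. Fixing a periodic direction $\Vec{\gamma}$ with non-zero contribution, $\mu_{\Vec{\gamma}}$ is the product of a non-zero measure $\bar\mu_{\Vec{\gamma}}$ on the transverse circle $\T^2/\R\Vec{\gamma}$ with arclength along the closed geodesics, and $\mu(\omega\times S^1)=0$ gives $\supp\bar\mu_{\Vec{\gamma}}\cap\omega_{\Vec{\gamma}^\perp}=\emptyset$.

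The heart of the argument — and the main obstacle — is to show that $\supp\bar\mu_{\Vec{\gamma}}$ is contained in the set of critical points of $A_{\Vec{\gamma}}$ (equivalently in $\{\langle B\rangle_{\gamma}=0\}$, cf.\ Proposition~\ref{MGCC:equiv}). For this I would pass to the adapted coordinates $(X,Y)$ of the introduction, so that $\gamma=\{Y=0\}$, choose a convenient gauge for $\A$ near $\gamma$, and perform a Birkhoff-type averaging normal form along the $X$-periodic geodesic flow: after conjugation by a bounded pseudodifferential operator, the tangential component $\A\cdot\Vec{\gamma}$ is replaced, modulo controlled errors, by its average $A_{\Vec{\gamma}}(Y)$, so that the transverse motion is governed by the effective force $\partial_YA_{\Vec{\gamma}}(Y)=-\langle B\rangle_{\gamma}(Y)$. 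Then I would second-microlocalize in the transverse variable $Y$: the quasimode concentrates transversally to $\gamma$ at scale $\sqrt h$, the magnetic drift it experiences is of size $\sim h$, and a quasimode precise to $o(h^{3/2})$ resolves a genuine transverse Hamiltonian whose flow is generated by $\partial_YA_{\Vec{\gamma}}$; this computation is what fixes the exponent $3/2$ and is, I expect, the most delicate part — it requires the two-microlocal calculus, in the spirit of the electric analysis of \cite{AM14}, \cite{BBZ13} and of Burq--Zworski \cite{BZ12}, together with careful error bookkeeping. Propagation of the associated two-microlocal measure then shows $\bar\mu_{\Vec{\gamma}}$ is invariant under this transverse flow, hence supported on $\{\partial_YA_{\Vec{\gamma}}=0\}$.

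To conclude, by {\bf{(MGCC)}} (Definition~\ref{def:MGGC}) the critical set of $A_{\Vec{\gamma}}$ is contained in $\omega_{\Vec{\gamma}^\perp}$, hence $\supp\bar\mu_{\Vec{\gamma}}\subset\omega_{\Vec{\gamma}^\perp}$; combined with $\supp\bar\mu_{\Vec{\gamma}}\cap\omega_{\Vec{\gamma}^\perp}=\emptyset$ this forces $\bar\mu_{\Vec{\gamma}}=0$, contradicting the choice of $\Vec{\gamma}$. Therefore $\mu=0$, contradicting $\mu(S^*\T^2)=1$, so \eqref{eq:plan-dagger} holds and Theorem~\ref{cor:stationary} follows. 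As a consistency check, the gain $h^{-3/2}$ in \eqref{eq:plan-dagger}, i.e.\ $|\lambda|^{-1/4}$, is exactly the threshold that upgrades a resolvent estimate to observability in \emph{every} time $T>0$, in accordance with Theorem~\ref{thm: main}.
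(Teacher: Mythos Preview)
Your overall plan is correct and, in essence, parallel to the paper, but the route you take for large $|\lambda|$ is different and longer than the one the authors actually use. In the paper, Theorem~\ref{cor:stationary} is deduced in about a page \emph{from} Proposition~\ref{prop: semiclassical ob} (the sharp time-dependent semiclassical observability at time scale $h^{1/2}$): one writes $v(t)=e^{-\ii t\lambda^{3/4}}\Pi_{h,\rho}u_\lambda$, observes that $v$ solves the magnetic Schr\"odinger equation with source $\lambda^{-1/4}e^{-\ii t\lambda^{3/4}}(H_{\A,V}-\lambda)\Pi_{h,\rho}u_\lambda$, and plugs this into \eqref{eq: sharp ob}; the spectrally non-localised part $(1-\Pi_{h,\rho})u_\lambda$ is handled by the trivial resolvent bound. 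All the heavy lifting (normal forms, frequency localisation, transversal high/low frequency analysis) has already been done in the proof of Proposition~\ref{prop: semiclassical ob}. Since you invoke Theorem~\ref{thm: main} for bounded $\lambda$, and Theorem~\ref{thm: main} is itself obtained from Proposition~\ref{prop: semiclassical ob}, the short Duhamel reduction is available to you as well, so your direct stationary contradiction argument re-proves Proposition~\ref{prop: semiclassical ob} in quasimode form rather than exploiting it.

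There is also an imprecision in your two-microlocal step. You write that propagation shows $\bar\mu_{\Vec{\gamma}}$ is ``invariant under the transverse flow, hence supported on $\{\partial_Y A_{\Vec{\gamma}}=0\}$''. The second-microlocal measure lives on $T^*\T_Y$ and is invariant under the Hamiltonian flow of $\eta^2\mp 2A_{\Vec{\gamma}}(Y)$, not under a first-order flow on $\T_Y$; invariance alone does \emph{not} force support on the critical set (there are plenty of periodic orbits). What {\bf(MGCC)} actually gives is that on each connected component of $\T_Y\setminus\omega_{\Vec{\gamma}^\perp}$ the derivative $A_{\Vec{\gamma}}'$ has a sign, so every trajectory of $\eta^2\mp 2A_{\Vec{\gamma}}$ enters $\omega_{\Vec{\gamma}^\perp}$ in uniformly bounded time; combined with vanishing over $\omega_{\Vec{\gamma}^\perp}$ this yields $\bar\mu_{\Vec{\gamma}}=0$. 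The paper does not phrase this via two-microlocal measures either: for the transversal low-frequency part it uses a positive-commutator (virial) estimate with the multiplier $\varphi^2(y)(y-y_0)D_y$ (Lemma~\ref{lem: hyperbolic estimate} and Proposition~\ref{prop: uniform ob for homogen-eq}), which produces the weight $K(y-y_0)\langle A_1\rangle'(y)$ with a definite sign on $\supp\varphi\subset\T_y\setminus\omega_y$. Your approach can be made to work, but you would also need to argue uniformity in the Fourier parameter $K$ (equivalently in the effective energy $(K^2-1)/h$), which in the paper is handled by the explicit large-time bound in \eqref{eq: hyperbolic estimate}.
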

Actually, the resolvent estimate \eqref{eq:resolventestimate} in Theorem \ref{cor:stationary} rather comes from a ``sharp'' semiclassical observability estimate stated in Proposition \ref{prop: semiclassical ob}, which is stronger than  Theorem \ref{thm: main}, that would lead to the following resolvent estimates
	\begin{equation}
	    \label{eq:resolventestimateNonsharp}
	     \|u_{\lambda}\|_{L^2(\mathbb{T}^2)}\leq C\left(\|(H_{\A,V} + \lambda) u_{\lambda}\|_{L^2(\mathbb{T}^2)}+\|u_{\lambda}\|_{L^2(\omega)}\right),\quad \forall \lambda \in \R,\ \forall u_{\lambda} \in H^2(\T^2).
	\end{equation}
Indeed, for any $u_{\lambda}\in H^2(\T^2)$, we set $v_{\lambda}(t,z):=\mathrm{e}^{\ii t\lambda}u_{\lambda}(z)$ and $f_{\lambda}(z):=(H_{\A,V} + \lambda) u_{\lambda}\in L^2(\T^2)$. Then, $v_{\lambda}$ is the unique solution to the equation 
\begin{equation*}
(\ii\p_t-H_{\A,V})v_{\lambda}=-\mathrm{e}^{\ii t\lambda}f,\;\;v_{\lambda}\big|_{t=0}=u_{\lambda}.
\end{equation*} 
Thanks to the observability \eqref{eq: ob-magnetic-sch-intro} and Duhamel's formula, we obtain
\begin{align*}
\|u_{\lambda}\|^2_{L^2(\T^2)}&\leq C\int_0^T\int_{\omega}\left|\mathrm{e}^{-\ii t H_{\A,V}}u_{\lambda}(z)\right|^2\d z\d t\\
&=C\int_0^T\int_{\omega}\left|v_{\lambda}(t,z)+\frac{1}{\ii}\int_0^t\mathrm{e}^{-\ii(t-s) H_{\A,V}}\mathrm{e}^{\ii s\lambda}f(z)\d s\right|^2\d z\d t.
\end{align*}
By Cauchy--Schwarz's inequality and the definitions of $v_{\lambda}$ and $f_{\lambda}$, we derive that
\begin{align*}
\|u_{\lambda}\|^2_{L^2(\T^2)}&\leq 2CT\|u_{\lambda}\|^2_{L^2(\omega)}+2C\int_0^T\int_0^t\int_{\T^2}\left|\mathrm{e}^{-\ii(t-s) H_{\A,V}}f(z)\right|^2\d z\d s\d t\\
&\leq 2CT\|u_{\lambda}\|^2_{L^2(\omega)}+2CT^2\|(H_{\A,V} + \lambda) u_{\lambda}\|_{L^2(\T^2)}^2.
\end{align*}
This means that \eqref{eq:resolventestimateNonsharp} is a direct consequence of \eqref{eq: ob-magnetic-sch-intro}.   
\begin{remark}
Very recently, in \cite{burq-zhu}, Burq and Zhu proved that on a compact Riemannian manifold, resolvent estimates for the Laplace--Beltrami operator imply observability for the Schr\"odinger propagator from time sets of positive Lebesgue measure. In fact, their result can also apply in our electromagnetic Schr\"odinger setup in $\T^2$. Therefore, the resolvent estimate \eqref{eq:resolventestimate} implies the observability 
\begin{equation*}
\|u_0\|^2_{L^2(\T^2)}\leq C\int_E\int_{\omega}\left|e^{-\ii tH_{\A,V}}u_0(z)\right|^2\d z\d t,   \qquad \forall u_0\in L^2(\T^2). 
\end{equation*}
where $E$ is a time set of positive Lebesgue measure and $\omega$ satisfies {\bf{(MGCC)}} in Theorem \ref{thm: main}.
\end{remark}

\vspace{0.3cm}

Our third main result establishes that the geometric condition {\bf{(MGCC)}} of Theorem \ref{thm: main} is optimal in the following sense.
\begin{theorem}\label{thm: optimal}
Let $\A \in C^{\infty}(\T^2;\R^2)$, $V \in C^{\infty}(\T^2;\R)$ and $\omega$ be a nonempty open subset of $\T^2$. Assume that $\omega$ is such that there exists a closed geodesic $\gamma\cap\overline{\omega}=\emptyset$ with the direction $\Vec{\gamma}$ and for some periodic direction $\vec{\gamma}$, $A_{\gamma}$ has at least one non-degenerate critical point outside $\overline{\omega}_{\vec{\gamma}^{\perp}}$.
Then for any $T>0$, the observability \eqref{eq: ob-magnetic-sch-intro} does not hold.
\end{theorem}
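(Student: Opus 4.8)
The plan is to \emph{disprove} observability by exhibiting, for arbitrarily large frequencies, normalised quasimodes of $H_{\A,V}$ that are supported away from $\omega$. Precisely, I would produce a sequence $(u_n)\subset C^\infty(\T^2)$ with $\|u_n\|_{L^2(\T^2)}=1$, with $\supp u_n\cap\omega=\emptyset$, and with $\|(H_{\A,V}-\mu_n)u_n\|_{L^2(\T^2)}=:\varepsilon_n\to 0$ for some real sequence $\mu_n\to+\infty$. Granting this, the proof ends in one line: Duhamel's formula gives $\|e^{-\ii tH_{\A,V}}u_n-e^{-\ii t\mu_n}u_n\|_{L^2(\T^2)}\le|t|\,\varepsilon_n$, and since $u_n$ vanishes on $\omega$ this yields $\|e^{-\ii tH_{\A,V}}u_n\|_{L^2(\omega)}\le T\varepsilon_n$ for $0\le t\le T$, hence $\int_0^T\int_\omega|e^{-\ii tH_{\A,V}}u_n|^2\le T^3\varepsilon_n^2\to 0$ while $\|u_n\|_{L^2(\T^2)}^2=1$; this contradicts \eqref{eq: ob-magnetic-sch-intro} for every fixed $T>0$.

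For the construction, after a rotation bringing the periodic direction to the first coordinate axis (in the coordinates $(X,Y)$ of the introduction), the function $A_\gamma=A_\gamma(Y)$ lives on the $Y$-circle with $\langle B\rangle_\gamma(Y)=-A_\gamma'(Y)$; let $Y_0$ be the non-degenerate critical point of $A_\gamma$ provided by the hypothesis, so $A_\gamma'(Y_0)=0\ne A_\gamma''(Y_0)$ and $Y_0\notin\overline{\omega}_{\vec\gamma^{\perp}}$, and fix $\delta>0$ with $\{|Y-Y_0|<\delta\}$ an embedded cylinder disjoint from $\omega$. On this cylinder I would choose the transverse gauge $\A\cdot e_Y\equiv 0$ (an ODE in $Y$), so that $H_{\A,V}=(D_X-\mathcal A)^2+D_Y^2+V$ with $D_X=\tfrac1\ii\p_X$, $D_Y=\tfrac1\ii\p_Y$, $\langle\mathcal A\rangle_X(Y)=A_\gamma(Y)$, and then conjugate by the unitary factors $e^{\ii\tilde\Psi}$ ($\p_X\tilde\Psi=\mathcal A-A_\gamma$) and $e^{\ii\rho(Y)}$ ($\rho'=-\langle\p_Y\tilde\Psi\rangle_X$) to reach $\mathcal H=(D_X-A_\gamma(Y))^2+(D_Y+q(X,Y))^2+V(X,Y)$ with $q$ smooth and of zero $X$-average. (The $X$-dependence of the magnetic field is exactly the obstruction to removing $X$-dependence from \emph{both} components simultaneously, so the residual coupling $q$ is unavoidable.)

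I would then look for quasimodes of $\mathcal H$ of the form $e^{\ii nX}(\psi_n(Y)+\sum_{k\ne0}e^{\ii kX}\phi_{k}(Y))$, with $n\to\pm\infty$ along the residue class making $e^{\ii nX}$ periodic. The zero mode is governed by the one-dimensional operator $L_n:=D_Y^2+(n-A_\gamma(Y))^2+\langle V\rangle_X(Y)+\langle q^2\rangle_X(Y)$; since $A_\gamma'(Y_0)=0$, near $Y_0$ its effective potential reads $(n-A_\gamma(Y_0))^2+\alpha_n(Y-Y_0)^2+\mathcal O(|n|\,|Y-Y_0|^3)$ with $\alpha_n=-(n-A_\gamma(Y_0))A_\gamma''(Y_0)$, which is $\sim c|n|>0$ once the sign of $n$ is chosen opposite to that of $A_\gamma''(Y_0)$; thus $L_n$ has a genuine non-degenerate well at $Y_0$ at scale $\alpha_n^{-1/4}\sim|n|^{-1/4}$, and from the harmonic ground state $e^{-\sqrt{\alpha_n}(Y-Y_0)^2/2}$ refined by finitely many Rayleigh--Schrödinger corrections in the anharmonicities one obtains $\psi_n$, genuinely supported in $\{|Y-Y_0|<\delta'\}$ for some $\delta'<\delta$, with $L_n\psi_n=\mu_n\psi_n+\mathcal O(|n|^{-M})$ for any $M$ and $\mu_n=(n-A_\gamma(Y_0))^2+\sqrt{\alpha_n}+\mathcal O(1)$. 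Plugging $e^{\ii nX}\psi_n$ into $\mathcal H$ leaves a remainder $(D_Yq)\psi_n+2qD_Y\psi_n+(\text{lower order})$ of size $\mathcal O(|n|^{1/4})$ (because $\|D_Y\psi_n\|\sim|n|^{1/4}$) which has zero $X$-average and hence only couples to the modes $k\ne0$; their equations involve the operators $(k+n-A_\gamma(Y))^2+D_Y^2+\cdots-\mu_n$, elliptic of size $\gtrsim|k|\,|n|$ in all channels except the single ``centrifugally shifted'' one $k_-$ with $k_-(n-A_\gamma(Y_0))<0$, where a resonance with extended states is possible; in that channel one uses that $\psi_n,D_Y\psi_n$ vary on scale $|n|^{-1/4}$ while the resonant states oscillate on scale $|n|^{-1/2}$, so the coupling is $\mathcal O(|n|^{-N})$ by non-stationary phase, and an $\mathcal O(|n|^{-1})$ shift of $\mu_n$ away from the (widely spaced, gap $\sim|n|^{1/2}$) spectrum of that channel keeps $\phi_{k_-}$ and all $\phi_k$ of norm $o(1)$. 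Undoing the conjugations and cutting off by $\chi(Y-Y_0)$ (an exponentially small error) produces $u_n$ with $\supp u_n\subset\{|Y-Y_0|<\delta\}$, $\|u_n\|_{L^2(\T^2)}=1$ after renormalisation, and $\|(H_{\A,V}-\mu_n)u_n\|_{L^2(\T^2)}=\mathcal O(|n|^{-1/2})$.

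The bulk of the work, and the only real obstacle, is this quasimode construction; within it the two delicate points are (i) the normal-form/averaging step, which must reduce the $\mathcal O(|n|)$-large $X$-fluctuations of the magnetic potential to the harmless coupling $q$ and then control the Fourier corrections, including the potentially resonant channel; and (ii) the ``semiclassically degenerate'' nature of the effective one-dimensional well: the concentration scale $|n|^{-1/4}$ makes the cubic and higher anharmonic terms enter at order $|n|^{1/4}$, so the harmonic approximation/perturbation expansion must be iterated a bounded number of times before the quasimode error drops below $o(1)$. Everything else — the rotation, the local gauge, the Duhamel estimate and the final contradiction — is routine; and since $\mu_n\to+\infty$ the same family also shows that the resolvent estimates \eqref{eq:resolventestimate} and even \eqref{eq:resolventestimateNonsharp} fail without {\bf{(MGCC)}}, consistently with Theorem~\ref{cor:stationary}.
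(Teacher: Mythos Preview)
Your overall strategy matches the paper's: construct $o(1)$-quasimodes of $H_{\A,V}$ concentrated at the non-degenerate critical point $Y_0$ of $A_\gamma$, with harmonic-oscillator profile on the scale $|n|^{-1/4}$, then contradict observability via Duhamel. The implementation differs. You work only with genuine gauge transforms (multiplication by $e^{\ii\phi}$) to reach the form $(D_X-A_\gamma)^2+(D_Y+q)^2+V$ with $\langle q\rangle_X=0$, then build the quasimode by the ansatz $e^{\ii nX}\psi_n(Y)+\sum_{k\neq 0}e^{\ii kX}\phi_k(Y)$, treating the $k\neq 0$ channels perturbatively. The paper instead reuses and refines the pseudodifferential normal form of Section~\ref{sec: normal form reduction}: conjugating by $e^{-G_2}e^{-G_3}$ (the extra $e^{-G_3}$ is needed here to push remainders to $o(h^2)$) reduces to a \emph{purely one-dimensional} operator $-\hbar^2\partial_y^2-2A_1(y)+\hbar^2(\cdots)$ with $\hbar=h^{1/2}$, for which a WKB/Hermite quasimode is standard (Lemma~\ref{lem: quasimode-construction}); one then transforms back. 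The PDO normal form removes the $x$-coupling completely up to acceptable errors, so no Fourier corrections and no resonant channels ever appear --- this is cleaner and dovetails with the machinery already set up for the positive result.

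Your direct route should go through, but the ``resonant channel'' step is where the difficulty concentrates and your description is imprecise. There is not \emph{one} centrifugally shifted channel $k_-$: every $k$ with $0<-k(n-A_\gamma(Y_0))<2(n-A_\gamma(Y_0))^2$ gives a non-elliptic $L_k-\mu_n$, so $O(|n|)$ channels in all. For bounded $|k|$ the non-stationary phase argument you sketch is correct (near-resonant eigenfunctions oscillate at frequency $\sim\sqrt{|k|\,|n|}\gg |n|^{1/4}$ on $\supp\psi_n$); for large $|k|$ the rapid decay of $\hat q(k)$ takes over. The ``$O(|n|^{-1})$ shift of $\mu_n$'' is not coherent as written --- $\mu_n$ is pinned by the $k=0$ quasimode equation up to $O(|n|^{-M})$, not freely adjustable --- and is in any case unnecessary: the non-stationary phase bound already makes the resonant projections $O(|n|^{-N})$, so those components can simply be discarded rather than solved for. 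Finally, the non-elliptic $\phi_k$ need not localise near $Y_0$, so the final cutoff $\chi(Y-Y_0)$ must be applied with this in mind (again, dropping those $\phi_k$ altogether is the clean fix).
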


\begin{remark}
	For a specific class of magnetic potential $\mathbf{A}$ such that $A_{\gamma}$ is a Morse function for any periodic direction $\vec{\gamma}$, Theorem \ref{thm: optimal}
shows that the condition {\bf{(MGCC)}} is sharp, up to the broad-line case where some critical point of $A_{\gamma}$ lies on the boundary of $\omega_{\vec{\gamma}^{\perp}}$, which is not considered in the current paper. 

More generally, if one deals with finite order degenerate critical points outside $\overline{\omega}_{\vec{\gamma}^{\perp}}$, our proof ideas seem to work in a similar way, at least in the case where $A_{\gamma}'(z_0)=\cdots=A_{\gamma}^{(2k-1)}(z_0)=0$ and $A_{\gamma}^{(2k)}(z_0)\neq0$ for some $k\in\N^*$. After reducing to the one-dimensional model equation, one will face the anharmonic oscillator instead of harmonic oscillator in the non-degenerate critical point case, which requires a careful and delicate spectral analysis of the semiclassical anharmonic oscillator, and we choose to not include it in the current paper.
\end{remark}

\textbf{A toy model.} We present an example of a magnetic potential $A$ and an associated $\omega$ such that (MGGC) holds to illustrate Theorem \ref{thm: main}. Let us assume that $\A = \A(y) = (A_1(y), A_2(y))$. Up to a gauge transformation one can assume that $\int_{\T} \A(y) dy = 0$. Then for every unit periodic direction $\Vec{\gamma}= (\gamma_1,\gamma_2)$ it is easy to see that we have the dichotomy $A_{\gamma} = 0$ if $\gamma_2 \neq 0$ and $A_{\gamma}=\pm A_1$ if $\gamma_2 = 0$. For $\gamma_2 \neq 0$, $\omega_{\gamma^\perp}$ shoud contain all the critical points of $A_{\gamma} = 0$ then $\omega_{\gamma^\perp}$ should be equal to the whole one-dimensional torus $\T$. So $\omega$ has to be an horizontal strip of the form $\omega = \T \times \omega_y$ where $\omega_y \subset \T$. Moreover, for $\gamma_2=0$, the corresponding orthogonal direction is $\gamma^\perp=(0,1)$ so we find that $\omega_{\gamma^\perp} = \omega_y$ hence $\omega_y$ has to contain all the critical points of $A_1$.

\begin{figure}[h]
    \centering
    \includegraphics[width=0.8\textwidth]{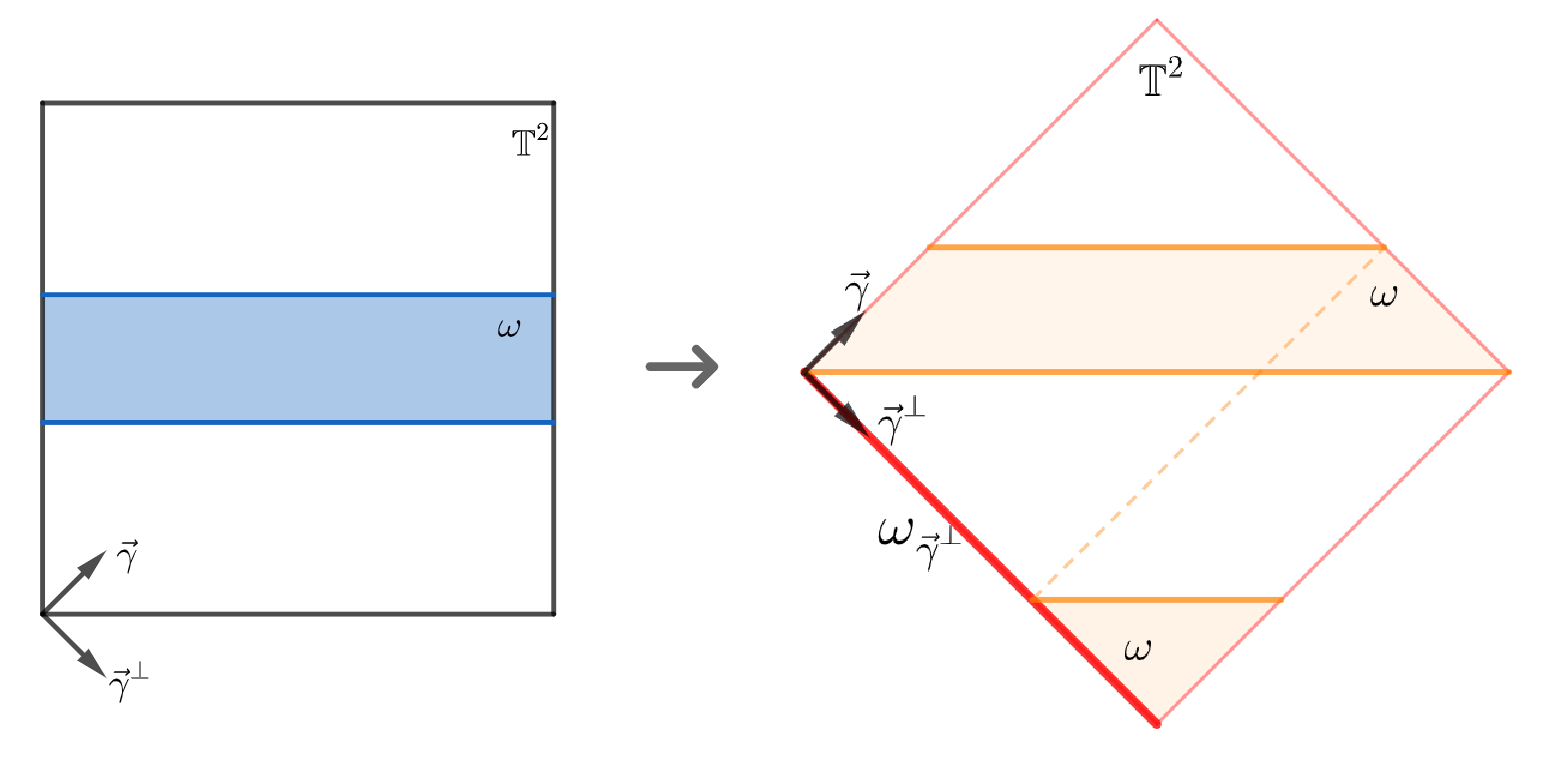}
    \caption{Control region projection for $\gamma_2\neq0$}
    \label{fig: omega-projection}
\end{figure}

\begin{figure}[h]
    \centering
    \includegraphics[width=0.8\textwidth]{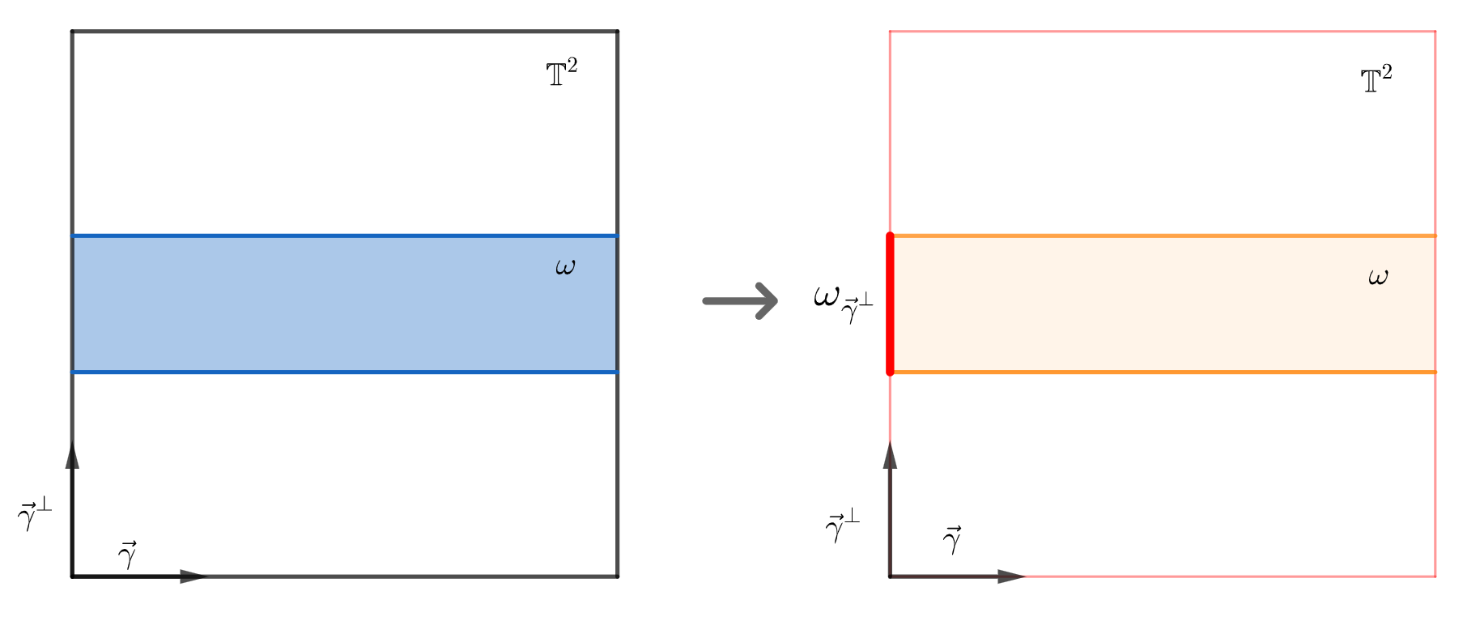}
    \caption{Control region projection for $\gamma_2=0$}
    \label{fig: region-1-0}
\end{figure}

\subsection{Control of the electromagnetic Schr\"odinger equation}

In this part, we state some control results that can be deduced from the observability estimate \eqref{eq:SchrodingerObs} of Theorem \ref{thm: main}.\\

We first consider the controlled Schr\"odinger equation
\begin{equation}
	\label{eq:SchrodingerControl}
		\left\{
			\begin{array}{ll}
			\ii  \partial_t \psi  =H_{\A,V}(z) \psi + h 1_{\omega} & \text{ in }  (0,+\infty) \times \T^2, \\
				\psi(0, \cdot) = \psi_0 & \text{ in } \T^2.
			\end{array}
		\right.
\end{equation}
In \eqref{eq:SchrodingerControl}, at time $t \in [0,+\infty)$, $\psi(t,\cdot) : \T^2 \to \C$ is the state and $h(t,\cdot) : \omega \to \C$ is the control.

From Theorem \ref{thm: main} and a standard duality argument, see for instance \cite[Theorem 2.42]{Cor07}, the following small-time exact controllability of \eqref{eq:SchrodingerControl} holds.

\begin{theorem}
Let $\A \in C^{\infty}(\T^2;\R^2)$, $V \in C^{\infty}(\T^2;\R)$ and $\omega$ be a nonempty open subset of $\T^2$. Assume that $\A$ and $\omega$ satisfy {\bf(MGCC)}. Then for every $T>0$, $\psi_0$ and $\psi_1 \in L^2(\T^2)$, there exists $h \in L^2((0,T)\times\omega)$ such that the solution $\psi \in C([0,T];L^2(\T^2))$ of \eqref{eq:SchrodingerControl} satisfies $\psi(T,\cdot) = \psi_1$.
\end{theorem}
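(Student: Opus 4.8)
The plan is to obtain this exact controllability result as the classical dual counterpart of the observability inequality \eqref{eq: ob-magnetic-sch-intro} established in Theorem \ref{thm: main}, following the Hilbert Uniqueness Method as presented in \cite[Theorem 2.42]{Cor07}. First I would recast the control problem in operator form: by Duhamel's formula the solution of \eqref{eq:SchrodingerControl} satisfies
\begin{equation*}
\psi(T,\cdot)=e^{-\ii TH_{\A,V}}\psi_0-\ii\int_0^T e^{-\ii(T-s)H_{\A,V}}\bigl(1_\omega h(s,\cdot)\bigr)\,\d s,
\end{equation*}
so the requirement $\psi(T,\cdot)=\psi_1$ amounts to the surjectivity onto $L^2(\T^2)$ of the bounded control operator $L\colon L^2((0,T)\times\omega)\to L^2(\T^2)$, $Lh:=-\ii\int_0^T e^{-\ii(T-s)H_{\A,V}}(1_\omega h(s,\cdot))\,\d s$. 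Once $L$ is known to be onto, one simply picks any $h$ with $Lh=\psi_1-e^{-\ii TH_{\A,V}}\psi_0$, which settles the statement for all $\psi_0,\psi_1\in L^2(\T^2)$.

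Next I would invoke the standard functional-analytic fact that a bounded linear operator between Hilbert spaces is surjective if and only if its adjoint is bounded from below. Since $H_{\A,V}$ is self-adjoint on $L^2(\T^2)$, one computes directly that for $f\in L^2(\T^2)$ one has $(L^*f)(s,\cdot)=\ii\,1_\omega\, e^{\ii(T-s)H_{\A,V}}f$ for $s\in(0,T)$, hence, after the substitution $t=T-s$,
\begin{equation*}
\|L^*f\|_{L^2((0,T)\times\omega)}^2=\int_0^T\!\!\int_\omega\bigl|e^{\ii tH_{\A,V}}f(z)\bigr|^2\,\d z\,\d t .
\end{equation*}
Therefore the surjectivity of $L$ is equivalent to the observability-type inequality $\|f\|_{L^2(\T^2)}^2\le C\int_0^T\int_\omega|e^{\ii tH_{\A,V}}f|^2\,\d z\,\d t$ for all $f\in L^2(\T^2)$.

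To close the argument I would reduce this last inequality to \eqref{eq: ob-magnetic-sch-intro} by time reversal: given $f\in L^2(\T^2)$, set $g:=e^{\ii TH_{\A,V}}f$ and $v(t,\cdot):=e^{-\ii tH_{\A,V}}g$, which is the solution of \eqref{eq:SchrodingerObs} with datum $g$; then $e^{\ii tH_{\A,V}}f=v(T-t,\cdot)$, so by the conservation of mass \eqref{eq:conservationmass} and Theorem \ref{thm: main} applied with the open set $\omega$ and the initial datum $g$,
\begin{equation*}
\int_0^T\!\!\int_\omega\bigl|e^{\ii tH_{\A,V}}f\bigr|^2\,\d z\,\d t=\int_0^T\!\!\int_\omega|v(s,z)|^2\,\d z\,\d s\ge C^{-1}\|g\|_{L^2(\T^2)}^2=C^{-1}\|f\|_{L^2(\T^2)}^2 .
\end{equation*}
This provides the required lower bound for $L^*$, hence the surjectivity of $L$. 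Moreover, an explicit control of minimal $L^2$-norm can then be exhibited as $h=L^*f_\star$, where $f_\star\in L^2(\T^2)$ is the (unique) minimizer of the strictly convex, coercive functional $J(f)=\tfrac12\|L^*f\|_{L^2((0,T)\times\omega)}^2-\langle\psi_1-e^{-\ii TH_{\A,V}}\psi_0,f\rangle_{L^2(\T^2)}$, the coercivity of $J$ being exactly the observability inequality above.

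The entire geometric and analytic difficulty is already contained in Theorem \ref{thm: main}; the steps above are essentially bookkeeping. The only point that deserves care is the correct identification of the adjoint dynamics, but here it is simply the time-reversed flow of the same self-adjoint operator $H_{\A,V}$, so {\bf(MGCC)} does not have to be re-examined and no further obstacle arises.
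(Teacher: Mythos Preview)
Your proposal is correct and is precisely the standard duality argument the paper invokes by citing \cite[Theorem 2.42]{Cor07}; the paper does not provide any further details beyond that reference, so you have simply written out in full the HUM computation that the authors leave implicit. All the steps (Duhamel formula, identification of $L^*$, time reversal to reduce to \eqref{eq: ob-magnetic-sch-intro}) are carried out correctly.
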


As a consequence of the negative result of Theorem \ref{thm: optimal}, one can also establish that for every time $T>0$, the exact controllability of \eqref{eq:SchrodingerControl} does not hold if the assumptions of Theorem \ref{thm: optimal} are fulfilled.\\

Finally, let us consider the damped Schr\"odinger equation
\begin{equation}
	\label{eq:SchrodingerControlDamped}
		\left\{
			\begin{array}{ll}
			\ii  \partial_t \psi  =H_{\A,V}(z) \psi - i a(x) \psi & \text{ in }  (0,+\infty) \times \T^2, \\
				\psi(0, \cdot) = \psi_0 & \text{ in } \T^2,
			\end{array}
		\right.
\end{equation}
where $a=a(x) \in C^{\infty}(\mathbb{T}^2;[0,+\infty))$, not identically zero. Denote $\omega:=\{x:\; a(x)>0\}$.

From Theorem \ref{thm: main} and a standard result due to Haraux, see \cite{Har89-2} and also \cite[Theorem 5.31]{Tre14},  we have the following exponential stability of 
\eqref{eq:SchrodingerControlDamped}.
\begin{theorem}
Let $\A \in C^{\infty}(\T^2;\R^2)$, $V \in C^{\infty}(\T^2;\R)$ and $\omega$ be a nonempty open subset of $\T^2$. Assume that $\A$ and $\omega$ satisfy {\bf{(MGCC)}}. Then there exists $C=C(\A,V,\omega)>0$ and $\alpha=\alpha(\A,V,\omega)>0$ such that for every $\psi_0 \in L^2(\T^2)$
\begin{equation}
    \label{eq:exponentialstability}
    \|\psi(t,\cdot)\|_{L^2(\T^2)} \leq C e^{- \alpha t}    \|\psi_0\|_{L^2(\T^2)}\qquad \forall t \geq 0.
\end{equation}
\end{theorem}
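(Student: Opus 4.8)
The plan is to obtain \eqref{eq:exponentialstability} from the conservative observability \eqref{eq: ob-magnetic-sch-intro}, which is available here because $\omega=\{x:\,a(x)>0\}$ satisfies \textbf{(MGCC)} and hence Theorem~\ref{thm: main} applies, together with the classical principle --- going back to Haraux \cite{Har89-2}, see also \cite[Theorem~5.31]{Tre14} --- that observability of a conservative unitary group from an open set $\omega$ is equivalent to exponential stabilization of the corresponding system damped by a function supported in $\omega$. First I would fix the functional setting: since $\A$ and $V$ are smooth and real, $H_{\A,V}$ is self-adjoint on $L^2(\T^2)$ with domain $H^2(\T^2)$, and $a\in C^\infty(\T^2;[0,+\infty))$ acts as a bounded non-negative multiplication operator; thus $-\ii H_{\A,V}-a$ is dissipative, $\Re\langle(-\ii H_{\A,V}-a)\psi,\psi\rangle_{L^2(\T^2)}=-\int_{\T^2}a|\psi|^2\le 0$, so by the Lumer--Phillips theorem it generates a strongly continuous contraction semigroup $(S(t))_{t\ge 0}$ on $L^2(\T^2)$; for $\psi_0\in H^2(\T^2)$ the solution $\psi(t)=S(t)\psi_0$ of \eqref{eq:SchrodingerControlDamped} is classical, and by density the energy identity $\|\psi(t_2)\|_{L^2(\T^2)}^2-\|\psi(t_1)\|_{L^2(\T^2)}^2=-2\int_{t_1}^{t_2}\int_{\T^2}a|\psi|^2$ holds for every $\psi_0\in L^2(\T^2)$ and $0\le t_1\le t_2$; in particular $t\mapsto\|\psi(t)\|_{L^2(\T^2)}$ is non-increasing.

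Next I would reduce \eqref{eq:exponentialstability} to a single ``damped observability'' estimate: it suffices to find $T>0$ and $c>0$ with $\|\psi_0\|_{L^2(\T^2)}^2\le c\int_0^T\int_{\T^2}a|\psi(t,z)|^2\,\d z\,\d t$ for all $\psi_0\in L^2(\T^2)$. Indeed, combined with the energy identity this forces $c\ge 2$ (as the energy identity gives $\int_0^T\int_{\T^2}a|\psi|^2\le\tfrac12\|\psi_0\|_{L^2(\T^2)}^2$) and yields $\|\psi(T)\|_{L^2(\T^2)}^2\le(1-\tfrac{2}{c})\|\psi_0\|_{L^2(\T^2)}^2$ with $1-\tfrac{2}{c}\in[0,1)$, whence \eqref{eq:exponentialstability} follows from the contraction semigroup property and the monotonicity of the energy. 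To produce such an estimate from \eqref{eq: ob-magnetic-sch-intro}, I would compare $\psi$ with the conservative flow: writing $\psi=u+r$ with $u(t)=e^{-\ii tH_{\A,V}}\psi_0$ and $r$ solving $\ii\partial_t r=H_{\A,V}r-\ii a\psi$, $r(0)=0$, Duhamel's formula and unitarity of $e^{-\ii tH_{\A,V}}$ yield $\sup_{t\in[0,T]}\|r(t)\|_{L^2(\T^2)}\le\|a\|_{L^\infty(\T^2)}^{1/2}T^{1/2}\big(\int_0^T\int_{\T^2}a|\psi|^2\big)^{1/2}$; substituting $u=\psi-r$ into \eqref{eq: ob-magnetic-sch-intro} applied with $u_0=\psi_0$ absorbs the $r$-contribution into the right-hand side, and the matter reduces to estimating $\int_0^T\int_\omega|\psi|^2$ by $\int_0^T\int_{\T^2}a|\psi|^2$.

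This last estimate is the only genuinely delicate point --- and the main obstacle --- since $a$ degenerates near $\partial\omega$, so one cannot pass from the indicator of $\omega$ to the weight $a$ by a pointwise comparison. The quickest resolution is that this is exactly what the abstract result of \cite{Har89-2}, \cite[Theorem~5.31]{Tre14} takes care of, in the form where the hypothesis is observability of the conservative flow from the open set $\{a>0\}$, which is \eqref{eq: ob-magnetic-sch-intro} supplied by Theorem~\ref{thm: main}, and whose conclusion is precisely the exponential stability \eqref{eq:exponentialstability}. If one preferred a self-contained argument, I would instead characterize exponential stability via the Gearhart--Prüss theorem as the uniform resolvent bound $\sup_{\mu\in\R}\|(H_{\A,V}+\mu-\ii a)^{-1}\|_{\mathcal{L}(L^2(\T^2))}<\infty$ (the absence of eigenvalues of $-\ii H_{\A,V}-a$ on $\ii\R$ following from unique continuation for $H_{\A,V}$, itself a consequence of \eqref{eq: ob-magnetic-sch-intro}), and prove this bound by contradiction: a putative sequence $\|\phi_n\|_{L^2}=1$, $(H_{\A,V}+\mu_n-\ii a)\phi_n\to 0$ satisfies $\|\sqrt{a}\,\phi_n\|_{L^2}\to 0$ and $\|(H_{\A,V}+\mu_n)\phi_n\|_{L^2}\to 0$; in the bounded regime $\mu_n\to\mu_\infty$ one gets $\phi_n\to\phi_\infty$ strongly in $L^2(\T^2)$ by elliptic $H^2$-regularity and compactness, with $\phi_\infty$ an eigenfunction of $H_{\A,V}$ vanishing on $\omega$, hence $\phi_\infty=0$, a contradiction; in the semiclassical regime $\mu_n\to-\infty$ one invokes Theorem~\ref{thm: main} (equivalently the semiclassical observability of Proposition~\ref{prop: semiclassical ob}) to rule out concentration away from $\omega$. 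I expect this semiclassical/degenerate-weight step to be the real technical content, which is precisely why, once Theorem~\ref{thm: main} is available, the cleanest route is to invoke Haraux's stabilization theorem as a black box.
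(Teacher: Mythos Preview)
Your proposal is correct and follows exactly the approach indicated in the paper: the paper provides no proof beyond the sentence ``From Theorem~\ref{thm: main} and a standard result due to Haraux, see \cite{Har89-2} and also \cite[Theorem~5.31]{Tre14},'' and your outline --- observability from $\omega=\{a>0\}$ via Theorem~\ref{thm: main}, then Haraux's stabilization principle as a black box --- is precisely this. Your additional sketches (Duhamel comparison, Gearhart--Pr\"uss alternative) go well beyond what the paper does but are compatible refinements of the same route.
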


\subsection{The role of the magnetic potential and the strategy of the proof}\label{sec: role of magnetic}

To understand the role of the magnetic potential $\mathbf{A}$, we assume first that $\mathbf{A}$ takes the simple form $\mathbf{A}=(A_1(y),A_2(y))$ depending only on the transversal direction $(0,1)$.  
In the trapped regime where $|D_x|\sim h^{-1}, |D_y|\ll h^{-1}$, the singularities of the solutions of the Schr\"odinger equation have the potential of concentration along the co-isotropic submanifold $\{\zeta_0=(1,0)\}$. 
Then the simplified magnetic Schr\"odinger operator $H_{\mathbf{A},V}$ has the principle term
$  D_x^2+D_y^2-2A_1(y)D_x,
$
in the regime where $D_y$ oscillates at a second scale $\hbar^{-1}:=h^{-\frac{1}{2}}$, as $D_y^2$ and $2A_1(y)D_x$ have the same order (see Section \ref{sec: optimality} for more details).   Taking the Fourier transform in $x$ and shifting a phase factor in time, the Schr\"odinger equation behaves like
$$  i\hbar^2\partial_tu-(\hbar^2D_y^2\pm2A_1(y))u=0,
$$
 which is dominated by the $\hbar$-semiclassical operator $\hbar^2D_y^2\pm 2A_1(y)$. If $A_1(y)$ has some non-degenerate critical point  
$y=y_0$, then singularities could be confined  in a small neighborhood of $y_0$. This crucial observation leads to the necessity of the geometric condition {\bf{(MGCC)}} for the observability of \eqref{eq:SchrodingerObs} to hold.
\vspace{0.3cm}

To prove the positive result under {\bf{(MGCC)}}, we first follow the classical compactness-uniqueness strategy in the context of Schr\"odinger equations (see \cite{BZ12}) to reduce the proof to the high-frequency observability. When the classic geometric control condition is not satisfied, the crucial part of our analysis is to reduce the problem to the model case described above. Under {\bf{(MGCC)}}, we are indeed able to prove the high-frequency (oscillating at scale $O(h^{-1})$) observability in an optimal time scale $t=O(h^{\frac{1}{2}})$.
Zooming into the new time scale $s=t/h$, the magnetic Schr\"odinger equation takes the semiclassical form
$  ih\partial_su_h=P_hu_h,
$
where
\begin{equation}\label{eq: Ph-p1-p2-intro}
P_h=h^2D_z^2+hp_1^{\mathrm{w}}(z,hD_z)+h^2p_2^{\mathrm{w}}(z,hD_z)
\end{equation}
which contains both the first and second order perturbations of the principal symbol $p_0=|\zeta|^2$. In fact, we shall see the explicit forms for $p_1$ and $p_2$ in \eqref{eq: p1-p2-defi} in Section \ref{sec: first microlocalization}.

The observability property for $t\sim O(h^{\frac{1}{2}})$ requires understanding of the semiclassical propagator $e^{- \frac{\ii sP_h}{h}}$ up to the time scale $s\sim O(h^{-\frac{1}{2}})$. The second order perturbation $h^2p_2^{\mathrm{w}}(z,hD_z)$ will not influence the dynamics within this long-time scale of the free propagator $e^{-\ii shD_z^2}$, and can be simply viewed as a perturbation. It turns out that the presence of the first order perturbation $hp_1^{\mathrm{w}}(z,hD_z)$ differs the long-time dynamics from the free-propagator, which yields different phenomena. This was also observed in \cite{MR18} (see also \cite{Wunsch}) in the study of the two-microlocal regularity of quasimodes of the semiclassical operator $-h^2\Delta+\epsilon_h^2 V$ on $\mathbb{T}^2$, where $0\leq \epsilon_h\leq h$.

We adopt a normal form approach in the spirit of \cite{BZ12} (see also \cite{Sun-dampedwave} in another setting of the first order perturbation). The basic idea is to conjugate the Schr\"odinger operator $H_{\mathbf{A},V}$ by suitable unitary operators that will average the lower order symbols along the trapped direction. In order to average the first order perturbations from the magnetic potential, we need to conjugate by a bounded operator that is not close to the identity (which is different from \cite{BZ12}). This procedure produces remainders that can not a priori be able to absorbed as acceptable errors of size $o(h^{3/2})$. So we have to treat separately the transversal low and high-frequencies in Section 4 and Section 5 in a similar way as in \cite{Sun-dampedwave}, in order to finally reduce the problem to the previously discussed model equation.

\subsection{Related results on the quantum unique ergodicity}

 Very recently,  Morin and Rivi\`ere \cite{RM24} prove the quantum unique ergodicity for the magnetic Laplacian on $\mathbb{T}^2$, under a slightly different but related magnetic geometric control condition. Note that the magnetic Laplacian is more generally defined as operators acting on sections of a line bundle over $\mathbb{T}^2$. Then the quantum unique ergodicity holds under their magnetic geometric control conditions asserting that the averaged magnetic field along any direction $\langle B\rangle_{\gamma}$ is everywhere non-vanishing. Note that in our setting where the magnetic field is derived from the magnetic potential $B=\nabla\wedge \mathbf{A}$, such a condition cannot hold since $\int_{\mathbb{T}^2}B \d z=0$. Nevertheless, their approach, different from ours, could eventually obtain the same resolvent estimate, i.e. Theorem \ref{cor:stationary}, under {\bf{(MGCC)}}. Moreover, all of our results can be likely extended to the general magnetic Laplacian as in the setting of \cite{RM24}, under the magnetic geometric control condition {\bf{(MGCC)}} stated under the form (ii) of Proposition \ref{MGCC:equiv}. 
  Even more recently, Charles and Lefeuvre \cite{CL25} investigate semiclassical defect measures associated with the magnetic Laplacian in the presence of a constant magnetic field on a closed hyperbolic surface.

\subsection*{Acknowledgments}
The authors would like to thank Gabriel Rivi\`ere for useful discussion. C. Sun is partially supported by ANR project SmoothANR-22CE40-0017. K. Le Balc'h is partially supported by ANR project Trecos ANR-20-CE40-0009. J. Niu is supported by Defi Inria EQIP and Inria Team CAGE. \\

\section{Semiclassical reduction of the observability}
\label{sec: semi-classical reduction of the observability}

\subsection{Sharp semiclassical observability estimates}

The goal of this part is to state semiclassical observability estimates that would be the key step of the proof of Theorem \ref{thm: main}.\\

First, let us define $\chi\in C^{\infty}_c(-1,1)$ such that $\chi$ is equal to $1$ near $0$, and the associated spectral projector for some (small) parameters $\rho>0$, $h>0$,
\begin{equation}
\label{eq: spectral projector}
\Pi_{h,\rho}u:=\chi\left(\frac{h^2H_{\A,V}-1}{\rho}\right)u,\qquad u \in L^2(\T^2).
\end{equation}

Our main result of this part is the following semiclassical observability estimate.
\begin{proposition}\label{prop: semiclassical ob}
There exists a numerical constant $T_0=T_0(\A,\omega)>0$, depending only on $\mathbf{A}$ and $\omega$, such that for any $T\geq T_0$, there exist constants $\rho_0>0$, $h_0>0$ and $C>0$ such that for any $\rho \in (0, \rho_0)$, $h\in(0,h_0)$, we have 
\begin{equation}\label{eq: sharp ob}
\|\Pi_{h,\rho}u_0\|^2_{L^2(\T^2)}\leq C\int_0^T\int_{\omega}\left|e^{-\ii th^{\frac{1}{2}}H_{\A,V}}\Pi_{h,\rho}u_0(z)\right|^2\d z\d t\qquad \forall u_0 \in L^2(\T^2).
\end{equation}
\end{proposition}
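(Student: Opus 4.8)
The plan is to prove the semiclassical observability estimate \eqref{eq: sharp ob} by a sequence of microlocal reductions that successively simplify the operator $e^{-\ii t h^{1/2} H_{\A,V}}\Pi_{h,\rho}$ until it matches the one-dimensional model operator $\hbar^2 D_y^2 \pm 2A_1(y)$ discussed in the introduction. First I would rescale time by setting $s = t/h^{1/2}$, so that the flow acts up to the long time scale $s \sim h^{-1/2}$, and pass to the semiclassical propagator $e^{-\ii s P_h/h}$ with $P_h = h^2 D_z^2 + h\, p_1^{\mathrm w}(z,hD_z) + h^2 p_2^{\mathrm w}(z,hD_z)$, the first-order perturbation encoding the magnetic potential $\A$. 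The spectral cutoff $\Pi_{h,\rho}$ localizes to the energy shell $\{|\zeta|^2 = 1 + O(\rho)\}$, so it suffices to understand the dynamics microlocally near $S^*\T^2$. I would then argue by contradiction in the spirit of \cite{BZ12}: if \eqref{eq: sharp ob} fails, there is a sequence of normalized quasimodes (or sequences of solutions) whose semiclassical defect measure $\mu$ charges $\omega^c$ but not $\omega$; the goal is to derive a contradiction with {\bf(MGCC)} by showing that $\mu$ must be invariant under a richer flow than the free geodesic flow — one that also feels the averaged magnetic potential $A_\gamma$ along each trapped direction $\vec\gamma$.

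\smallskip

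The core of the argument is a two-scale microlocal decomposition. I would first use propagation of singularities for the free Schr\"odinger flow to reduce to directions $\vec\gamma$ that are periodic (rational), since along irrational directions the defect measure equidistributes and the GCC-type argument of Lebeau \cite{Leb92} handles the contribution; this is where the reduction "$\omega$ does not satisfy GCC" enters. Fix such a rational direction, rotate coordinates via the map $F$ so that $\vec\gamma = (1,0)$, take the Fourier transform in $x$, and localize to the co-isotropic regime $|D_x| \sim h^{-1}$, $|D_y| \ll h^{-1}$. In the subregime $|D_y| \sim \hbar^{-1} = h^{-1/2}$ the operator $h^2 D_y^2 \pm 2 A_1(y) D_x$ has two balanced terms of size $h$; after normalization this is the $\hbar$-semiclassical operator $\hbar^2 D_y^2 \pm 2 A_1(y)$ (with $A_1$ replaced by its average $A_\gamma$ after a normal-form averaging step). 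For the one-dimensional model one proves observability from $\omega_{\vec\gamma^\perp}$ on the time interval $[0,T]$ with $T \geq T_0$ as long as $\omega_{\vec\gamma^\perp}$ contains all critical points of $A_\gamma$ — this is exactly {\bf(MGCC)}, and the model observability is essentially a statement that a classical particle in the potential $\pm 2A_\gamma$ either passes through $\omega_{\vec\gamma^\perp}$ in bounded time or sits arbitrarily close to a critical point (which is in $\omega_{\vec\gamma^\perp}$). The remaining subregime $|D_y| \lesssim h^{-1/2}$ (very low transversal frequency) and $|D_y|$ of intermediate size are handled separately — this is the splitting into "transversal low and high frequencies" alluded to in the strategy paragraph, treated as in \cite{Sun-dampedwave} — where in the low-frequency piece the solution is essentially $x$-independent and one uses the fact that it solves a $y$-only equation, while intermediate frequencies are ruled out by an energy/non-stationary-phase argument because the two terms no longer balance.

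\smallskip

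The normal form step deserves emphasis: to average $p_1^{\mathrm w}$ along the trapped direction one conjugates $P_h$ by a unitary operator $e^{\ii B_h/h}$. Unlike \cite{BZ12}, where the conjugating operator is close to the identity (the perturbation being $O(h^2)$), here the first-order term forces a conjugation by a \emph{bounded} operator not close to $\mathrm{Id}$, which produces remainders of size that are not automatically $o(h^{3/2})$; one has to track these remainders carefully and absorb them using the frequency localization, which is precisely why the low/high transversal frequency split is needed at the level of the reduction and not just cosmetically. After the normal form, the operator only depends on $y$ (and on the conserved quantity $\xi = $ dual variable to $x$) modulo acceptable errors, and the problem genuinely decouples into the one-dimensional model. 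Assembling: the contribution of each rational direction is controlled by {\bf(MGCC)} via the model estimate, the irrational directions by GCC-type propagation, and the elliptic/high-frequency leftover ($|\zeta|$ bounded away from $1$) is killed by the cutoff $\Pi_{h,\rho}$; a summation/partition-of-unity argument over directions (only finitely many rational directions matter up to a small angular loss, by a standard covering argument on the torus) closes the contradiction.

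\smallskip

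The main obstacle I expect is the control of the normal-form remainders in the regime of intermediate transversal frequencies $h^{-1/2} \ll |D_y| \ll h^{-1}$: here the two natural scales do not separate cleanly, the conjugating operator is large, and one must show that the dynamics is still close enough to the free flow (or to the model) on the time scale $s \sim h^{-1/2}$ for the defect-measure invariance argument to go through. Getting the sharp time scale $t \sim h^{1/2}$ — rather than a longer, suboptimal one — requires squeezing these estimates to their limit, and this is where the bulk of the technical work (Sections 4 and 5 of the paper) lies.
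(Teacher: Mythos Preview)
Your proposal is essentially correct and follows the same route as the paper: contradiction via a semiclassical defect measure, elimination of irrational directions by equidistribution, reduction to finitely many rational directions, rotation to $\vec\gamma=(1,0)$, normal-form averaging (first a gauge transform, then a conjugation by a bounded pseudodifferential exponential) to remove the $x$-dependence of the magnetic potential, a transversal frequency split, and reduction after Fourier transform in $x$ to the one-dimensional model $h D_y^2-2K\langle A_1\rangle(y)$ whose observability from $\omega_{\vec\gamma^\perp}$ is exactly {\bf(MGCC)}.

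A few corrections on the details. The transversal cut is placed at $|hD_y|\sim h^\alpha$ with $\alpha\in(\tfrac14,\tfrac12)$, not at $h^{1/2}$; the constraint $\alpha>\tfrac14$ is precisely what makes the normal-form remainder $R^{(2)}_{1+2\alpha}=O(h^{1+2\alpha})=o(h^{3/2})$ absorbable. The second conjugation is by $e^{G_2}$ with $G_2$ a \emph{bounded} pseudodifferential operator, hence invertible but not unitary --- this is the ``not close to the identity'' feature you mention, but it is not of the form $e^{iB_h/h}$. Most relevant to your closing paragraph, your localization of the difficulty is inverted: the region $h^{\alpha}\lesssim |hD_y|\ll 1$ is dispatched for free by the support property $\mathrm{supp}(\mu)\subset\{\eta=0\}$ (the cutoff $1-\vartheta(h^{1-\alpha}D_y)$ vanishes there), and the piece $|hD_y|\sim h^\alpha$ (the transversal ``high'' part $w_h^2$) falls to a short positive-commutator argument based on $[yD_y,D_y^2]=2iD_y^2$. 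The delicate part is the \emph{low} transversal frequency $|hD_y|\lesssim h^\alpha$, where no Bernstein-type lower bound on $\|hD_y w_h^1\|$ is available; there the paper runs the weighted commutator with $\varphi^2(y)(y-y_0)D_y$, exploits the sign of $\langle A_1\rangle'$ on $\T_y\setminus\omega_y$ guaranteed by {\bf(MGCC)}, and uses the large-time integration $T\geq T_0$ to extract smallness --- this is where the numerical $T_0$ in the statement actually originates.
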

  We point out that Proposition \ref{prop: semiclassical ob} is the sharp version of semiclassical observability under our electromagnetic setting. For the sharpness, we refer to Section \ref{sec: optimality}.

\subsection{Consequences of the semiclassical observability estimates}
Before the proof of Proposition \ref{prop: semiclassical ob}, we present two direct consequences. \\

The first one is the following small-time semiclassical observability estimate.
\begin{corollary}\label{cor: normal ob inequality}
For any $T>0$, there exists $\rho_0>0$, $h_0>0$ and $C>0$ such that for any $\rho \in (0, \rho_0)$, $h\in(0,h_0)$, we have
\begin{equation}\label{eq: semiclassical ob}
\|\Pi_{h,\rho}u_0\|^2_{L^2(\T^2)}\leq C\int_0^T\int_{\omega}\left|e^{-\ii tH_{\A,V}}\Pi_{h,\rho}u_0(z)\right|^2\d z\d t\qquad \forall u_0 \in L^2(\T^2). 
\end{equation}
\end{corollary}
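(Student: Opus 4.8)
The plan is to deduce the small-time estimate \eqref{eq: semiclassical ob} from the sharp estimate \eqref{eq: sharp ob} of Proposition \ref{prop: semiclassical ob} by a simple rescaling of the time variable. First I would fix $T>0$ and let $T_0$ be the numerical constant furnished by Proposition \ref{prop: semiclassical ob}. Choosing $h_0$ small enough that $h_0^{1/2} T_0 \leq T$, we then have for every $h \in (0,h_0)$ the inclusion $[0, h^{1/2} T_0] \subset [0,T]$, so that enlarging the time interval of integration only increases the right-hand side:
\begin{equation*}
\int_0^{h^{1/2} T_0}\int_{\omega}\left|e^{-\ii t H_{\A,V}}\Pi_{h,\rho}u_0(z)\right|^2\d z\d t \leq \int_0^{T}\int_{\omega}\left|e^{-\ii t H_{\A,V}}\Pi_{h,\rho}u_0(z)\right|^2\d z\d t.
\end{equation*}
It therefore suffices to bound $\|\Pi_{h,\rho}u_0\|_{L^2}^2$ by the left-hand side above.

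Next I would perform the change of variables $t = h^{1/2} s$ in \eqref{eq: sharp ob}, applied with the threshold time $T_0$ in place of the generic $T\geq T_0$ there. Writing $e^{-\ii s h^{1/2} H_{\A,V}} = e^{-\ii t H_{\A,V}}$ with $t = h^{1/2}s$, the substitution gives $\d t = h^{1/2}\d s$, hence
\begin{equation*}
\int_0^{T_0}\int_{\omega}\left|e^{-\ii s h^{1/2} H_{\A,V}}\Pi_{h,\rho}u_0(z)\right|^2\d z\d s = h^{-1/2}\int_0^{h^{1/2}T_0}\int_{\omega}\left|e^{-\ii t H_{\A,V}}\Pi_{h,\rho}u_0(z)\right|^2\d z\d t.
\end{equation*}
Combining this identity with \eqref{eq: sharp ob} (for $T = T_0$) and the inclusion of time intervals above yields
\begin{equation*}
\|\Pi_{h,\rho}u_0\|^2_{L^2(\T^2)} \leq C h^{1/2}\int_0^{h^{1/2}T_0}\int_{\omega}\left|e^{-\ii t H_{\A,V}}\Pi_{h,\rho}u_0(z)\right|^2\d z\d t \leq C h_0^{1/2}\int_0^{T}\int_{\omega}\left|e^{-\ii t H_{\A,V}}\Pi_{h,\rho}u_0(z)\right|^2\d z\d t,
\end{equation*}
which is exactly \eqref{eq: semiclassical ob} with the constant $C h_0^{1/2}$ (one may simply absorb the harmless factor $h^{1/2}\leq h_0^{1/2}$ into $C$). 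The values $\rho_0$ and $h_0$ are taken to be the minimum of those produced by Proposition \ref{prop: semiclassical ob} and the requirement $h_0^{1/2}T_0\leq T$.

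There is essentially no obstacle here: the only point to watch is that the constant $C$ in \eqref{eq: semiclassical ob} is allowed to depend on $T$ (through the bound $h_0^{1/2}\leq (T/T_0)^{1/2}$, or simply because $h_0$ is chosen in terms of $T$), which is permitted by the statement. One could phrase the estimate even more cheaply without tracking the factor $h^{1/2}$ at all, since it is bounded. The content of the corollary is thus entirely contained in Proposition \ref{prop: semiclassical ob}; the role of the rescaling is only to convert the observability on the optimal window $t=O(h^{1/2})$ into observability on any fixed window $[0,T]$.
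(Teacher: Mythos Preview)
Your argument contains a computational slip that hides a genuine gap. After the change of variables $t=h^{1/2}s$ you correctly obtain
\[
\int_0^{T_0}\int_{\omega}\left|e^{-\ii s h^{1/2} H_{\A,V}}\Pi_{h,\rho}u_0\right|^2\d z\,\d s
= h^{-1/2}\int_0^{h^{1/2}T_0}\int_{\omega}\left|e^{-\ii t H_{\A,V}}\Pi_{h,\rho}u_0\right|^2\d z\,\d t,
\]
but when you combine this with \eqref{eq: sharp ob} you write the factor as $Ch^{1/2}$ instead of the correct $Ch^{-1/2}$. With the correct sign the inequality reads
\[
\|\Pi_{h,\rho}u_0\|_{L^2}^2 \leq Ch^{-1/2}\int_0^{h^{1/2}T_0}\int_{\omega}\left|e^{-\ii t H_{\A,V}}\Pi_{h,\rho}u_0\right|^2\d z\,\d t
\leq Ch^{-1/2}\int_0^{T}\int_{\omega}\left|e^{-\ii t H_{\A,V}}\Pi_{h,\rho}u_0\right|^2\d z\,\d t,
\]
and the constant $Ch^{-1/2}$ blows up as $h\to 0$. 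Since the statement requires a constant independent of $h\in(0,h_0)$, simply enlarging the integration interval from $[0,h^{1/2}T_0]$ to $[0,T]$ is not enough: you are throwing away the information contained in the rest of $[0,T]$.

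The paper compensates for this loss by tiling $[0,T]$ with roughly $N\sim T/(T_0 h^{1/2})$ translates of the short window, applying the rescaled estimate on each subinterval (using that $\Pi_{h,\rho}$ commutes with the flow and that the $L^2$-mass is conserved), and then summing. The $N$ copies of the left-hand side produce an extra factor $N\sim h^{-1/2}$ which exactly cancels the $h^{-1/2}$ from the change of variables, leaving a constant of order $CT_0/T$ that is uniform in $h$. Your first displayed inequality is in fact the starting point of that argument; what is missing is this summation step.
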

\begin{proof}[Proof of Corollary \ref{cor: normal ob inequality} from Proposition \ref{prop: semiclassical ob}]
Let $T_0>0$ be the numerical constant of Proposition \ref{prop: semiclassical ob}. Let us apply Proposition \ref{prop: semiclassical ob} with $T=T_0$, it fixes the constant $\rho_0>0$, $h_0>0$ and $C>0$ of Proposition \ref{prop: semiclassical ob}. 

Let $T>0$. We decompose
$$ [0,T] = \bigcup_{k=0}^{N_T-1} I_k,\mbox{ with }I_k:=[kT_0h_{0,T}^{\frac{1}{2}},(k+1)T_0h_{0,T}^{\frac{1}{2}}],$$
for some $h_{0,T} \in (0,h_0)$ depending on $T$. Here $N_T$ is defined so that $N_T=\lfloor h^{-\frac{1}{2}}_{0,T}\rfloor$.

By applying \eqref{eq: sharp ob} for $\rho \in (0, \rho_0)$, $h \in (0,h_{0,T})$, and by using the change of the variable $s=h_{0,T}^{\frac{1}{2}}t$, we have for some $C=C(T_0)>0$,
\[
\|\Pi_{h,\rho}u_0\|^2_{L^2(\T^2)}\leq \frac{C}{h_{0,T}^{\frac{1}{2}}}\int_0^{T_0h_{0,T}^{\frac{1}{2}}}\int_{\omega}\left|e^{-\ii sH_{\A,V}}\Pi_{h,\rho}u_0(z)\right|^2\d z\d s.
\]
We set $u_h(t,z):=e^{-\ii tH_{\A,V}}\Pi_{h,\rho}u_0(z)$. Then, we obtain from the previous estimate that for every $k \in \{0, \dots, N_T-1\}$,
\[
h_{0,T}^{\frac{1}{2}}\|u_h(t=kT_0h_{0,T}^{\frac{1}{2}},\cdot)\|^2_{L^2(\T^2)}\leq C\int_0^{T_0h_{0,T}^{\frac{1}{2}}}\int_{\omega}\left|u_h(t+kT_0h_{0,T}^{\frac{1}{2}},z)\right|^2\d z\d t=C\int_{I_{k}}\int_{\omega}\left|u_h(t,z)\right|^2\d z\d t.
\]
Since $(\ii\pt-H_{\A,V})u_h=0$, we know $\|u_h(t,\cdot)\|_{L^2(\T^2)}=\|u_h(0,\cdot)\|_{L^2(\T^2)}$ by the mass conservation law \eqref{eq:conservationmass}. Consequently, we sum up from $k=0$ to $N_T-1$,
\[
N_Th_{0,T}^{\frac{1}{2}}\|\Pi_{h,\rho}u_0\|^2_{L^2(\T^2)}\leq C\int_0^T\int_{\omega}\left|u_h(t,z)\right|^2\d z\d t.
\]
We conclude that
\[
\|\Pi_{h,\rho}u_0\|^2_{L^2(\T^2)}\leq \frac{C}{N_Th_{0,T}^{\frac{1}{2}}}\int_0^T\int_{\omega}\left|u_h(t,z)\right|^2\d z\d t.
\]
This ends the proof of \eqref{eq: semiclassical ob}.
\end{proof}
By using Corollary \ref{cor: normal ob inequality} and a standard procedure, see for instance \cite{BZ12}, reported in Section \ref{sec:refobsmultid}, we deduce Theorem \ref{thm: main}.\\  

The second consequence of Proposition \ref{prop: semiclassical ob} is the proof of the resolvent estimates from Theorem \ref{cor:stationary}.
\begin{proof}[Proof of Theorem \ref{cor:stationary} from Proposition \ref{prop: semiclassical ob}] Let $C_0>1$ be a positive constant such that $H_{\A,V}+C_0$ is a positive self-adjoint operator. First, from \cite[Proposition 6.6.4]{TW09}, we know that it suffices to prove the resolvent estimate \eqref{eq:resolventestimate} for $|\lambda|$ large enough and the unique continuation property for the eigenfunctions of the operator $H_{\A,V}$. In fact, we have the following unique continuation result \cite[Theorem 5.2]{LLRR} for the operator $H_{\A,V}$, i.e.
$$ \left(H_{\A,V} u = \lambda u\ \text{in}\ \T^2,\ u = 0\ \text{in}\ \omega \right) \Rightarrow u = 0\ \text{in}\ \T^2.$$
Then, one can restrict to prove the resolvent estimate for $|\lambda|$ large enough. Indeed, for $|\lambda| > 1 + C_0$, we then split the proof in two different cases.\\

$\bullet$ Case $\lambda<-1-C_0$. By the choice of $C_0$, $H_{\A,V}+C_0$ is positive self-adjoint. Then, 
\begin{gather*}
\|(H_{\A,V} - \lambda)^{-1}\|_{\mathcal{L}(L^2)}\leq \frac{1}{|\lambda+C_0|}\leq \frac{C_0+2}{|\lambda|+1},\\
\|u_{\lambda}\|_{L^2}=\|(H_{\A,V} - \lambda)^{-1}(H_{\A,V} - \lambda)u_{\lambda}\|_{L^2}\leq \frac{C_0+2}{|\lambda|+1}\|(H_{\A,V} - \lambda)u_{\lambda}\|_{L^2}.
\end{gather*}

$\bullet$ Case $\lambda>1+C_0$. Let us fix $\rho \in (0,\rho_0)$ as in Proposition \ref{prop: semiclassical ob}. Let $h = \sqrt{\lambda}^{-1}$ and we consider the projector $\Pi_{h,\rho}=\chi\left(\frac{\lambda^{-1}H_{\A,V}-1}{\rho}\right)$ be the same as in \eqref{eq: spectral projector}. Then, $u_{\lambda}=\Pi_{h,\rho}u_{\lambda}+(1-\Pi_{h,\rho})u_{\lambda}$. For the first part, we define 
\begin{equation*}
v(t,z):=e^{-\ii t\lambda^{\frac{3}{4}}}\Pi_{h,\rho}u_{\lambda}(z),\;\mbox{ and }f_{\lambda}:=(H_{\A,V}-\lambda)\Pi_{h,\rho}u_{\lambda}. 
\end{equation*}
It is easy to verify that 
\begin{equation*}
v(t)=e^{-\ii \lambda^{-\frac{1}{4}}tH_{\mathbf{A},V}}\Pi_{h, \rho}u_{\lambda}+\ii \int_0^t e^{-\ii (t-s)\lambda^{-\frac{1}{4}}H_{\mathbf{A},V}}\left(\frac{e^{-\ii \lambda^{\frac{3}{4}} s}f_{\lambda }}{\lambda^{\frac{1}{4}}}\right) \d s
\end{equation*}
Thanks to Proposition \ref{prop: semiclassical ob} and the $L^2$-conservation of the group $e^{-\ii t \lambda^{-\frac{1}{4}}H_{\mathbf{A},V}}$, we derive that
\begin{align}
\|\Pi_{h, \rho}u_{\lambda}\|_{L^2}&  \leq C\|e^{-\ii t\lambda^{-\frac{1}{4}}H_{\A,V}}\Pi_{h, \rho}u_{\lambda}\|_{L^2([0,T_0]\times\omega)}\notag\\
&\leq C\|v\|_{L^2([0,T_0]\times\omega)}+\frac{C}{\lambda^{1/4}}\|f_{\lambda }\|_{L^2(\T^2)}\notag\\
&\leq C\left(\frac{1}{\lambda^{\frac{1}{4}}}\|(H_{\A,V} -\lambda) \Pi_{h, \rho}u_{\lambda}\|_{L^2(\mathbb{T}^2)}+\|\Pi_{h, \rho}u_{\lambda}\|_{L^2(\omega)}\right)\notag\\
& \leq C\left(\frac{1}{\lambda^{\frac{1}{4}}}\|\Pi_{h, \rho}(H_{\A,V} -\lambda) u_{\lambda}\|_{L^2(\mathbb{T}^2)}+\|\Pi_{h, \rho}u_{\lambda}\|_{L^2(\omega)}\right)\label{eq: est-1-part}
\end{align}
For the second part, $(1-\Pi_{h, \rho})u_{\lambda}$ satisfies $(H_{\A,V} - \lambda)(1-\widetilde{\Pi}_{\lambda,\rho})(1-\Pi_{h, \rho})u_{\lambda}=(1-\Pi_{h, \rho})(H_{\A,V} - \lambda)u_{\lambda}$, where $\widetilde{\Pi}_{h,\rho}=\widetilde{\chi}\left(\frac{\lambda^{-1}H_{\A,V}-1}{\rho}\right)$ with some $\Tilde{\chi}\in C^{\infty}_c(\R)$ such that $\Tilde{\chi}\chi=\Tilde{\chi}$. Now the operator $(H_{\A,V} - \lambda)(1-\widetilde{\Pi}_{h,\rho})$ is invertible with $\|\left((H_{\A,V} - \lambda)(1-\widetilde{\Pi}_{h,\rho})\right)^{-1}\|\leq \frac{C_{\rho}}{\lambda}$. Then we have 
\begin{multline}
  \|(1-\Pi_{h, \rho})u_{\lambda}\|_{L^2}\leq \|\left((H_{\A,V} - \lambda)(1-\widetilde{\Pi}_{\lambda,\rho})\right)^{-1}(1-\Pi_{h, \rho})(H_{\A,V} - \lambda)u_{\lambda}\|_{L^2}\\
\leq\frac{C}{\lambda}\|(H_{\A,V} - \lambda)(1-\Pi_{h, \rho})u_{\lambda}\|_{L^2} \leq \frac{C}{\lambda}\|(1-\Pi_{h, \rho})(H_{\A,V} - \lambda)u_{\lambda}\|_{L^2} .\label{eq: est-2-part}  
\end{multline}
Therefore, combining the estimates \eqref{eq: est-1-part} and \eqref{eq: est-2-part}, we obtain
\[
\|u_{\lambda}\|_{L^2(\T^2)}\leq C\left( \frac{1}{1+\lambda^{\frac{1}{4}}}\|(H_{\A,V} - \lambda) u_{\lambda}\|_{L^2(\mathbb{T}^2)}+\|u_{\lambda}\|_{L^2(\omega)}\right),
\]
leading to the desired resolvent estimate \eqref{eq:resolventestimate}.
\end{proof}

\section{Beginning of the proof of the semiclassical observability estimate}

The goal of this section is to begin the proof of Proposition \ref{prop: semiclassical ob}.

\subsection{Semiclassical measures}
We turn to the proof of Proposition \ref{prop: semiclassical ob}. 

The proof is based on a contradiction argument. More precisely, suppose that \eqref{eq: sharp ob} is untrue. Since $T_0$ is independent of the parameters $h,\rho$ and the solutions, then for any $T\geq T_0$, there exist sequences 
\begin{align*}
 \rho_n\rightarrow0,\ h_n\rightarrow0,\ v_{0,n}=\Pi_{h_n,\rho_n}u_{0,n}\in L^2(\T^2), \\
(\ii\p_t-h_n^{\frac{1}{2}}H_{\A,V})v_n(t,z)=0,v_n(0,z)=v_{0,n}(z):=\Pi_{h_n,\rho_n}u_{0}(z),
\end{align*}
such that
\begin{equation}\label{eq: sequence hypothesis1}
\|v_{0,n}\|^2_{L^2(\T^2)}=1,
\end{equation}
and 
\begin{equation}
\label{eq: sequence hypothesis2}
    \int_0^{T}\int_{\omega}\left|e^{-\ii th_n^{\frac{1}{2}}H_{\A,V}}v_{0,n}(z)\right|^2\d z\d t\rightarrow0.
\end{equation}
The sequence $(v_n)_{n\in\N^*}$ is bounded in $L^2_{loc}(\R_t\times\T_z^2)$ by the $L^2$-mass conservation law \eqref{eq:conservationmass} and consequently after possibly extracting a subsequence, there exists a semiclassical defect measure $\mu$ on $\R_t\times T^*\T_z^2$ such that for any function $\psi\in L^1(\R_t)$ and any $a\in C^{\infty}_c(T^*\T_z^2)$, we have
\begin{equation}\label{eq: measure-h}
\poscals{\mu}{\psi(t)a(z,\zeta)}=\lim_{n\rightarrow+\infty}\int_{\R_t\times\T_z^2}\psi(t)(\Op_{h_n}(a)v_n)(t,z)\overline{v_n(t,z)}\d z\d t.
\end{equation}
For the semiclassical Weyl quantization on the torus, one can see Appendix \ref{sec:semiclassicaltorus}. For the proof of this existence of semiclassical measure $\mu$, one can refer to \cite{AM14,Macia09,Zwo12,GMMP}.\\

As usual with semiclassical methods, we will drop the index $n$ and rather write $(v_h)_{h\to0^+}$ satisfying that
\begin{equation}\label{eq: original eq-sch}
\left\{
\begin{array}{l}
     (\ii\pt-h^{\frac{1}{2}}H_{\A,V})v_h=0,  \\
     v_h|_{t=0}=v^0_h:=\Pi_{h,\rho}u^0_h, 
\end{array}
\right.
\end{equation}
with the following properties
\begin{equation}\label{eq: sequence hypothesis-h1}
\|v^0_{h}\|^2_{L^2(\T^2)}=1,
\end{equation}
and for $T\geq T_0$,
\begin{equation}
\label{eq: sequence hypothesis-h2}
    \int_0^T\int_{\omega}\left|v_h(t,z)\right|^2\d z\d t=o(1) \mbox{ as }h\to0^+.
\end{equation}
\subsection{First microlocalization}\label{sec: first microlocalization}
In this subsection, we reduce the proof to the analysis of the measure $\mu$ supported in the periodic geodesics. This is accomplished by a standard propagation property of the semi-classical measure $\mu$. More precisely, we have the following result.
\begin{proposition}\label{prop: measure property}
The measure $\mu$ defined by \eqref{eq: measure-h} has the following properties.
\begin{enumerate}
    \item The measure $\mu$ is supported in the set
    \begin{equation}\label{eq: supp of measure-h}
    \supp\mu\subset \{(t,z,\zeta)\in \R_t\times T^*\T^2: |\zeta|=1\}, 
    \end{equation}
    and the wave front set satisfies for a.e. $t\in\mathbb{R}$
    $$ \WF(v_h(t,\cdot))\subset\{(z,\zeta) \in T^*\T^2\ ;\ |\zeta|=1\}.$$
    \item For any $t_0<t_1$, we have
    \begin{equation}
    \label{eq:measureintime}
     \mu((t_0,t_1)\times T^*\T^2)=t_1-t_0.   
    \end{equation}
    \item For a.e. $t\in\mathbb{R}$, 
    \begin{equation}
    \label{eq:supportinvariant}
    \zeta\cdot\nabla_z\mu(t,\cdot)=0.
    \end{equation} In particular, the support of $\mu$ is invariant under the geodesic flow i.e.
    $$(t_0,z_0,\zeta_0)\in\mathrm{supp}(\mu) \Rightarrow (t_0,z_0+s\zeta_0,\zeta_0)\in\mathrm{supp}(\mu)\qquad \forall s\in\mathbb{R}.$$ 
    \item The measure $\mu$ vanishes on $(0,T)\times T^*\omega$.
\end{enumerate}
\end{proposition}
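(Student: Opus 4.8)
The plan is to prove the four assertions in order, each by the standard propagation calculus for semiclassical defect measures, adapted to the critical time scale of the problem. The first step is to put \eqref{eq: original eq-sch} into genuinely semiclassical form: with $P_h:=h^2H_{\A,V}$, the equation reads $\ii h^{3/2}\pt v_h=P_hv_h$, where (see Appendix~\ref{sec:semiclassicaltorus}) $P_h$ is a self-adjoint $h$-pseudodifferential operator on $\T^2$, namely $P_h=\Op_h(|\zeta|^2)-2h\Op_h(\A(z)\cdot\zeta)+h^2\Op_h(|\A|^2+V)$; writing $p_0(z,\zeta)=|\zeta|^2$, the perturbation $P_h-\Op_h(p_0)$ is of order $h$ and its commutator with any $\Op_h(a)$, $a\in C^\infty_c(T^*\T^2)$, is $\mathcal{O}_{\mathcal{L}(L^2)}(h^2)$. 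Two facts will be used throughout: semiclassical Wigner measures are nonnegative, and, $t$ being a parameter rather than a quantised variable, $\mu$ disintegrates as $\mu=\int_{\R}\mu_t\,\d t$ with $\mu_t\ge0$ for a.e.\ $t$ — a consequence of the uniform bound $\sup_t|\poscals{\Op_h(a)v_h(t)}{v_h(t)}|\le C_a$ coming from the Calderón–Vaillancourt theorem and $\|v_h(t)\|_{L^2}=\|v_h^0\|_{L^2}=1$ (see \eqref{eq:conservationmass}, \eqref{eq: sequence hypothesis-h1}).

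Assertions (i) and (ii) are purely a matter of frequency localisation. Since $e^{-\ii th^{1/2}H_{\A,V}}$ commutes with $\Pi_{h,\rho}$, one has $v_h(t,\cdot)=\Pi_{h,\rho}v_h(t,\cdot)$ for all $t$, and for $\rho$ small $\Pi_{h,\rho}=\chi((h^2H_{\A,V}-1)/\rho)$ is an $h$-pseudodifferential operator essentially microlocalised in $\{\,|\zeta|^2\in(1-\rho,1+\rho)\,\}$. Thus, for $a\in C^\infty_c(T^*\T^2)$ vanishing near $\{|\zeta|=1\}$ and $n$ large enough that $\rho_n<\operatorname{dist}(\supp a,\{|\zeta|=1\})$, one has $\Op_{h_n}(a)\Pi_{h_n,\rho_n}=\mathcal{O}_{\mathcal{L}(L^2)}(h_n^\infty)$, hence $\poscals{\Op_{h_n}(a)v_{h_n}}{v_{h_n}}\to0$; this yields $\supp\mu\subset\{|\zeta|=1\}$ and, for a.e.\ $t$, $\WF(v_h(t,\cdot))\subset\{|\zeta|=1\}$. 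Taking instead $a\equiv1$ near $\{|\zeta|=1\}$ gives $\Op_{h_n}(a)\Pi_{h_n,\rho_n}=\Pi_{h_n,\rho_n}+\mathcal{O}_{\mathcal{L}(L^2)}(h_n^\infty)$, so $\poscals{\Op_{h_n}(a)v_{h_n}(t)}{v_{h_n}(t)}=1+\mathcal{O}(h_n^\infty)$; passing to the limit and using $\supp\mu_t\subset\{|\zeta|=1\}$ gives $\mu_t(T^*\T^2)=1$ for a.e.\ $t$, which is \eqref{eq:measureintime}.

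Assertion (iii) is the heart of the matter. For $a\in C^\infty_c(T^*\T^2)$ independent of $t$, using $\ii h^{3/2}\pt v_h=P_hv_h$ and $P_h^*=P_h$ gives the exact identity $\tfrac{\d}{\d t}\poscals{\Op_h(a)v_h(t)}{v_h(t)}=\tfrac{1}{\ii h^{3/2}}\poscals{[\Op_h(a),P_h]v_h(t)}{v_h(t)}$. By the Weyl symbolic calculus, $[\Op_h(a),\Op_h(p_0)]=\tfrac{h}{\ii}\Op_h(\{a,p_0\})+\mathcal{O}_{\mathcal{L}(L^2)}(h^2)$ with $\{a,p_0\}=-2\zeta\cdot\nabla_za$, while $[\Op_h(a),P_h-\Op_h(p_0)]=\mathcal{O}_{\mathcal{L}(L^2)}(h^2)$; therefore $\tfrac{\d}{\d t}\poscals{\Op_h(a)v_h(t)}{v_h(t)}=-h^{-1/2}\poscals{\Op_h(\{a,p_0\})v_h(t)}{v_h(t)}+\mathcal{O}(h^{1/2})$. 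The right-hand side is only $\mathcal{O}(h^{-1/2})$ — it does not pass to the limit as it stands, and this blow-up is precisely the obstruction created by the critical time scale $t\sim h^{1/2}$. One overcomes it by multiplying by $h^{1/2}$, testing against $\psi\in C^\infty_c(\R)$, and integrating by parts in $t$: this gives $\int_{\R}\psi(t)\poscals{\Op_h(\{a,p_0\})v_h(t)}{v_h(t)}\,\d t=h^{1/2}\int_{\R}\psi'(t)\poscals{\Op_h(a)v_h(t)}{v_h(t)}\,\d t+\mathcal{O}(h)$, whose right-hand side is $\mathcal{O}(h^{1/2})\to0$ by the uniform bound of the first paragraph. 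Hence $\poscals{\mu}{\psi(t)\,\zeta\cdot\nabla_za}=0$ for all $\psi,a$, i.e.\ (by density, since $\zeta\cdot\nabla_z$ is divergence-free in $z$) $\poscals{\zeta\cdot\nabla_z\mu}{\phi}=0$ for every $\phi\in C^\infty_c(\R\times T^*\T^2)$, which is $\zeta\cdot\nabla_z\mu_t=0$ for a.e.\ $t$; and since $\zeta\cdot\nabla_z$ generates the geodesic flow $(z,\zeta)\mapsto(z+s\zeta,\zeta)$ on $\{|\zeta|=1\}$, positivity of $\mu$ yields the stated invariance of $\supp\mu$.

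Assertion (iv) follows at once from the vanishing observation \eqref{eq: sequence hypothesis-h2}. Given $a\in C^\infty_c(T^*\T^2)$ with $\pi_z(\supp a)\Subset\omega$, choose $\theta\in C^\infty_c(\omega;[0,1])$ with $\theta\equiv1$ on $\pi_z(\supp a)$, so that $a=\theta a$ as a function; using that the Weyl quantisation of $\theta(z)$ is the multiplication operator $M_\theta$ and $\Op_h(\theta)\Op_h(a)=\Op_h(\theta a)+\mathcal{O}_{\mathcal{L}(L^2)}(h)$, one gets $|\poscals{\Op_h(a)v_h(t)}{v_h(t)}|\le|\poscals{\Op_h(a)v_h(t)}{M_\theta v_h(t)}|+\mathcal{O}(h)\le C\|v_h(t)\|_{L^2(\omega)}+\mathcal{O}(h)$. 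Testing against $\psi\in C^\infty_c((0,T))$, the Cauchy–Schwarz inequality in $t$ together with \eqref{eq: sequence hypothesis-h2} give $\poscals{\mu}{\psi(t)a}=0$; since such products are dense in $C^\infty_c((0,T)\times T^*\omega)$ and $\mu\ge0$, this forces $\mu\big((0,T)\times T^*\omega\big)=0$. The only genuine difficulty in the whole argument is the one isolated in (iii): the factor $h^{-1/2}$ in the propagation identity rules out a naive passage to the limit, and one must exploit the exact-derivative structure — integration by parts against a fixed $\psi$ — together with the uniform boundedness of $\poscals{\Op_h(a)v_h(t)}{v_h(t)}$, the same bound that also underlies the $t$-disintegration of $\mu$.
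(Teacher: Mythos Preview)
Your proof is correct and follows essentially the same approach as the paper: localisation via the spectral projector for (i)--(ii), the commutator/propagation identity plus integration by parts in $t$ for (iii), and the vanishing observation hypothesis for (iv). The only noteworthy difference is in (i): you invoke the semiclassical functional calculus to treat $\Pi_{h,\rho}$ as an $h$-pseudodifferential operator microlocalised in $\{||\zeta|^2-1|<\rho\}$, whereas the paper uses the more elementary route $(h^2H_{\A,V}-1)v_h=\mathcal{O}_{L^2}(\rho)$ (immediate from the spectral theorem), then $(-h^2\Delta-1)v_h=\mathcal{O}_{L^2}(\rho+h)=o(1)$, which directly gives $\supp\mu\subset\{|\zeta|=1\}$ without ever needing the functional calculus as $\rho\to 0$.
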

\begin{proof}
The proof of Proposition \ref{prop: measure property} is rather standard, see for instance \cite{AM14,Macia09,Zwo12}, but for the sake of completeness, we include the proof here. \\

For the first statement, we rewrite our operator $h^2H_{\A,V}$ as $P_h=-h^2\Delta_z+hp^{\mathrm{w}}_1(z,hD_z)+h^2p^{\mathrm{w}}_2(z,hD_z)$ (this coincides with \eqref{eq: Ph-p1-p2-intro} in Section \ref{sec: role of magnetic}), where
\begin{equation}\label{eq: p1-p2-defi}
p^{\mathrm{w}}_1(z,hD_z):=\Op_h(A_1\xi+A_2\eta),\;\;p^{\mathrm{w}}_2(z,hD_z):=\Op_h(|\A|^2+V)=|\A|^2+V.
\end{equation}

 By definition, we know the fact that
\[
v_h=\Tilde{\Pi}_{h,\rho}v_h,
\]
where $\Tilde{\Pi}_{h,\rho}=\Tilde{\chi}(\frac{h^2H_{\A,V}-1}{\rho})$ with some $\Tilde{\chi}\in C^{\infty}_c(\R)$ such that $\Tilde{\chi}\chi=\chi$. Thus, on the one hand, we have 
\begin{align*}
(h^2H_{\A,V}-1)v_h=\Tilde{\Pi}_{h,\rho}(h^2H_{\A,V}-1)v_h=\bigO{\rho}{L^2}.
\end{align*}
On the other hand,
\begin{align*}
(h^2H_{\A,V}-1)v_h=&(-h^2\Delta_z-1)v_h +h\left(\Op_h(A_1\xi+A_2\eta)+h^2(|\A|^2+V)\right)v_h\\=&(-h^2\Delta_z-1)v_h +\bigO{h}{L^2}.
\end{align*}
Therefore, we know
\begin{equation}\label{eq: Delta-h-rho-eq}
(-h^2\Delta_z-1)v_h=\bigO{h+\rho}{L^2}.   
\end{equation}
As $\rho,h\rightarrow 0$, we deduce that $(-h^2\Delta-1)v_h=o_{L^2}(1)$, which yields $\supp(\mu)\subset \{ |\zeta|=1\}$. 

The support property of the wave front set is similar to the proof of the first statement, see also \cite[Theorem 8.15]{Zwo12}. We omit the details here.

For the second statement, we take any $\varphi\in C_c^0(\mathbb{R})$ and $a_1(\zeta)\in C_c^{\infty}(\mathbb{R}^2)$ such that $a_1\equiv 1$ in a neighborhood of $|\zeta|=1$. By \eqref{eq: measure-h} and the support property \eqref{eq: supp of measure-h}, we have
$$ \langle\mu,\varphi(t)\rangle=\langle\mu,\varphi(t)a_1(\zeta)\rangle
=\lim_{h\rightarrow 0}\int_{\R}\varphi(t)\Big( \poscals{ (\Op_h(a_1)-1)v_h(t)}{v_h(t)}_{L_z^2}
+\| v_h(t)\|^2_{L_z^2}\Big) dt. 
$$
From the support property \eqref{eq: supp of measure-h}, the first limit on the right hand side is zero. Since $\|v_h(t)\|_{L_z^2}=\|v_h(0)\|_{L_z^2}=1$ by the $L^2$-mass conservation law, we deduce that $\langle\mu,\varphi(t)\rangle=\int_{\R}\varphi(t) dt$. Approximating $\mathbf{1}_{(t_0,t_1)}$ by a sequence of $\varphi_{\epsilon}(t)\in C_c^0(\mathbb{R})$, we obtain \eqref{eq:measureintime}.\\ 

Now we define the Wigner distribution $W_h(t,\cdot)$ on the cotangent bundle $T^*\T^2$ by 
\begin{equation*}
\poscals{W_h(t,\cdot)}{a}:=\int_{\T^2}\left(\Op_h(a)v_h\right)(t,z)\overline{v_h(t,z)}\d z,\qquad \forall a\in C^{\infty}_c(T^*\T^2).
\end{equation*}
Thus,
\begin{align*}
\pt\poscals{W_h(t,\cdot)}{a}&=\int_{\T^2}\left(\Op_h(a)\pt v_h\right)(t,z)\overline{v_h(t,z)}\d z
   +\int_{\T^2}\left(\Op_h(a)v_h\right)(t,z)\overline{\pt v_h(t,z)}\d z\\
   &=-\int_{\T^2}\ii\left(\Op_h(a)H_{\A,V} v_h\right)(t,z)\overline{v_h(t,z)}\d z
   -\int_{\T^2}\left(\Op_h(a)v_h\right)(t,z)\overline{\ii H_{\A,V}v_h(t,z)}\d z\\
   &=-\int_{\T^2}\ii[\Op_h(a),H_{\A,V} ]v_h(t,z)\cdot\overline{v_h(t,z)}\d z.
\end{align*}
Using the symbolic calculus, more precisely the exact commutator formula \eqref{eq: commutators_laplacian}, we obtain
\begin{equation}\label{eq: commutator-formula}
\begin{aligned}
\ii h[\Op_h(a),H_{\A,V}]&=\frac{\ii}{h}[\Op_h(a),-h^2\Delta]+\ii[\Op_h(a),\Op_h(\A\cdot\zeta)]+\ii h[\Op_h(a),|\A|^2+V]\\
&=-\Op_h(2\zeta\cdot\nabla_z a)+\bigO{h}{\mathcal{L}(L^2)}.
\end{aligned}
\end{equation}
For any $\psi\in C^{\infty}_c(\R)$, we have
\begin{align*}
\left|h\int_{\R}\psi(t)\pt\poscals{W_h(t,\cdot)}{a}\d t\right|&=\left|h\int_{\R\times\T^2}\psi'(t)\left(\Op_h(a)v_h\right)(t,z)\overline{v_h(t,z)}\d z\d t\right|\\
&\leq Ch\|\psi'\|_{L^{\infty}}\|a\|_{L^{\infty}}+\mathcal O(h^2).
\end{align*}
Consequently, we obtain
\begin{equation*}
\lim_{h\rightarrow0}\int_{\R_t\times\T^2}\ii h\psi(t)[\Op_h(a),H_{\A,V} ]v_h(t,z)\overline{v_h(t,z)}\d z\d t=0.
\end{equation*}
Thanks to \eqref{eq: commutator-formula}, we have 
\begin{equation*}
0=\lim_{h\rightarrow0}\int_{\R_t\times\T^2}\psi(t)\Op_h(2\zeta\cdot\nabla_z a)v_h(t,z)\overline{v_h(t,z)}\d z\d t=\poscals{\mu}{2\psi(t)\zeta\cdot\nabla_z a}.
\end{equation*}
This implies that $\mu$ solves \eqref{eq:supportinvariant}.
As a consequence, we know that the measure $\mu$ is invariant under the geodesic flow.\\

The last statement comes directly from \eqref{eq: measure-h} and \eqref{eq: sequence hypothesis-h2}. 
\end{proof}

The core of the following proof consists in demonstrating that $$\mu\equiv0.$$
This will be a contradiction with property $\mathrm{(ii)}$.

\subsection{Irrational directions.} In the spirit of \cite{BBZ13,BZ12}, we begin by proving $\supp(\mu)$ does not contain irrational directions. This is due to the fact that the $z-$projection of a trajectory in $T^*\T^2$ associated with an irrational direction is dense (see also \cite{LBM23}). 
We define the set $\Sigma_{\mathbb R \setminus \mathbb Q}$ of irrational directions on the torus $\mathbb T^2$,
\begin{equation*}
    \Sigma_{\mathbb R \setminus \mathbb Q}:=\left\{(z, \zeta)\in T^*\mathbb T^2; \ |\zeta|=1, \, \mathbb Z^2 \cap \{\zeta\}^{\perp}=\{0\}\right\},
\end{equation*}
and $\Sigma_{\mathbb Q}=\{(z, \zeta) \in T^*\T^2; \ |\zeta|=1\}\setminus \Sigma_{\mathbb R \setminus \mathbb Q}$ the set of rational directions. The set $\Sigma_{\mathbb R \setminus \mathbb Q}$ is clearly invariant by the flow
\begin{equation}\label{irr_dir_flow}
\quad (z+s\zeta,\zeta) \in \Sigma_{\mathbb R \setminus \mathbb Q}\qquad \forall (z, \zeta) \in \Sigma_{\mathbb R \setminus \mathbb Q}, \forall s \in \R.
\end{equation}
Let us define $\mu_{\R \setminus \Q}$ to be the restriction of 
the measure $\mu$ to $ \Sigma_{ \R \setminus \Q}$. The following result holds.
\begin{proposition}
We have that $\mu_{\R \setminus \Q}=0$
\end{proposition}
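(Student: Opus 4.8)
The plan is to exploit the density of irrational geodesics on $\T^2$ together with the invariance property \eqref{eq:supportinvariant} and the vanishing of $\mu$ on $(0,T)\times T^*\omega$ from Proposition \ref{prop: measure property}. First I would fix $t$ for which \eqref{eq:supportinvariant} holds, so that $\mu(t,\cdot)$ is a nonnegative measure on $S^*\T^2$ invariant under the geodesic flow, and decompose it as $\mu(t,\cdot) = \mu_{\R\setminus\Q}(t,\cdot) + \mu_{\Q}(t,\cdot)$ according to the flow-invariant partition $S^*\T^2 = \Sigma_{\R\setminus\Q} \sqcup \Sigma_{\Q}$. Both pieces are separately flow-invariant since $\Sigma_{\R\setminus\Q}$ is invariant by \eqref{irr_dir_flow}. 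The key classical fact to invoke is that for $(z,\zeta)\in\Sigma_{\R\setminus\Q}$ the orbit $\{z + s\zeta : s\in\R\}$ is dense in $\T^2$ (unique ergodicity of the linear flow in an irrational direction), so any point of $\supp\mu_{\R\setminus\Q}(t,\cdot)$ has its whole orbit in $\supp\mu(t,\cdot)$, and that orbit enters $\omega$.

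\textbf{Key steps.} The argument would proceed as follows. Since $\mu$ vanishes on $(0,T)\times T^*\omega$, for a.e.\ $t\in(0,T)$ the fiber measure $\mu(t,\cdot)$ vanishes over $\omega$, i.e.\ $\mu(t, T^*\omega) = 0$; fix such a $t$ at which both this and \eqref{eq:supportinvariant} hold. Suppose for contradiction that $\mu_{\R\setminus\Q}(t,\cdot) \neq 0$; then there is a point $(z_0,\zeta_0)\in\Sigma_{\R\setminus\Q}$ in $\supp\mu(t,\cdot)$. By the support-invariance under the geodesic flow (last part of Proposition \ref{prop: measure property}(iii)), the entire line $\{(z_0 + s\zeta_0,\zeta_0): s\in\R\}\subset\supp\mu(t,\cdot)$. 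Because $\zeta_0$ is an irrational direction, $\{z_0 + s\zeta_0 : s\in\R\}$ is dense in $\T^2$, hence meets the nonempty open set $\omega$: there is $s_0$ with $z_0 + s_0\zeta_0\in\omega$, so $(z_0+s_0\zeta_0,\zeta_0)\in\supp\mu(t,\cdot)\cap T^*\omega$. Since $\omega$ is open, a neighborhood of this point in $T^*\T^2$ lies in $T^*\omega$, and as it is in the support of $\mu(t,\cdot)$ it has positive $\mu(t,\cdot)$-mass, contradicting $\mu(t,T^*\omega)=0$. Therefore $\mu_{\R\setminus\Q}(t,\cdot) = 0$ for a.e.\ $t$, and integrating in $t$ gives $\mu_{\R\setminus\Q} = 0$.

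\textbf{Main obstacle.} The conceptual content is entirely standard, so the only care needed is in the measure-theoretic bookkeeping: making rigorous the disintegration $\mu = \int \mu(t,\cdot)\,dt$ (justified since $\mu$ is a finite measure on $(0,T)\times S^*\T^2$ once localized in time), checking that ``vanishes on $(0,T)\times T^*\omega$'' transfers to ``$\mu(t,T^*\omega)=0$ for a.e.\ $t$'', and that the flow-invariance statement \eqref{eq:supportinvariant}, which a priori holds for the full $\mu(t,\cdot)$, indeed forces the \emph{support} to be flow-saturated rather than merely the measure to be flow-invariant — this is exactly what the last bullet of Proposition \ref{prop: measure property}(iii) records, so it can be quoted directly. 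I would also remark that the restriction $\mu_{\R\setminus\Q}$ is well-defined because $\Sigma_{\R\setminus\Q}$ is a Borel (indeed $G_\delta$) subset of $S^*\T^2$. No genuine difficulty is expected here; this proposition is the easy ``irrational part'' step that reduces everything to the analysis of rational directions (closed geodesics), which is where the magnetic condition \textbf{(MGCC)} will enter.
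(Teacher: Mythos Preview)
Your argument is correct and takes a somewhat different route from the paper. You disintegrate in time and argue by contradiction on the support: for a.e.\ $t$, the equation \eqref{eq:supportinvariant} makes $\mu(t,\cdot)$ flow-invariant, hence $\supp\mu(t,\cdot)$ is flow-saturated; any support point with irrational $\zeta_0$ then has an orbit dense in $\T^2$, so it meets the open set $T^*\omega$, contradicting $\mu(t,T^*\omega)=0$. The paper instead works directly on the full space-time measure via a quantitative Birkhoff-type averaging: flow-invariance of $\mu_{\R\setminus\Q}$ and Fubini give
\[
\mu_{\R\setminus\Q}((t_1,t_2)\times\omega\times\R^2)=\int\mathbf{1}_{(t_1,t_2)}(t)\Big(\frac{1}{T}\int_0^T\mathbf{1}_{\Phi_s(\omega\times\R^2)}(z,\zeta)\,ds\Big)\,d\mu_{\R\setminus\Q},
\]
and the equidistribution theorem sends the inner average to $|\omega|/|\T^2|$ as $T\to\infty$; since the left side vanishes by (iv), so does the total mass. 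Your version is a bit more elementary (only density of irrational orbits, not full Weyl equidistribution), at the cost of the time-disintegration step. One small quibble: the last bullet of Proposition~\ref{prop: measure property}(iii) is stated for the space-time support $\supp\mu$, not for the fibers $\supp\mu(t,\cdot)$; what you actually use is that \eqref{eq:supportinvariant} makes $\mu(t,\cdot)$ itself $\Phi_s$-invariant, whence its support is flow-invariant --- an immediate consequence, but not literally the quoted statement.
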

\begin{proof}
Since $\mu$ is 
invariant by the geodesic flow according to $(iii)$, then for any $s\in \R$, 
$$ \mu_{\R\setminus \Q}((t_1, t_2) \times \omega \times \R^2) = \mu_{\R\setminus \Q}  ((t_1, t_2) \times  \Phi_s(\omega \times \R^2)),
$$
where the flow $\Phi_s$ is defined by  $ \Phi_s (z , \zeta)= (z+ s\zeta,
\zeta)$. 
As a consequence, we obtain by Fubini's theorem 
\begin{align*} 
 \mu_{\R\setminus \Q} ((t_1, t_2) \times \omega \times \R^2) & = \frac 1 T \int_0 ^T \mu_{\R\setminus \Q}  ((t_1, t_2) \times  \Phi_s(\omega \times \R^2)) ds \\
& =  \frac 1 T \int_0 ^T \int_{(t_1, t_2) \times\Phi_s(\omega\times \R^2) } d\mu_{\R\setminus \Q}(t,z,\zeta)ds\\
& = \frac 1 T \int_0 ^T \int_{[0,T] \times \T^2 \times \R^2 } 1_{ (t_1, t_2)}(t) 1_{ \Phi_s(\omega\times \R^2)}(z, \zeta)  d\mu_{\R\setminus \Q}(t,z, \zeta) ds\\
&  = \int 1_{ (t_1, t_2)}(t) \times \left(\frac 1 T \int_0 ^T 1_{ \Phi_s(\omega\times \R^2)}(z, \zeta)
  ds \right) d\mu_{\R\setminus \Q}(t,z,\zeta). 
 \end{align*}
The equidistribution theorem shows that for any $(z, \zeta) $ in the support of  $\mu_{\R\setminus \Q}$, 
 $$ \lim_{ T \to \infty } \frac 1 T \int_0 ^T 1_{ \Phi_s(\omega\times \R^2)}(z, \zeta) ds = \frac {| \omega|} {|\T^2|}. $$
Hence the dominated convergence theorem and $(iv)$ show that for $t_1,t_2\in [0,T]$
 \begin{equation}\label{eq.contrainte1}
  \mu_{\R\setminus \Q} ((t_1, t_2) \times \T^2 \times \R^2) = 0,
   \end{equation}
   concluding the proof.
\end{proof}

\subsection{Reduction to a finite number of periodic directions.}

In the same spirit of \cite[Remark 1.4]{BG-stabilization}, we claim that periodic geodesics corresponding to directions $\frac{(n,m)}{\sqrt{n^2+m^2}}$ with $\gcd(n,m)=1$ will also enter the control region $\omega$ as soon as $\sqrt{n^2+m^2}$ is large enough. Therefore, the following result holds. We shall use the following proposition in Section \ref{sec:frequencylocalization} to isolate each of the periodic geodesics not entering the control region.
\begin{proposition}
There exists $p_0\in\N^*$ large enough, for all $|p|\geq p_0$ and $q\in \Z$ with $\gcd(p,q)=1$ and $1\leq |q|<|p|$, such that for any point $(x,y)\in\T^2$, the periodic geodesic $(x,y)+t(p,q)$ enters into the control region $\omega$. In particular, $(x,y,\frac{p}{\sqrt{p^2+q^2}},\frac{q}{\sqrt{p^2+q^2}})\notin\supp(\mu)$.
\end{proposition}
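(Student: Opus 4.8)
The plan is to establish the purely geometric fact that a closed geodesic of rational direction $(p,q)$ with $\gcd(p,q)=1$ becomes arbitrarily fine in $\T^2$ as $p^2+q^2\to\infty$, and then to combine it with properties $\mathrm{(iii)}$ and $\mathrm{(iv)}$ of Proposition~\ref{prop: measure property}, exactly as was done for irrational directions earlier in this section.

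\textbf{Step 1 (density of closed geodesics).} I would first show that for every $(x,y)\in\T^2$ and every coprime pair $(p,q)$, the image $\Gamma=\Gamma_{(x,y),(p,q)}\subset\T^2$ of the closed geodesic $t\mapsto(x,y)+t(p,q)$ satisfies
\[
\sup_{w\in\T^2}\mathrm{dist}_{\T^2}\!\big(w,\Gamma\big)\ \leq\ \frac{\pi}{\sqrt{p^2+q^2}}.
\]
The argument lifts to the universal cover $\R^2$: the preimage of $\Gamma$ is $\bigcup_{(m,n)\in\Z^2}\big(\ell+(2\pi m,2\pi n)\big)$, where $\ell=\{(x,y)+t(p,q):t\in\R\}$. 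These are parallel lines of direction $(p,q)$, and their signed perpendicular offsets from $\ell$ are the reals $\tfrac{2\pi(np-mq)}{\sqrt{p^2+q^2}}$. Since $\gcd(p,q)=1$, Bézout's identity shows that the integer $np-mq$ sweeps all of $\Z$, so the set of offsets is exactly $\tfrac{2\pi}{\sqrt{p^2+q^2}}\Z$, an arithmetic progression with gap $\tfrac{2\pi}{\sqrt{p^2+q^2}}$. Hence any point of $\R^2$ lies at perpendicular distance at most half of this gap from one of the translated lines, and its nearest point on that line projects to a point of $\Gamma$; projecting back to $\T^2$ (the quotient map being $1$-Lipschitz) yields the stated bound. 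This step is where the coprimality hypothesis is genuinely used, and I expect it to be the only substantive point of the proof.

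\textbf{Step 2 (choice of $p_0$).} Next, since $\omega$ is nonempty open it contains a ball $B_{\T^2}(z_0,r)$ for some $z_0\in\T^2$ and $r>0$; I would pick $p_0\in\N^*$ with $\tfrac{\pi}{p_0}<r$. For any $(p,q)$ with $\gcd(p,q)=1$ and $|p|\geq p_0$ (in particular for all $p,q$ as in the statement, where moreover $1\le|q|<|p|$), one has $\sqrt{p^2+q^2}\geq|p|\geq p_0$, hence by Step 1 every point of $\T^2$ — in particular $z_0$ — lies within distance $\tfrac{\pi}{\sqrt{p^2+q^2}}\leq\tfrac{\pi}{p_0}<r$ of $\Gamma_{(x,y),(p,q)}$. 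Thus $\Gamma_{(x,y),(p,q)}\cap B_{\T^2}(z_0,r)\neq\emptyset$, i.e. the periodic geodesic issued from $(x,y)$ in direction $(p,q)$ enters $\omega$.

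\textbf{Step 3 (conclusion for $\supp\mu$).} Finally, fix such $(p,q)$, set $\zeta_0=\tfrac{(p,q)}{\sqrt{p^2+q^2}}$, and suppose towards a contradiction that $(t_0,x,y,\zeta_0)\in\supp(\mu)$ for some $t_0\in(0,T)$. By the geodesic-flow invariance of $\supp(\mu)$ from Proposition~\ref{prop: measure property}$\,\mathrm{(iii)}$, we get $(t_0,(x,y)+s\zeta_0,\zeta_0)\in\supp(\mu)$ for all $s\in\R$; choosing $s_0$ with $(x,y)+s_0\zeta_0\in\omega$ (possible by Step 2) produces a point of $\supp(\mu)$ inside the open set $(0,T)\times T^*\omega$, on which $\mu$ vanishes by Proposition~\ref{prop: measure property}$\,\mathrm{(iv)}$ — a contradiction. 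Hence $(x,y,\tfrac{p}{\sqrt{p^2+q^2}},\tfrac{q}{\sqrt{p^2+q^2}})\notin\supp(\mu)$. I do not anticipate any real obstacle beyond Step 1; the rest consists of the same soft flow-invariance arguments already used in this section.
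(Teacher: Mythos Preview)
Your proof is correct and reaches the same conclusion as the paper, but by a cleaner and more geometric route.

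The paper fixes a small square $(a-\delta,a+\delta)\times(b-\delta,b+\delta)\subset\omega$, then reduces to a number-theoretic Lemma which, for given coprime $(p,q)$ with $p$ large, manufactures explicit integer pairs $(m_j,n_j)=(jp+r,jq+s)$ via B\'ezout (choosing $r,s$ so that $rq-sp=\lfloor qa-pb\rfloor$) and checks by hand that two auxiliary intervals $I_j,J_j$ overlap by at least $\delta/p$, forcing the line $t(p,q)$ to hit a translate of the square. Your Step~1 bypasses this computation entirely: lifting to $\R^2$, you observe that the parallel translates of the line have perpendicular offsets $\tfrac{2\pi}{\sqrt{p^2+q^2}}\,\{np-mq:(m,n)\in\Z^2\}$, and B\'ezout makes this set all of $\tfrac{2\pi}{\sqrt{p^2+q^2}}\Z$, giving immediately the sharp bound $\mathrm{dist}(w,\Gamma)\le \pi/\sqrt{p^2+q^2}$ for every $w\in\T^2$. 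This is both shorter and yields the optimal constant. Your Step~3 also spells out the flow-invariance argument linking the geometric statement to $\supp(\mu)$, which the paper's proof leaves implicit (it only proves the geometric part and takes the ``In particular'' as understood).
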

\begin{proof}
As we demonstrated in the previous part, 
only periodic geodesics can be included in $\supp(\mu)$. In this proof, we identify $\T^2=\R^2/\Z^2$ just for the simplicity of the proof. Since $\omega$ is open, by shifting the coordinate axes, we assume that $(a,b)\in (0,1)^2\in\omega$ and there exists $\delta=\delta(a,b,\omega)\in(0,1)$ sufficiently small such that the square $(a-\delta,a+\delta)\times(b-\delta,b+\delta)$ is still contained in $\omega$. 

Now it suffices to prove that 
\begin{gather}
\exists p_0\in\N^* \mbox{ large enough, for all }p\geq p_0\mbox{ and }q\in \Z\mbox{ with } \gcd(p,q)=1, \notag\\
\mbox{ such that the straight line } t(p,q), \mbox{ starting from }(0,0),  \mbox{ enters into }\label{eq: statement}\\
(m_j+a-\delta,m_j+a+\delta)\times(n_j+b-\delta,n_j+b+\delta) \mbox{ for some }(m_j,n_j)_{j\geq1}\in\Z^2.\notag
\end{gather}
This is a direct consequence of the following lemma.
\begin{lemma}\label{lem: finite direction}
There exists $p_0\in\N^*$ large enough, for all $p\geq p_0$ and $q\in \N^*$ with $\gcd(p,q)=1$ and $1\leq q<p$, such that we can find a sequence of integers $(m_j,n_j)_{j\in\N^*}$ such that $\frac{n_j}{m_j}\to\frac{q}{p}$ and for all $j$ large enough, 
\begin{equation}\label{eq: ex of velocity}
\left|\Big(\frac{m_j+a-\delta}{p},\frac{m_j+a+\delta}{p}\Big)\cap \Big(\frac{n_j+b-\delta}{q},\frac{n_j+b+\delta}{q}\Big)\right|\geq \frac{\delta}{p}.
\end{equation}
\end{lemma}
Indeed, \eqref{eq: ex of velocity} shows that there exists $t_0$ such that $(t_0p,t_0q)\in (m_j+a-\delta,m_j+a+\delta)\times(n_j+b-\delta,n_j+b+\delta)$, which implies the statement \eqref{eq: statement} immediately. 
\end{proof}
Therefore, we only need to prove Lemma \ref{lem: finite direction}.
\begin{proof}[Proof of Lemma \ref{lem: finite direction}]
We will realize the choices of $(m_j,n_j)_{j\geq1}$ in the following form
\begin{equation}\label{eq: form-m-n}
m_j=jp+r,\;\;n_j=jq+s,\;\mbox{ with }r,s \mbox{ to be chosen later.}
\end{equation}
Let us denote
\begin{gather*}
I_j:=(\frac{m_j}{p}+\frac{a-\delta}{p},\frac{m_j}{p}+\frac{a+\delta}{p}),\;J_j=(\frac{n_j}{q}+\frac{b-\delta}{q},\frac{n_j}{q}+\frac{b+\delta}{q}),
\end{gather*}
and let $C(I)$ be the center of an interval $I$. Hence,
\begin{equation*}
C(I_j)=\frac{m_j+a}{p},\;C(J_j)=\frac{n_j+b}{q}.
\end{equation*}
Thanks to the form \eqref{eq: form-m-n}, we know that $C(I_j)-C(J_j)=\frac{aq-bp}{pq}+\frac{rq-sp}{pq}$. Since $\gcd(p,q)=1$, there exists an integer pair $(r_q,s_p)\in\Z^2$ such that $r_qq-s_pp=1$. Set $c=\lfloor qa-pb\rfloor$ to be the integer such that $c\leq qa-pb<c+1$ and 
\[
r:=cr_q,\;s:=cs_p.
\]
Then, due to the preceding choice and $r_qq-s_pp=1$, we derive that
\[
|C(I_j)-C(J_j)|=|\frac{aq-bp}{pq}+\frac{cr_qq-cs_pp}{pq}|=\frac{1}{pq}|aq-bp-\lfloor qa-pb\rfloor|<\frac{1}{pq}.
\]
Now we claim that $|I_j\cap J_j|\geq \frac{\delta}{p}$, provided that $p\geq p_0>\frac{1}{\delta}$. \\

Indeed, the length of $I_j$ is $\frac{2\delta}{p}$ and the length of $J_j$ is $\frac{2\delta}{q}$. Choosing that $p_0>\frac{1}{\delta}>1$, for any $p\geq p_0$, we have
\[
\frac{1}{pq}\leq \frac{1}{p_0q}<\frac{\delta}{q}.
\]
Using that $1\leq q<p$, we know that $\frac{\delta}{q}=\max\{\frac{\delta}{p},\frac{\delta}{q}\}$. Since $|C(I_j)-C(J_j)|<\frac{1}{pq}<\frac{\delta}{q}$, we obtain
\[
|I_j\cap J_j|=\frac{|I_j|+|J_j|}{2}-|C(I_j)-C(J_j)|>\frac{\delta}{p}+\frac{\delta}{q}-\frac{\delta}{q}=\frac{\delta}{p},
\]
leading to \eqref{eq: ex of velocity}.
\end{proof}
As a direct consequence, we have the following corollary.
\begin{corollary}\label{cor: finite-direction}
The support of $\mu$ contains only a finite number of periodic directions, i.e. 
$$ \sharp \ \Sigma_{\Q} \cap \left\{(z,\zeta) \in T^* \T^2\ ;\ |\zeta|= 1 , \ (z, \zeta) \in \supp(\mu) \right\} < + \infty.$$    
\end{corollary}
\begin{proof}
Equipped with the preceding lemma, we know that for all $|p|\geq p_0$ and $q\in \Z$ with $\gcd(p,q)=1$ and $1\leq |q|<|p|$, we have $(x,y,\frac{p}{\sqrt{p^2+q^2}},\frac{q}{\sqrt{p^2+q^2}})\notin\supp(\mu)$ for any $(x,y)\in\T^2$. Since $p$ and $q$ are symmetric in the setup, we deduce that
$$
\sharp \ \Sigma_{\Q} \cap \left\{(z,\zeta) \in T^* \T^2\ ;\ |\zeta|= 1 , \ (z, \zeta) \in \supp(\mu) \right\}\leq 2p_0<+\infty.
$$
This ends the proof.
\end{proof}

Let us now define the measure $\mu_T \in \mathcal M_+(T^*\mathbb T^2)$ by
$$\mu_T(dz, d\zeta)=\int_0^T \mu(t, dz, d\zeta) dt.$$
Consequently, $\mu_T(\Sigma_{\mathbb Q})=T>0$ and since
$\left\{\zeta \in \mathbb S^1\ ; \ \exists z\in \mathbb T^2, (z, \zeta) \in \Sigma_{\mathbb Q} \right\}$
is a countable set, there exists $\zeta_0 \in \R^2$ such that
$$\mu_T(\mathbb T^2 \times \{\zeta_0\})>0 \quad \text{and} \quad \zeta_0=\frac{(n,m)}{\sqrt{n^2+m^2}} \quad \text{for some}\quad (n,m) \in \mathbb Z^2.$$
Moreover, inspired by \cite{BBZ13,BZ12}, in order to perform a finer analysis near the periodic geodesics, we apply a change of variables. Similar techniques can also be found in \cite{Sun-dampedwave,LBM23}. 

\subsection{A change of variable}
This step is devoted to showing that, up to a change of variables, we can assume that $\zeta_0=(1,0)$. 
Let $F : \R^2 \longrightarrow \R^2$ be the isometry defined by $$\forall (x, y) \in \R^2, \quad F(x, y)= x \zeta_0^{\perp} +y \zeta_0,$$
where $\zeta_0^{\perp}=\frac{(-m,n)}{\sqrt{n^2+m^2}}$.
One can readily verify that for any function $u$ periodic with respect to $(2\pi \mathbb Z)^2$, the function $F^*u$ is periodic with respect to $(M \mathbb Z)^2$, with $M=2\pi\sqrt{n^2+m^2}$. Moreover, if $u(t,\cdot)$ is solution to the Schr\"odinger equation \eqref{eq:SchrodingerObs} then, $v(t,\cdot)=F^*u(t, \cdot) = u(t,F^{-1}(\cdot))$ is solution to the Schr\"odinger equation posed on $\mathbb R^2/(M \mathbb Z)^2$
\begin{equation}\label{eq:SchrodingerEq_DilatedTorus}
		\left\{
			\begin{array}{ll}
				i  \partial_t v  = H_{\mathbf{F^* A}, F^* V}  v & \text{ in }  (0,T) \times \mathbb R^2/(M \mathbb Z)^2, \\
				v(0, \cdot) = F^*u_0 & \text{ in } \mathbb R^2/(M \mathbb Z)^2.
			\end{array}
		\right.
\end{equation}
As a consequence, if we define $w_n= F^* v_n$ then, 
$$\forall t \in \R, \quad F^*\left(e^{-it H_{\A,V}} v_n\right)= e^{-it H_{\mathbf{F^* \A}, F^* V} } w_n.$$
In the following, the new torus $\mathbb R^2/(M \mathbb Z)^2$ is still denoted $\mathbb T^2$. 
Up to a subsequence, associated to this family of solutions of the Schr\"odinger equation \eqref{eq:SchrodingerEq_DilatedTorus} with initial data $(w_n)_{n \in \N}$ is a semiclassical defect measure $\nu \in L^{\infty}(\mathbb R, \mathcal M_+(T^*\mathbb T^2))$ satisfying:
\begin{itemize}
    \item  for all $t_0,t_1 \in \R,\ \nu([t_0,t_1]\times \mathbb T^2 \times \R^2)=M^2|t_1-t_0|$ and $\mathrm{Supp}\  \nu \subset \R \times \mathbb T^2 \times \mathbb S^1$,
    \item $\nu_T(\mathbb T^2\times \{(1,0)\})>0$, with $\nu_T(dz, d\zeta)=\int_0^T \nu(t, dz, d\zeta)dt$.
\end{itemize}
In other words, we are in the same situation as in the second step, with $\zeta_0=(1,0)$. For the sake of conciseness, we keep in the remainder of the proof the notations adopted in the second step and assume that $\zeta_0=(1,0)$.

\section{Normal form reduction}\label{sec: normal form reduction}
Recall that $\zeta_0=(1,0)$ and our goal now is to prove that $\mu\mathbf{1}_{\zeta=\zeta_0}=0$. This task is divided into three steps: the current section can be seen as a preparation for later proof and aims to provide a suitable normal form for the equation \eqref{eq: original eq-sch}. Section \ref{sec:frequencylocalization} establishes general facts about frequencyy localization. Section \ref{sec: transversal HF} deals with the transversal high-frequency part, while Section \ref{sec: transversal LF} is devoted to dealing with the transversal low-frequency part. Combining these four sections, we conclude that $\mu\mathbf{1}_{\zeta=\zeta_0}=0$, which completes the proof of Proposition \ref{prop: semiclassical ob}. 

\vspace{1em}
Recall that $v_h$ satisfies the equation \eqref{eq: original eq-sch}, the normalization \eqref{eq: sequence hypothesis-h1} and the estimate \eqref{eq: sequence hypothesis-h2}, equivalently,
\begin{equation}
(\ii h^{\frac{3}{2}}\pt-h^2H_{\A,V})v_h=0, \quad v_h|_{t=0}=v^0_h:=\Pi_{h,\rho}u^0_h,\label{eq: original eq-normal-form}
\end{equation}
\begin{equation}
    \|v^0_{h}\|^2_{L^2(\T^2)}=1, \label{eq: sequence hypothesis-h1Bis}
\end{equation}
\begin{equation}
  \int_0^T\int_{\omega}\left|v_h(z)\right|^2\d z\d t=o(1) \mbox{ as }h\to0^+. 
  \label{eq: sequence hypothesis-h2Bis}
\end{equation}
This section is devoted to simplify the equation \eqref{eq: original eq-normal-form} through two successive normal form reductions. 
\begin{itemize}
    \item The first reduction, i.e., the well-known gauge transformation for the magnetic potential $\A=(A_1,A_2)$, removes the $x-$dependence of $A_1$, replacing $A_1(x,y)$ by its average $\langle A_1\rangle(y)$ along the direction $\zeta_0=(1,0)$ (see Proposition \ref{prop: 1-average} below). 
    \item The second reduction aims to remove the $x-$dependence of $\vnh{A}{1}_2$, also replacing $\vnh{A}{1}_2(x,y)$ by its average $\langle \vnh{A}{1}_2\rangle(y)$ along the direction $\zeta_0=(1,0)$, where $\vnh{A}{1}_2$ is a compensated potential after the first averaging process (see the details in Proposition \ref{prop: 2-average} below). 
\end{itemize}
\subsection{First averaging: gauge transformation}
The following result is our first averaging process.
\begin{proposition}\label{prop: 1-average}
Let us define the gauge transformation
\begin{equation}
\label{eq:gaugetransformation1}
  v^{(1)}_h:=e^{\ii g_1}v_h,\ \text{where}\ g_1(x,y)=\int_{-\pi}^x\left(\langle A_1\rangle(y)-A_1(s,y)\right)\d s\qquad (x,y) \in \T^2.  
\end{equation} 
Then, $v^{(1)}_h$ satisfies the following equation
\begin{equation}\label{eq: eq-v-1}
(\ii h^{\frac{3}{2}}\p_t-P^{(1)}_h-Q^{(1)}_h)\vh{v}{1}=0,
\end{equation}
where $\vh{P}{1}$ and $\vh{Q}{1}$ are defined as follows:
\begin{align}
\vh{P}{1}&:=h^2D_x^2+h^2D_y^2-2h\langle A_1\rangle(y)hD_x,\label{eq:defP1}\\
\vh{Q}{1}&:=-hA^{(1)}_2(x,y)hD_y-h^2D_y(A^{(1)}_2(x,y))+h^2V^{(1)}(x,y),\label{eq:defQ1}\\
A^{(1)}_2(x,y)&:=A_2(x,y)+\p_yg_1(x,y),\;V^{(1)}(x,y)=|\A(x,y)+\nabla g_1(x,y)|^2+V(x,y). \label{eq:defMagnetic1Electric1}
\end{align}
Moreover, the following properties hold for $v^{(1)}_h$.
\begin{enumerate}
    \item For every $t \geq 0$, $\|\vh{v}{1}(t,\cdot)\|_{L^2(\T^2)}=\|v_h(t,\cdot)\|_{L^2(\T^2)}$. In particular, $\|\vh{v}{1}|_{t=0}\|_{L^2(\T^2)}=1$.
    \item We have $\|\vh{v}{1}\|^2_{L^2((0,T)\times\omega)}=o(1)$.
    \item The wave front set is conserved, i.e. $\WFm{m}(\vh{v}{1})=\WFm{m}(v_h)$ for every $m \in \N$.
\end{enumerate}    
\end{proposition}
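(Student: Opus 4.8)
The plan is to verify the equation \eqref{eq: eq-v-1} by a direct conjugation computation, and then read off the three listed properties almost for free. First I would recall that the gauge transformation $v_h\mapsto e^{\ii g_1}v_h$ acts on the magnetic Schr\"odinger operator by the standard intertwining identity $e^{\ii g_1}H_{\A,V}e^{-\ii g_1}=H_{\A+\nabla g_1,V}$, which holds because $e^{\ii g_1}(\tfrac1\ii\nabla-\A)e^{-\ii g_1}=\tfrac1\ii\nabla-(\A+\nabla g_1)$. Applying this with $g_1$ as in \eqref{eq:gaugetransformation1}, whose $x$-derivative is $\partial_x g_1(x,y)=\langle A_1\rangle(y)-A_1(x,y)$, the first component of the new magnetic potential becomes $A_1+\partial_x g_1=\langle A_1\rangle(y)$, which no longer depends on $x$; the second component is $A_2^{(1)}=A_2+\partial_y g_1$ as in \eqref{eq:defMagnetic1Electric1}, and the electric potential is shifted to $V^{(1)}=|\A+\nabla g_1|^2+V$. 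Expanding $H_{\A+\nabla g_1,V}=(\tfrac1\ii\nabla-\A-\nabla g_1)^2+V$ in coordinates, separating the $x$-part $D_x^2-2\langle A_1\rangle(y)D_x$ (using that $\langle A_1\rangle$ is $x$-independent, so there is no ordering issue in $x$) from the $y$-part, and multiplying through by $h^2$ after substituting $h^2 H$ via \eqref{eq: original eq-normal-form}, gives exactly the splitting $P_h^{(1)}+Q_h^{(1)}$ with \eqref{eq:defP1}--\eqref{eq:defQ1}: the term $-h^2 D_y(A_2^{(1)})$ comes from the cross term $D_y A_2^{(1)}+A_2^{(1)}D_y=2A_2^{(1)}D_y+[D_y,A_2^{(1)}]$ and the identity $[D_y,A_2^{(1)}]=\tfrac1\ii\partial_y A_2^{(1)}$, i.e. $-D_y(A_2^{(1)})$ after the appropriate sign bookkeeping.

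Next I would dispatch the three properties. Property (i) is immediate since multiplication by the unimodular function $e^{\ii g_1}$ is a unitary operator on $L^2(\T^2)$, so it preserves $\|\cdot\|_{L^2(\T^2)}$ pointwise in $t$; combined with \eqref{eq: sequence hypothesis-h1Bis} this gives $\|v_h^{(1)}|_{t=0}\|_{L^2}=1$. Property (ii) follows because $|v_h^{(1)}(t,z)|=|v_h(t,z)|$ for every $(t,z)$, hence $\|v_h^{(1)}\|_{L^2((0,T)\times\omega)}^2=\|v_h\|_{L^2((0,T)\times\omega)}^2=o(1)$ by \eqref{eq: sequence hypothesis-h2Bis}. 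Property (iii), the invariance of the semiclassical wave front set $\WFm{m}$ under conjugation by $e^{\ii g_1}$, is the only point requiring a genuine (but routine) microlocal argument: since $g_1\in C^\infty(\T^2;\R)$ is independent of $h$, the operator $e^{\ii g_1}$ is a (semiclassical) Fourier integral operator associated with the canonical transformation $(x,\zeta)\mapsto(x,\zeta-\nabla g_1(x))$ — equivalently one can argue by Egorov's theorem or directly via the composition formula $e^{\ii g_1}\Op_h(a)e^{-\ii g_1}=\Op_h(b)+\BigO{h^\infty}$ with $b(x,\zeta)=a(x,\zeta+\nabla g_1(x))+\BigO{h}$, whose principal symbol has the same essential support up to the diffeomorphism $\zeta\mapsto\zeta+\nabla g_1(x)$. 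Since this diffeomorphism is a bijection of $T^*\T^2$, testing against symbols $a$ supported away from $\WFm{m}(v_h)$ shows the corresponding quantities vanish to order $h^m$ for $v_h^{(1)}$ as well, and conversely; hence $\WFm{m}(v_h^{(1)})=\WFm{m}(v_h)$.

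The main obstacle, such as it is, is purely bookkeeping: getting the signs and the ordering of the non-commuting factors $A_2^{(1)}(x,y)$ and $hD_y$ right in the expansion of $(\tfrac1\ii\partial_y-A_2^{(1)})^2$, so that the first-order term appears as $-hA_2^{(1)}(x,y)hD_y$ and the "extra" term is precisely $-h^2 D_y(A_2^{(1)})$ as written in \eqref{eq:defQ1}; one also has to be slightly careful that no analogous extra term appears from the $x$-part, which is exactly why the averaging was done first — $\langle A_1\rangle$ depends only on $y$, so $[D_x,\langle A_1\rangle(y)]=0$ and the $x$-block stays clean. Everything else is a one-line consequence of unitarity of $e^{\ii g_1}$ and the standard microlocal invariance under conjugation by a smooth, $h$-independent phase.
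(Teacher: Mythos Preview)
Your derivation of the conjugated equation and of properties (i)--(ii) is correct and essentially identical to the paper's own argument.

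For (iii), however, there is a genuine confusion. Multiplication by $e^{\ii g_1}$ with $g_1$ \emph{independent of $h$} is \emph{not} a semiclassical Fourier integral operator associated with the canonical transformation $(z,\zeta)\mapsto(z,\zeta-\nabla g_1(z))$; that would be the case for the phase $e^{\ii g_1/h}$. Here $e^{\ii g_1}$ is simply a zeroth-order semiclassical pseudodifferential operator with nonvanishing symbol $e^{\ii g_1(z)}$ (independent of $\zeta$), and the Weyl composition formula gives $e^{-\ii g_1}\Op_h(a)e^{\ii g_1}=\Op_h(a+hb_1+h^2b_2+\cdots)$ with all $b_j$ supported in $\supp_\zeta a$. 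Thus $\WFm{m}$ is preserved \emph{pointwise}. Note also that your stated conclusion, equality of the wave front sets, would not even follow from your FIO picture: a nontrivial canonical transformation would \emph{move} the wave front set, not leave it invariant. The paper's proof of (iii) is accordingly a one-liner: it is immediate from $e^{\ii g_1}\in C^\infty(\T^2;\C)$ and $|e^{\ii g_1}|=1$.
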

\begin{proof}
We consider the conjugate operator
\begin{align*}
e^{\ii g_1}(\ii h^{\frac{3}{2}}\pt-h^2H_{\A,V})e^{-\ii g_1}&=\ii h^{\frac{3}{2}}\pt+h^2e^{\ii g_1}(\Delta-\ii (\A\cdot\nabla+\nabla\cdot\A))e^{-\ii g_1}-h^2(|\A|^2+V)\\
&=\ii h^{\frac{3}{2}}\pt-h^2(\frac{1}{\ii}\nabla-(\A+\nabla g_1))^2 - h^2 V.
\end{align*}
Thanks to the definition of $g_1$ in \eqref{eq:gaugetransformation1}, we obtain
\begin{align*}
e^{\ii g_1}(\ii h^{\frac{3}{2}}\pt-h^2H_{\A,V})e^{-\ii g_1}&=\ii h^{\frac{3}{2}}\pt-\vh{P}{1}-\vh{Q}{1},
\end{align*}
where $\vh{P}{1}$, $\vh{Q}{1}$ are defined in \eqref{eq:defP1}, \eqref{eq:defQ1} and \eqref{eq:defMagnetic1Electric1}.
Using the definition of $\vh{v}{1}$ and \eqref{eq: original eq-normal-form}, we derive
\[
e^{\ii g_1}(\ii h^{\frac{3}{2}}\pt-h^2H_{\A,V})e^{-\ii g_1}\vh{v}{1}=e^{\ii g_1}(\ii h^{\frac{3}{2}}\pt-h^2H_{\A,V})v_h=0,
\]
so \eqref{eq: eq-v-1} holds. 

Since $e^{\ii g_1}\in C^{\infty}(\T^2;\C)$ and $|e^{\ii g_1}|=1$, (i), (ii) and (iii) are immediate from \eqref{eq: sequence hypothesis-h1Bis} and \eqref{eq: sequence hypothesis-h2Bis}.
\end{proof}

The gauge transform $v_h\mapsto \vh{v}{1}=e^{\ii g_1}v_h$ allows us to assume that $\A$ has the form
\[
\A^{(1)}(x,y)=(\langle A_1\rangle(y),A^{(1)}_2(x,y)).
\]

By using the last two points of Proposition \ref{prop: measure property}, in the following, without loss of generality, we are going to consider the model case $\widetilde{\omega}=\mathbb{T}_x\times\omega_y$, where $\omega_y\subset \mathbb{T}_y$ is an open set containing all critical points of $\langle A_1\rangle(y)$. First, we observe that there exists an open subset $\widetilde{\omega}_y$ of $\omega_y$, consisting of finitely many strips, such that all the critical points of $\langle A_1\rangle(y)$ are contained in this open subset. By shifting the $y$-coordinate, we may assume that $$\widetilde{\omega}_y=\bigcup_{j=1}^{n_0}I_j,\quad I_j=(a_j,b_j), \mod 2\pi,
$$  
where $-\pi<a_1<b_1<a_2<b_2<\cdots<a_{n_0}<\pi<b_{n_0}<a_1+2\pi$. Set $\delta_0<\frac{1}{2}\min(|I_1|,\cdots,|I_{n_0}|)$ such that any critical point of $\langle A_1\rangle(y)$ is contained in one of $\widetilde{I}_j=(a_j+\delta_0,b_j-\delta_0)$ mod $2\pi$.   
\begin{figure}[h]
\begin{minipage}[t]{0.7\linewidth}
    \centering
    \includegraphics[width=\linewidth]{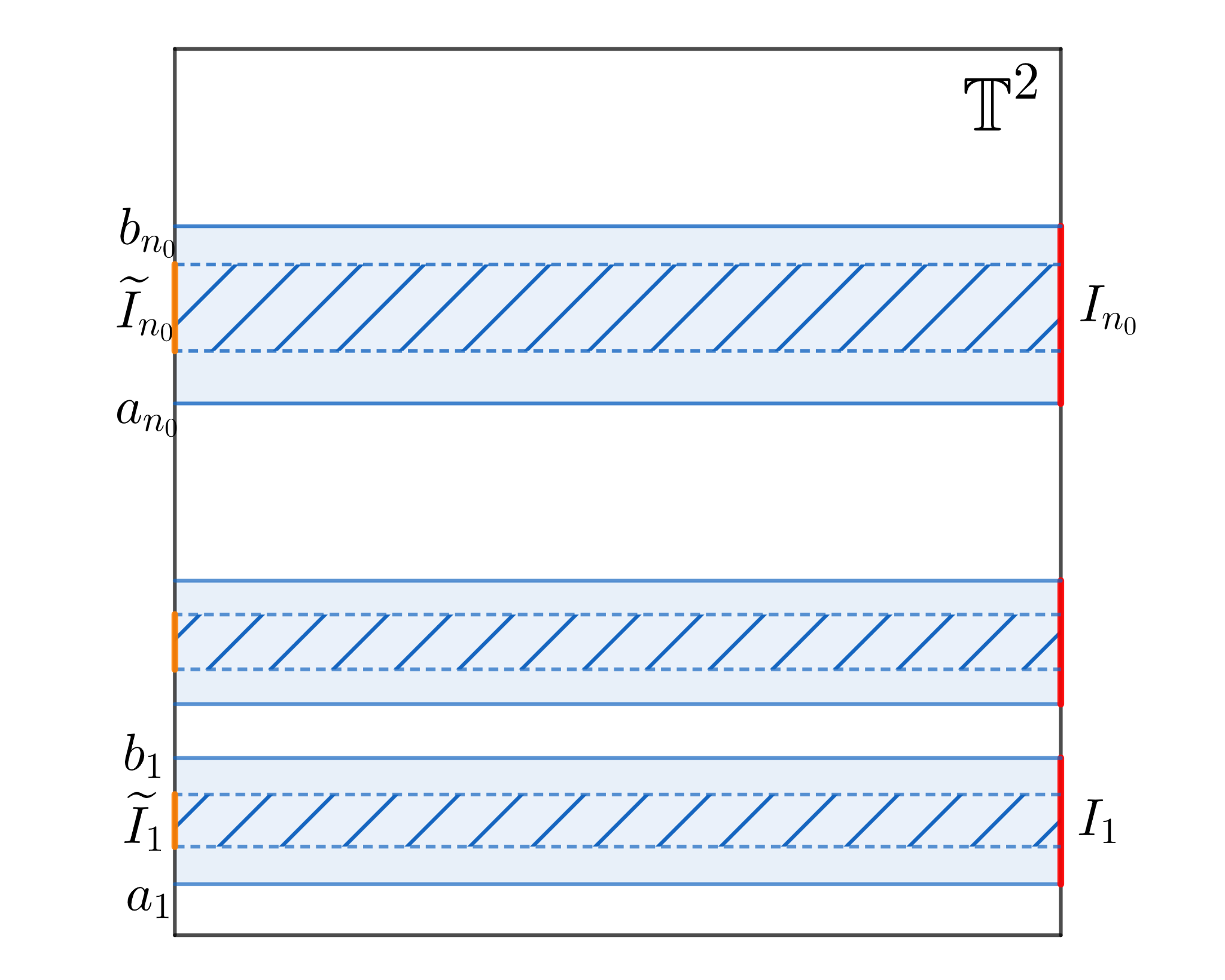}
    \caption{Multiple strips}
  \end{minipage}
  \hfill
  \begin{minipage}[t]{0.2\linewidth}
    \includegraphics[width=\linewidth]{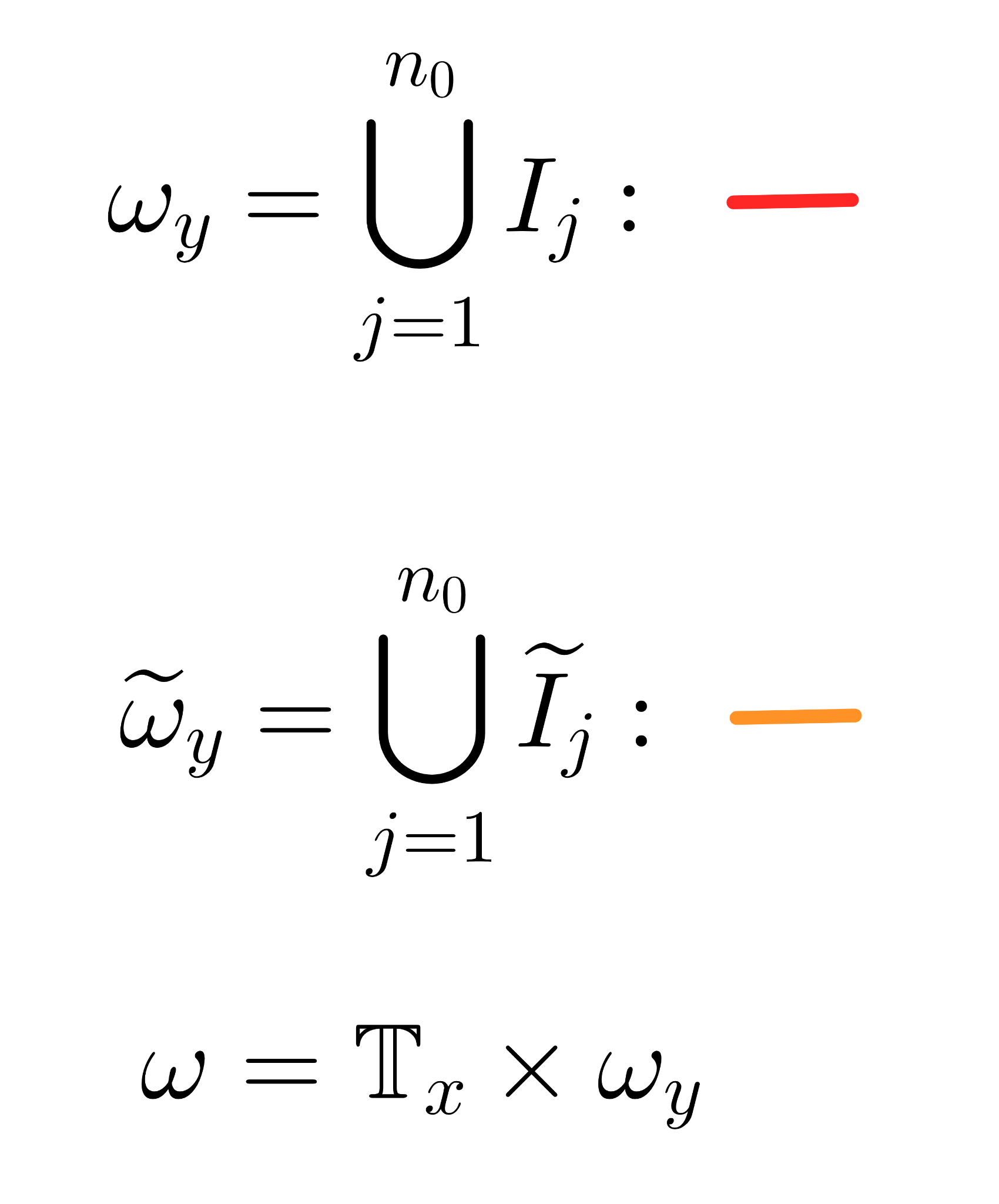}
  \end{minipage}
\end{figure}

Let $\chi^{(1)}\in C^{\infty}(\T_y;[0,1])$ be such that $\chi^{(1)}=1$ on $\bigcup_{j=1}^{n_0}\Tilde{I}_j$ and $\supp(\chi^{(1)})\subset\bigcup_{j=1}^{n_0}I_j$. The following result is an easy consequence of (ii) of Proposition \ref{prop: 1-average}.
\begin{corollary}
\label{cor:supportchi1}
We have $\|\chi^{(1)}\vh{v}{1}\|^2_{L^2((0,T)\times\T^2)}=o(1)$.
\end{corollary}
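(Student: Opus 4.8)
The goal is to deduce Corollary \ref{cor:supportchi1} from point (ii) of Proposition \ref{prop: 1-average}, namely $\|v^{(1)}_h\|^2_{L^2((0,T)\times\omega)}=o(1)$. The plan is as follows.

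\textbf{Step 1: Reduce to the model control set.} By (iv) of Proposition \ref{prop: measure property} — transported to the gauge-transformed setting via (iii) of Proposition \ref{prop: 1-average}, which preserves the wave front set and hence the associated semiclassical measure — the defect measure of $(v^{(1)}_h)$ vanishes on $(0,T)\times T^*\omega$. Combined with the flow-invariance (iii) of Proposition \ref{prop: measure property} and the support property that $\mu$ lives on rational (periodic) directions, the measure restricted to $\zeta=\zeta_0=(1,0)$ can only be supported on periodic geodesics of direction $(1,0)$, i.e. on horizontal lines $\{y=\text{const}\}$, and such a geodesic is hit by $\omega$ precisely when its $y$-coordinate lies in $\omega_y:=\{y:\ (\mathbb{T}_x\times\{y\})\cap\omega\neq\emptyset\}$. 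Since $\omega_y$ is open and contains all critical points of $\langle A_1\rangle(y)$ (by {\bf(MGCC)} applied in the direction $\zeta_0$), and since $\supp(\chi^{(1)})\subset\bigcup_j I_j\subset\omega_y$, it suffices to observe the solution on the strip $\widetilde\omega=\mathbb{T}_x\times\omega_y$ rather than on $\omega$ itself.

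\textbf{Step 2: From observability on $\omega$ to observability on the strip.} The key point is that the $L^2((0,T)\times\omega)\to 0$ information propagates to fill out the whole horizontal strip. Concretely, pick any $y_0\in\omega_y$; there exists $x_0$ and $r>0$ with the small ball $B((x_0,y_0),r)\subset\omega$. Using the exact propagation identity $\zeta\cdot\nabla_z\mu=0$ (point (iii) of Proposition \ref{prop: measure property}) for the measure of $v^{(1)}_h$, and the explicit fact that the $z$-projection of the geodesic flow in direction $(1,0)$ through $(x_0,y_0)$ is the whole circle $\mathbb{T}_x\times\{y_0\}$, one obtains that the measure vanishes on $(0,T)\times(\mathbb{T}_x\times\{y_0\})\times\{\zeta_0\}$; letting $y_0$ range over the open set $\omega_y\supset\supp(\chi^{(1)})$ and using that the measure is already supported on $\{\zeta=\zeta_0\}$ after the first-microlocalization reduction (together with (i) of Proposition \ref{prop: measure property} giving $|\zeta|=1$), the measure vanishes on $(0,T)\times\widetilde\omega$. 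Translating this measure statement back into a quantitative $o(1)$ estimate: since $\chi^{(1)}(y)$ is a smooth multiplier supported in $\omega_y$, and the full sequence $v^{(1)}_h$ is bounded in $L^2_{loc}$ with defect measure vanishing over the support of $\chi^{(1)}$ in the relevant time-space-frequency region, standard properties of semiclassical defect measures (see \cite{Zwo12}) give $\|\chi^{(1)}(y)v^{(1)}_h\|^2_{L^2((0,T)\times\T^2)}\to 0$.

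\textbf{Alternative, more elementary route.} Rather than going through the measure, one can argue directly. Since $\chi^{(1)}$ is supported in $\omega_y=\bigcup_j I_j$, and each $I_j$ is contained in $\omega_y$, it is enough to show that for any compactly-supported smooth $\psi(y)$ with $\supp\psi\subset\omega_y$ we have $\|\psi(y)v^{(1)}_h\|_{L^2((0,T)\times\T^2)}=o(1)$. Fix such a $\psi$ and a point $y_0\in\supp\psi$; choose $(x_0,y_0)$ and $r>0$ with $Q:=B((x_0,y_0),r)\subset\omega$, so by (ii) of Proposition \ref{prop: 1-average} $\|v^{(1)}_h\|_{L^2((0,T)\times Q)}=o(1)$. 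Now use that $v^{(1)}_h$ is an $o(1)$-quasimode at frequency $\sim h^{-1}$ microlocalized near $\zeta=(1,0)$ (this is exactly the content of the support properties \eqref{eq: supp of measure-h} and the reduction to $\zeta_0=(1,0)$): test the commutator of the equation \eqref{eq: eq-v-1} against a symbol $a(x,y)$ that propagates the cutoff from $Q$ along the $x$-direction. Since the principal part of $\vh{P}{1}$ is $h^2(D_x^2+D_y^2)$ whose Hamiltonian flow at energy $1$ with momentum $\zeta_0$ is exactly translation in $x$, Egorov/propagation over bounded time $s=O(1)$ (in the slow time variable) spreads the smallness from $\mathbb{T}_x$-translates of $Q$ to all of $\mathbb{T}_x\times\{|y-y_0|<r'\}$ for some $r'>0$. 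A finite cover of $\supp\psi$ by such $y$-intervals then yields $\|\psi(y)v^{(1)}_h\|_{L^2((0,T)\times\T^2)}=o(1)$, and applying this with $\psi=\chi^{(1)}$ gives the claim.

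\textbf{Main obstacle.} The only genuinely delicate point is the propagation of smallness in the $x$-direction: one must be careful that the first-order perturbation $-2h\langle A_1\rangle(y)hD_x$ in $\vh{P}{1}$ and the terms in $\vh{Q}{1}$ do not spoil the argument over the relevant time window. However, for the purpose of this corollary we only need propagation over an $O(1)$ window in the slow time $s=t/h^{1/2}$ and only at the level of the principal symbol $|\zeta|^2$; the subprincipal and $\vh{Q}{1}$ contributions are $O(h)$ in $\mathcal{L}(L^2)$ and are harmless. So this step, while requiring care, is routine; the substance of the corollary is really just the geometric observation of Step 1 that $\supp(\chi^{(1)})\subset\omega_y$, combined with the already-established flow invariance of the measure.
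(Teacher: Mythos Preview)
You are working much harder than the paper does, and there is a real gap in your argument. In the paper the corollary is stated \emph{after} the reduction ``without loss of generality, we consider the model case $\widetilde\omega = \T_x \times \omega_y$'' (see the paragraph immediately preceding the definition of $\chi^{(1)}$). In that setting the proof is one line: since $\supp(\chi^{(1)}) \subset \bigcup_j I_j \subset \omega_y$ and $0 \le \chi^{(1)} \le 1$, one has the pointwise bound $|\chi^{(1)}(y)\,v_h^{(1)}| \le \mathbf{1}_{\omega}\,|v_h^{(1)}|$, and (ii) of Proposition~\ref{prop: 1-average} gives the result directly.

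Your argument instead tries to establish the corollary for a \emph{general} open $\omega$, and the step where you claim ``the measure is already supported on $\{\zeta = \zeta_0\}$ after the first-microlocalization reduction'' is incorrect: no microlocalization of $v_h^{(1)}$ has taken place at this point in the paper. The sequence $v_h^{(1)} = e^{\ii g_1} v_h$ carries the full defect measure $\mu$, which in general has mass at other periodic directions $\zeta_j \neq \zeta_0$. For such $\zeta_j$ (say a vertical direction), flow invariance together with vanishing on $T^*\omega$ does \emph{not} force $\mu\mathbf{1}_{\zeta=\zeta_j}$ to vanish on the horizontal strip $\T_x \times \supp(\chi^{(1)})$; so without the model-case assumption the statement you are trying to prove can actually fail. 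The same objection applies to your ``alternative route'': Egorov-type propagation along the direction $\zeta_0$ only controls the $\zeta_0$-microlocal piece of $v_h^{(1)}$, not the full $L^2$ norm of $\chi^{(1)} v_h^{(1)}$.
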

In prevision of the next step, we define  $\chi^{(2)}\in C^{\infty}(\T_y;[0,1])$ such that $\chi^{(2)}=1$ on $\bigcup_{j=1}^{n_0}\Tilde{I}_j$ and $\supp(\chi^{(2)})\subset\{\chi^{(1)}=1\}$.

\subsection{Second averaging}

In the sequel, we fix the anti-derivative $\partial_x^{-1}$ acting on functions $f$ such that $\int_{-\pi}^{\pi}f(x,y)\d x=0$ by
$$ \partial_x^{-1}f(x,y):=\int_{-\pi}^x f(x',y)\d x' -\frac{1}{2\pi}\int_{-\pi}^{\pi}\d x'\int_{\pi}^{x'} f(x'',y)\d x''.
$$
In particular, when taking the Fourier transform in $x$, we have
$$ \mathcal{F}_x(\partial_x^{-1}f)(k,y)=\frac{\mathcal{F}_x f(k,y)}{\ii k}.
$$

We introduce two cut-off functions
\begin{gather}
\psi\in C^{\infty}_c(\R),\mbox{ supported on }|\xi\pm1|\leq\frac{1}{8}, \mbox{ and }\psi(\xi)=1 \mbox{ on }|\xi\pm1|\leq\frac{1}{16},\label{eq: defi-psi-tilde}\\
\vartheta\in C^{\infty}_c(\R),\mbox{ supported on }|\eta|\leq2, \mbox{ and }\vartheta(\eta)=1 \mbox{ on }|\eta|\leq 1,\label{eq: defi-vartheta}
\end{gather}
and we set $\alpha$ as a parameter that will be fixed later. Note that here we localize near $|\xi|\sim1$. However,  the periodic orbits generated by $\zeta_0=(1,0)$ and $-\zeta_0=(-1,0)$ are the same. Hence, it can be the same as the localization near $\xi\sim1$.

Then, we are in a position to introduce the second averaging process.
\begin{proposition}\label{prop: 2-average}
Let us define the following transformation
\begin{equation}
\label{eq:transformation2}
  \vh{v}{2}:=e^{G_2}\vh{v}{1},\ G_2:=\Op_h(g_2(x,y,\xi)\vartheta(\eta)\eta),
\end{equation} 
where
\begin{equation}
    \label{eq:transformation2Bis}
    g_2(x,y,\xi):=\frac{\psi(\xi)}{2\ii \xi}\partial_x^{-1}(\langle A_2^{(1)}\rangle-A_2^{(1)})(x,y)\qquad (x,y,\xi) \in \T^2\times \R.  
\end{equation}
Then $v^{(2)}_h$ satisfies the following equation
\begin{equation}\label{eq: eq-v-2}
(\ii h^{\frac{3}{2}}\pt-\vh{P}{2}+\vnh{R}{2}_{1+2\alpha}+R^{(2)}_{2+\alpha}+R_h)\vh{v}{2}=0,   
\end{equation}
where
\begin{align}
\vh{P}{2}(y,D_x,D_y)&=h^2D_x^2+h^2D_y^2-2h\langle A_1\rangle(y)hD_x-2h\Op_h(\langle A^{(1)}_2\rangle(y)\eta), \label{eq:defP2}\\ 
\vnh{R}{2}_{1+2\alpha}(x,y,D_x,D_y)&=h\Op_h(r^{(1)}_{1+2\alpha}\eta^2),\qquad r^{(1)}_{1+2\alpha}=2\ii \p_yg_2(x,y,\xi)\vartheta(\eta),\label{eq:defR2alpha}\\
R^{(2)}_{2+\alpha}(x,y,D_x,D_y)&=-h\Op_h\left((1-\psi(\xi))\left(\langle A_2^{(1)}\rangle(y)-A_2^{(1)}(x,y)\right)\eta\right)\label{eq:defRalpha}\\
&\quad +h\Op_h\left(\ii\{\xi^2,g_2(x,y,\xi)(1-\vartheta(\eta))\eta\}\right),\notag\\
R_h&=\bigO{h^2}{\mathcal{L}(L^2)},\label{eq:defreminder2}
\end{align}
Moreover, the following properties hold for $\vh{v}{2}$.
\begin{enumerate}
    \item There exists a constant $C>0$ such that for every $t \geq 0$, $C^{-1}\|v_h\|_{L^2(\T^2)}\leq\|\vh{v}{2}\|_{L^2(\T^2)}\leq C\|v_h\|_{L^2(\T^2)}$. In particular, $C^{-1} \leq \|\vh{v}{2}|_{t=0}\|_{L^2(\T^2)} \leq C$.
    \item  We have $\|\chi^{(2)}\vh{v}{2}\|^2_{L^2((0,T)\times\T^2)}=o(1)$.
    \item The wave front set is preserved $\WFm{m}(\vh{v}{2})=\WFm{m}(v_h)$ for any $m\in\N$.
\end{enumerate}
\end{proposition}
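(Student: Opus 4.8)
The plan is to mimic the proof of Proposition \ref{prop: 1-average}: compute the conjugation of the operator $\ii h^{3/2}\partial_t - \vh{P}{1} - \vh{Q}{1}$ by the unitary-up-to-bounded operator $e^{G_2}$, keeping track of the orders of the various terms in powers of $h$ and in powers of the localization parameter $\alpha$. Concretely, I would write
\[
e^{G_2}(\ii h^{3/2}\partial_t - \vh{P}{1} - \vh{Q}{1})e^{-G_2} = \ii h^{3/2}\partial_t - e^{G_2}(\vh{P}{1} + \vh{Q}{1})e^{-G_2},
\]
and expand the right-hand side using the identity $e^{G_2}Be^{-G_2} = B + [G_2,B] + \tfrac12[G_2,[G_2,B]] + \cdots$. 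The key point is that $G_2 = \Op_h(g_2(x,y,\xi)\vartheta(\eta)\eta)$ is a semiclassical operator whose symbol is of size $O(1)$ (not $O(h)$) — so, unlike in \cite{BZ12}, $e^{G_2}$ is not close to the identity, and I cannot discard $[G_2,[G_2,B]]$ on order grounds alone. However, the symbol $g_2$ carries a factor $\psi(\xi)$ localizing $\xi$ near $\pm 1$ and a factor $\vartheta(\eta)$ localizing $\eta$ in a bounded set, so on the relevant microlocal region the symbolic calculus is well-behaved; the commutator $[G_2, h^2 D_x^2] = \tfrac{h}{\ii}\Op_h(\{\xi^2, g_2\vartheta(\eta)\eta\}) + O(h^2)$ is designed so that its principal part cancels the non-averaged part of $\vh{Q}{1}$, i.e. $-h\Op_h((\langle A_2^{(1)}\rangle - A_2^{(1)})\eta)$, leaving behind exactly the averaged term $-2h\Op_h(\langle A_2^{(1)}\rangle \eta)$ in $\vh{P}{2}$ together with the cut-off error terms collected in $R^{(2)}_{2+\alpha}$ (the pieces with $1-\psi(\xi)$ and $1-\vartheta(\eta)$) and the term $\vnh{R}{2}_{1+2\alpha}$ coming from $[G_2, h^2 D_y^2]$ (which produces the $\partial_y g_2$ factor and an extra power of $\eta$). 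The remaining double-commutator terms and the commutators of $G_2$ with $\vh{Q}{1}$ itself are of size $O(h^2)$ in $\mathcal{L}(L^2)$ and go into $R_h$.

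After establishing \eqref{eq: eq-v-2}, I would verify the three listed properties. For (i), since $G_2$ is a bounded self-adjoint-type operator (more precisely $\Op_h$ of a real-valued symbol, hence with $G_2^* = G_2 + O(h)$), $e^{G_2}$ and $e^{-G_2}$ are bounded on $L^2$ with norms bounded uniformly in $h$, so $C^{-1}\|\vh{v}{1}\|_{L^2} \le \|\vh{v}{2}\|_{L^2} \le C\|\vh{v}{1}\|_{L^2}$, and then invoke point (i) of Proposition \ref{prop: 1-average} to replace $\vh{v}{1}$ by $v_h$. For (iii), conjugation by $e^{G_2}$ — an elliptic-order-zero semiclassical Fourier integral / pseudodifferential operator — preserves the semiclassical wave front set $\WFm{m}$, combined with point (iii) of Proposition \ref{prop: 1-average}. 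Property (ii), the smallness of $\|\chi^{(2)}\vh{v}{2}\|_{L^2((0,T)\times\T^2)}$, is the more delicate of the three: I would write $\chi^{(2)}\vh{v}{2} = \chi^{(2)}e^{G_2}\vh{v}{1} = e^{G_2}\chi^{(2)}\vh{v}{1} + [\chi^{(2)}, e^{G_2}]\vh{v}{1}$, and observe that the commutator $[\chi^{(2)}, e^{G_2}]$ is supported (microlocally in $y$) on $\supp(\nabla_y(\text{symbol of }G_2)) \subset \supp(\partial_y g_2) \subset \{\chi^{(1)} = 1\}$ by the choice of $\chi^{(2)}$ with $\supp(\chi^{(2)}) \subset \{\chi^{(1)} = 1\}$; hence both terms are controlled by $\|\chi^{(1)}\vh{v}{1}\|_{L^2((0,T)\times\T^2)} = o(1)$ from Corollary \ref{cor:supportchi1}, up to $O(h)$ errors which are also $o(1)$.

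The main obstacle I anticipate is the careful bookkeeping of the remainder terms: because $e^{G_2}$ is genuinely bounded away from the identity, one must be sure that iterating the Baker–Campbell–Hausdorff-type expansion actually terminates into an $O(h^2)$ remainder in operator norm, which requires controlling the nested commutators $\ad_{G_2}^k(\vh{P}{1})$ and $\ad_{G_2}^k(\vh{Q}{1})$ uniformly in $k$ — this is where the compact support of $\psi$ and $\vartheta$ in the symbol of $G_2$, together with the gain of one power of $h$ per commutator bracket with the principal part $h^2 D_z^2$ (the Poisson bracket lowers the order), is essential. A secondary subtlety is tracking why the term coming from $[G_2, -2h\langle A_1\rangle(y) hD_x]$ and $[G_2, \vh{Q}{1}]$ produce only acceptable errors (of size $h^2$ or of the cut-off type absorbed into $R^{(2)}_{2+\alpha}$), since these involve the $\xi$-dependence of $g_2$ and a priori could be of size $h \cdot h = h^2$ only if one uses that $\langle A_1 \rangle$ and $A_2^{(1)}$ are smooth and that the symbols are appropriately localized. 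I would organize this by first computing the principal symbol of $e^{G_2}(\vh{P}{1}+\vh{Q}{1})e^{-G_2}$ modulo $h^2\mathcal{L}(L^2)$ and modulo the cut-off terms, checking that it equals $\vh{P}{2} - \vnh{R}{2}_{1+2\alpha} - R^{(2)}_{2+\alpha}$, and only then addressing the uniform summability of the tail of the expansion.
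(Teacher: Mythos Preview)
Your overall approach is correct and matches the paper's, but two points need correction.

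\medskip
\textbf{The ``main obstacle'' is a non-issue.} You worry about controlling $\ad_{G_2}^k(\vh{P}{1})$ uniformly in $k$, as if an infinite series had to be summed. This is unnecessary: the paper simply applies Lemma~\ref{lem: exponential of bounded op} with $N=2$, obtaining
\[
e^{G_2}Be^{-G_2}=B+[G_2,B]+\frac{1}{2}\int_0^1(1-s)^2 e^{sG_2}\ad_{G_2}^2(B)e^{-sG_2}\,\d s,
\]
with $B=-\vh{P}{1}-\vh{Q}{1}$. Each semiclassical commutator with $G_2$ gains one factor of $h$ (the symbol of $G_2$ is bounded with all derivatives, so $[G_2,\Op_h(a)]=\tfrac{h}{\ii}\Op_h(\{g_2\vartheta\eta,a\})+O_{\mathcal{L}(L^2)}(h^2)$ even when $a$ is a second-order polynomial in $\zeta$, by the exact formula \eqref{eq: commutators_laplacian}). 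Hence $\ad_{G_2}^2(B)=O_{\mathcal{L}(L^2)}(h^2)$, and since $e^{\pm sG_2}$ is uniformly bounded, the integral remainder is $O_{\mathcal{L}(L^2)}(h^2)=R_h$. No infinite expansion, no uniformity in $k$.

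\medskip
\textbf{The argument for (ii) is flawed.} Your support claim is wrong on two counts. First, the principal symbol of $[\chi^{(2)},G_2]$ involves $(\chi^{(2)})'(y)$, not $\partial_y g_2$; the Poisson bracket is $\{\chi^{(2)}(y),g_2\vartheta(\eta)\eta\}=(\chi^{(2)})'(y)\,\partial_\eta(g_2\vartheta(\eta)\eta)$. Second, $\supp(\partial_y g_2)$ has no reason to lie in $\{\chi^{(1)}=1\}$: $g_2$ depends on $y$ through $\partial_x^{-1}(\langle A_2^{(1)}\rangle-A_2^{(1)})(x,y)$, which is generically supported on all of $\T_y$. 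More fundamentally, even with the correct support of $[\chi^{(2)},G_2]$, passing to $[\chi^{(2)},e^{G_2}]$ via the integral formula involves conjugation by $e^{\pm sG_2}$, which does not preserve spatial localization.

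The paper bypasses all of this: since $[\chi^{(2)},G_2]=O_{\mathcal{L}(L^2)}(h)$ (one semiclassical commutator), Lemma~\ref{lem: exponential of bounded op} with $N=1$ gives $e^{-G_2}\chi^{(2)}e^{G_2}=\chi^{(2)}+O_{\mathcal{L}(L^2)}(h)$, hence
\[
\|\chi^{(2)}\vh{v}{2}\|_{L^2((0,T)\times\T^2)}\leq C\big(\|\chi^{(2)}\vh{v}{1}\|_{L^2((0,T)\times\T^2)}+h\|\vh{v}{1}\|_{L^2((0,T)\times\T^2)}\big)\leq C\big(\|\chi^{(1)}\vh{v}{1}\|_{L^2((0,T)\times\T^2)}+h\big),
\]
the last step using $\supp(\chi^{(2)})\subset\{\chi^{(1)}=1\}$. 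Now invoke Corollary~\ref{cor:supportchi1}. No support property of the commutator is needed---only its size.
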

In the following, to simplify the notation, we shall use 
\begin{equation}
    R_h=\bigO{h^2}{\mathcal{L}(L^2)}\mbox{ to denote operators of size at most }\BigO{h^2}.\label{eq: R-h}
\end{equation}
The operators $R_h$ can change from line to line. Here we use notations $R^{(2)}_{1+2\alpha}$ and $R^{(2)}_{2+\alpha}$ without introducing any terms involving the parameter $\alpha$. Indeed, at this point, the sub-indices have no specific meaning. However, we shall see later in Section \ref{sec: transversal HF} and Section \ref{sec: transversal LF}, in particular, Lemma \ref{lem: source-est-hf}, after taking suitable cutoffs $\vartheta(h^{1-\alpha}D_y)$, the term $R^{(2)}_{1+2\alpha}$ will behave like $\bigO{h^{1+2\alpha}}{L^2}$, while $R^{(2)}_{2+\alpha}$ will behave like $\bigO{h^{\infty}}{L^2}$. So we denote these terms by $R^{(2)}_{1+2\alpha}$ and $R^{(2)}_{2+\alpha}$, respectively.

\begin{proof}[Proof]
From Lemma \ref{lem: exponential of bounded op}, we have the following expansion
\begin{multline*}
e^{G_2}(\ii h^{\frac{3}{2}}\pt-\vh{P}{1}-\vh{Q}{1})e^{-G_2}\\
=\ii h^{\frac{3}{2}}\pt-\vh{P}{1}-\vh{Q}{1}-[G_2,\vh{P}{1}+\vh{Q}{1}]+ \frac{1}{2}\int_0^1(1-s)^2e^{sG_h}\ad^2_{G_h}(-\vh{P}{1}-\vh{Q}{1})e^{-sG_h}\d s
\end{multline*}
By the standard semi-classical symbol calculus, we have
\[
\|\frac{1}{2}\int_0^1(1-s)^2e^{sG_h}\ad^2_{G_h}(-\vh{P}{1}-\vh{Q}{1})e^{-sG_h}\d s\|_{\mathcal{L}(L^2)}=\BigO{h^2}.
\]
By recalling the definitions of $\vh{P}{1}$ and $\vh{Q}{1}$ in \eqref{eq:defP1} and in \eqref{eq:defQ1}, 
we have
\begin{align*}
[G_2,\vh{P}{1}]&=[G_2,h^2D_x^2+h^2D_y^2]- h[G_2,2\langle A_1\rangle(y)hD_x]\\
&=\ii h\Op_h(\{\xi^2+\eta^2,g_2\vartheta(\eta)\eta \})-\ii h^2\Op_h(\{
2\langle A_1\rangle(y)\xi,g_2\vartheta(\eta)\eta
\} )+\mathcal{O}_{\mathcal{L}(L^2)}(h^3)  \\
&=\ii h\Op_h(\{\xi^2+\eta^2,g_2\vartheta(\eta)\eta\})+R_h,\\
[G_2,\vh{Q}{1}]&=\ii h^2\Op_h(\{g_2\vartheta(\eta)\eta,2A_2^{(1)}\eta\})+\bigO{h^3}{\mathcal{L}(L^2)}=R_h.
\end{align*}
Using the definition of the Poisson bracket, we simplify the first commutator term $[G_2,\vh{P}{1}+\vh{Q}{1}]$ by
\[
[G_2,\vh{P}{1}+\vh{Q}{1}]=h\Op_h(\ii\{\xi^2,g_2\vartheta(\eta)\eta\})+h\Op_h(r^{(1)}_{1+2\alpha}\eta^2)+R_h,
\]
where $r^{(1)}_{1+2\alpha}(x,y,\xi)=2\ii \p_yg_2(x,y,\xi)\vartheta(\eta)$. Then, we obtain a simplified form for the conjugate operator
\begin{align*}
e^{G_2}(\ii h^{\frac{3}{2}}\pt-\vh{P}{1}-\vh{Q}{1})e^{-G_2}&=\ii h^{\frac{3}{2}}\pt-\vh{P}{1}-h\Op_h\left(\ii\{\xi^2,g_2\eta\}+A_2^{(1)}(x,y)\eta\right)\\
&+h\Op_h(r^{(1)}_{1+2\alpha}\eta^2)
+h\Op_h\left(\ii\{\xi^2,g_2(x,y,\xi)(1-\vartheta(\eta))\eta\}\right)+R_h.
\end{align*}
By the definition of $g_2$ in \eqref{eq:transformation2Bis} we derive that
\begin{align*}
\ii\{\xi^2,g_2\eta\}+A_2^{(1)}\eta&=2\ii \xi\frac{\psi(\xi)}{2\ii \xi}\left(\langle A_2^{(1)}\rangle(y)-A_2^{(1)}(x,y)\right)\eta+A_2^{(1)}(x,y)\eta\\
&=(\psi(\xi)-1)\left(\langle A_2^{(1)}\rangle(y)-A_2^{(1)}(x,y)\right)\eta+\langle A_2^{(1)}\rangle(y)\eta.
\end{align*}

Therefore, we obtain
\[
e^{G_2}(\ii h^{\frac{3}{2}}\pt-\vh{P}{1}-\vh{Q}{1})e^{-G_2}=\ii h^{\frac{3}{2}}\pt-\vh{P}{2}+R^{(1)}_{1+2\alpha}+R^{(2)}_{2+\alpha}+R_h,
\]
where $\vh{P}{2}$, $\vnh{R}{2}_{1+2\alpha}$ and $R^{(2)}_{2+\alpha}$ are defined in \eqref{eq:defP2}, \eqref{eq:defR2alpha} and \eqref{eq:defRalpha}.
By using the definition of $\vh{v}{2}$ in \eqref{eq:transformation2} and the equation satisfied by $\vh{v}{1}$ in \eqref{eq: eq-v-1}, we have 
\[
e^{G_2}(\ii h^{\frac{3}{2}}\pt-\vh{P}{1}-\vh{Q}{1})e^{-G_2}\vh{v}{2}=e^{G_2}(\ii h^{\frac{3}{2}}\pt-\vh{P}{1}-\vh{Q}{1})\vh{v}{1}=0,
\]
which implies that $\vh{v}{2}$ satisfies \eqref{eq: eq-v-2}.\\

The point (i) directly comes from $\|e^{\pm G_2}\|_{\mathcal{L}(L^2)}\leq e^{\|G_2\|_{\mathcal{L}(L^2)}}$ and the point (i) of Proposition \ref{prop: 1-average}.

For the point (ii), we have 
\[
\|\chi^{(2)}\vh{v}{2}\|_{L^2((0,T)\times\T^2)}\leq \|e^{G_2}\left(e^{-G_2}\chi^{(2)} e^{G_2}\right)\vh{v}{1}\|_{L^2((0,T)\times\T^2)}.
\]
By Lemma \ref{lem: exponential of bounded op}, we have the Taylor expansion $$e^{-G_2}\chi^{(2)} e^{G_2}\vh{v}{1}=\chi^{(2)} \vh{v}{1}+\int_0^1e^{-sG_2}\underbrace{[\chi^{(2)},G_2]}_{\bigO{h}{\mathcal{L}(L^2)}} e^{sG_2}\vh{v}{1}\d s,$$ hence
\[
\|e^{G_2}\left(e^{-G_2}\chi^{(2)} e^{G_2}\right)\vh{v}{1}\|_{L^2((0,T)\times\T^2)}\leq C\left(\|\chi^{(1)}\vh{v}{1}\|_{L^2((0,T)\times\T^2)}+h\|\vh{v}{1}\|_{L^2((0,T)\times\T^2)}\right). 
\]
Therefore, (ii) is a direct consequence of Corollary \ref{cor:supportchi1}.

The last statement (iii) is a direct consequence of Lemma \ref{lem: wave front invariance} and the point (iii) of Proposition \ref{prop: 1-average}.
\end{proof}

\section{Frequency localization}
\label{sec:frequencylocalization}

Recall that the cut-off function $\vartheta$ is defined in \eqref{eq: defi-vartheta}. Since the support of $\mu$ contains only a finite number of periodic geodesics and we reduce the problem to a model case $\zeta_0=(1,0)$ by a change of variables, we know that $\zeta_0$ is isolated. Therefore, we can choose a small parameter $\epsilon\in (0,\frac{1}{2})$ to define a frequency cut-off $\vartheta(\frac{hD_y}{\epsilon})$. Again, the sequence $(\vartheta(\frac{hD_y}{\epsilon})v_h)_{h>0}$ is bounded in $L^2_{loc}(\R_t\times\T_z^2)$ by the $L^2$-mass conservation law \eqref{eq:conservationmass} and consequently after possibly extracting a subsequence, there exists a semiclassical defect measure $\mu_{\varepsilon}$ on $\R_t\times T^*\T_z^2$ such that for any function $\psi\in L^1(\R_t)$ and any $a\in C^{\infty}_c(T^*\T_z^2)$, we have
\begin{equation}\label{eq: measure-hSecond}
\poscals{\mu_{\varepsilon}}{\psi(t)a(z,\zeta)}=\lim_{n\rightarrow+\infty}\int_{\R_t\times\T_z^2}\psi(t)\left(\Op_{h}(a)\vartheta(\frac{hD_y}{\epsilon})v_h\right)(t,z)\overline{\left(\vartheta(\frac{hD_y}{\epsilon})v_h\right)(t,z)}\d z\d t.
\end{equation}

We know that $\mu_{\epsilon}=|\vartheta(\frac{\eta}{\epsilon})|^2\mu$. From now on, we fix $\epsilon_0$ sufficiently small such that 
\[
\supp(\mu_{\epsilon_0})=\supp(|\vartheta(\frac{\eta}{\epsilon_0})|^2\mu)\subset \{\zeta_0=(1,0)\}.
\] 
We microlocalize the solution $v_h$ near the direction $\zeta_0=(1,0)$ for some $\alpha\in(\frac{1}{4},\frac{1}{2})$, we introduce
\begin{equation}
    \label{eq:defwh}
w_h=\vartheta(h^{1-\alpha}D_y)\vartheta(\frac{hD_y}{\epsilon_0})\vh{v}{2}.
\end{equation}
The range of $\alpha$ is according to the later proof (See Lemma \ref{lem: source-est-hf} and Remark \ref{rem: index-alpha}). \\

Since we focus on the limit process $h\to0$, we can ask that $h^{\frac{1}{4}}<\frac{\epsilon_0}{4}$. Then, we obtain that $$w_h=\vartheta(h^{1-\alpha}D_y)\vartheta(\frac{hD_y}{\epsilon_0})\vh{v}{2}=\vartheta(h^{1-\alpha}D_y)\vh{v}{2},$$ 
for free due to the support of $\vartheta$. \\

In the following, to simplify the notation, we shall use 
\begin{equation}
    r_h=o(h^{3/2})_{\mathcal{L}(L^2)}\mbox{ to denote operators of size at most }o(h^{3/2}).\label{eq: r-h}
\end{equation}
The operators $r_h$ can change from line to line.

Our first result concerns the equation for the frequency-localized solution $w_h$. 
\begin{proposition}\label{lem: f-h-error}
The function $w_h$ defined in \eqref{eq:defwh} satisfies
\begin{equation}\label{eq: w-h-eq}
\left\{
\begin{array}{l}
     (\ii h^{\frac{3}{2}}\pt-\vh{P}{2}+R^{(2)}_{1+2\alpha}+R^{(2)}_{2+\alpha}+R_h)w_h \\=[\vartheta(h^{1-\alpha}D_y),\vh{P}{2}-R^{(2)}_{1+2\alpha}-R^{(2)}_{2+\alpha}-R_h]\vh{v}{2}=:f_h\\
     w_h|_{t=0}=w^0_h:=\vartheta(h^{1-\alpha}D_y)\vh{v}{2}(0,\cdot),
\end{array}
\right.
\end{equation}
Moreover, we have
\begin{equation}
    \label{eq:boundfh}
    f_h=\bigO{h^{2-\alpha}}{L^2}.
\end{equation}
\end{proposition}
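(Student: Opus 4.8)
The plan is to establish \eqref{eq: w-h-eq} by a direct computation from the equation \eqref{eq: eq-v-2} satisfied by $\vh{v}{2}$, and then to estimate the commutator $f_h$ using the frequency localization of $\vh{v}{2}$ together with standard semiclassical pseudodifferential calculus. First I would apply the operator $\vartheta(h^{1-\alpha}D_y)$ to \eqref{eq: eq-v-2}. Since $\vartheta(h^{1-\alpha}D_y)$ is a Fourier multiplier in $y$, it commutes with $\pt$, so we get
\[
(\ii h^{\frac{3}{2}}\pt-\vh{P}{2}+\vnh{R}{2}_{1+2\alpha}+R^{(2)}_{2+\alpha}+R_h)\vartheta(h^{1-\alpha}D_y)\vh{v}{2}
= [\vartheta(h^{1-\alpha}D_y),\, \vh{P}{2}-\vnh{R}{2}_{1+2\alpha}-R^{(2)}_{2+\alpha}-R_h]\vh{v}{2},
\]
which is exactly \eqref{eq: w-h-eq} with $w_h=\vartheta(h^{1-\alpha}D_y)\vh{v}{2}$ and $f_h$ the right-hand side (using the reduction $w_h = \vartheta(h^{1-\alpha}D_y)\vartheta(hD_y/\epsilon_0)\vh{v}{2}$ noted in the text, valid since $h^{1/4} < \epsilon_0/4$).

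Next I would bound each of the four commutators making up $f_h$. The term $[\vartheta(h^{1-\alpha}D_y), R_h]$ is trivially $\bigO{h^2}{\mathcal{L}(L^2)}$ since both factors are bounded operators, and $R_h$ has size $\BigO{h^2}$; similarly $[\vartheta(h^{1-\alpha}D_y), R^{(2)}_{2+\alpha}]$ and $[\vartheta(h^{1-\alpha}D_y), \vnh{R}{2}_{1+2\alpha}]$ are commutators of a bounded multiplier with operators of size $\BigO{h}$, hence at worst $\BigO{h}$ in $\mathcal{L}(L^2)$, but in fact one gains an extra factor from the commutator structure; for the argument it suffices that all these are $\bigO{h^{2-\alpha}}{L^2}$ when applied to the $L^2$-bounded family $\vh{v}{2}$. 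The genuinely significant term is $[\vartheta(h^{1-\alpha}D_y), \vh{P}{2}]$. Writing $\vh{P}{2} = h^2D_x^2 + h^2D_y^2 - 2h\langle A_1\rangle(y)hD_x - 2h\Op_h(\langle A^{(1)}_2\rangle(y)\eta)$, the terms $h^2D_x^2$ and $h^2D_y^2$ commute exactly with $\vartheta(h^{1-\alpha}D_y)$ (both are Fourier multipliers), so only the two $y$-dependent terms contribute. For $[\vartheta(h^{1-\alpha}D_y), -2h\langle A_1\rangle(y)hD_x]$, the semiclassical expansion gives a leading term proportional to $h\cdot (hD_x)\cdot h^{1-\alpha}\vartheta'(h^{1-\alpha}D_y)\langle A_1\rangle'(y)$, which has size $h \cdot 1 \cdot h^{1-\alpha} = h^{2-\alpha}$ (using that $hD_x$ acts on the microlocally normalized family with symbol near $\xi = \pm 1$, so is $\BigO{1}$); similarly $[\vartheta(h^{1-\alpha}D_y), -2h\Op_h(\langle A_2^{(1)}\rangle(y)\eta)]$ produces a term of size $h \cdot h^{1-\alpha} \cdot |\eta|$, and since $w_h$ is supported where $|\eta| \lesssim h^{\alpha-1}$ this could a priori be of size $h$, but the commutator gains one derivative: the leading term is $h \cdot h^{1-\alpha}\vartheta'(h^{1-\alpha}D_y)\partial_y(\langle A_2^{(1)}\rangle \eta)$, i.e. $h^{2-\alpha}\langle A_2^{(1)}\rangle'(y)\eta$, and on the support $|\eta| \lesssim h^{\alpha-1}$ this is $\BigO{h^{2-\alpha} \cdot h^{\alpha-1}} = \BigO{h}$ — so one has to be slightly careful and use instead that the full symbol of this term is $O(h^{2-\alpha})$ once the $\eta$-localization of $\vh{v}{2}$ in \eqref{eq:defwh} is taken into account, combining $\vartheta(h^{1-\alpha}D_y)$ with its derivative; the cleanest route is to note $[\vartheta(h^{1-\alpha}D_y), \Op_h(c(y)\eta)]v = \Op_h(e(y,\eta))v$ where $e$ is supported in $|\eta|\lesssim h^{\alpha-1}$ and $\|e\|_{S} = \BigO{h^{1-\alpha}}$ with the multiplication by $h\eta \sim h^\alpha$ already counted, yielding the claimed $\BigO{h^{2-\alpha}}$.

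The main obstacle, and where I would spend most of the care, is precisely the bookkeeping for the term $[\vartheta(h^{1-\alpha}D_y), -2h\Op_h(\langle A^{(1)}_2\rangle(y)\eta)]$: one must track how the factor $\eta$ interacts with the derivative of the cutoff produced by the commutator, and verify that the combination lands at $h^{2-\alpha}$ rather than $h$. This is exactly the point where the constraint $\alpha \in (1/4, 1/2)$ enters (it is flagged in the text as "according to the later proof", cf. Lemma \ref{lem: source-est-hf} and Remark \ref{rem: index-alpha}): for the source term $f_h = \bigO{h^{2-\alpha}}{L^2}$ to be useful later as an $o(h^{3/2})$ error after division by $h^{3/2}$, one wants $2 - \alpha > 3/2$, i.e. $\alpha < 1/2$, while $\alpha > 1/4$ comes from the competing requirement in the high-frequency analysis. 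I would carry out the commutator expansions to one order with explicit remainder (using the exact Weyl composition formula, or Lemma on exponentials/Beals-type estimates available in the paper's appendix), keep the symbols in a suitable $h$-dependent symbol class, and conclude by the Calderón–Vaillancourt theorem that the operator norm of each piece is $\BigO{h^{2-\alpha}}$, hence $f_h = \bigO{h^{2-\alpha}}{L^2}$ after applying to the $L^2$-bounded family $\vh{v}{2}$.
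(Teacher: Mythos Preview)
Your approach is correct and essentially the same as the paper's: commute $\vartheta(h^{1-\alpha}D_y)$ through \eqref{eq: eq-v-2} and estimate each commutator by semiclassical calculus. One clarification: your worry about $[\vartheta(h^{1-\alpha}D_y),\,-2h\Op_h(\langle A_2^{(1)}\rangle(y)\eta)]$ is misplaced, since on the support of $\vartheta'(h^{-\alpha}\eta)$ one has $|\eta|\sim h^{\alpha}$ in the $h$-semiclassical variable (your ``$|\eta|\lesssim h^{\alpha-1}$'' mixes the raw and semiclassical scales), so this piece is actually $\bigO{h^{2}}{\mathcal L(L^2)}$; the $h^{2-\alpha}$ bound is saturated by the $\langle A_1\rangle'(y)\xi$ term where $|\xi|\sim 1$, and the paper simply writes the full commutator as $h^{2-\alpha}\Op_h\big(\tfrac{1}{\ii}\vartheta'(h^{-\alpha}\eta)(\langle A_1\rangle'\xi+\langle A_2^{(1)}\rangle'\eta)\big)+\bigO{h^{3-2\alpha}}{}$, while handling $[\vartheta(h^{1-\alpha}D_y),\vnh{R}{2}_{1+2\alpha}]$ via the rescaling $\hbar=h^{1-\alpha}$ (giving $\bigO{h^{2+\alpha}}{}$) and $[\vartheta(h^{1-\alpha}D_y),R^{(2)}_{2+\alpha}]\vh{v}{2}$ via the $\xi$/$\eta$-support mismatch (giving $\bigO{h^{\infty}}{}$).
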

\begin{proof}

The equation satisfied by $w_h$ directly comes from \eqref{eq: eq-v-2}.

We then start from the definition of $\vh{P}{2}$ in \eqref{eq:defP2}. We have that $\vartheta(h^{1-\alpha}D_y)$ commutes with $h^2D_x^2+h^2D_y^2$. So, from the semiclassical calculus, we have
\begin{align*}
[\vartheta(h^{1-\alpha}D_y),\vh{P}{2}]&=-h[\vartheta(h^{1-\alpha}D_y),\Op_h(\langle A_1\rangle(y)\xi+\langle A_2^{(1)}\rangle(y)\eta)]\\
&=h^{2-\alpha}\Op_h(\frac{1}{\ii}\vartheta'(h^{-\alpha} \eta)\left(\langle A_1\rangle'(y)\xi+\langle A_2^{(1)}\rangle'(y)\eta\right))+\bigO{h^{3-2\alpha}}{\mathcal{L}(L^2)}\\
&=\bigO{h^{2-\alpha}}{\mathcal{L}(L^2)}.
\end{align*}
Then, $[\vartheta(h^{1-\alpha}D_y),\vh{P}{2}-R_h]=\bigO{h^{2-\alpha}}{\mathcal{L}(L^2)}$. We turn to another term $[\vartheta(h^{1-\alpha}D_y), \vnh{R}{2}_{1+2\alpha}]$. Recall 
\[
\vnh{R}{2}_{1+2\alpha}=h\Op_h(2\ii \p_yg_2(x,y,\xi)\vartheta(\eta)\eta^2).
\]
Similarly, we obtain $[\vartheta(h^{1-\alpha}D_y), \vnh{R}{2}_{1+2\alpha}]=R_h$. Indeed, let $\hbar=h^{1-\alpha}$. Then we compute the commutator operator 
\begin{align*}
[\vartheta(h^{1-\alpha}D_y),\vnh{R}{2}_{1+2\alpha}]
&=h^{1+2\alpha}[\Op_{\hbar}(\vartheta(\eta)),\Op_{\hbar}(2\ii \p_yg_2(x,y,h^{\alpha}\xi)\vartheta(h^{\alpha}\eta)\eta^2)]\\
&=\hbar h^{1+2\alpha}\Op_{\hbar}\left(\{\vartheta'(\eta)\p_y(2 \p_yg_2(x,y,h^{\alpha}\xi)\vartheta(h^{\alpha}\eta)\eta^2)\}\right)+h^{1+2\alpha}\bigO{\hbar^3}{\mathcal{L}(L^2)}\\
&=h^{1+2\alpha}\bigO{\hbar}{\mathcal{L}(L^2)}\\
&=\bigO{h^{2+\alpha}}{\mathcal{L}(L^2)}\\
&=R_h.
\end{align*}
Let $g(x,y,\xi, \eta)$ be a symbol. We claim that 
\begin{gather*}
[\vartheta(h^{1-\alpha}D_y),\Op_h((\psi(\xi)-1)g(x,y,\xi, \eta))]\vh{v}{2}=\BigO{h^{\infty}},\\
[\vartheta(h^{1-\alpha}D_y),\Op_h((\vartheta(\eta)-1)g(x,y,\xi, \eta))]\vh{v}{2}=\BigO{h^{\infty}}.
\end{gather*}
Indeed, this is a direct sequence of symbolic calculus and the fact that $\vartheta(\frac{\cdot}{h^{\alpha}})(1-\vartheta(\cdot))\equiv0$, $\xi^2+h^{2\alpha}<1$ for $h$ sufficiently small and any $\xi\in\supp(1-\psi)$.

Recall that $R^{(2)}_{2+\alpha}=h\Op_h\left((1-\psi(\xi))\left(\langle A_2^{(1)}\rangle(y)-A_2^{(1)}(x,y)\right)\eta\right)+h\Op_h\left(\ii\{\xi^2,g_2(x,y,\xi)(1-\vartheta(\eta))\eta\}\right)$. Then, we deduce that
\[
[\vartheta(h^{1-\alpha}D_y), R^{(2)}_{2+\alpha}]\vh{v}{2}=\bigO{h^{\infty}}{L^2}.
\]
In summary, provided that $\alpha\in(0,\frac{1}{2})$, we conclude that
\[
[\vartheta(h^{1-\alpha}D_y),\vh{P}{2}-R^{(3)}_{1+2\alpha}-R^{(2)}_{2+\alpha}-R_h]\vh{v}{2}=\bigO{h^{2-\alpha}}{L^2}=o_{L^2}(h^{\frac{3}{2}})=r_h.
\]
This concludes the proof.
\end{proof}
To prove that $\mu\mathbf{1}_{\zeta=\zeta_0}=0$, it suffices to show $\vartheta(\frac{hD_y}{\epsilon})v_h=o_{L^2_{loc}(\R;L^2(\T^2))}(1)$. First we show that the normal form transform $e^{G_h}(\cdot)$ does not change the frequency localization. Recall that $\zeta=(\xi,\eta)$ is the dual variable of $z=(x,y)\in\T^2$.
\begin{lemma}\label{lem: xi near 1}
There exists $\rho_1\in(0,\frac{1}{4})$ such that for any semiclassical symbols $l(z,\eta)$ supported in a compact set away from $\{(z,\zeta);|\xi|\in(1-\rho_1,1+\rho_1)\}$ in $T^*\T^2$ and $L_h:=\Op_h(l)$, then for every $\alpha\in(0,\frac{1}{2})$ we have
\[
L_hw_h=o(1)_{L^2}.
\]
\end{lemma}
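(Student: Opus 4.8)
The plan is to exploit the fact that $w_h$ is an approximate solution of the Schrödinger-type equation \eqref{eq: w-h-eq} governed by $\vh{P}{2}$, whose principal symbol is $p_0(z,\zeta) = \xi^2 + \eta^2$. Combined with the spectral localization $v_h = \widetilde\Pi_{h,\rho} v_h$ which forces (after the argument of Proposition \ref{prop: measure property}) that $v_h$ — and hence $w_h$, since conjugation by $e^{\ii g_1}$ and $e^{G_2}$ preserves wavefront sets by the points (iii) of Propositions \ref{prop: 1-average} and \ref{prop: 2-average}, and since $\vartheta(h^{1-\alpha}D_y)$ is a Fourier multiplier — is microlocally concentrated on $\{|\zeta| = 1\}$, together with the transversal frequency localization $\supp \vartheta(h^{1-\alpha}\eta) \subset \{|\eta| \leq 2 h^{\alpha-1}\}$, i.e. $|\eta| = o(1)$ relative to the $h^{-1}$ scale. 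On the elliptic set $\{|\xi| \notin (1-\rho_1, 1+\rho_1), \ |\eta| \text{ small}\}$ one has $|\xi^2 + \eta^2 - 1|$ bounded below, so the operator $h^2 D_x^2 + h^2 D_y^2 - 1$ (the leading part of $\vh{P}{2} - 1$) is elliptic there, and an elliptic parametrix construction should give the claim.

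Concretely, first I would fix $\rho_1 \in (0,\tfrac14)$ small and take $\ell(z,\eta)$ as in the statement; without loss of generality $\ell$ is supported in $\{|\eta| \leq C\}$ in the $\eta$-variable (the full symbol class) and in $\{|\xi| \leq 1-\rho_1\} \cup \{|\xi| \geq 1+\rho_1\}$, but I may further cut off to $\{|\eta| \leq 2\}$ since $w_h = \vartheta(h^{1-\alpha}D_y)\vh{v}{2}$ and for $h$ small $\vartheta(h^{1-\alpha}\eta)$ is supported where $|\eta| \leq 2h^{\alpha-1}$, so after rescaling the relevant $\eta$-region in the semiclassical variable is $|\eta| = O(h^{1-\alpha}) = o(1)$; thus on $\supp \ell \cap \{w_h \text{ lives}\}$ we have $|\xi^2 + \eta^2 - 1| \geq \tfrac{\rho_1}{2}$ for $h$ small. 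Then I would build a semiclassical symbol $b(z,\zeta)$, elliptic where $\ell$ is supported, such that $\Op_h(b)(\vh{P}{2} - 1) = \Op_h(\ell) + \bigO{h}{\mathcal L(L^2)} + (\text{error supported off } \supp\ell)$, by dividing by $\xi^2 + \eta^2 - 1$ and iterating to kill lower-order terms — a standard elliptic parametrix. Applying $\Op_h(b)$ to the equation \eqref{eq: w-h-eq} rewritten as $(\vh{P}{2}-1)w_h = \ii h^{3/2}\pt w_h - (1 - \vh{P}{2} + \vh{P}{2})w_h + \ldots$; more cleanly, from \eqref{eq: original eq-normal-form} and the spectral cutoff we have $(\vh{P}{2} - 1)w_h = \bigO{\rho + h^{1/2}}{L^2} + f_h + (\text{lower order perturbations } R^{(2)}_{1+2\alpha}, R^{(2)}_{2+\alpha}, R_h \text{ acting on } w_h)$, all of which are $o_{L^2}(1)$ (using $\|w_h\|_{L^2} = O(1)$, \eqref{eq:boundfh}, and the sizes of the remainders). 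Hence $\Op_h(\ell)w_h = \Op_h(b)(\vh{P}{2}-1)w_h + o_{L^2}(1) = o_{L^2}(1)$.

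The main obstacle is bookkeeping the frequency localization carefully: the symbol $\ell$ is only assumed smooth and compactly supported in $z$, with the forbidden region only excluded in the $\xi$-variable, so I must first argue that the $\eta$-content of $w_h$ is confined to $|\eta| \lesssim h^{1-\alpha}$ and that the "off $\supp\ell$" errors from the parametrix hit $w_h$ and produce $\bigO{h^\infty}{L^2}$ by non-stationary phase / disjoint supports in the $\xi$-variable — this requires knowing $w_h$ is also $\xi$-localized near $|\xi|=1$, which is exactly the content of the spectral projector $\Pi_{h,\rho}$ combined with the $\eta$-smallness: $h^2 D_x^2 \approx 1$ on the range of $v_h$ up to $O(\rho + h^{1/2})$. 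Making this last implication quantitative enough to feed into the elliptic estimate — i.e. showing $(1 - \psi_1(hD_x))w_h = o_{L^2}(1)$ for a suitable cutoff $\psi_1$ equal to $1$ near $\pm 1$ with support in $(1-\rho_1,1+\rho_1)$, before even invoking ellipticity of $\vh{P}{2}-1$ — is where the care is needed; once that is in hand the elliptic parametrix step is routine.
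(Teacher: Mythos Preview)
Your plan is essentially correct, but it takes a longer route than the paper's argument, and your write-up blurs the key input.

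The paper's proof is very short and does not build any parametrix. It simply recalls that by Proposition~\ref{prop: measure property} (and the argument leading to \eqref{eq: Delta-h-rho-eq}), any $h$-pseudodifferential operator $K_h$ with symbol supported away from $\{|\zeta|\in(1-\rho_0,1+\rho_0)\}$ satisfies $K_hv_h=o_{L^2}(1)$; by the wavefront invariance already recorded in Propositions~\ref{prop: 1-average}(iii) and~\ref{prop: 2-average}(iii) the same holds for $\vh{v}{2}$. One then sets $\rho_1=2\rho_0$ and splits according to whether $\supp(l)$ meets $\{|\eta|<\rho_0\}$. If it does not, then $L_h\vartheta(h^{1-\alpha}D_y)=\mathcal{O}_{\mathcal L(L^2)}(h^{\infty})$ by disjoint $\eta$-supports (since $|h^{-\alpha}\eta|\leq 2$ forces $|\eta|\leq 2h^{\alpha}\ll\rho_0$). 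If it does, then on $\supp(l)$ one has $|\xi|\notin(1-2\rho_0,1+2\rho_0)$ and $|\eta|<\rho_0$, hence $\supp(l)$ avoids $\{|\zeta|\in(1-\rho_0,1+\rho_0)\}$; commuting $L_h$ past $\vartheta(h^{1-\alpha}D_y)$ at the cost of $\mathcal{O}(h^{1-\alpha})$ and using $L_h\vh{v}{2}=o_{L^2}(1)$ finishes the argument.

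Your route --- show $(\vh{P}{2}-1)w_h=o_{L^2}(1)$ and invert $\vh{P}{2}-1$ microlocally on the support of $l$ after the same $\eta$-cutoff --- also works, but note two points. First, the relevant input for $(\vh{P}{2}-1)w_h=o_{L^2}(1)$ is \emph{only} the spectral localization $(h^2H_{\A,V}-1)v_h=\mathcal{O}_{L^2}(\rho)$, conjugated through $e^{\ii g_1}$ and $e^{G_2}$ and commuted past $\vartheta(h^{1-\alpha}D_y)$; the time equation \eqref{eq: w-h-eq} that you invoke contains the term $\ii h^{3/2}\partial_t w_h$, which is $\mathcal{O}_{L^2}(1)$ rather than $o_{L^2}(1)$ and so gives nothing here --- drop it from your argument. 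Second, once you have $(\vh{P}{2}-1)w_h=o_{L^2}(1)$, the parametrix step is indeed routine, but it is exactly re-proving, for $w_h$, the statement ``$K_h\vh{v}{2}=o_{L^2}(1)$ off $|\zeta|=1$'' that the paper gets for free from wavefront invariance. So your approach is sound but duplicates work already packaged in Propositions~\ref{prop: 1-average}(iii) and~\ref{prop: 2-average}(iii); the paper's commutator-plus-case-analysis is the more economical way to close.
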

\begin{proof}
 By definition, $w_h=\vartheta(h^{1-\alpha}D_y)\vh{v}{2}$. Since $v_h$ satisfies $v_h=\Pi_{\rho_h,h}u_h$, by Proposition \ref{prop: measure property}, more precisely by the property of the support of the semiclassical measure $\mu$, for any semiclassical operator $K_h$ with its symbol supported away from $\{(z,\zeta):|\zeta|\in(1-\rho_0,1+\rho_0)\}$, we have 
\[
K_hv_h=o_{L^2}(1).
\]
Similar to the proof for the invariance of $\WFm{m}(\vh{v}{2})$, we have $K_h\vh{v}{2}=o_{L^2}(1)$. Choosing $\rho_1=2\rho_0$, we have
\begin{gather*}
\supp(l)\cap \{|\xi|\in(1-\rho_1,1+\rho_1)\}=\emptyset.
\end{gather*}
Therefore, $L_hw_h=L_h\vartheta(h^{1-\alpha}D_y)\vh{v}{2}$. There are two cases:
\begin{enumerate}
    \item If $\supp(l)\cap \{\eta\in(-\rho_0,\rho_0)\}=\emptyset$, for $h^{\alpha}<\frac{\rho_0}{4}$, by standard symbolic calculus, $L_hw_h=L_h\vartheta(h^{1-\alpha}D_y)\vh{v}{2}=\bigO{h^{\infty}}{L^2}$;
    \item If $\supp(l)\cap \{\eta\in(-\rho_0,\rho_0)\}\neq\emptyset$, then $\supp(l)\cap \{(z,\zeta):|\zeta|\in(1-\rho_0,1+\rho_0)\}=\emptyset$, thanks to $|\xi|>1+2\rho_0$ or $|\xi|<1-2\rho_0$ in such case. Hence, by semiclassical calculus, 
    \begin{align*}
     L_hw_h=L_h\vartheta(h^{1-\alpha}D_y)\vh{v}{2}
     =\vartheta(h^{1-\alpha}D_y)L_h\vh{v}{2}+\bigO{h^{1-\alpha}}{L^2}.
    \end{align*}
    Using $\supp(l)\cap \{(z,\zeta):|\zeta|\in(1-\rho_0,1+\rho_0)\}=\emptyset$, we know that $L_h\vh{v}{2}=o(1)_{L^2}$, we obtain $L_hw_h=o(1)_{L^2}$.
\end{enumerate}
This ends the proof.
\end{proof}
The preceding lemma implies that the $\xi-$localization of $v_h$ could also provide $\xi-$localization of $w_h$. The next lemma shows that the $\eta-$cutoff of $w_h$ provides the $\eta-$localization of $v_h$. 
\begin{lemma}\label{lem: smallness-chain}
Let $\alpha\in(0,\frac{1}{2})$ and $\{\Tilde{v}_h(t,\cdot)\}_{h>0}\subset L^2(\T^2)$ be a bounded sequence, and $G_h=\Op_h(g)$ be a bounded semiclassical operator. Define $\Tilde{w}_h:=\vartheta(h^{1-\alpha}D_y)e^{G_h}\Tilde{v}_h$. Then,  if $\Tilde{w}_h=o_{L^2_{loc}(\R;L^2(\T^2))}(1)$, we have 
\[
\|\vartheta(h^{1-\alpha}D_y)\Tilde{v}_h\|_{L^2_{loc}(\R;L^2(\T^2))}=o(1).
\]
\end{lemma}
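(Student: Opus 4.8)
The approach is the commutator argument already used for the norm and wave-front invariance in Propositions~\ref{prop: 1-average}--\ref{prop: 2-average}: one transfers the $\eta$-localization through the (uniformly $L^2$-bounded) conjugation $e^{G_h}$. First I would write $\Tilde{v}_h=e^{-G_h}\bigl(e^{G_h}\Tilde{v}_h\bigr)$, insert the cut-off, and commute it past $e^{-G_h}$:
\begin{equation*}
\vartheta(h^{1-\alpha}D_y)\Tilde{v}_h=\vartheta(h^{1-\alpha}D_y)e^{-G_h}\bigl(e^{G_h}\Tilde{v}_h\bigr)=e^{-G_h}\,\Tilde{w}_h+\bigl[\vartheta(h^{1-\alpha}D_y),e^{-G_h}\bigr]\bigl(e^{G_h}\Tilde{v}_h\bigr).
\end{equation*}
Since $G_h$ is $L^2$-bounded uniformly in $h$, $\|e^{\pm G_h}\|_{\mathcal{L}(L^2)}\le e^{\|G_h\|_{\mathcal{L}(L^2)}}=\BigO{1}$, so the sequence $e^{G_h}\Tilde{v}_h$ is bounded in $L^2_{loc}(\R;L^2(\T^2))$ and $e^{-G_h}\Tilde{w}_h=o_{L^2_{loc}(\R;L^2(\T^2))}(1)$ by hypothesis; the whole matter thus reduces to proving $\bigl[\vartheta(h^{1-\alpha}D_y),e^{-G_h}\bigr]=o_{\mathcal{L}(L^2)}(1)$.

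For this I would first estimate the commutator $\bigl[\vartheta(h^{1-\alpha}D_y),G_h\bigr]$. Writing $\vartheta(h^{1-\alpha}D_y)=\Op_h\bigl(\vartheta(h^{-\alpha}\eta)\bigr)$, whose symbol depends only on $\eta$, the semiclassical expansion of its commutator with $G_h=\Op_h(g)$ has principal part $\tfrac{h}{\ii}\Op_h\bigl(\{\vartheta(h^{-\alpha}\eta),g\}\bigr)=\tfrac{h^{1-\alpha}}{\ii}\Op_h\bigl(\vartheta'(h^{-\alpha}\eta)\,\partial_yg\bigr)$. After the rescaling $\eta=h^{\alpha}\widetilde{\eta}$, the operator $\vartheta(h^{1-\alpha}D_y)$ becomes an honest $\hbar$-pseudodifferential operator in $y$ with $\hbar=h^{1-\alpha}$ and all symbols involved remain in a fixed symbol class, uniformly in $h$; the semiclassical calculus at scale $\hbar$ then yields $\bigl[\vartheta(h^{1-\alpha}D_y),G_h\bigr]=\BigO{\hbar}=\bigO{h^{1-\alpha}}{\mathcal{L}(L^2)}$, which is where $\alpha<1$ is used. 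Feeding this into the Duhamel identity
\begin{equation*}
\bigl[\vartheta(h^{1-\alpha}D_y),e^{-G_h}\bigr]=-\int_0^1 e^{-sG_h}\,\bigl[\vartheta(h^{1-\alpha}D_y),G_h\bigr]\,e^{-(1-s)G_h}\,\d s
\end{equation*}
(equivalently, Lemma~\ref{lem: exponential of bounded op}) together with $\|e^{-sG_h}\|_{\mathcal{L}(L^2)}\le e^{\|G_h\|_{\mathcal{L}(L^2)}}$ gives $\bigl\|[\vartheta(h^{1-\alpha}D_y),e^{-G_h}]\bigr\|_{\mathcal{L}(L^2)}=\BigO{h^{1-\alpha}}=o(1)$.

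Combining the two estimates, on every compact interval $K\subset\R$,
\begin{equation*}
\|\vartheta(h^{1-\alpha}D_y)\Tilde{v}_h\|_{L^2(K;L^2(\T^2))}\le e^{\|G_h\|_{\mathcal{L}(L^2)}}\,\|\Tilde{w}_h\|_{L^2(K;L^2(\T^2))}+\BigO{h^{1-\alpha}}\,\|\Tilde{v}_h\|_{L^2(K;L^2(\T^2))}=o(1),
\end{equation*}
which is the desired conclusion. The only genuinely delicate point is the commutator estimate $[\vartheta(h^{1-\alpha}D_y),G_h]=\bigO{h^{1-\alpha}}{\mathcal{L}(L^2)}$: one must juggle the two scales $h$ and $h^{1-\alpha}$ simultaneously, since each $\eta$-derivative falling on $\vartheta(h^{-\alpha}\eta)$ costs a power of $h^{-\alpha}$, but because $\alpha<1$ this loss is always dominated by the gain from the symbolic expansion, and everything else is routine bookkeeping with $L^2$-bounded operators.
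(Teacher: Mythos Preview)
Your proof is correct and follows essentially the same route as the paper: both arguments hinge on the commutator estimate $[\vartheta(h^{1-\alpha}D_y),G_h]=\bigO{h^{1-\alpha}}{\mathcal{L}(L^2)}$ and then pass to $e^{\pm G_h}$ via the Duhamel formula of Lemma~\ref{lem: exponential of bounded op}. The only cosmetic difference is that the paper starts from $\theta\Tilde{w}_h$ and conjugates $\vartheta(h^{1-\alpha}D_y)$ by $e^{-G_h}(\cdot)e^{G_h}$, whereas you start from $\vartheta(h^{1-\alpha}D_y)\Tilde{v}_h$ and commute past $e^{-G_h}$; the algebra and the key estimate are identical.
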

\begin{proof}
Let $\theta\in C^{\infty}_c(\R_t)$, thanks to Lemma \ref{lem: exponential of bounded op} 
\begin{align*}
\theta\Tilde{w}_h=&e^{G_h}e^{-G_h}\theta\vartheta(h^{1-\alpha}D_y)e^{G_h}\Tilde{v}_h\\
=&e^{G_h}\left(\theta\vartheta(h^{1-\alpha}D_y)\Tilde{v}_h-\int_0^1e^{-sG_h}\theta[G_h,\vartheta(h^{1-\alpha}D_y)]e^{sG_h}\Tilde{v}_h\d s\right)\\
=&e^{G_h}\theta\vartheta(hD_y)\Tilde{v}_h+\bigO{h^{1-\alpha}}{L^2}.
\end{align*}
The last equality comes from $\|[G_h,\vartheta(h^{1-\alpha}D_y)]\|_{\mathcal{L}(L^2(\T^2))}\lesssim h^{1-\alpha}$. Hence, $\theta\vartheta(h^{1-\alpha}D_y)\Tilde{v}_h=e^{-G_h}\theta\Tilde{w}_h+\bigO{h^{1-\alpha}}{L^2}$.
If $\Tilde{w}_h=o_{L^2_{loc}(\R;L^2(\T^2))}(1)$, for $\alpha\in(0,\frac{1}{2})$, we obtain
\[
\|\theta\vartheta(h^{1-\alpha}D_y)\Tilde{v}_h\|_{L^2(\R\times\T^2)}=o(1)+\BigO{h^{1-\alpha}}=o(1)
\]
leading to the conclusion of the lemma.
\end{proof}
Using this lemma, we derive the following corollary:
\begin{corollary}\label{cor: zeta0}
Given $\theta\in C^{\infty}_c(\R_t)$, if $\|\theta w_h\|_{L^2(\R_t\times\T^2)}=o(1)$, then 
\begin{equation*}
\theta(t)\vartheta(h^{1-\alpha}D_y)v_h=o(1)_{L^2(\R_t;L^2(\T^2))}.    
\end{equation*}
\end{corollary}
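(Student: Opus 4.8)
The plan is to unwind, one at a time, the two conjugations $v_h\mapsto \vh{v}{1}=e^{\ii g_1}v_h\mapsto \vh{v}{2}=e^{G_2}\vh{v}{1}$ that define $w_h$, transporting the smallness of $\theta w_h$ back to $\theta\,\vartheta(h^{1-\alpha}D_y)v_h$ through two applications of the ``chain'' Lemma \ref{lem: smallness-chain}. Recall from the discussion below \eqref{eq:defwh} that for $h$ small one has $w_h=\vartheta(h^{1-\alpha}D_y)\vh{v}{2}$ (the factor $\vartheta(hD_y/\epsilon_0)$ being identically $1$ on the support of $\vartheta(h^{1-\alpha}D_y)$), that $G_2=\Op_h(g_2(x,y,\xi)\vartheta(\eta)\eta)$ is bounded on $L^2$ (Proposition \ref{prop: 2-average}), and that multiplication by $e^{\ii g_1}$ equals $e^{G_h'}$ with $G_h'=\Op_h(\ii g_1(x,y))$, a bounded operator since $g_1\in C^\infty(\T^2;\R)$ (Proposition \ref{prop: 1-average}).

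First I would apply Lemma \ref{lem: smallness-chain} with $\Tilde{v}_h=\vh{v}{1}$ and $G_h=G_2$, so that $\Tilde{w}_h=\vartheta(h^{1-\alpha}D_y)e^{G_2}\vh{v}{1}=w_h$. Since $\vartheta(h^{1-\alpha}D_y)$ acts only in $z$, it commutes with multiplication by $\theta(t)$, so the hypothesis $\|\theta w_h\|_{L^2(\R_t\times\T^2)}=o(1)$ is precisely the smallness input required by the lemma (whose proof works test function by test function, hence delivers, for our fixed $\theta$, the corresponding $\theta$-localized conclusion). This gives $\|\theta\,\vartheta(h^{1-\alpha}D_y)\vh{v}{1}\|_{L^2(\R_t\times\T^2)}=o(1)$. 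Then I would apply the lemma once more, now with $\Tilde{v}_h=v_h$ and $G_h=\Op_h(\ii g_1(x,y))$, so that $\Tilde{w}_h=\vartheta(h^{1-\alpha}D_y)e^{\ii g_1}v_h=\vartheta(h^{1-\alpha}D_y)\vh{v}{1}$; the previous step supplies $\theta\Tilde{w}_h=o_{L^2}(1)$, and the lemma yields $\|\theta\,\vartheta(h^{1-\alpha}D_y)v_h\|_{L^2(\R_t\times\T^2)}=o(1)$, which is exactly the claim.

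I do not expect any genuine difficulty here; the corollary is a short bookkeeping argument and the only points needing a line of justification are routine. The lemma must be applicable to the generator $\Op_h(\ii g_1(x,y))$: this is immediate since $\ii g_1(x,y)$ is a (trivially $\zeta$-independent) bounded symbol, and in particular $\|[\Op_h(\ii g_1(x,y)),\vartheta(h^{1-\alpha}D_y)]\|_{\mathcal{L}(L^2)}=\BigO{h^{1-\alpha}}$ by the symbolic calculus (the Poisson bracket $\{\ii g_1(x,y),\vartheta(h^{-\alpha}\eta)\}$ being proportional to $h^{-\alpha}\vartheta'(h^{-\alpha}\eta)\,\partial_yg_1(x,y)$, hence of size $h^{-\alpha}$, and the commutator carrying an extra factor $h$), which is the one estimate that the proof of Lemma \ref{lem: smallness-chain} actually uses. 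The other point is the identity $w_h=\vartheta(h^{1-\alpha}D_y)\vh{v}{2}$ for small $h$, already recorded below \eqref{eq:defwh}, which is what allows the chain to start from $w_h$ rather than from $\vartheta(h^{1-\alpha}D_y)\vartheta(hD_y/\epsilon_0)\vh{v}{2}$.
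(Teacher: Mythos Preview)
Your proposal is correct and matches the paper's implicit argument: the paper records Corollary~\ref{cor: zeta0} immediately after Lemma~\ref{lem: smallness-chain} with the one-line justification ``Using this lemma, we derive the following corollary,'' and your two-step unwinding through $e^{G_2}$ and then $e^{\ii g_1}=e^{\Op_h(\ii g_1)}$ is exactly the intended bookkeeping. Your observation that the multiplication operator $\ii g_1$ fits the hypothesis $G_h=\Op_h(g)$ of the lemma (with the requisite commutator bound $\mathcal{O}(h^{1-\alpha})$) is the only point needing a word, and you supply it.
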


\medskip
\textbf{Decomposition of the low-frequency part and the high-frequency part.} We finally split the transversal frequency into two parts 
\begin{equation}
\label{eq: decomposition of w-h}
    w_h:=w^1_h+w^2_h,\qquad w^1_h:=\vartheta(2h^{1-\alpha}D_y)w_h,\quad w^2_h:=\left(1-\vartheta(2h^{1-\alpha}D_y)\right)w_h,
\end{equation}
for $\alpha\in(\frac{1}{4},\frac{1}{2})$.  In this decomposition,  $w^1_h$ corresponds to the transversal low-frequency part with $|hD_y|\leq h^{\alpha}$, while $w^2_h$ corresponds to the transversal high-frequency part with $h^{\alpha}\leq|hD_y|\leq 2h^{\alpha}$, with $\alpha\in (\frac{1}{4},\frac{1}{2})$.

\section{Transversal high-frequency part}\label{sec: transversal HF}
In this section, we focus on the transversal high-frequency part $w^2_h$ defined in \eqref{eq: decomposition of w-h}. We notice that $w^2_h$ satisfies the following equation
\begin{equation}\label{eq: high-frequency-eq}
\left\{
\begin{array}{l}
     (\ii h^{\frac{3}{2}}\pt-\vh{P}{2})w^2_h=f^2_h,  \\
     w^2_h|_{t=0}=\left(1-\vartheta(2h^{1-\alpha}D_y)\right)w^0_h,
\end{array}
\right.
\end{equation}
where 
\begin{equation}
f^2_h:=\left(1-\vartheta(2h^{1-\alpha}D_y)\right)f_h-[\vartheta(2h^{1-\alpha}D_y),\vh{P}{2}-R^{(2)}_{1+2\alpha}-R^{(2)}_{2+\alpha}]w_h-(R^{(2)}_{1+2\alpha}+R^{(2)}_{2+\alpha})w_h^2.
\end{equation}

As usual, we quantify the size of the source term $f^2_h$.
\begin{lemma}\label{lem: source-est-hf}
Let $\alpha\in (0,\frac{1}{2})$. Then, the following estimates hold
\begin{align}
f_h^{2,a}:= \left(1-\vartheta(2h^{1-\alpha}D_y)\right)f_h-[\vartheta(2h^{1-\alpha}D_y),\vh{P}{2}-R^{(2)}_{1+2\alpha}-R^{(2)}_{2+\alpha}]w_h&=\bigO{h^{2-\alpha}}{L^2}, \label{eq:estimatefh2a}\\
f_h^{2,b}:= (R^{(2)}_{1+2\alpha}+R^{(2)}_{2+\alpha})w_h^2&=\bigO{h^{1+2\alpha}}{L^2}.\label{eq:estimatefh2b}
\end{align}
\end{lemma}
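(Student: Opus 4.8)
The plan is to estimate $f_h^{2,a}$ and $f_h^{2,b}$ separately, combining in each case the bound \eqref{eq:boundfh} on $f_h$, commutator estimates of the same type as those already carried out in the proof of Proposition \ref{lem: f-h-error}, and the microlocalization properties of $\vh{v}{2}$, $w_h$ and $w_h^2$ inherited from Proposition \ref{prop: measure property} and Lemma \ref{lem: xi near 1}.

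\smallskip

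\emph{Estimate of $f_h^{2,a}$.} The term $\left(1-\vartheta(2h^{1-\alpha}D_y)\right)f_h$ is $\bigO{h^{2-\alpha}}{L^2}$ by \eqref{eq:boundfh} and the uniform $L^2$-boundedness of $1-\vartheta(2h^{1-\alpha}D_y)$. For the commutator term $[\vartheta(2h^{1-\alpha}D_y),\vh{P}{2}-R^{(2)}_{1+2\alpha}-R^{(2)}_{2+\alpha}]w_h$, the computations are verbatim those of the proof of Proposition \ref{lem: f-h-error}, the dilation factor $2$ in the cutoff being irrelevant for all symbolic estimates: since $\vartheta(2h^{1-\alpha}D_y)$ commutes with $h^2D_x^2+h^2D_y^2$, only the first-order (in $h$) part of $\vh{P}{2}$ contributes, giving $[\vartheta(2h^{1-\alpha}D_y),\vh{P}{2}]=\bigO{h^{2-\alpha}}{\mathcal{L}(L^2)}$; after rescaling at the scale $\hbar=h^{1-\alpha}$ one gets $[\vartheta(2h^{1-\alpha}D_y),R^{(2)}_{1+2\alpha}]=\bigO{h^{2+\alpha}}{\mathcal{L}(L^2)}$; and $[\vartheta(2h^{1-\alpha}D_y),R^{(2)}_{2+\alpha}]\vh{v}{2}=\BigO{h^{\infty}}$, because the composition symbol carries a factor $1-\psi(\xi)$ or $1-\vartheta(\eta)$ whose support is avoided by that of the frequency cutoff together with the microsupport of $\vh{v}{2}$, exactly as in the proof of Proposition \ref{lem: f-h-error}. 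Using $\|w_h\|_{L^2}\lesssim 1$, these combine to \eqref{eq:estimatefh2a}.

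\smallskip

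\emph{Estimate of $f_h^{2,b}$.} Here the key observation is that $w_h^2=\left(1-\vartheta(2h^{1-\alpha}D_y)\right)\vartheta(h^{1-\alpha}D_y)\vh{v}{2}$ has transversal frequency supported in the dyadic band $\frac{1}{2}h^{\alpha}\leq|hD_y|\leq 2h^{\alpha}$, so that in $R^{(2)}_{1+2\alpha}=h\Op_h\big(2\ii\p_y g_2(x,y,\xi)\vartheta(\eta)\eta^2\big)$ the quadratic weight $\eta^2$ only contributes $\BigO{h^{2\alpha}}$. Concretely, inserting a cutoff $\widetilde\vartheta(h^{1-\alpha}D_y)$ equal to $1$ on that band and rescaling $\eta=h^{\alpha}\widetilde\eta$ turns $R^{(2)}_{1+2\alpha}$ into a (mixed-scale) pseudodifferential operator whose symbol equals $h^{1+2\alpha}$ times $2\ii\p_y g_2(x,y,\xi)\,\vartheta(h^{\alpha}\widetilde\eta)\,\widetilde\eta^2\widetilde\vartheta(\widetilde\eta)$, and the latter, together with all its derivatives, is $\BigO{1}$ uniformly in $h$ because $\widetilde\eta^2\widetilde\vartheta(\widetilde\eta)$ is compactly supported; the $L^2$-continuity of semiclassical pseudodifferential operators then yields $R^{(2)}_{1+2\alpha}w_h^2=\bigO{h^{1+2\alpha}}{L^2}$. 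For $R^{(2)}_{2+\alpha}w_h^2$ one uses that $w_h^2$ is microlocalized in a fixed small neighborhood of $\{|\xi|=1,\ \eta=0\}$, on which the factors $1-\psi(\xi)$, $1-\vartheta(\eta)$ and their $\p_x$-derivatives occurring in $R^{(2)}_{2+\alpha}$ all vanish, whence $R^{(2)}_{2+\alpha}w_h^2=\BigO{h^{\infty}}$. Adding these gives \eqref{eq:estimatefh2b}.

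\smallskip

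The step requiring most care is the rescaling argument for $R^{(2)}_{1+2\alpha}w_h^2$: one must justify the insertion of the band cutoff $\widetilde\vartheta(h^{1-\alpha}D_y)$ (harmless up to $\BigO{h^{\infty}}$ since $w_h^2$ is already supported in that band), check that after $\eta=h^{\alpha}\widetilde\eta$ the factor $\widetilde\eta^2\vartheta(h^{\alpha}\widetilde\eta)\widetilde\vartheta(\widetilde\eta)$ and its derivatives are bounded uniformly as $h\to0^+$ — which is precisely where $\eta^2$ is converted into the gain $h^{2\alpha}$ — and then apply $L^2$-boundedness at the mixed scales ($h$ in $(x,\xi)$, $h^{1-\alpha}$ in $(y,\eta)$). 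Everything else, namely the commutator bounds and the microlocalization statements for the $R^{(2)}_{2+\alpha}$-terms, follows from the support considerations already spelled out in the proofs of Propositions \ref{prop: measure property} and \ref{lem: f-h-error} and of Lemma \ref{lem: xi near 1}. Finally, note that \eqref{eq:estimatefh2a} and \eqref{eq:estimatefh2b} hold for every $\alpha\in(0,1/2)$, and that they produce acceptable remainders $o(h^{3/2})$ precisely when in addition $\alpha>1/4$, consistently with the range of $\alpha$ fixed around \eqref{eq:defwh}.
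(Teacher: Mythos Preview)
Your proof is correct and follows essentially the same approach as the paper's own argument: the paper likewise refers back to the computation in Proposition \ref{lem: f-h-error} for the commutator estimate \eqref{eq:estimatefh2a}, and for \eqref{eq:estimatefh2b} performs the same rescaling $\hbar=h^{1-\alpha}$ to extract the $h^{1+2\alpha}$ gain from the $\eta^2$ factor in $R^{(2)}_{1+2\alpha}$ together with the dyadic cutoff, and invokes the disjoint-support/microlocalization argument to show $R^{(2)}_{2+\alpha}w_h^2=\bigO{h^{\infty}}{L^2}$. The only cosmetic difference is that you insert an auxiliary band cutoff $\widetilde\vartheta$, whereas the paper uses directly the product $(1-\vartheta(2\cdot))\vartheta(\cdot)$ at the $\hbar$-scale; this is immaterial.
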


In particular, we can deduce the following result.
\begin{corollary}
For $\alpha\in (\frac{1}{4},\frac{1}{2})$, $(R^{(2)}_{1+2\alpha}+R^{(2)}_{2+\alpha})w_h^2=\bigO{h^{1+2\alpha}}{L^2}=o_{L^2}(h^{\frac{3}{2}})$ so
\begin{equation}
f^2_h = o_{L^2}(h^{\frac{3}{2}}).
\end{equation}
\end{corollary}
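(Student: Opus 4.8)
The plan is to combine the two estimates \eqref{eq:estimatefh2a} and \eqref{eq:estimatefh2b} from Lemma \ref{lem: source-est-hf} with the extra gain obtained once we restrict to $\alpha\in(\tfrac14,\tfrac12)$, and then invoke the $\mathcal{O}(h^2)$ bound \eqref{eq:defreminder2} for $R_h$. Concretely, by Lemma \ref{lem: source-est-hf} we already have $f_h^{2,a}=\bigO{h^{2-\alpha}}{L^2}$; since $\alpha<\tfrac12$ we have $2-\alpha>\tfrac32$, so $f_h^{2,a}=o_{L^2}(h^{3/2})=r_h$. Likewise $f_h^{2,b}=(R^{(2)}_{1+2\alpha}+R^{(2)}_{2+\alpha})w_h^2=\bigO{h^{1+2\alpha}}{L^2}$, and the condition $\alpha>\tfrac14$ gives $1+2\alpha>\tfrac32$, hence $f_h^{2,b}=o_{L^2}(h^{3/2})$ as well. (This is exactly where the lower endpoint $\alpha>\tfrac14$ of the range fixed in \eqref{eq:defwh} is used, as flagged in Remark \ref{rem: index-alpha}.)

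Next I would account for the discrepancy between the symbol $R^{(3)}_{1+3\alpha}$ appearing in the statement of the corollary and $R^{(2)}_{1+2\alpha}$ appearing in \eqref{eq:estimatefh2b}: both are of the schematic form $h\Op_h(\text{(smooth)}\,\vartheta(\eta)\eta^2)$, and on the high-frequency piece $w_h^2$ one has $|hD_y|\sim h^{\alpha}$, so each factor $\eta$ applied to $w_h^2$ costs $h^{\alpha}$; thus $R^{(2)}_{1+2\alpha}w_h^2=\bigO{h^{1+2\alpha}}{L^2}$ and similarly $R^{(3)}_{1+3\alpha}w_h^2$ (which carries an extra $\eta$, or an extra $h^{\alpha}$ from the cutoff derivative) is of size $\bigO{h^{1+3\alpha}}{L^2}$, in any case $o_{L^2}(h^{3/2})$ for $\alpha>\tfrac14$. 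The term $R^{(2)}_{2+\alpha}$ is of order $h^{1+\alpha}$ in $\mathcal{L}(L^2)$ by \eqref{eq:defRalpha} once the factor of $\eta$ it contains is evaluated on $w_h^2$; again this is $o(h^{3/2})$ when $\alpha>\tfrac12-\tfrac12$, i.e.\ for all admissible $\alpha$, and in fact one can do better using the support properties of $\psi$ and $\vartheta$ exactly as in the proof of Proposition \ref{lem: f-h-error}.

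Finally, assembling the pieces: $f^2_h = f_h^{2,a} - f_h^{2,b}$ (up to the commutator reshuffling already performed in defining $f^2_h$), and the $R_h=\bigO{h^2}{\mathcal{L}(L^2)}$ contributions are trivially $o(h^{3/2})$. Summing the three contributions, each of which has been shown to be $o_{L^2}(h^{3/2})$, yields $f^2_h=o_{L^2}(h^{3/2})$, which is the claim.

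The only genuinely delicate point is the bookkeeping of powers of $\eta$ versus powers of $h^{\alpha}$ on $w_h^2$ — i.e.\ making precise that every transversal derivative hitting the high-frequency localized profile $w_h^2$ really does produce a factor $h^{\alpha}$ and not merely $O(1)$ — together with keeping track of which operators act on $w_h$ versus on $w_h^2$ versus on $\vh{v}{2}$, so that the $\BigO{h^\infty}$ gains from the mismatched cutoffs $\psi(\xi)$, $\vartheta(\eta)$ (as in Proposition \ref{lem: f-h-error}) can be used where needed. All of this is routine semiclassical calculus once one has committed to the scaling $\hbar = h^{1-\alpha}$, but it is where an error would most plausibly creep in; the rest is just arithmetic on the exponents, constrained by $\alpha\in(\tfrac14,\tfrac12)$.
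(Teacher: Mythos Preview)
Your proposal is correct and follows exactly the paper's approach: the corollary is an immediate consequence of the two bounds in Lemma~\ref{lem: source-est-hf} together with the exponent arithmetic $2-\alpha>\tfrac32$ (from $\alpha<\tfrac12$) and $1+2\alpha>\tfrac32$ (from $\alpha>\tfrac14$). One small slip: you write that $R^{(2)}_{2+\alpha}w_h^2=\mathcal{O}(h^{1+\alpha})$ is $o(h^{3/2})$ ``when $\alpha>\tfrac12-\tfrac12$'', but $h^{1+\alpha}=o(h^{3/2})$ would require $\alpha>\tfrac12$, which is outside the admissible range---you need (and immediately invoke) the support mismatch of $\psi,\vartheta$ to get the actual $\mathcal{O}(h^{\infty})$ bound here, just as the paper does. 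Your discussion of $R^{(3)}_{1+3\alpha}$ versus $R^{(2)}_{1+2\alpha}$ is over-engineered: this is simply a typographical inconsistency in the paper, and the intended operator throughout is $R^{(2)}_{1+2\alpha}$ as defined in \eqref{eq:defR2alpha}.
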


\begin{proof}
The proof of the first statement is similar to the proof of Lemma \ref{lem: f-h-error}. We omit some details here.

For $\vnh{R}{2}_{1+2\alpha}w_h^2$, setting $\hbar=h^{1-\alpha}$, we have
\begin{align*}
\vnh{R}{2}_{1+2\alpha}w_h^2&=h\Op_h(2\ii \p_yg_2(x,y,\xi)\vartheta(\eta)\eta^2)\left(1-\vartheta(2h^{1-\alpha}D_y)\right)w_h\\
&=h^{1+2\alpha}\Op_{\hbar}(2\ii \p_yg_3(x,y,h^{\alpha}\xi)\vartheta(h^{\alpha}\eta)\eta^2)\Op_{\hbar}\left(1-\vartheta(2\eta)\right)\vartheta(h^{1-\alpha}D_y)\vh{v}{2}\\
&=\bigO{h^{1+2\alpha}}{L^2}.
\end{align*}
Moreover, for $\vnh{R}{2}_{2+\alpha}w_h^2$, we have 
\begin{gather*}
h\Op_h\left((1-\psi(\xi))\left(\langle A_2^{(1)}\rangle(y)-A_2^{(1)}(x,y)\right)\eta\right)\left(1-\vartheta(2h^{1-\alpha}D_y)\right)\vartheta(h^{1-\alpha}D_y)\vh{v}{2}=\bigO{h^{\infty}}{L^2},\\
h\Op_h\left(\ii\{\xi^2,g_2(x,y,\xi)(1-\vartheta(\eta))\eta\}\right)\left(1-\vartheta(2h^{1-\alpha}D_y)\right)\vartheta(h^{1-\alpha}D_y)\vh{v}{2}=\bigO{h^{\infty}}{L^2}.
\end{gather*}
Therefore, we have 
\[
(R^{(2)}_{1+2\alpha}+R^{(2)}_{2+\alpha})w_h^2=\bigO{h^{1+2\alpha}}{L^2}+\bigO{h^{2}}{L^2}=\bigO{h^{1+2\alpha}}{L^2}.
\]
This concludes the proof.
\end{proof}
\begin{remark}\label{rem: index-alpha}
As we can observe in the preceding lemma, $\alpha>\frac{1}{4}$ is a technical condition to make $R^{(2)}_{1+2\alpha}$ as an error term of order $o_{L^2}(h^{\frac{3}{2}})$. Hence, by a two-step normal form reduction, we obtain a good simplified equation for the transversal high frequency part $w^2_h$
\begin{equation}\label{eq: w^2-simplified eq}
(\ii h^{\frac{3}{2}}\pt-\vh{P}{2})w^2_h=r_h.
\end{equation}
Indeed, if $\alpha\leq\frac{1}{4}$, we could perform more delicate normal form averaging processes to obtain \eqref{eq: w^2-simplified eq}. In order to present our proof in a more comprehensive way, we choose $\alpha>\frac{1}{4}$.
\end{remark}

Our next proposition concerns the energy estimates of $w^2_h$.
\begin{proposition}\label{prop: energy estimates}
Let $\alpha\in(\frac{1}{4},\frac{1}{2})$.
Then, we have
\begin{equation}
    \| w^2_h\|_{L^2((0,T)\times\T^2)}=o(1).
\end{equation}
\end{proposition}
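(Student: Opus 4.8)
The plan is to exploit the equation \eqref{eq: w^2-simplified eq} satisfied by $w_h^2$, namely $(\ii h^{3/2}\pt - \vh{P}{2})w_h^2 = r_h = o_{L^2}(h^{3/2})$, together with the transversal frequency localization $\tfrac12 h^{\alpha} \leq |hD_y| \leq 2h^{\alpha}$, to derive a positive lower bound on the time a wave packet spends outside the control region, forcing $\|w_h^2\|_{L^2((0,T)\times\T^2)}$ to be small. The key point is that on the support of $w_h^2$ the operator $\vh{P}{2} = h^2D_x^2 + h^2D_y^2 - 2h\langle A_1\rangle(y)hD_x - 2h\Op_h(\langle A_2^{(1)}\rangle(y)\eta)$ is, after Fourier transform in $x$ (which is legitimate since the $x$-dependence has been averaged away in $\vh{P}{2}$) and freezing the $\xi$-frequency near $1$, essentially the $\hbar$-semiclassical Schrödinger operator $\hbar^2 D_y^2 \mp 2A_1(y)$ with $\hbar = h^{1/2}$, up to lower-order terms. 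The second-microlocal scale $\hbar^{-1} = h^{-1/2}$ is precisely the scale at which $h^2D_y^2$ and $2h\langle A_1\rangle(y)hD_x$ balance, because $|hD_y|\sim h^{\alpha}$ with $\alpha<1/2$ is much larger than $h^{1/2}$, so in fact the kinetic term $h^2D_y^2$ \emph{dominates} the magnetic term $h\langle A_1\rangle(y)\cdot hD_x = O(h)$ on this high-frequency piece.

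Concretely, I would first rescale time by $s = t/h^{1/2}$ so that the propagator is $e^{-\ii s \vh{P}{2}/h^{3/2}}$, and introduce a second semiclassical parameter $\hbar := h^{\alpha}$ adapted to the transversal frequency, writing $hD_y = \hbar\cdot(\hbar^{-1}hD_y)$ with $\hbar^{-1}hD_y$ of size $O(1)$. Then $h^2D_y^2 = \hbar^2\cdot(\text{bounded positive operator in }D_y)$ while the magnetic contribution $2h\langle A_1\rangle(y)hD_x$ is of size $O(h) = O(\hbar^{1/\alpha})$, which is $o(\hbar^2)$ since $\alpha < 1/2$ gives $1/\alpha > 2$. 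Hence on $w_h^2$ the equation is dominated by the free transversal Laplacian $h^2D_y^2$ at scale $\hbar = h^{\alpha}$. A wave packet microlocalized at $|hD_y|\sim h^{\alpha}$ therefore travels in the $y$-direction with speed $\sim h^{\alpha}$ on the original time scale, hence with speed $\sim h^{\alpha}/h^{1/2} = h^{\alpha - 1/2} \to \infty$ on the rescaled time scale $s = t/h^{1/2}$ — equivalently, on the original time interval $[0,T]$ it sweeps out the whole transversal torus $\T_y$ many times. Since $\widetilde\omega = \T_x\times\omega_y$ with $\omega_y$ a fixed open set, and by Corollary \ref{cor:supportchi1} / point (ii) of Proposition \ref{prop: 2-average} we know $\chi^{(2)}\vh{v}{2} = o_{L^2}(1)$ with $\chi^{(2)}=1$ on a fixed open set, a propagation/positive-commutator argument (testing the equation against $\Op_h(b(y,hD_y/\hbar))$ for a suitable escape function $b$ built from the free transversal flow, or simply an Egorov argument up to time $O(h^{1/2})$) transports the smallness from $\chi^{(2)}$ to the whole torus, giving $\|w_h^2\|_{L^2((0,T)\times\T^2)} = o(1)$.

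The main obstacle — and where most of the work lies — is controlling the source term and the non-commutativity carefully enough. One must check that the remainder $r_h = o_{L^2}(h^{3/2})$ is genuinely negligible relative to the energy identity at the rescaled scale: the energy method against the test operator produces a commutator term of size (symbol derivative)$\times h^{1-\alpha}$ and a source-coupling term of size $h^{-3/2}\cdot\|r_h\|\cdot(\text{time length})$, and one needs $\alpha > 1/4$ precisely so that $o(h^{3/2})$ beats the $h^{1+2\alpha}$-type errors from $R^{(2)}_{1+2\alpha}$ (cf. Remark \ref{rem: index-alpha}); all of these must be shown to vanish in the limit. A secondary subtlety is that the escape/propagation argument must be carried out at the \emph{second} microlocal scale $\hbar = h^{\alpha}$ rather than at scale $h$, so one should either invoke a two-parameter (second-microlocal) pseudodifferential calculus or, more elementarily, Fourier-transform in $x$, diagonalize in $\xi$ near $\xi \approx \pm 1$, and reduce to a one-dimensional $\hbar$-semiclassical transport estimate in $y$ for each fixed frequency slice, then reassemble. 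I expect the cleanest route is the latter: freeze $\xi$, observe that the $y$-dynamics is (to leading order) the free one at speed $\sim h^{\alpha}$, apply a standard one-dimensional semiclassical observability/propagation lemma on $\T_y$ with the fixed observation set $\omega_y$, and sum over $\xi$.
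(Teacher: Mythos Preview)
Your physical picture is correct: on $w_h^2$ the transversal kinetic term $h^2D_y^2\sim h^{2\alpha}$ dominates the magnetic contribution $2h\langle A_1\rangle hD_x\sim h$ (since $2\alpha<1$), so the $y$-dynamics is essentially free and fast, and a positive-commutator argument should transport the smallness on $\omega_y$ to the whole torus. This is indeed what the paper does. However, your proposal stays at the level of heuristics and offers several parallel strategies without committing to one; the paper's implementation is more elementary and more specific than any of the routes you sketch.

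Concretely, the paper tests the equation \eqref{eq: w^2-simplified eq} against the single classical multiplier $\theta_T^2(t)\varphi^2(y)\,y\,D_y$, where $\varphi\in C^\infty(\T_y)$ equals $1$ on $\T_y\setminus I_y$ and is supported in $\T_y\setminus\widetilde I_y$ (so $\operatorname{supp}\varphi'\subset\omega_y$). The commutator with $h^2D_y^2$ produces the coercive term $\|\theta_T\varphi\, h\partial_y w_h^2\|_{L^2}^2$ plus a term carrying $\varphi\varphi'$, hence localized inside $\omega_y$ where smallness is known from Proposition~\ref{prop: 2-average}(ii). All lower-order contributions (the magnetic terms, the $\partial_t$ term, the source $f_h^2$) are shown to be $O(h^{2\alpha})\cdot o(1)$, using in particular the splitting $f_h^2=f_h^{2,a}+f_h^{2,b}$ of Lemma~\ref{lem: source-est-hf} and the restriction $\alpha>\tfrac14$. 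The final and essential step, which your sketch does not mention, is a Bernstein-type inequality: because $w_h^2$ is spectrally supported where $|hD_y|\geq \tfrac12 h^{\alpha}$, one has $\|\varphi\,h\partial_y w_h^2\|_{L^2}^2\gtrsim h^{2\alpha}\|\varphi w_h^2\|_{L^2}^2$ (up to an $\omega_y$-localized error), which converts the derivative estimate into the desired $L^2$ smallness. No second-microlocal calculus or Egorov theorem is invoked.

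Your alternative route (Fourier in $x$, reduce to a one-dimensional $\hbar$-semiclassical problem, apply a 1D observability lemma) is viable in principle but is essentially the machinery the paper reserves for the \emph{low}-frequency part in Section~\ref{sec: transversal LF}; for the high-frequency piece the direct multiplier computation above is shorter and avoids having to track a family of 1D problems indexed by $\xi$. If you want to turn your sketch into a proof, the quickest path is to adopt the paper's multiplier $\varphi^2(y)yD_y$ and add the Bernstein step at the end.
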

\begin{proof}
We use a commutator method based on the
following basic identity
\[
[yD_y,D_x^2+D_y^2]=2iD_y^2.
\]
Before presenting the concrete computation, we first introduce some cutoff functions. Let $\theta_T\in C^{\infty}_c(\R)$ be a time cutoff such that $\supp\theta_T\subset[0,T]$ and $\varphi\in C^{\infty}_c(\T_y)$ be a space cutoff defined only on $y-$direction such that $\varphi=1$ on $\T_y\setminus I_y$ and $\supp\varphi\subset\T_y\setminus\Tilde{I}_y$.

We consider a localized vector field in the form $\theta^2_T(t)\varphi^2(y)yD_y$. Therefore, we have the following commutator computation
\begin{align*}
[\theta^2_T(t)\varphi^2(y)yD_y, &\; \ii h^{\frac{3}{2}}\pt+h^2\Delta]\\=&-2\ii\theta^2_T(t)\varphi^2(y)h^2D_y^2+\theta^2_T(t)[h^2D_y^2,\varphi^2]yD_y-2\ii h^{\frac{3}{2}}\theta_T'(t)\theta_T(t)\varphi^2(y)yD_y\\
=&\underbrace{-2\ii\theta^2_T(t)\varphi^2(y)h^2D_y^2-4\ii\theta^2_T(t)\varphi'(y)\varphi(y)yh^2D_y^2}_{\mathrm{principal\;part}}\\
-&h\big(2\theta^2_T(t)\varphi^2(y)+\theta^2_T(t)(2(\varphi'(y))^2+2\varphi(y)\varphi''(y))y
\big)hD_y-2\ii h^{\frac{1}{2}}\theta_T(t)\theta_T'(t)\varphi^2(y)yhD_y.
\end{align*}  
Moreover, for the lower order terms, we have
\begin{align*}
& [\varphi^2(y)yD_y, 2\langle A_1\rangle(y)hD_x]=-2\ii\varphi^2(y)y\langle A_1\rangle'(y)hD_x,\\
&[\varphi^2(y)yD_y, \langle \vnh{A}{1}_2\rangle hD_y+hD_y\langle \vnh{A}{1}_2\rangle]=-\ii\varphi^2(y)y\langle \vnh{A}{1}_2\rangle'(y)hD_y-ihD_y\varphi^2(y)y\langle \vnh{A}{1}_2\rangle'(y)\\
&\hspace{5.2cm}+\ii\langle \vnh{A}{1}_2\rangle(\varphi^2(y)y)'hD_y+\ii(\varphi^2(y)y)'hD_y\langle \vnh{A}{1}_2\rangle.
\end{align*}
In summary, from \eqref{eq:defP2}, we have the full commutator $[\theta^2_T(t)\varphi^2(y)yD_y,\ii h^{\frac{3}{2}}\pt-\vh{P}{2}]$ as follows
\begin{align}\label{eq1:commutator}
&[\theta^2_T(t)\varphi^2(y)yD_y,\ii h^{\frac{3}{2}}\pt-\vh{P}{2}]\notag  \\
=&-2\ii\theta^2_T(t)(\varphi^2(y)+2\varphi(y)\varphi'(y)y)h^2D_y^2+\mathcal{O}(h)\Op_h(r)-2\ii h^{\frac{1}{2}}\theta_T(t)\theta_T'(t)\varphi^2(y)yhD_y,
\end{align}
where $r(z,\zeta)\in S_{\mathrm{hom}}^1$. Recall that $S_{\mathrm{hom}}^1$ denotes the homogeneous symbol class, 
\begin{equation*}
S_{\mathrm{hom}}^1:=\{a\in S^1, a(z,\zeta) \text{ is homogeneous of order }1 \text{ with respect to }\zeta\}.
\end{equation*}
In particular, $\Op_h(r)v_h=\bigO{1}{L^2}$. In fact, $v_h$ is well-localized due to $v_h=\widetilde{\chi}(\frac{h^2H_{\A,V}-1}{\rho})v_h$. Then, $\Op_h(r)v_h=\Op_h(r)\widetilde{\chi}(\frac{h^2H_{\A,V}-1}{\rho})v_h$ is bounded in $L^2(\T^2)$.

Now let us turn to the energy estimates of the equation \eqref{eq: high-frequency-eq}. Multiplying \eqref{eq: high-frequency-eq} by $\theta^2_T(t)\varphi^2(y)y\overline{D_yw^2_h}$ and integration over $\R_t\times\T_z^2$ and taking the imaginary part, on the left hand side, we obtain
\begin{align*}
&\Im\int_{\R\times\T^2}(\ii h^{\frac{3}{2}}\pt-\vh{P}{2})w^2_h\theta_T(t)\varphi(y)y\overline{D_yw^2_h}\d z\d t\\
=&\frac{1}{2\ii}\poscals{[\theta^2_T(t)\varphi^2(y)yD_y,\ii h^{\frac{3}{2}}\pt-\vh{P}{2}]w^2_h}{w^2_h}_{L^2(\R\times\T^2)}-\frac{1}{2}\poscals{\theta^2_T(t)(\varphi^2(y)+2\varphi(y)\varphi'(y)y)r_h}{w^2_h}_{L^2(\R\times\T^2)}\\
=&\poscals{-\theta^2_T(t)\left(\varphi^2(y)+2\varphi(y)\varphi'(y)y\right)h^2D_y^2w^2_h}{w^2_h}_{L^2(\R\times\T^2)}-\poscals{h^{\frac{1}{2}}\theta_T(t)\theta_T'(t)\varphi^2(y)yhD_yw^2_h}{w^2_h}_{L^2(\R\times\T^2)}+\mathcal{O}(h).
\end{align*}
Integrating by parts, we obtain
\begin{align*}
-\Im\int_{\R\times\T^2}(\ii h^{\frac{3}{2}}\pt-\vh{P}{2})w^2_h\theta_T(t)\varphi(y)y\overline{D_yw^2_h}&\d z\d t\\
=\int_{\R\times\T^2}|\theta_T(t)\varphi(y)h\p_yw^2_h|^2&\d z\d t+2\int_{\R\times\T^2}\varphi(y)\varphi'(y)y|\theta_T(t)h\p_yw^2_h|^2\d z\d t\\
-&\poscals{h^{\frac{1}{2}}\theta_T(t)\theta_T'(t)\varphi^2(y)yhD_yw^2_h}{w^2_h}_{L^2(\R\times\T^2)}+\mathcal{O}(h).
\end{align*}
Now let us turn to the right hand side, we obtain from \eqref{eq: decomposition of w-h}, \eqref{eq:estimatefh2a} and \eqref{eq:estimatefh2b} that
\begin{gather*}
\Im\int_{\R\times\T^2}f^{2,a}_h\theta^2_T(t)\varphi^2(y)y\overline{D_yw^2_h}\d z\d t
=h\Im\int_{\R\times\T^2}\frac{f^{2,a}}{h^{2-\alpha}}\theta^2_T(t)\varphi^2(y)y\overline{h^{1-\alpha}D_yw^2_h}\d z\d t
=\mathcal{O}(h)\\
\Im\int_{\R\times\T^2}f^{2,b}\theta^2_T(t)\varphi^2(y)y\overline{D_yw^2_h}\d z\d t
=h^{3\alpha}\Im\int_{\R\times\T^2}\frac{f^{2,b}}{h^{1+2\alpha}}\theta^2_T(t)\varphi^2(y)y\overline{h^{1-\alpha}D_yw^2_h}\d z\d t
=\mathcal{O}(h^{3\alpha}).
\end{gather*}
Hence, we deduce that
\begin{equation}\label{eq: rhs-w-2}
\Im\int_{\R\times\T^2}f^2_h\theta^2_T(t)\varphi^2(y)y\overline{D_yw^2_h}\d z\d t
=\mathcal{O}(h^{3\alpha}+h).
\end{equation}
As a consequence, we derive that
\begin{align*}
&\int_{\R\times\T^2}|\theta_T(t)\varphi(y)h\p_yw^2_h|^2\d z\d t+2\int_{\R\times\T^2}\varphi(y)\varphi'(y)y|\theta_T(t)h\p_yw^2_h|^2\d z\d t\\
&\leq |\poscals{h^{\frac{1}{2}}\theta_T(t)\theta_T'(t)\varphi^2(y)yhD_yw^2_h}{w^2_h}_{L^2(\R\times\T^2)}|+|\Im\int_{\R\times\T^2}f^2_h\theta^2_T(t)\varphi^2(y)y\overline{D_yw^2_h}\d z\d t|+Ch\\
&\leq Ch\|\theta_T'\varphi w^2_h\|^2_{L^2(\R\times\T^2)}+\frac{1}{2}\|\theta_T\varphi h\p_yw^2_h\|^2_{L^2(\R\times\T^2)}+C(h+h^{3\alpha})\\
&\leq \frac{1}{2}\|\theta_T\varphi h\p_yw^2_h\|^2_{L^2(\R\times\T^2)}+C(h+h^{3\alpha}).
\end{align*}
This implies that
\begin{equation}\label{eq: H^1-est-1}  
\begin{aligned}
\int_{\R\times\T^2}|\theta_T(t)\varphi(y)h\p_yw^2_h|^2\d z\d t&\leq 4\left|\int_{\R\times\T^2}\varphi(y)\varphi'(y)y|\theta_T(t)h\p_yw^2_h|^2\d z\d t\right|+C(h+h^{3\alpha})\\
&\leq Ch^{2\alpha}\left(\|h^{1-\alpha}\p_yw^2_h\|^2_{L^2((0,T)\times\supp(\varphi'))}+h^{1-2\alpha}+h^{\alpha}\right).
\end{aligned}
\end{equation}
Thanks to the frequency cut-off of $w^2_h$, we have 
\begin{equation}\label{eq: bernstein-ineq}
\|h^{1-\alpha}\p_y(\varphi w^2_h)\|_{L^2(\T^2)}^2\gtrsim\|\varphi w^2_h\|_{L^2(\T^2)}^2 - \|h^{1-\alpha} w^2_h\|^2_{L^2(\supp(\varphi'))}.
\end{equation}
Hence, from \eqref{eq: bernstein-ineq} we know that
\begin{align*}
h^{2\alpha}\|\theta_T\varphi w^2_h\|_{L^2(\R\times\T^2)}^2&\leq \|\theta_Th\p_y(\varphi w^2_h)\|_{L^2(\R\times\T^2)}^2 + \|\theta_T h w^2_h\|^2_{L^2(\R\times\supp(\varphi'))}.\\
&\leq C\left(
\int_{\R\times\T^2}|\theta_T(t)\varphi(y)h\p_yw^2_h|^2\d z\d t+h^2\int_{\R\times\T^2}|\theta_T(t)\varphi'(y)w^2_h|^2\d z\d t\right)\\
&\leq C\left(
\int_{\R\times\T^2}|\theta_T(t)\varphi(y)h\p_yw^2_h|^2\d z\d t+h^2\right).
\end{align*}
Therefore, by the preceding estimate and \eqref{eq: H^1-est-1}, we conclude that
\begin{align*}
h^{2\alpha}\|\theta_T\varphi w^2_h\|_{L^2(\R\times\T^2)}^2\leq Ch^{2\alpha}\left(\|h^{1-\alpha}\p_yw^2_h\|^2_{L^2((0,T)\times\supp(\varphi'))}+h^{1-2\alpha}+h^{\alpha}\right)+C h^2.
\end{align*}
Using the support of $\supp(\varphi')\subset\omega$, $\|h^{1-\alpha}\p_yw^2_h\|^2_{L^2((0,T)\times\supp(\varphi'))}=o(1)$, which implies that
\[
h^{2\alpha}\|\theta_T\varphi w^2_h\|_{L^2(\R\times\T^2)}^2=o(h^{2\alpha}),\mbox{ i.e., }\|\theta_T\varphi w^2_h\|_{L^2(\R\times\T^2)}^2=o(1).
\]
We finally conclude that $\| w^2_h\|_{L^2((0,T)\times\T^2)}=o(1)$ due to the support of $\varphi$.
\end{proof}
\begin{remark}\label{rmk: optimal order-hf}
In general, we could relax many of the hypotheses in Proposition \ref{prop: energy estimates}.
In fact, let $f^2_h=o_{L^2}(h^{\beta})$ with $\beta\geq\frac{3}{2}$ and $\alpha<\frac{1}{2}$. After multiplying \eqref{eq: high-frequency-eq} with $\theta^2_T(t)\varphi^2(y)y\overline{D_yw^2_h}$, \eqref{eq: rhs-w-2} is of order $o(h^{\alpha+\beta-1})$ by
\begin{equation*}
h^{\alpha+\beta-1}\Im\int_{\R\times\T^2}\frac{f_h^2}{h^{\beta}}\theta^2_T(t)\varphi(y)y\overline{h^{1-\alpha}D_yw^2_h}\d z\d t\\
= o(h^{\alpha+\beta-1}).
\end{equation*}
Then, due to the support of $\varphi$, \eqref{eq: H^1-est-1} becomes
\begin{equation*}
\|\theta_T\varphi h\p_yw^2_h\|^2_{L^2}\leq C\left(h^{2\alpha}\left|\int_{\R\times\T^2}\varphi(y)\varphi'(y)y|\theta_T(t)h^{1-\alpha}\p_yw^2_h|^2\d z\d t\right|+h+h^{\alpha+\beta-1}\right)=o(h^{2\alpha})+o(h^{\alpha+\beta-1}).
\end{equation*}
Therefore, $\|\theta_T\varphi w^2_h\|_{L^2(\R\times\T^2)}^2=o(1)+o(h^{\beta-\alpha-1})=o(1)$.
\end{remark}
\begin{remark}\label{rmk: h-order and alpha-1/2}
In the proof of Proposition \ref{prop: energy estimates}, especially in the estimates \eqref{eq: H^1-est-1}, in order to conclude we need $\|w^2_h\|_{L^2([0,T]\times\T^2)}$ is $\BigO{h^{1-2\alpha}+h^{\alpha}}=o(1)$. This implies that $\alpha\in (0,\frac{1}{2})$.
\end{remark}

\section{Transversal low-frequency part}\label{sec: transversal LF}
In this section, we deal with the transversal low-frequency part $w^1_h$, defined in \eqref{eq: decomposition of w-h}. In the spirit of \cite{BZ12}, we use the normal form in Section \ref{sec: normal form reduction} to reduce the problem to a one-dimensional model equation. 

\begin{remark}
In this transversal low-frequency situation, on the one hand, we shall face one difficulty that, any transversal low-frequency truncation of the solution $\vartheta(h^{1-\alpha} D_y)w_h$ yields at most the estimate $\|h\p_y\vartheta(h^{1-\alpha} D_y)w_h\|_{L^2(\T^2)}=\BigO{h^{\frac{1}{2}}}$. Indeed, since $\alpha<\frac{1}{2}$,
\[
\|h\p_y\vartheta(h^{1-\alpha} D_y)w_h\|_{L^2(\T^2)}=h^{\alpha}\|h^{1-\alpha}\p_y\vartheta(h^{1-\alpha} D_y)w_h\|_{L^2(\T^2)}=\BigO{h^{\alpha}}.
\]
Unfortunately, this $\BigO{h^{\frac{1}{2}}}$ bound is not sufficient to conclude (see Lemma \ref{lem: hyperbolic estimate} below for more details). Indeed, this $\BigO{h^{\frac{1}{2}}}$ bound merely ensures the transversal low-frequency part $\vartheta(h^{1-\alpha} D_y)w_h$ remains bounded in $L^2$ but not necessarily small, that is, $\vartheta(h^{1-\alpha} D_y)w_h=\bigO{1}{L^2},\mbox{ but may not }o_{L^2}(1).$  To gain the smallness, we rely on the large time integration. On the other hand, due to the lack of the lower bound of the frequency cut-off of $w^1_h$, we cannot derive an inequality of the form \eqref{eq: bernstein-ineq}. Hence, the commutator method in Proposition \ref{prop: energy estimates} must be adapted accordingly.
\end{remark}

We start by looking at the equation for $w^1_h$ as follows:
\begin{equation}\label{eq: low-frequency-eq}
\left\{
\begin{array}{l}
     (\ii h^{\frac{3}{2}}\pt-\vh{P}{2})w^1_h=f^1_h,  \\
     w^1_h|_{t=0}=\vartheta(h^{1-\alpha} D_y)w^0_h,
\end{array}
\right.
\end{equation}
where 
\begin{equation}
    f^1_h:=\vartheta(2h^{1-\alpha}D_y)f_h+[\vartheta(2h^{1-\alpha} D_y),\vh{P}{2}-R^{(2)}_{1+2\alpha}-R^{(2)}_{2+\alpha}]w_h-(R^{(2)}_{1+2\alpha}+R^{(2)}_{2+\alpha})w^1_h
\end{equation}

Similarly, we have the following lemma.
\begin{lemma}
Let $\alpha\in(\frac{1}{4},\frac{1}{2})$.  Then we have
\begin{align*}
\vartheta(2h^{1-\alpha} D_y)f_h+[\vartheta(2h^{1-\alpha}D_y),\vh{P}{2}-R^{(2)}_{1+2\alpha}-R^{(2)}_{2+\alpha}]w_h&=\bigO{h^{2-\alpha}}{L^2},\\
(R^{(2)}_{1+2\alpha}+R^{(2)}_{2+\alpha})w^1_h&=\bigO{h^{1+2\alpha}}{L^2}.
\end{align*}
In particular, 
\begin{equation}
  f^1_h=o_{L^2}(h^{\frac{3}{2}}).
\end{equation}
\end{lemma}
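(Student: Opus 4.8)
The plan is to mirror, term by term, the argument already carried out for the high-frequency source $f^2_h$ in Lemma \ref{lem: source-est-hf} and its corollary, since the only structural difference is that the frequency cut-off $1-\vartheta(2h^{1-\alpha}D_y)$ is replaced by $\vartheta(2h^{1-\alpha}D_y)$ and $w_h^2$ by $w_h^1$. First I would treat the term $\vartheta(2h^{1-\alpha}D_y)f_h$: by Proposition \ref{lem: f-h-error} we have $f_h=\bigO{h^{2-\alpha}}{L^2}$, and since $\vartheta(2h^{1-\alpha}D_y)$ is uniformly bounded on $L^2$, this contributes $\bigO{h^{2-\alpha}}{L^2}$. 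Next I would handle the commutator $[\vartheta(2h^{1-\alpha}D_y),\vh{P}{2}-R^{(2)}_{1+2\alpha}-R^{(2)}_{2+\alpha}]w_h$ exactly as in the proof of Proposition \ref{lem: f-h-error}: writing $\hbar=h^{1-\alpha}$ and expanding by semiclassical calculus, the commutator with $h^2D_x^2+h^2D_y^2$ vanishes, the commutator with the averaged first-order part $-h\Op_h(\langle A_1\rangle\xi+\langle A_2^{(1)}\rangle\eta)$ is $\bigO{h^{2-\alpha}}{\mathcal L(L^2)}$, the commutator with $R^{(2)}_{1+2\alpha}$ is $\bigO{h^{2+\alpha}}{\mathcal L(L^2)}=R_h$, and the commutator with $R^{(2)}_{2+\alpha}$ applied to $\vh{v}{2}$ is $\BigO{h^\infty}$ because the symbols there carry the factors $1-\psi(\xi)$ or $1-\vartheta(\eta)$, which are disjoint in support from the $\vartheta(2h^{1-\alpha}D_y)$ cut-off for $h$ small. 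So this whole block is $\bigO{h^{2-\alpha}}{L^2}$, giving the first displayed estimate.

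For the second estimate I would bound $(R^{(2)}_{1+2\alpha}+R^{(2)}_{2+\alpha})w_h^1$ just as $f_h^{2,b}$ was bounded in the corollary to Lemma \ref{lem: source-est-hf}. Writing $w_h^1=\vartheta(2h^{1-\alpha}D_y)\vartheta(h^{1-\alpha}D_y)\vh{v}{2}$ and setting $\hbar=h^{1-\alpha}$, the term $R^{(2)}_{1+2\alpha}w_h^1=h\Op_h(2\ii\p_yg_2(x,y,\xi)\vartheta(\eta)\eta^2)w_h^1$ rescales to $h^{1+2\alpha}$ times a product of $\hbar$-quantized operators with bounded symbols, hence is $\bigO{h^{1+2\alpha}}{L^2}$; and the two pieces of $R^{(2)}_{2+\alpha}w_h^1$ are $\BigO{h^\infty}$ for the same support-disjointness reason as above, since $w_h^1$ is microlocalized to $|\eta|\le 2h^{\alpha}$ whereas those symbols vanish there. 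This yields $(R^{(2)}_{1+2\alpha}+R^{(2)}_{2+\alpha})w_h^1=\bigO{h^{1+2\alpha}}{L^2}$.

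Finally, to conclude $f^1_h=o_{L^2}(h^{3/2})$ I would combine the two estimates: $f^1_h$ is $\bigO{h^{2-\alpha}}{L^2}+\bigO{h^{1+2\alpha}}{L^2}$, and since $\alpha\in(\tfrac14,\tfrac12)$ we have $2-\alpha>\tfrac32$ and $1+2\alpha>\tfrac32$, so both terms are $o_{L^2}(h^{3/2})$. I do not expect any genuine obstacle here; the work is entirely bookkeeping of the symbol supports and the $\hbar$-rescaling, and the only point requiring mild care is checking that the constraint $\alpha>\tfrac14$ is exactly what makes the $\bigO{h^{1+2\alpha}}{L^2}$ term subcritical, as already flagged in Remark \ref{rem: index-alpha}.
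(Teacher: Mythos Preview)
Your proposal is correct and follows essentially the same approach as the paper, which simply states that the proof is identical to that of Lemma~\ref{lem: source-est-hf} and omits the details. The only harmless slip is that in the commutator with $R^{(2)}_{2+\alpha}$ you wrote ``applied to $\vh{v}{2}$'' when the lemma has it applied to $w_h$; since $w_h=\vartheta(h^{1-\alpha}D_y)\vh{v}{2}$ is even more tightly microlocalized, the support-disjointness argument goes through unchanged.
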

The proof is the same as Lemma \ref{lem: source-est-hf} so we do not repeat it here. According to the preceding lemma, using the convention \eqref{eq: r-h}, we simplify the equation for $w^1_h$:
\begin{equation}\label{eq: low-frequency-eq-simplified}
\left\{
\begin{array}{l}
     (\ii h^{\frac{3}{2}}\pt-\vh{P}{2})w^1_h=r_h,  \\
     w^1_h|_{t=0}=\vartheta(h^{1-\alpha}D_y)w^0_h. 
\end{array}
\right.
\end{equation}

\subsection{One-dimensional estimates}
Recall $\omega=\T_x\times\omega_y$, with
\[
\omega_y=\bigcup_{j=1}^{n_0}I_j,\quad I_j=(a_j,b_j), \mod 2\pi,
\]
where $-\pi<a_1<b_1<a_2<b_2<\cdots<a_{n_0}<\pi<b_{n_0}<a_1+2\pi$. The following assumption holds throughout this section:

\vspace{1em}
$\mathbf{(MGCC_{y})}:$ ~Set $\delta_0<\frac{1}{2}\min(|I_1|,\cdots,|I_{n_0}|)$ such that any critical point of $\langle A_1\rangle(y)$ is contained in one of $\widetilde{I}_j=(a_j+\delta_0,b_j-\delta_0)$ mod $2\pi$ and $\widetilde{\omega}_y=\bigcup_{j=1}^{n_0}\widetilde{I}_j\subset\omega_y$.
\vspace{1em}

The primary objective of this section is to show that $\|w^1_h\|_{L^2((0,T)\times\T^2)}=o(1)$.

\vspace{1em}
Let us introduce another cutoff function $\psi_2\in C^{\infty}_c(\R),\mbox{ supported on }|\xi\pm1|\leq2\rho_1, \mbox{ and }\psi_2(\xi)=1 \mbox{ on }|\xi\pm1|\leq\rho_1$. 
Indeed, by Lemma \ref{lem: xi near 1}, we know that 
\begin{align*}
w_h^1=(1-\psi_2(hD_x))w_h^1+\psi_2(hD_x)w_h^1=\psi_2(hD_x)w_h^1+o(1)_{L^2}.   
\end{align*}
Therefore, we know that $w^1_h$ is spectrally localized in $|\eta|\leq h^{\alpha}$ and $|\xi\pm1|\leq 2\rho_1$.

\vspace{1em}
Since the left hand side of \eqref{eq: low-frequency-eq-simplified} commutes with $D_x$, by taking the Fourier transform in $x$, we can reduce the analysis to a sequence of one-dimensional problems.

Let $v_{h,K}$ be a solution to
\begin{equation}\label{eq: 1-d sch eq}
\left\{
\begin{array}{l}
     (\ii h^{\frac{3}{2}}\pt-P_{h,K})v_{h,K}=r_{h,K},  \\
     v_{h,K}|_{t=0}=v^0_{h,K},
\end{array}
\right.
\end{equation}
where $P_{h,K}:=h^2D_y^2+K^2-2hK\langle A_1\rangle(y)-2h\Op_h(\langle A_2^{(1)}\rangle(y)\eta)$. Moreover, the solution is microlocalized near $|\eta|\leq h^{\alpha}$ and $|K|\in(1-2\rho_1,1+2\rho_1)$. We aim to prove the following estimate for solutions of \eqref{eq: 1-d sch eq}.

\begin{proposition}\label{prop: 1d-key-est}
Suppose that $\mathbf{(MGCC_y)}$ holds. 
Then, there exists $T_0>0$, so that for every $T\geq T_0$, there exists $h_0\in(0,1)$ and $C_{0,T}>0$ such that for all $h\in(0,h_0)$, and all $|K|\in(\frac{1}{2},\frac{3}{2})$, the solutions $v_{h,K}$ of the equation \eqref{eq: 1-d sch eq} satisfy the uniform estimate:
\begin{equation}\label{eq: 1d-est}
\|v_{h,K}\|^2_{L^2([0,T]\times\T)}\leq C_{0,T}\left(\int_0^T\int_{\R}|\widetilde{\chi}(y)v_{h,K}(t,y)|^2\d y\d t+\frac{1}{h^3}\|r_{h,K}\|^2_{L^2([0,T]\times\T_y)}\right),
\end{equation}
where $\widetilde{\chi}$ is some bump function, compactly supported in $\omega_y$.
\end{proposition}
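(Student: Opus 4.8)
The plan is to argue by contradiction, reducing \eqref{eq: 1-d sch eq} to the one-dimensional semiclassical model operator anticipated in Section~\ref{sec: optimality} and extracting a semiclassical defect measure that the assumption $\mathbf{(MGCC_y)}$ forces to vanish. First I would conjugate \eqref{eq: 1-d sch eq} by the unitary phase $e^{\ii K^2 t/h^{3/2}}$, which removes the constant $K^2$ without changing any of the norms in \eqref{eq: 1d-est}, and use the microlocalisations $|hD_y|\le h^\alpha$ and $|K|\in(1-2\rho_1,1+2\rho_1)$ (so $K$ stays in a fixed compact set away from $0$). Fixing $T_0$ to be the control time produced by the dynamical statement below and taking $T\ge T_0$, if \eqref{eq: 1d-est} failed for this $T$ there would be $h_n\to0$, $K_n\to K_\infty$ with $|K_\infty|\in[1-2\rho_1,1+2\rho_1]$, and solutions $v_n:=v_{h_n,K_n}$ with $\|v_n\|_{L^2([0,T]\times\T_y)}=1$ while $\|v_n\|^2_{L^2([0,T]\times\widetilde{\omega}_y)}+h_n^{-3}\|r_{h_n,K_n}\|^2_{L^2([0,T]\times\T_y)}\to0$.

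Setting $\hbar:=h^{1/2}$ and dividing the phase-shifted equation by $h=\hbar^2$ brings it to the semiclassical form $\ii\hbar\p_t u_n=H_\hbar u_n+S_n$, where $H_\hbar$ is a self-adjoint $\hbar$-pseudodifferential operator with principal symbol $p(y,\eta)=\eta^2-2K_\infty\langle A_1\rangle(y)$ — the term $-h\Op_h(\langle A^{(1)}_2\rangle(y)\eta)$ being genuinely sub-principal — and $S_n=h_n^{-1}r_{h_n,K_n}=o(\hbar_n)$ in $L^2$. As stressed in the remark preceding the statement, the difficulty is that $v_n$ is \emph{not} localised to bounded transversal frequency in this scaling, since $|hD_y|\le h^\alpha$ only gives $|\hbar D_y|\le h^{\alpha-1/2}\to\infty$. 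I would therefore split $v_n=v_n^{\mathrm{lo}}+v_n^{\mathrm{hi}}$ at the transversal frequency scale $h^{1/2}$, choosing the cut-off carefully enough that the resulting cut-off commutator with $P_{h,K}$ is $o(h^{3/2})$ in $L^2$ and thus harmless. The ``kinetic'' piece $v_n^{\mathrm{hi}}$, with $|hD_y|\gtrsim h^{1/2}$, has $h^2D_y^2$ dominating $2hK\langle A_1\rangle(y)$ and carries a transversal lower frequency bound; the positive-commutator argument of Section~\ref{sec: transversal HF} (test against $\theta_T^2(t)\varphi^2(y)y\overline{D_yv_n^{\mathrm{hi}}}$, use $[\varphi^2yD_y,D_y^2]=2\ii(\varphi^2+2\varphi\varphi'y)D_y^2+(\text{lower order})$, absorb the lower-order terms using the lower frequency bound, then apply a Bernstein inequality) then applies essentially verbatim and gives $\|v_n^{\mathrm{hi}}\|_{L^2([0,T]\times\T_y)}=o(1)$.

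For the ``model'' piece $v_n^{\mathrm{lo}}$, with $|hD_y|\lesssim h^{1/2}$, i.e.\ $|\hbar D_y|$ bounded, the sequence $(u_n^{\mathrm{lo}})$ is frequency-localised to a fixed compact set, no mass escapes to infinity, and one may extract a semiclassical defect measure $\nu=(\nu_t)_t$ on $T^*\T_y$. By the computations of Proposition~\ref{prop: measure property}, $\nu_t$ is transported along the Hamiltonian flow $\phi_t$ of $p$, for a.e.\ $t$ one has $\nu_t(T^*\widetilde{\omega}_y)=0$ (from the vanishing observation), and since $\|v_n^{\mathrm{hi}}\|\to0$ forces $\|v_n^{\mathrm{lo}}\|_{L^2([0,T]\times\T_y)}\to1$ while the equation for $v_n^{\mathrm{lo}}$ is mass-almost-conserving, $\nu_t(T^*\T_y)$ equals a positive constant for a.e.\ $t$. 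I would then invoke the dynamical statement: \emph{there exists $T_0>0$ depending only on $\langle A_1\rangle$ and $\widetilde{\omega}_y$ such that, for every admissible $K$ and every $(y,\eta)$ with $|\eta|$ bounded, the orbit $\phi_t(y,\eta)$ of $p=\eta^2-2K\langle A_1\rangle(y)$ meets $T^*\widetilde{\omega}_y$ for some $t\in[0,T_0]$.} Granting this, for $T\ge T_0$ and $\rho\in\supp\nu_0$ one has $\phi_{\tau}(\rho)\in T^*\widetilde{\omega}_y$ for some $\tau\in(0,T)$, whence by transport and openness $\nu_t(T^*\widetilde{\omega}_y)>0$ for $t$ near $\tau$, a contradiction; thus $\nu\equiv0$, so $\|v_n^{\mathrm{lo}}\|\to0$, contradicting $\|v_n\|=1$. (Equivalently, in the spirit of the remark, the same outcome can be reached by an adapted positive-commutator estimate for $v_n^{\mathrm{lo}}$ with a multiplier built from an escape function for the flow of $p$, combined with the large-time integration.)

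It remains to justify the dynamical statement, which is the crux. Here $\mathbf{(MGCC_y)}$ enters: the equilibria of $\phi_t$ are exactly the points $(y_0,0)$ with $\langle A_1\rangle'(y_0)=0$, all of which lie in $T^*\widetilde{\omega}_y$ by hypothesis; every other orbit is either rotating (hence sweeps all of $\T_y$), or librating in a well of the potential $-2K\langle A_1\rangle$ (hence passes through the well bottom, a critical point of $\langle A_1\rangle$), or asymptotic to a saddle (again a critical point of $\langle A_1\rangle$), so in all cases it enters the open set $T^*\widetilde{\omega}_y$ in finite time. The main obstacle is the \emph{uniformity} of this entry time: orbits near a separatrix have unbounded period, so one cannot argue by periodicity — but such orbits also spend most of their time near the corresponding saddle, which is interior to $\widetilde{\omega}_y$, so the entry time into the open set $\widetilde{\omega}_y$ is upper semicontinuous in $(y,\eta)$ and in $K$, hence bounded on the relevant compact set, which yields $T_0$. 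Two further technical points must be dealt with: the normal-form remainders $R^{(2)}_{1+2\alpha}+R^{(2)}_{2+\alpha}$ and the frequency cut-off commutators restricted to each band have to be shown negligible, which is where the hypothesis $\alpha\in(\tfrac14,\tfrac12)$ is used, and at the transition scale $|hD_y|\sim h^{1/2}$ a hyperbolic-type estimate near a nondegenerate maximum of the effective potential — obtained by normal-form reduction to an inverted harmonic oscillator — may be required to control how slowly a concentrating quasimode can leave a neighbourhood of such a point.
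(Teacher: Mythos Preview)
Your route is genuinely different from the paper's, and considerably more elaborate. The paper does \emph{not} introduce a second frequency splitting or any semiclassical measure for the one-dimensional problem. It first reduces to the homogeneous equation by Duhamel (Lemma~\ref{lem: energy estimate-1-d}), and then proves the homogeneous observability (Proposition~\ref{prop: uniform ob for homogen-eq}) by a \emph{single} positive-commutator identity (Lemma~\ref{lem: hyperbolic estimate}): testing against $\theta_T^2(t)\varphi^2(y)(y-y_0)D_y$ with $\varphi$ supported in $\T_y\setminus\widetilde{\omega}_y$ yields
\[
\|\theta_T\varphi\, h\partial_y\widetilde{v}_{h,K}\|_{L^2}^2 \;+\; h\!\int K(y-y_0)\,\langle A_1\rangle'(y)\,|\theta_T\varphi\,\widetilde{v}_{h,K}|^2\,\d y\,\d t \;\le\; C_M\bigl(h^{1+\alpha}+hT^{-2}\bigr).
\]
Under $\mathbf{(MGCC_{y})}$ the function $\langle A_1\rangle'$ has a fixed sign on each component of $\supp\varphi$, so choosing $y_0$ at the appropriate endpoint (according to the sign of $K$) makes the \emph{potential} term on the left already $\ge ch\,\|\theta_T\varphi\,\widetilde v_{h,K}\|_{L^2}^2$. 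Dividing by $h$ gives $\|\theta_T\varphi\,\widetilde v_{h,K}\|_{L^2}^2\le C(h^{\alpha}+T^{-2})$, which for $T\ge T_0$ large contradicts mass conservation. No Bernstein lower bound, no defect measure, no Hamiltonian-flow analysis is needed: the monotonicity of $\langle A_1\rangle$ outside $\widetilde\omega_y$ does all the work in one stroke.

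Your $\hbar=h^{1/2}$ rescaling and Hamiltonian-flow picture are the correct heuristic explanation of \emph{why} the above works, but your implementation has a concrete gap. The commutator $[\vartheta(h^{1/2}D_y),P_{h,K}]$ is only $O(h^{3/2})$, not the $o(h^{3/2})$ you assert: its leading piece is $\tfrac{2K}{i}h^{3/2}\,\vartheta'(h^{1/2}D_y)\langle A_1\rangle'(y)$, coming from the potential $-2hK\langle A_1\rangle(y)$. After dividing by $h$, this leaves an $O(\hbar)$ source in the equation for $v_n^{\mathrm{lo}}$, and the standard propagation-of-measure argument requires $o(\hbar)$. Likewise, at the bottom of your high-frequency band the kinetic and potential terms are genuinely comparable, so the Section~\ref{sec: transversal HF} absorption does not close as written. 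Both issues can be repaired (cut at $|hD_y|=Mh^{1/2}$ and let $M\to\infty$ after $h\to0$, and track the explicit form of the commutator), but the paper's observation that the potential commutator term is itself coercive under $\mathbf{(MGCC_{y})}$ bypasses the splitting entirely.
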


We start by the commutator method to derive a general  weighted energy estimate, specifically targeting the region $\T_y \setminus \omega_y$:  
\begin{lemma}\label{lem: hyperbolic estimate}
Let $\varphi\in C^{\infty}(\T_y;[0,1])$ and $\theta\in C^{\infty}_c(\R_t;[0,1])$. 
Assume that $\varphi=1$ on $\T_y\setminus \omega_y$, $\supp(\varphi)\subset \T_y\setminus\Tilde{\omega}_y$.
Assume that
 $\theta=1$ on $(\frac{1}{4},\frac{1}{2})$, $\supp(\theta)\subset[0,1]$ and $\theta_T(t)=\theta(\frac{t}{T})$.
 Let $\widetilde{\chi}\in C_c^{\infty}(\omega_y)$ be such that $\widetilde{\chi}=1$ on $\mathrm{supp}(\varphi')\cup\mathrm{supp}(1-\varphi)$.
 
  Let $\widetilde{v}_{h,K}$ be the solution of :
$$  \big(\ii h^{\frac{3}{2}}\partial_t-P_{h,K})\widetilde{v}_{h,K}=\widetilde{r}_{h,K},
$$
with $\widetilde{v}_{h,K}=\vartheta_0(h^{1-\alpha}D_y)\widetilde{v}_{h,K}$ for some $\vartheta_0\in C_c^{\infty}(\R)$.
Then for any $y_0\in\T_y$, there exists $C_M=C_M(\varphi,\langle A_1\rangle,\langle A_2^{(1)}\rangle)>0$ such that for any $T>0$ and sufficiently small $0<h\ll 1$, 
\begin{multline}\label{eq: hyperbolic estimate}
\|\theta_T\varphi h\p_y\widetilde{v}_{h,K}\|^2_{L^2(\R\times\T)}+h\int_{\R\times\T}K(y-y_0)\langle A_1\rangle'(y)|\theta_T\varphi \widetilde{v}_{h,K}|^2\d y\d t\\
\leq C_M
\Big(\frac{1}{h^2}\|\theta_T\widetilde{r}_{h,K}\|_{L^2(\R\times\T)}^2+h^2\|\theta_T\widetilde{v}_{h,K}\|_{L^2(\R\times\T)}^2  \Big)	\notag \\
+
	C_M\Big(\frac{h}{T^2}\big\|\theta'\big(\frac{t}{T}\big)\varphi\widetilde{v}_{h,K} \big\|_{L^2(\R\times\T)}^2 + \|\theta_T\widetilde{\chi}(y) h\partial_y\widetilde{v}_{h,K}\|_{L^2(\R\times\T)}^2 \Big).
\end{multline}
\end{lemma}
\begin{proof}
  We use a similar commutator method as in Proposition \ref{prop: energy estimates} based on $[yD_y,D_y^2]=2\ii D_y^2$. Let us consider a localized vector field $\theta^2_T(t)\varphi^2(y)(y-y_0)D_y$. Then, we have
\begin{align*}
[\theta^2_T(t)\varphi^2(y)&(y-y_0)D_y,\ii h^{\frac{3}{2}}\pt-h^2D_y^2-K^2]\\
&=-2\ii \frac{h^{\frac{3}{2}}}{T}\theta_T'(t)\theta_T(t)\varphi^2(y)(y-y_0)D_y-2\ii \theta^2_T(t)\varphi^2(y)h^2D_y^2+\theta^2_T(t)[h^2D_y^2,\varphi^2](y-y_0)D_y\\
&=\underbrace{-2\ii \theta^2_T(t)\varphi^2(y)h^2D_y^2-4\ii \theta^2_T(t)\varphi\varphi'(y)(y-y_0)h^2D_y^2 }_{\mbox{principal part}}\\
&-h\theta^2_T(2\varphi^2(y)+(\varphi^2)''(y)(y-y_0))hD_y-2\ii \frac{h^{\frac{3}{2}}}{T}\theta_T'(t)\theta_T(t)\varphi^2(y)(y-y_0)D_y
\end{align*}
Moreover, for the lower order terms, we have
\begin{align*}
& [\varphi^2(y)(y-y_0)D_y, 2hK\langle A_1\rangle(y)]=-2\ii hK\varphi^2(y)(y-y_0)\langle A_1\rangle'(y),\\
&[\varphi^2(y)(y-y_0)D_y, 2h\langle A_2^{(1)}\rangle(y)hD_y]=2\ii h\left(\langle A_2^{(1)}\rangle\varphi^2(y)+2\langle A_2^{(1)}\rangle\varphi\varphi'(y)(y-y_0)-\varphi^2(y)(y-y_0)\langle A_2^{(1)}\rangle'\right)hD_y,\\
&[\varphi^2(y)(y-y_0)D_y,-\ii h^2\langle A_2^{(1)}\rangle'(y)]=- h^2\varphi^2(y)(y-y_0) \langle A_2^{(1)}\rangle''(y).
\end{align*}
In summary, the full commutator reads as
\begin{align*}
[\theta^2_T(t)\varphi^2(y)(y-y_0)D_y,\ii h^{\frac{3}{2}}\pt-P_{h,K}]=&-2\ii \theta^2_T(t)\varphi^2(y)h^2D_y^2-4\ii \theta^2_T(t)\varphi\varphi'(y)(y-y_0)h^2D_y^2\notag  \\
-&2\ii hK\theta^2_T(t)\varphi^2(y)(y-y_0)\langle A_1\rangle'(y)+h\theta^2_T(t)F(y)hD_y\\
-&2\ii h^{\frac{3}{2}}\theta_T(t)\theta_T'(t)\varphi^2(y)(y-y_0)D_y+\bigO{h^2}{\mathcal{L}(L^2)},
\end{align*}
where $F(y)$ is a compactly supported function with $\supp(F)\subset\supp(\varphi)$ and $|F|\lesssim |\varphi|$.

\vspace{3mm}
Let us consider $\frac{1}{2\ii}\poscals{[\theta^2_T(t)\varphi^2(y)(y-y_0)D_y,\ii h^{\frac{3}{2}}\pt-P_{h,K}] \widetilde{v}_{h,K}}{\widetilde{v}_{h,K}}_{L^2(\R_t\times\T_y)}$. We deal with it term by term. For $\poscals{\theta^2_T(t)\varphi^2(y)h^2D_y^2 \widetilde{v}_{h,K}}{\widetilde{v}_{h,K}}_{L^2(\R\times\T_y)}$, integrating by parts, we get
\begin{align*}
\poscals{\theta^2_T(t)\varphi^2(y)h^2D_y^2 \widetilde{v}_{h,K}}{\widetilde{v}_{h,K}}_{L^2(\R\times\T)}&=\poscals{\theta_T(t)h\p_y \widetilde{v}_{h,K}}{\theta_T(t)h\p_y(\varphi^2(y)\widetilde{v}_{h,K})}_{L^2(\R\times\T)}\\
&=\int_{\R\times\T}|\theta_T(t)\varphi(y)h\p_y\widetilde{v}_{h,K}|^2\d y\d t+2h\int_{\R\times\T}\theta^2_T\varphi\varphi'h\p_y\widetilde{v}_{h,K}\overline{\widetilde{v}_{h,K}}\d y\d t.
\end{align*}
Similarly, we also have
\begin{align*}
&\poscals{2\theta^2_T(t)\varphi\varphi'(y)(y-y_0)h^2D_y^2 \widetilde{v}_{h,K}}{\widetilde{v}_{h,K}}_{L^2(\R\times\T)}\\
=&2\int_{\R\times\T}\theta^2_T\varphi(y)\varphi'(y)(y-y_0)|h\p_y\widetilde{v}_{h,K}|^2\d y\d t
+h\int_{\R\times\T}\theta^2_T\left(2\varphi\varphi'
+(\varphi^2)''(y)(y-y_0)\right)h\p_y\widetilde{v}_{h,K}\overline{\widetilde{v}_{h,K}}\d y\d t.
\end{align*}
The remaining part is given via
\begin{align*}
&\poscals{- \frac{h^{\frac{3}{2}}}{T}\theta_T\theta_T'(t)\varphi^2(y)(y-y_0)D_y \widetilde{v}_{h,K}}{\widetilde{v}_{h,K}}_{L^2(\R\times\T)}
=-\frac{h^{\frac{1}{2}}}{T}\int_{\R\times\T}\theta_T\theta_T'(t)\varphi^2(y)(y-y_0)hD_y \widetilde{v}_{h,K}\overline{\widetilde{v}_{h,K}}\d y\d t,\\
&\poscals{hK\theta^2_T(t)\varphi^2(y)(y-y_0)\langle A_1\rangle'(y)\widetilde{v}_{h,K}}{\widetilde{v}_{h,K}}_{L^2(\R\times\T)}
=h\int_{\R\times\T}K(y-y_0)\langle A_1\rangle'(y)|\theta_T\varphi\widetilde{v}_{h,K}|^2\d y\d t.
\end{align*}
Therefore, we know that
\begin{equation}\label{eq: imaginary main part}
\begin{aligned}
&\frac{1}{2\ii}\poscals{[\theta_T^2(t)\varphi^2(y)(y-y_0)D_y,\ii h^{\frac{3}{2}}\pt-P_{h,K}] \widetilde{v}_{h,K}}{\widetilde{v}_{h,K}}_{L^2(\R_t\times\T_y)}\\
=&-\|\theta_T\varphi h\p_y\widetilde{v}_{h,K}\|^2_{L^2(\R\times\T)}-h\int_{\R\times\T}K(y-y_0)\langle A_1\rangle'(y)|\theta_T\varphi\widetilde{v}_{h,K}|^2\d y\d t\\
+&\frac{h^{\frac{1}{2}}}{T}\int_{\R\times\T}\theta_T(t)\theta'\big(\frac{t}{T}\big)\varphi^2(y)(y-y_0)hD_y \widetilde{v}_{h,K}\overline{\widetilde{v}_{h,K}}\d y\d t+h\int_{\R\times\T}\theta^2_T(t)\widetilde{F}(y)h\p_y\widetilde{v}_{h,K}\overline{\widetilde{v}_{h,K}}\d y\d t\\
-&2\int_{\R\times\T}\theta^2_T\varphi(y)\varphi'(y)(y-y_0)|h\p_y\widetilde{v}_{h,K}|^2\d y\d t +\BigO{h^2}\|\theta_T \widetilde{v}_{h,K}\|_{L^2(\R\times\T)}^2.
\end{aligned}
\end{equation}
Here $\widetilde{F}(y)$ is a compactly supported function with $\supp(\widetilde{F})\subset\supp(\varphi)$ and $|\widetilde{F}|\lesssim |\varphi|$.
\medskip

Multiplying \eqref{eq: 1-d sch eq} by $\theta^2_T(t)\varphi^2(y)(y-y_0)\overline{D_y\widetilde{v}_{h,K}}$,  integrating over $\R_t\times\T_y$ and taking the imaginary part, we have
\begin{align}\label{eq: integral-identity}
\Big|\Im\int_{\R\times\T_y}(\ii h^{\frac{3}{2}}\pt-P_{h,K}) \widetilde{v}_{h,K}\cdot \theta^2_T(t)\varphi^2(y)(y-y_0)\overline{D_y\widetilde{v}_{h,K}}\d y\d t\Big| \notag \\ \leq \frac{C}{h}\|\theta_T\widetilde{r}_{h,K}\|_{L^2(\R\times\T)}\|\theta_T\varphi h\partial_y \widetilde{v}_{h,K}\|_{L^2(\R\times\T)}.
\end{align}
For the left hand side of \eqref{eq: integral-identity}, we have
\begin{align*}
&\Im\int_{\R\times\T_y}(\ii h^{\frac{3}{2}}\pt-P_{h,K}) \widetilde{v}_{h,K}\cdot \theta^2_T(t)\varphi^2(y)(y-y_0)\overline{D_y\widetilde{v}_{h,K}}\d y\d t\\
=&\frac{1}{2\ii}\poscals{[\theta^2_T(t)\varphi^2(y)(y-y_0)D_y,\ii h^{\frac{3}{2}}\pt-P_{h,K}] \widetilde{v}_{h,K}}{\widetilde{v}_{h,K}}_{L^2(\R\times\T_y)}\\-&\frac{1}{2}\poscals{\theta^2_T(t)\varphi^2(y)(\ii h^{\frac{3}{2}}\pt-P_{h,K}) \widetilde{v}_{h,K}}{\widetilde{v}_{h,K}}_{L^2(\R\times\T_y)}\\
-&\poscals{\theta^2_T(t)\varphi(y)\varphi'(y)(y-y_0)(\ii h^{\frac{3}{2}}\pt-P_{h,K}) \widetilde{v}_{h,K}}{\widetilde{v}_{h,K}}_{L^2(\R\times\T_y)}.
\end{align*}
Therefore, thanks to \eqref{eq: 1-d sch eq}, we derive
\begin{align*}
&\Big|\frac{1}{2\ii}\poscals{[\theta^2_T(t)\varphi^2(y)(y-y_0)D_y,\ii h^{\frac{3}{2}}\pt-P_{h,K}] \widetilde{v}_{h,K}}{\widetilde{v}_{h,K}}_{L^2(\R\times\T_y)}\Big| \\ \leq 
&C\|\theta_T \widetilde{r}_{h,K}\|_{L^2(\R\times\T)}\|\theta_T\varphi \widetilde{v}_{h,K}\|_{L^2(\R\times\T)}+\frac{C}{h}\|\theta_T \widetilde{r}_{h,K} \|_{L^2(\R\times\T)}\|\theta_T\varphi h\partial_y\widetilde{v}_{h,K}\|_{L^2(\R\times\T)}.
\end{align*}
Combing the inequality above and \eqref{eq: imaginary main part}, using  Cauchy-Schwarz, we obtain the key output :
\begin{align*}
&\|\theta_T\varphi h\p_y\widetilde{v}_{h,K}\|^2_{L^2}+h\int_{\R\times\T}K(y-y_0)\langle A_1\rangle'(y)|\theta_T\varphi\widetilde{v}_{h,K}|^2\d y\d t\\
\leq & C\|\theta_T \widetilde{r}_{h,K}\|_{L^2(\R\times\T)}\|\theta_T\varphi\widetilde{v}_{h,K}\|_{L^2(\R\times\T)}+\frac{C}{h}\|\theta_T \widetilde{r}_{h,K} \|_{L^2(\R\times\T)}\|\theta_T\varphi h\partial_y\widetilde{v}_{h,K} \|_{L^2(\R\times\T)}\\
+&C\frac{h^{\frac{1}{2}}}{T}\big\|\theta'\big(\frac{t}{T}\big)\varphi \widetilde{v}_{h,K} \big\|_{L^2(\R\times\T)}\|\theta_T\varphi h\partial_y\widetilde{v}_{h,K} \|_{L^2(\R\times\T)}+ Ch\|\theta_T\varphi h\partial_y\widetilde{v}_{h,K} \|_{L^2(\R\times\T)}\|\theta_T\widetilde{v}_{h,K}\|_{L^2(\R\times\T)}\\
+&C \|\theta_T |\varphi'|^{\frac{1}{2}}h\partial_y\widetilde{v}_{h,K} \|_{L^2(\R\times\T)}^2 +Ch^2\|\theta_T\widetilde{v}_{h,K}\|_{L^2(\R\times\T)}^2.
\end{align*}
Applying Young's inequality of the type
$$ AB\leq \epsilon A^2+C_{\epsilon}B^2,\qquad \epsilon >0,
$$
and absorbing similar terms to the left, 
we obtain that
\begin{multline*}
	\|\theta_T\varphi h\p_y\widetilde{v}_{h,K}\|^2_{L^2(\R\times\T)}+h\int_{\R\times\T}K(y-y_0)\langle A_1\rangle'(y)|\theta_T\varphi \widetilde{v}_{h,K}|^2\d y\d t\\
	\leq C_M
			\Big(\frac{1}{h^2}\|\theta_T\widetilde{r}_{h,K}\|_{L^2(\R\times\T)}^2+h^2\|\theta_T\widetilde{v}_{h,K}\|_{L^2(\R\times\T)}^2  \Big)		\notag \\
	+
			C_M\Big(\frac{h}{T^2}\big\|\theta'\big(\frac{t}{T}\big)\varphi\widetilde{v}_{h,K} \big\|_{L^2(\R\times\T)}^2 + \|\theta_T|\varphi'(y)|^{\frac{1}{2}} h\partial_y\widetilde{v}_{h,K}\|_{L^2(\R\times\T)}^2 \Big).
\end{multline*}
Finally, from the property of $\widetilde{\chi}$,
we conclude the proof of Lemma \ref{lem: hyperbolic estimate}.
\end{proof}

 For $T>1$, large enough, to be fixed later, set
 \begin{align}\label{def:LambdaT}
 \Lambda_{\leq T}:=\vartheta\big(\frac{h^{\frac{1}{2}}D_y }{T}\big).
 \end{align}
 We decompose $v_{h,K}=\widetilde{w}_1+\widetilde{w}_2$, where 
\begin{equation*}
\widetilde{w}_1=\Lambda_{\leq T}(v_{h,K}),\;\;\widetilde{w}_2=(1-\Lambda_{\leq T})(v_{h,K}).
\end{equation*}
Note that for sufficiently small $h$, $T h^{\frac{1}{2}}\leq h^{\alpha}$, since $\alpha<\frac{1}{2}$. Hence, $\widetilde{w}_1$ is genuinely the transversal low-frequency portion ($ |hD_y|\lesssim h^{\frac{1}{2}}$), while $\widetilde{w}_2$ is relatively the transversal high-frequency portion ($h^{\frac{1}{2}}\lesssim |hD_y|\lesssim h^{\alpha}$) of $v_{h,K}$. Moreover, 
\begin{equation*}
(\ii h^{\frac{3}{2}}\pt-P_{h,K})\widetilde{w}_{1}=\widetilde{r}_1,\quad (\ii h^{\frac{3}{2}}\pt-P_{h,K})\widetilde{w}_{2}=\widetilde{r}_2,
\end{equation*}
where
\begin{align*}
&\widetilde{r}_1=\Lambda_{\leq T}(r_{h,K}) - [P_{h,K}, \; \Lambda_{\leq T} ]v_{h,K},\\
&\widetilde{r}_2=(1-\Lambda_{\leq T})(r_{h,K})+[P_{h,K},\;  \Lambda_{\leq T}]v_{h,K}.
\end{align*}
From the symbolic calculus, we have
\begin{align}\label{r_1:commutator}
\|  [P_{h,K},\; \Lambda_{\leq T}]v_{h,K}   \|_{L_y^2(\R)}\leq \frac{Ch^{\frac{3}{2}}}{T}\|v_{h,K}\|_{L_y^2(\R)}
\end{align}
Applying Lemma \ref{lem: hyperbolic estimate} to $\widetilde{v}_{h,K}=\widetilde{w}_1, \widetilde{r}_{h,K}=\widetilde{r}_1$, 
we have the following corollary:

\begin{corollary}\label{cor:tildew_1}
Let $\varphi,\widetilde{\chi},\theta, T, y_0$ be the same as in Lemma \ref{lem: hyperbolic estimate}. Then, there exists $$C_M=C_M(\varphi,\langle A_1\rangle,\langle A_2^{(1)}\rangle)>0,$$ such that
\begin{align*}
\int_{\R\times\T}K\cdot (y-y_0)\langle A_1\rangle'(y)|\theta_T\varphi \widetilde{w}_1|^2 \d y\d t
\leq  &C_M\Big(\frac{1}{h^3}\|\theta_T\Lambda_{\leq T} (r_{h,K})\|_{L^2(\R\times\T)}^2 +\frac{1}{T^2}\| v_{h,K}\|_{L^2([0,T]\times\T)}^2 \Big)\\
+&C_M\big(T^2\|\theta_T\Lambda_{\leq T}(\widetilde{\chi}(y)v_{h,K} ) \|_{L^2(\R\times\T)}^2+Ch\|v_{h,K}\|_{L^2([0,T]\times\T)}^2 \big).
\end{align*}
\end{corollary}
\begin{proof}

Applying Lemma \ref{lem: hyperbolic estimate} and \eqref{r_1:commutator}, we have
\begin{align*}
	\|\theta_T \varphi h\partial_y\widetilde{w}_1\|_{L^2(\R\times\T)}^2+&hK\int_{\R\times\T}(y-y_0)\langle A_1\rangle'(y)|\theta_T\varphi \widetilde{w}_1|^2 \d y\d t\\
	\leq & C_M\Big(\frac{1}{h^2}\|\theta_T  \Lambda_{\leq T}(r_{h,K})\|_{L^2(\R\times\T)}^2 +h^2\|\theta_T\widetilde{w}_1\|_{L^2(\R\times\T)}^2+\frac{h}{T^2}\|\theta_T v_{h,K}\|_{L^2(\R\times\T)}^2 \Big)\\
	+&C_M\Big(\frac{h}{T^2}\big\|\theta'\big(\frac{t}{T}\big)\varphi \widetilde{w}_1 \big\|_{L^2(\R\times\T)}^2+\|\theta_T\widetilde{\chi}(y)h\partial_y\widetilde{w}_1 \|_{L^2(\R\times\T)}^2 \Big).
\end{align*}
Since $\Lambda_{\leq T}=\vartheta\big(\frac{h^{\frac{1}{2}} D_y}{T}\big)$ and $\widetilde{w}_1=\Lambda_{\leq T}(v_{h,K})$, we have
\begin{align*}
\widetilde{\chi}(y)h\partial_y\widetilde{w}_1=&h\partial_y(\widetilde{\chi}(y)\widetilde{w}_1 )+[\widetilde{\chi}(y),h\partial_y]\widetilde{w}_1\\
=&h\partial_y\Lambda_{\leq T}(\widetilde{\chi}(y)v_{h,K} )+h\partial_y[\widetilde{\chi}(y),\Lambda_{\leq T}]v_{h,K}+[\widetilde{\chi}(y),h\partial_y]\widetilde{w}_1\\
=& \Lambda_{\leq T}h\partial_y(\widetilde{\chi}(y)v_{h,K} )+[\widetilde{\chi}(y),\Lambda_{\leq T}]h\partial_y v_{h,K}+\mathcal{O}_{\mathcal{L}(L_y^2)}(h)v_{h,K}\\
=&h^{\frac{1}{2}}\cdot \Lambda_{\leq T}h^{\frac{1}{2}}\partial_y(\widetilde{\chi}(y)v_{h,K} )+\mathcal{O}_{\mathcal{L}(L_y^2)}(h)v_{h,K}.
\end{align*}	
Since $\|\Lambda_{\leq T}h^{\frac{1}{2}}\partial_y\|_{\mathcal{L}(L^2)}\leq CT$, we have 
\begin{align*}
\|\theta_T \widetilde{\chi}(y)h\partial_y\widetilde{w}_1 \|_{L^2(\R\times\T)}^2\leq CT^2h\|\theta_T\Lambda_{\leq T}(\widetilde{\chi}(y)v_{h,K})\|_{L^2(\R\times\T)}^2+Ch^2\|\theta_Tv_{h,K}\|_{L^2(\R\times\T)}^2. 
\end{align*}
Similarly,
\begin{align*}
\frac{h}{T^2}\|\theta'(\frac{t}{T})\varphi\widetilde{w}_1\|_{L^2(\R\times\T)}^2\leq & \frac{h}{T^2}\|\theta'(\frac{t}{T})\Lambda_{\leq T}(v_{h,K}) \|_{L^2(\R\times\T)}^2+Ch^2\|\theta'(\frac{t}{T})v_{h,K}\|_{L^2(\R\times\T)}^2.
\end{align*}
This concludes the proof.
\end{proof}
Next, we estimate the high frequency part $\widetilde{w}_2$.
\begin{corollary}\label{cor:tildew_2}
	Let $\varphi,\widetilde{\chi},\theta,y_0$ be the same as in Lemma \ref{lem: hyperbolic estimate}. Then, there exists $$C_M=C_M(\varphi,\langle A_1\rangle,\langle A_2^{(1)}\rangle)>0,$$ such that
	\begin{align}\label{eq: est-w-2}
		\|\theta_T \widetilde{w}_2\|_{L^2(\R\times\T)}^2\leq C_M\Big(\frac{1}{T^2}\|v_{h,K}\|_{L^2([0,T]\times\T)}^2+\frac{1}{h^3}\|\theta_Tr_{h,K}\|_{L^2(\R\times\T)}^2+\|\theta_T\widetilde{\chi}(y)v_{h,K}\|_{L^2(\R\times\T)}^2 \Big).
	\end{align}
\end{corollary}

\begin{proof}
We perform a Littlewood-Paley dyadic decomposition to connect the scales of $|D_y|$ from $ Th^{-\frac{1}{2}}$ to $h^{-(1-\alpha)}$. Pick $\widetilde{\vartheta}_2,\widetilde{\theta}\in C^{\infty}_c(\R)$, supported away from $0$. Denote 
$$ \Lambda_j:=\widetilde{\vartheta}_2(2^{-j}h^{\frac{1}{2}}D_y).
$$
The cutoff functions $\widetilde{\vartheta}_2$ is well-chosen such that the following dyadic decomposition holds :
\[
\widetilde{w}_{2} : =\sum_{j=N_{T}}^{N_{\alpha,h}}\widetilde{w}_{2,j},\text{ where } \widetilde{w}_{2,j}:= \Lambda_j\widetilde{w}_{2}, \text{ and }N_{\alpha,h}:=\lfloor(\frac{1}{2}-\alpha)\log_2(\frac{1}{h})\rfloor, N_{T}:=\lfloor\log_2(T)\rfloor.
\]
Similar as the equation for $\widetilde{w}_2$, we have $$(\ii h^{\frac{3}{2}}\pt-P_{h,K})\widetilde{w}_{2,j}=\widetilde{r}_{2,j},$$
where
$$
\widetilde{r}_{2,j}=\Lambda_j(1-\Lambda_{\leq T})(r_{h,K})+\mathcal{O}_{\mathcal{L}(L^2)}(2^{-j}h^{\frac{3}{2}})v_{h,K},
$$
where the last term on the right hand side comes from the similar commutator estimate as \eqref{r_1:commutator}. 
Applying Lemma \ref{lem: hyperbolic estimate} to $\widetilde{w}_{2,j}$, we have

\begin{multline*}
	\|\theta_T\varphi h\p_y\widetilde{w}_{2,j}\|^2_{L^2(\R\times\T)}+h\int_{\R\times\T}K(y-y_0)\langle A_1\rangle'(y)|\theta_T\varphi\widetilde{w}_{2,j}|^2\d y\d t\\
	\leq C_M\Big(\frac{1}{h^2}\|\theta_T\Lambda_j(r_{h,K}) \|_{L^2(\R\times\T)}^2+2^{-2j}h\|\theta_Tv_{h,K}\|_{L^2(\R\times\T)}^2  \Big)\\
	+C_M\Big(\frac{h}{T^2}\big\|\theta'\big(\frac{t}{T}\big)\varphi\widetilde{w}_{2,j} \|_{L^2(\R\times\T)}^2+\|\theta_T\widetilde{\chi}(y)h\partial_y\widetilde{w}_{2,j} \|_{L^2(\R\times\T)}^2 \Big).
\end{multline*}
The same argument as in the proof of Corollary \ref{cor:tildew_1} yields (using $\|\Lambda_j h^{\frac{1}{2}}\partial_y \|_{\mathcal{L}(L^2) }\leq C2^j$),
\begin{align*}
	&\|\theta_T\varphi h\p_y\widetilde{w}_{2,j}\|^2_{L^2(\R\times\T)}+h\int_{\R\times\T}K(y-y_0)\langle A_1\rangle'(y)|\theta_T\varphi\widetilde{w}_{2,j}|^2\d y\d t\\
	\leq &C_M\Big(\frac{1}{h^2}\|\theta_T\Lambda_j(r_{h,K}) \|_{L^2(\R\times\T)}^2+2^{-2j}h\|\theta_Tv_{h,K}\|_{L^2(\R\times\T)}^2  \Big)\\
	+&C_M\Big(\frac{h}{T^2}\big\|\theta'\big(\frac{t}{T}\big)\Lambda_j(v_{h,K}) \|_{L^2(\R\times\T)}^2+2^{2j}h\|\theta_T\Lambda_j\big(\widetilde{\chi}(y)v_{h,K}\big) \|_{L^2(\R\times\T)}^2 +Ch^2\big\|\theta'\big(\frac{t}{T}\big)v_{h,K} \big\|_{L^2(\R\times\T)}^2 \Big).
\end{align*}
For the left hand side, write
\begin{align*}
	\varphi h\p_y\widetilde{w}_{2,j}
	&=2^j h^{\frac{1}{2}}\left(-2^{-j}h^{\frac{1}{2}}\widetilde{\Lambda}_j\varphi'\widetilde{w}_{2,j}+2^{-j}h^{\frac{1}{2}}\widetilde{\Lambda}_j\p_y(\varphi \widetilde{w}_{2,j})-[\widetilde{\Lambda}_j,2^{-j}h^{\frac{1}{2}}\varphi\p_y]\widetilde{w}_{2,j}\right),
\end{align*}
which implies that 
\begin{gather*}
	\|\theta_T\varphi h\p_y\widetilde{w}_{2,j}\|^2_{L^2}\geq 2^{2j-1}h\|\theta_T\varphi\widetilde{w}_{2,j}\|^2_{L^2}-Ch^2\|\widetilde{w}_{2,j}\|^2_{L^2},\\
	\left|h\int_{\R\times\T}K(y-y_0)\langle A_1\rangle'(y)|\theta_T\varphi\widetilde{w}_{2,j}|^2\d y\d t\right|\leq Ch\|\theta_T\varphi\widetilde{w}_{2,j}\|^2_{L^2(\R\times\T)}.
\end{gather*}
Combining all these estimates above, we obtain
\begin{align*}
	2^{2j-1}h\|\theta_T\varphi\widetilde{w}_{2,j}\|^2_{L^2}\leq 
&	Ch\|\theta_T\varphi\widetilde{w}_{2,j}\|_{L^2(\R\times\T)}^2+Ch^2\|\theta_T\widetilde{w}_{2,j}\|_{L^2(\R\times\T)}^2
\\
+&C_M\Big(\frac{1}{h^2}\|\theta_T\Lambda_j(r_{h,K})\|_{L^2(\R\times\T)}^2+2^{-2j}h\|\theta_Tv_{h,K} \|_{L^2(\R\times\T)}^2 \Big)
\\
+&C_M\Big(
\frac{h}{T^2}\|\theta'(\frac{t}{T})\Lambda_j(v_{h,K}) \|_{L^2(\R\times\T)}^2
+2^{2j}h\|\theta_T\Lambda_j\big(\widetilde{\chi}(y)v_{h,K}\big)\|^2_{L^2(\R\times\T)}\Big)\\+&C_M h^2\|\theta'(\frac{t}{T})v_{h,K}\|_{L^2(\R\times\T)}^2.
\end{align*}
Now we sum in $j$, using the almost orthogonality of projectors $\Lambda_j$ to obtain
\begin{align*}
	\sum_{j=N_{T}}^{N_{\alpha,h}}\|\theta_T\varphi\widetilde{w}_{2,j}\|^2_{L^2}\leq &\frac{C_M}{T^2}\|\theta'(\frac{t}{T})v_{h,K}\|_{L^2(\R\times\T)}^2+
	\frac{C_M}{h^3}\|\theta_T r_{h,K}\|_{L^2(\R\times\T)}^2 +C_M\|\theta_T \widetilde{\chi}(y)v_{h,K}\|_{L^2(\R\times\T) }^2\\
	+&2^{-4N_{T}}C_M\|\theta_T v_{h,K}\|_{L^2(\R\times\T)}^2+C_M2^{-2N_T}h\|\theta'(\frac{t}{T})v_{h,K}\|_{L^2(\R\times\T)}^2.
\end{align*}
Note that $2^{-2N_{T}}\leq T^{-2}<1$. We finally deduce that 
\begin{equation}\label{eq:LTsumw2} \sum_{j=N_{T}}^{N_{\alpha,h}}\|\theta_T\varphi\widetilde{w}_{2,j}\|^2_{L^2}\leq C_M\Big(\frac{1}{T^2}\|v_{h,K}\|_{L^2(([-T,T])\times\T)}^2+\frac{C_M}{h^3}\|\theta_T r_{h,K}\|_{L^2(\R\times\T)}^2 +C_M\|\theta_T\widetilde{\chi}(y)v_{h,K} \|_{L^2(\R\times\T)}^2  \Big).
\end{equation}
To deal with the left hand side, 
 we introduce an auxiliary cutoff $\widetilde{\vartheta}\in C_c^{\infty}(\R)$ such that $\widetilde{\vartheta}= 1$ on supp$(\widetilde{\vartheta}_2)$, and 
$$  \widetilde{\Lambda}_j=\widetilde{\vartheta}(2^{-j}h^{\frac{1}{2}}D_y).
$$ 
We write
\begin{align*}
	\varphi\widetilde{w}_{2,j}&=\widetilde{\Lambda}_j\Lambda_j(\varphi\widetilde{w}_{2})+\widetilde{\Lambda}_j[\varphi,\Lambda_j]\widetilde{w}_{2}+(1-\widetilde{\Lambda}_j)[\varphi,\Lambda_j]\widetilde{w}_{2}+(1-\widetilde{\Lambda}_j)\Lambda_j(\varphi \widetilde{w}_{2})\\
	&=\Lambda_j(\varphi \widetilde{w}_{2})+\widetilde{\Lambda}_j[\varphi,\Lambda_j]\widetilde{w}_{2}+(1-\widetilde{\Lambda}_j)[\varphi,\Lambda_j]\widetilde{w}_{2}.
\end{align*}
This gives us
\begin{align*}
	\|\Lambda_j(\varphi \widetilde{w}_{2})\|^2_{L^2}&\leq \|\varphi \widetilde{w}_{2,j}\|^2_{L^2}+\|\widetilde{\Lambda}_j[\varphi,\Lambda_j] \widetilde{w}_{2}\|^2_{L^2}+\|(1-\widetilde{\Lambda}_j)[\varphi,\Lambda_j] \widetilde{w}_{2}\|^2_{L^2}\\
	&\leq \|\varphi \widetilde{w}_{2,j}\|^2_{L^2}+2^{-2j}Ch\| \widetilde{w}_{2}\|^2_{L^2}.
\end{align*}
Recall that $\widetilde{\chi}=1$ on supp$(1-\varphi)$, 
Plugging \eqref{eq:LTsumw2}, we have
\begin{align*}
\|\theta_T \widetilde{w}_2\|_{L^2(\R\times\T)}^2\leq C_M\Big(\frac{1}{T^2}\|v_{h,K}\|_{L^2([0,T]\times\T)}^2+\frac{C_M}{h^3}\|\theta_Tr_{h,K}\|_{L^2(\R\times\T)}^2+C_M\|\theta_T\widetilde{\chi}(y)v_{h,K}\|_{L^2(\R\times\T)}^2 \Big).
\end{align*}
This completes the proof.
\end{proof}

We are now in a position to prove Proposition \ref{prop: 1d-key-est}.
\begin{proof}[Proof of Proposition \ref{prop: 1d-key-est}]
 Recall the definition of $\mathbf{(MGCC_{y})}$
  and the corresponding parameter $\delta_0>0$. We take $\varphi_{j}\in C_c^{\infty}((b_j-\delta_0,a_j+\delta_0))$ such that
$ \varphi_j=1
$ on $(b_j,a_j)$. Note that on each connected component of  $(b_j-\delta_0,a_j+\delta_0)$ of $\T\setminus\widetilde{\omega}_y$, there is a choice of $y_{0,j}\in\{b_j,a_j \}$ (depending on the sign of $K$), such that
$ K(y-y_{0,j})\langle A_1\rangle'(y)>0
$
on this component. Furthermore, there is a universal constant $\gamma_0>0$, depending only on $\mathbf{(MGCC_{y})}$, such that
$$  K(y-y_{0,j})\langle A_1\rangle'(y)\geq \gamma_0\delta_0|K|\geq\frac{\gamma_0\delta_0}{2}
$$
on supp$(\varphi_j)$.

 Recall the constant $C_M>0$ in Corollary \ref{cor:tildew_1}, Corollary \ref{cor:tildew_2}. 
Let us fix $T_0\geq 1$ with the constraint
\begin{equation}\label{eq: defi-T0}
\frac{4C_M}{T_0^2}\Big(\frac{2}{\gamma_0\delta_0}+1\Big)<\frac{1}{2},
\end{equation}
and fix $T\geq T_0$ in the sequel. 
\begin{remark}
Here we give an explicit form of $T_0(\zeta_0)$ associated with the direction $\zeta_0=(1,0)$, which depends only on $\A$ and $\omega$. Thanks to Corollary \ref{cor: finite-direction}, we only have to analyze finite periodic orbits, say $\{\zeta_k\}_{0\leq k\leq k_0}$ with $k_0\in\N^*,k_0\leq 2p_0$. For each $\zeta_k$, we can find $T_0(\zeta_k)$ that depends only on $\A$ and $\omega$ similarly. Then, for the numerical constant $T_0$ appearing in Proposition \ref{prop: semiclassical ob}, we take $T_0:=\max_{0\leq k\leq k_0}T_0(\zeta_k)$. Consequently, we deduce that $T_0$ depends only on $\A$ and $\omega$, which is determined totally by the geometry and the magnetic effects. 
\end{remark}

Recall the decomposition $v_{h,K}=\widetilde{w}_1+\widetilde{w}_2$.

 First, applying Corollary \ref{cor:tildew_1} for $\widetilde{w}_1$, we have
 \begin{align*}
 \frac{\gamma_0\delta_0}{2} \cdot \|\theta_T\varphi \widetilde{w}_1\|_{L^2(\R\times\T)}^2\leq & \frac{C_M}{h^3}\|\theta_T r_{h,K}\|_{L^2(\R\times\T)}^2+C_MT^2\|\theta_T\widetilde{\chi}(y)v_{h,K}\|_{L^2(\R\times\T)}^2+ \frac{C_M}{T^2}\|v_{h,K}\|_{L^2([0,T]\times\T)}^2.
 \end{align*}
Using the fact that $\widetilde{\chi}(y)=1$ on supp$(1-\varphi)$ and $[\widetilde{\chi},\Lambda_{\leq T}]=\mathcal{O}_{\mathcal{L}(L^2)}(T^{-1}h^{\frac{1}{2}})$, we have
$$ \|\theta_T\widetilde{w}_1\|_{L^2(\R\times\T)}^2\leq \|\theta_T\varphi\widetilde{w}_1\|_{L^2(\R\times\T)}^2+
\|\theta_T(1-\varphi)\widetilde{w}_1\|_{L^2(\R\times\T)}^2+
 ChT^{-2}\|v_{h,K}\|_{L^2([0,T]\times\T)}^2,
$$
which yields (using $|1-\varphi|\leq \widetilde{\chi}$)
\begin{align}\label{output:w1}
 \|\theta_T\widetilde{w}_1\|_{L^2(\R\times\T)}^2\leq \frac{2}{\gamma_0\delta_0}\Big(\frac{C_M}{h^3}\|\theta_Tr_{h,K}\|_{L^2(\R\times\T)}^2+C_MT^2\|\theta_T\widetilde{\chi}(y)v_{h,K}\|_{L^2(\R\times\T)}^2+\frac{C_M}{ T^2}\|v_{h,K}\|_{L^2([0,T]\times\T)}^2\Big).
\end{align}
Next, apply Corollary \ref{cor:tildew_2} for $\widetilde{w}_2$, we have
\begin{align}\label{output:w2}
\|\theta_T\widetilde{w}_2\|_{L^2(\R\times\T)}^2\leq \frac{C_M}{h^3}\|\theta_Tr_{h,K}\|_{L^2(\R\times\T)}^2+C_M\|\theta_T\widetilde{\chi}(y)v_{h,K}\|_{L^2(\R\times\T)}^2+\frac{C_M}{T^2}\|v_{h,K}\|_{L^2([0,T]\times\T)}^2.
\end{align}
Adding \eqref{output:w1}, \eqref{output:w2}, using the fact that $K>1-2\rho_1$ we obtain that
\begin{align*}
\|v_{h,K}\|_{L^2([T/4,T/2]\times\T)}^2\leq & \Big(\frac{2C_M}{\gamma_0\delta_0}+1\Big)\frac{1}{h^3}\|\theta_Tr_{h,K}\|_{L^2(\R\times\T)}^2\\ + 
&C_M\Big(\frac{2T^2}{\gamma_0\delta_0}+1\Big)\|\theta_T\widetilde{\chi}(y)v_{h,K}\|_{L^2(\R\times\T)}^2\\
+&C_M\Big(\frac{2}{\gamma_0\delta_0}+1\Big)\frac{1}{T^2}\|v_{h,K}\|_{L^2([0,T]\times\T)}^2,
\end{align*}
where we have used the support property of $\theta_T$ such that
$$ \|v_{h,K}\|_{L^2([T/4,T/2]\times\T )}^2\leq \|\theta_T v_{h,K}\|_{L^2(\R\times\T)}^2.
$$

By Duhamel's formula and Cauchy-Schwarz, for any $t_1,t_2\in[0,T]$,
$$ \|v_{h,K}(t_1)\|_{L^2(\T)}\leq \|v_{h,K}(t_2)\|_{L^2(\T)}+\frac{|t_1-t_2|^{\frac{1}{2}}}{h^{\frac{3}{2}}}\|r_{h,K}\|_{L^2([t_1,t_2]\times\T)}.
$$
Taking the square and integrating over $t_1\in[0,T], t_2\in[T/4,T/2]$ for both sides, we have
\begin{align*}
	\frac{T}{4}\|v_{h,K}\|_{L^2([0,T]\times\T)}^2\leq T\|v_{h,K}\|_{L^2([T/4,T/2]\times\T)}^2+\frac{T^3}{h^3}\|r_{h,K}\|_{L^2([0,T]\times\T)}^2.
\end{align*}
This finally yields
\begin{align*}
\|v_{h,K}\|_{L^2([0,T]\times\T)}^2\leq &C_{T,\gamma_0,\delta_0}\Big(\frac{1}{h^3}\|\theta_Tr_{h,K}\|_{L^2(\R\times\T)}^2 + \|\theta_T\widetilde{\chi}(y)v_{h,K}\|_{L^2(\R\times\T)}^2\Big)\\+&\frac{4C_M}{T^2}\Big(\frac{2}{\gamma_0\delta_0}+1 \Big)\|v_{h,K}\|_{L^2([0,T]\times\T)}^2,
\end{align*}
where in the last line, we used the fact that $\rho_1<\frac{1}{4}$ in Lemma \ref{lem: xi near 1} so that $1-2\rho_1>\frac{1}{2}$. By definition of $T_0$ in \eqref{eq: defi-T0}, for $T\geq T_0$, we finally obtain that
\begin{align*}
\|v_{h,K}\|_{L^2([0,T]\times\T)}^2\leq C_{T,\gamma_0,\delta_0}\Big(\frac{1}{h^3}\|\theta_T r_{h,K}\|_{L^2(\R\times\T)}^2 + \|\theta_T \widetilde{\chi}(y)v_{h,K} \|_{L^2(\R\times\T)}^2 \Big)+\frac{1}{2}\|v_{h,K}\|_{L^2([0,T]\times\T)}^2.
\end{align*}
Recall that supp$(\theta_T)\subset (0,T)$ and supp$(\widetilde{\chi})\subset\widetilde{\omega}_y$, we complete the proof of Proposition \ref{prop: 1d-key-est}.
\end{proof}

\medskip

To finish this section, we finally show that $\|w_h^1\|^2_{L^2([0,T]\times\T^2)}=o(1)$. 
\begin{proposition}\label{prop: w-1-small}
Let $T_0$ be defined as in \eqref{eq: defi-T0} then for every $T \geq T_0$, we have 
\[
\|w_h^1\|^2_{L^2([0,T]\times\T^2)}=o(1).
\]
\end{proposition}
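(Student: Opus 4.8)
The plan is to deduce Proposition~\ref{prop: w-1-small} by taking the Fourier transform in the $x$-variable of the reduced equation~\eqref{eq: low-frequency-eq-simplified} and applying the uniform one-dimensional estimate of Proposition~\ref{prop: 1d-key-est} together with the already established smallness of $w_h$ on the control region. First I would recall that, since the operator $\ii h^{3/2}\pt-\vh{P}{2}$ commutes with $D_x$, writing $w^1_h(t,x,y)=\sum_{K}e^{\ii Kx}v_{h,K}(t,y)$ (with $K$ ranging over the appropriate frequency set, after the dilation $K\in\frac1M\Z$) turns~\eqref{eq: low-frequency-eq-simplified} into the family of one-dimensional equations~\eqref{eq: 1-d sch eq} with source $r_{h,K}=o_{L^2}(h^{3/2})$ in the sense that $\sum_K\|r_{h,K}\|_{L^2([0,T]\times\T_y)}^2=\|r_h w^1_h\cdots\|^2=o(h^3)$ — here I use that the $R_h$-type remainders produce $r_h=o_{L^2}(h^{3/2})$ acting on $w^1_h$ as established before~\eqref{eq: low-frequency-eq-simplified}. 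Moreover, by Lemma~\ref{lem: xi near 1} and the cut-off $\psi_2(hD_x)$, only finitely many (after rescaling, those with $|K|\in(1-2\rho_1,1+2\rho_1)$) frequencies $K$ contribute modulo an $o_{L^2}(1)$ error, and for those $K$ the solution $v_{h,K}$ is microlocalized in $|\eta|\le h^\alpha$, so Proposition~\ref{prop: 1d-key-est} applies.

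The next step is to sum the one-dimensional estimate~\eqref{eq: 1d-est} over $K$ (using Plancherel in $x$):
\begin{equation*}
\|w^1_h\|_{L^2([0,T]\times\T^2)}^2\le C_0\Big(\int_0^T\!\!\int_{\T_x\times\widetilde{\omega}_y}|w^1_h(t,x,y)|^2\,\d x\,\d y\,\d t+\frac{1}{h^3}\sum_K\|r_{h,K}\|_{L^2([0,T]\times\T_y)}^2\Big)+o(1).
\end{equation*}
The second term on the right is $\frac{1}{h^3}\cdot o(h^3)=o(1)$ by the bound on $r_h$. For the first term, I would use that $\widetilde{\omega}=\T_x\times\widetilde{\omega}_y\subset\omega$ (this is exactly the model reduction made after Proposition~\ref{prop: 1-average}, where $\widetilde{\omega}_y$ was chosen inside $\omega_y$), and then compare $w^1_h$ restricted to $\widetilde{\omega}$ with $v_h$ restricted to $\omega$. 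Concretely, $w^1_h=\vartheta(2h^{1-\alpha}D_y)\vartheta(h^{1-\alpha}D_y)\vh{v}{2}=\vartheta(2h^{1-\alpha}D_y)\vh{v}{2}$, and by Proposition~\ref{prop: 2-average}(ii) — more precisely Corollary~\ref{cor:supportchi1} and the choice of $\chi^{(1)},\chi^{(2)}$ equal to $1$ on $\widetilde{\omega}_y$ — together with the fact that the frequency cut-offs are bounded operators that essentially commute with the spatial cut-offs up to $\BigO{h}$ errors (as in the proof of point (ii) of Proposition~\ref{prop: 2-average}), one gets $\|w^1_h\|_{L^2((0,T)\times\widetilde\omega)}=o(1)$.

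Putting these together, $\|w^1_h\|_{L^2([0,T]\times\T^2)}^2=o(1)+o(1)+o(1)=o(1)$, which is the claim. The main obstacle, and the step requiring the most care, is the passage between the genuinely one-dimensional quantities $v_{h,K}$ and the two-dimensional cut-off solution $w^1_h$: one must check that the microlocalization in $\xi$ near $\{|\xi|=1\}$ (so that only $O(1)$ many rescaled frequencies $K$ matter, and $P_{h,K}$ has the stated form with $K^2$ a genuine constant in that window) is legitimate, that the remainder $r_{h,K}$ genuinely has $\ell^2_K$-summable norm of size $o(h^{3/2})$ rather than merely being $o(h^{3/2})$ frequency by frequency, and that restricting to $\widetilde\omega_y$ (rather than the larger $\omega_y$) still captures all critical points of $\langle A_1\rangle$ — which is guaranteed by $\mathbf{(MGCC_y)}$. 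Everything else is a routine summation of uniform-in-$K$ estimates via Plancherel.
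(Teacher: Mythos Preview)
Your proof is essentially the paper's own argument: Fourier-decompose $w_h^1$ in $x$, apply the uniform one-dimensional estimate of Proposition~\ref{prop: 1d-key-est} to each mode with $K=hk$, and sum by Plancherel, using $r_h=o_{L^2}(h^{3/2})$ for the source term and the smallness of $w_h^1$ on the control strip for the observation term. One slip worth flagging: you write that ``only $O(1)$ many rescaled frequencies $K$ matter''. That is not true---since $K=hk$ with $k$ ranging over (a dilate of) $\Z$, the window $|K|\in(1-2\rho_1,1+2\rho_1)$ contains on the order of $h^{-1}$ integers $k$. This is harmless, though: the estimate \eqref{eq: 1d-est} is uniform in $K$, and Plancherel controls the $\ell^2_k$-sum of $\|w^1_{h,k}\|_{L^2}^2$ and of $\|r_{h,k}\|_{L^2}^2$ directly by the two-dimensional norms, so no finiteness is needed.
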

\begin{proof}
By taking the Fourier transform in $x$, 
\[
w_h^1(t,x,y)=\sum_{k\in\Z}w^1_{h,k}(t,y)e^{\ii kx}, r_h(t,x,y)=\sum_{k\in\Z}r_{h,k}(t,y)e^{\ii kx}.
\]
Thanks to the relation $w_h^1=\psi_2(hD_x)w_h^1+o(1)_{L^2}$ with a well-chosen cutoff $\psi_2$, we know that
\[
w_h^1(t,x,y)=\sum_{|hk|\in(1-2\rho_1,1+2\rho_1)}w^1_{h,k}(t,y)e^{\ii kx}+o(1)_{L^2}.
\]
Then each $w_{h,k}^1(t,y)$ satisfies the assumptions of $v_{h,K}(t,y)$ with $K=hk$. By Proposition \ref{prop: 1d-key-est}, we know that for $|hk|\sim1$,
\[
\|w^1_{h,k}\|^2_{L^2([0,T]\times\T_y)}\leq C_0\left(\int_0^T\int_{\R}|\widetilde{\chi}(y)w^1_{h,k}(t,y)|^2\d y\d t+\frac{1}{h^3}\|r_{h,k}\|^2_{L^2([0,T]\times\T_y)}\right).
\]
Applying the Plancherel theorem, 
\begin{align*}
\|w_h^1\|^2_{L^2([0,T]\times\T^2)}&=\sum_{|hk|\in(1-2\rho_1,1+2\rho_1)}\|w^1_{h,k}\|^2_{L^2([0,T]\times\T)}+\sum_{|hk|\notin(1-2\rho_1,1+2\rho_1)}\|w^1_{h,k}\|^2_{L^2([0,T]\times\T)}\\
&\leq C_0\sum_{|hk|\in(1-2\rho_1,1+2\rho_1)}\left(\int_0^T\int_{\R}|\widetilde{\chi}(y)w^1_{h,k}(t,y)|^2\d y\d t+\frac{1}{h^3}\|r_{h,k}\|^2_{L^2([0,T]\times\T_y)}\right)+o(1)\\
&\leq \int_0^T\int_{\R\times\T}|\widetilde{\chi}(y)w^1_{h}(t,z)|^2\d z\d t+\frac{1}{h^3}\|r_{h}\|^2_{L^2([0,T]\times\T^2)}+o(1)\\
&=o(1).
\end{align*}
The last equality is due to Lemma \ref{lem: source-est-hf}, the fact that supp$(\widetilde{\chi})\subset \omega_y$, and the vanishing hypothesis $\|w^1_{h}\|^2_{L^2([0,T]\times\omega_y)}=o(1)$.
\end{proof}

\subsection{End of the proof of Proposition \ref{prop: semiclassical ob}}
In this subsection, we finish the proof of Proposition \ref{prop: semiclassical ob}. Recall that it suffices to prove that $\mu\mathbf{1}_{\zeta=\zeta_0}=0$ with $\zeta_0=(1,0)$. 
\\

Thanks to Proposition \ref{prop: energy estimates} and Proposition \ref{prop: w-1-small}, we know that 
\begin{equation*}
\|w_h^2\|^2_{L^2([0,T]\times\T^2)}=o(1),\;\;\|w_h^1\|^2_{L^2([0,T]\times\T^2)}=o(1).
\end{equation*}
This leads to $\|w_h\|^2_{L^2([0,T]\times\T^2)}=o(1)$ due to the decomposition $w_h=w_h^1+w_h^2$. Then, by Corollary \ref{cor: zeta0}, we have $\vartheta(h^{1-\alpha}D_y)v_h=o(1)_{L^2((0,T)\times\T^2)}$.

For the $\epsilon_0\in(0,\frac{1}{2})$, we define $v_{h,\epsilon_0}=\vartheta(\frac{hD_y}{\epsilon_0})(1-\vartheta(h^{1-\alpha}D_y))v_h$. It suffices to prove that $v_{h,\varepsilon}=o(1)_{L^2_{loc}(\R_t;L^2(\T^2))}$. Indeed, as a direct consequence, 
\begin{equation}\label{eq: decom-mu}
\|\vartheta(\frac{hD_y}{\epsilon_0})v_h\|^2_{L^2((0,T)\times\T^2)}\leq \|v_{h,\epsilon_0}\|^2_{L^2((0,T)\times\T^2)}+\|\vartheta(\frac{hD_y}{\epsilon_0})\vartheta(h^{1-\alpha}D_y)v_h\|^2_{L^2((0,T)\times\T^2)}=o(1).
\end{equation}
This gives us the desired result $\mu\mathbf{1}_{\zeta=\zeta_0}=0$. Let us focus on presenting $ \|v_{h,\epsilon_0}\|^2_{L^2((0,T)\times\T^2)}=o(1)$. 
To prove this, we perform a Littlewood-Paley dyadic decomposition to connect the scales from $hD_y$ to $h^{1-\alpha}D_y$, similar to the proof of Corollary \ref{cor:tildew_2}.  Indeed, for some $\widetilde{\vartheta}\in C^{\infty}_c(\R)$, we further decompose 
\[
    v_{h,\epsilon_0} : =\sum_{j=0}^{N_{\alpha,h}}v_{j,h}^{\epsilon_0},\text{ where } v_{j,h}^{\epsilon_0}:= \widetilde{\vartheta}(2^{-j}h^{1-\alpha}D_y)v_{h,\epsilon_0}, \text{ and }N_{\alpha,h}=\lfloor\alpha\log_2(\frac{1}{h})\rfloor.
 \]
 Then we run the (essentially the same) argument in the proof of Proposition \ref{prop: energy estimates} and obtain the following proposition.
 \begin{proposition}\label{prop: energy estimates-app}
Let $\alpha<\frac{1}{2}$. 
Then, we have
\begin{equation*}
    \| v_{h,\epsilon_0}\|_{L^2((0,T)\times\T^2)}=o(1).
\end{equation*}
\end{proposition}

\begin{proof}
The proof is almost the same as Proposition 6.4. Here we only mention the differences for the sketch of the proof. The function $v^{\epsilon_0}_{j,h}$ satisfies the following equation:
\begin{equation}\label{eq: high-fre-v-eq-app}
(\ii h^{\frac{3}{2}}\partial_t-h^2H_{\A,V})v^{\epsilon_0}_{j,h}=r_{j,h}=\bigO{h^{2-\alpha}}{L^2},
\end{equation}
Let $\Phi_t$ be the geodesic flow on $T^*\T^2$ and for any $\zeta\in \mathbb{S}^1$, we denote by $\Phi_t(\cdot,\zeta): \T^2\to\T^2$ the projection of the flow map $\Phi_t$. Up to the change of coordinates, we assume that 
\begin{equation*}
\omega_0:=\T_x\times I_y\subset\bigcup_{t\in[0,2\pi]}\Phi_t(\cdot,\zeta_0)(\omega).
\end{equation*}
\begin{figure}
    \centering
\tikzset{every picture/.style={line width=0.75pt}} 

\begin{tikzpicture}[x=0.75pt,y=0.75pt,yscale=-0.8,xscale=0.8]

\draw   (163,12.5) -- (439,12.5) -- (439,270.5) -- (163,270.5) -- cycle ;
\draw  [fill={rgb, 255:red, 183; green, 216; blue, 247 }  ,fill opacity=0.87 ][line width=0.75]  (163,104.5) -- (439,104.5) -- (439,197.5) -- (163,197.5) -- cycle ;
\draw  [fill={rgb, 255:red, 126; green, 211; blue, 33 }  ,fill opacity=1 ][dash pattern={on 0.84pt off 2.51pt}][line width=0.75]  (319.04,78.12) .. controls (339.12,74.56) and (354.64,106.55) .. (353.7,149.59) .. controls (352.76,192.62) and (335.72,230.39) .. (315.64,233.95) .. controls (295.56,237.51) and (280.04,205.52) .. (280.98,162.49) .. controls (281.92,119.45) and (298.96,81.68) .. (319.04,78.12) -- cycle ;
\draw [line width=1.5]    (164,42.5) -- (437,41.51) ;
\draw [shift={(440,41.5)}, rotate = 179.79] [color={rgb, 255:red, 0; green, 0; blue, 0 }  ][line width=1.5]    (14.21,-4.28) .. controls (9.04,-1.82) and (4.3,-0.39) .. (0,0) .. controls (4.3,0.39) and (9.04,1.82) .. (14.21,4.28)   ;

\draw (302,189.4) node [anchor=north west][inner sep=0.75pt]  [font=\large]  {$\omega $};
\draw (167,114.4) node [anchor=north west][inner sep=0.75pt]    {$\omega _{0} =\mathbb{T}_{x} \times I_{y}$};
\draw (414,247.4) node [anchor=north west][inner sep=0.75pt]    {$\mathbb{T}^{2}$};
\draw (301,22.4) node [anchor=north west][inner sep=0.75pt]    {$\zeta _{0} =( 1,0)$};
\end{tikzpicture}
\end{figure}
Therefore, there exists $\epsilon_0$ sufficiently small, and $T_0>0$ such that for any $\zeta\in\mathbb{S}^1$, $|\zeta-\zeta_0|<\epsilon_0$ and any $z_0\in\omega_0$, we have $\Phi_t(z_0,\zeta)\in\omega$ for some $t\leq T_0$. Using the classical propagation argument and geometric control condition, we derive that $\|v_{h,\epsilon_0}\|_{L^2([0,T]\times\omega_0)}=o(1)$ (one can find similar proof in \cite[Lemma 3.4]{Sun-dampedwave}). Let $\theta_T\in C^{\infty}_c(\R)$ be a time cutoff such that $\supp\theta_T\subset[0,T]$ and $\varphi\in C^{\infty}_c(\T_y)$ be a space cutoff defined only on $y-$direction such that $\varphi=1$ on $\T_y\setminus I_y$ and $\supp\varphi\subset\T_y\setminus\Tilde{I}_y$. 
In summary, we have the full commutator $[\theta^2_T(t)\varphi^2(y)yD_y,\ii h^{\frac{3}{2}}\pt-h^2H_{\A,V}]$ as follows
\begin{align}\label{eq1:commutator-app}
&[\theta^2_T(t)\varphi^2(y)yD_y,\ii h^{\frac{3}{2}}\pt-h^2H_{\A,V}]\notag  \\
=&-2\ii\theta^2_T(t)(\varphi^2(y)+2\varphi(y)\varphi'(y)y)h^2D_y^2+\mathcal{O}(h)\Op_h(r)-2\ii h^{\frac{1}{2}}\theta_T(t)\theta_T'(t)\varphi^2(y)yhD_y,
\end{align}
where $r(z,\zeta)\in S_{\mathrm{hom}}^1$. 
In particular, $\Op_h(r)v^{\epsilon_0}_{j,h}=\bigO{1}{L^2}$. Now let us turn to the energy estimates of the equation. Multiplying \eqref{eq: high-fre-v-eq-app} by $\theta^2_T(t)\varphi^2(y)y\overline{D_yv^{\epsilon_0}_{j,h}}$ and integration over $\R_t\times\T_z^2$ and taking the imaginary part, we obtain
\begin{align}
-\Im\int_{\R\times\T^2}\left((\ii h^{\frac{3}{2}}\pt-h^2H_{\A,V})v^{\epsilon_0}_{j,h}\right)&\theta^2_T(t)\varphi^2(y)y\overline{D_yv^{\epsilon_0}_{j,h}}\d z\d t\notag\\
=\int_{\R\times\T^2}&|\theta_T(t)\varphi(y)h\p_yv^{\epsilon_0}_{j,h}|^2\d z\d t+2\int_{\R\times\T^2}\varphi(y)\varphi'(y)y|\theta_T(t)h\p_yv^{\epsilon_0}_{j,h}|^2\d z\d t\notag\\
-&\poscals{h^{\frac{1}{2}}\theta_T(t)\theta_T'(t)\varphi^2(y)yhD_yv^{\epsilon_0}_{j,h}}{v^{\epsilon_0}_{j,h}}_{L^2(\R\times\T^2)}+\BigO{h}.\label{eq: energy-est-identity-app}
\end{align}
In the left hand side of \eqref{eq: energy-est-identity-app}, using the equation \eqref{eq: high-fre-v-eq-app},  we have $r_{j,h}=\BigO{h^{2-\alpha}}$ and thanks to the frequency cutoff, we obtain
\begin{equation}\label{eq: rhs-v-hf-app}
\Im\int_{\R\times\T^2}r_{j,h}\theta^2_T(t)\varphi^2(y)y\overline{D_yv^{\epsilon_0}_{j,h}}\d z\d t=
\BigO{2^jh}.
\end{equation}
Since $0\leq j\leq \alpha\log_2(1/h)$, we know that $h^{\alpha}\leq 2^jh^{\alpha}\leq 1$ and $\frac{2^jh}{2^{2j}h^{2\alpha}}=2^{-j}h^{1-2\alpha}\ll1$. 
Using the frequency localization again, we notice that the sizes of the terms of the right hand side are 
\begin{equation*}
\|h\p_yv^{\epsilon_0}_{j,h}\|^2_{L^2(\T^2)}=\BigO{2^{2j}h^{2\alpha}},h^{\frac{1}{2}}\poscals{hD_yv^{\epsilon_0}_{j,h}}{v^{\epsilon_0}_{j,h}}_{L^2}=\BigO{2^jh^{\frac{1}{2}+\alpha}}.
\end{equation*}
Note that $\alpha<\frac{1}{2}$. Hence, the dominating term is $\|h\p_yv^{\epsilon_0}_{j,h}\|^2_{L^2(\T^2)}$. Now we perform more precise estimates. As a consequence of Cauchy--Schwarz's inequality and direct computation, we derive that
\begin{equation}\label{eq: H^1-est-1-app}  
\begin{aligned}
\int_{\R\times\T^2}|\theta_T(t)\varphi(y)h\p_yv^{\epsilon_0}_{j,h}|^2\d z\d t&\leq C\int_{\R\times\T^2}|\theta_T(t)\varphi'(y)h\p_yv^{\epsilon_0}_{j,h}|^2\d z\d t+Ch+C2^jh\\
&\leq C2^{2j}h^{2\alpha}\left(\|2^{-j}h^{1-\alpha}\p_yv^{\epsilon_0}_{j,h}\|^2_{L^2((0,T)\times\supp(\varphi'))}+h^{1-2\alpha}(2^{-2j}+2^{-j})\right).
\end{aligned}
\end{equation}
Thanks to the frequency cut-off of $v^{\epsilon_0}_{j,h}$, we have
\begin{equation}\label{eq: bernstein-ineq-app}
\|2^{-j}h^{1-\alpha}\p_y(\varphi v^{\epsilon_0}_{j,h})\|_{L^2(\T^2)}^2\gtrsim\|\varphi v^{\epsilon_0}_{j,h}\|_{L^2(\T^2)}^2-\|2^{-j}h^{1-\alpha}\varphi' v^{\epsilon_0}_{j,h}\|_{L^2(\T^2)}^2+\BigO{2^{-2j}h^{2-2\alpha}}
\end{equation}
Hence, from \eqref{eq: bernstein-ineq-app} and \eqref{eq: H^1-est-1-app}, we conclude that
\begin{align*}
2^{2j}h^{2\alpha}\|\theta_T\varphi v^{\epsilon_0}_{j,h}\|_{L^2(\R\times\T^2)}^2\leq C2^{2j}h^{2\alpha}\left(\|v^{\epsilon_0}_{j,h}\|^2_{L^2((0,T)\times\supp(\varphi'))}+h^{1-2\alpha}\right).
\end{align*}
Summing up in $j$, 
\begin{equation*}
\sum_j\|\theta_T\varphi v^{\epsilon_0}_{j,h}\|_{L^2(\R\times\T^2)}^2\leq C\sum_j\|v^{\epsilon_0}_{j,h}\|^2_{L^2((0,T)\times\supp(\varphi'))}+C\alpha h^{1-2\alpha}\log_2(h^{-1}).
\end{equation*}
Using the support of $\supp(\varphi')\subset\omega_0$, $\|v_{h,\epsilon_0}\|^2_{L^2((0,T)\times\supp(\varphi'))}=o(1)$, which implies that
\[
\|\theta_T\varphi v_{h,\epsilon_0}\|_{L^2(\R\times\T^2)}^2=o(1)+\BigO{h^{1-2\alpha}\log_2(h^{-1})}=o(1).
\]
We finally conclude that $\| v_{h,\epsilon_0}\|_{L^2((0,T)\times\T^2)}=o(1)$ due to the support of $\varphi$.
\end{proof}

\section{From the semiclassical observability to the observability}
\label{sec:refobsmultid}

We start from \eqref{eq: semiclassical ob}. Then the goal is to prove that 
\begin{equation}
\label{eq:obsandcompact}
 \left\| u_0 \right\|_{L^2(\T^2)}^2 \leq C{'} \left(\int_0^T \int_{\omega} \left| e^{-\ii tH_{\A,V}} u_0 \right|^2 \d z\d t+ \|u_0\|^2_{H^{-2}(\T^2)}\right).
\end{equation}
The term $\|u_0\|^2_{H^{-2}(\T^2)}$ will be removed in a further step. The strategy is by now classical and it is strongly inspired by \cite[Section 3.1]{BZ19}.

\begin{proof}

Let $T>0$. By assumptions, there exist $\rho_0>0$, $h_0>0$ such that \eqref{eq: semiclassical ob} holds. Fix $R>1$ such that 
$(R^{-1},R) \subset \{r \in \R\ ;\ \chi((r-1)/\rho_0) = 1\}$. Then from \cite[Proposition 2.10]{BCD11}, one can find a dyadic partition of the unity as follows, there exist $\varphi_0 \in C_c^{\infty}((-1,1);[0,1])$ and $\varphi \in C_c^{\infty}((R^{-1},R);[0,1])$ such that, denoting $\varphi_k^2(r)=\varphi^2(R^{-k}r)$ for $k \geq 1$,
\begin{equation}
\label{eq:decompositiondyadique}
    \forall r \in \R^+,\ \varphi_0^2(r) + \sum_{k=1}^{+\infty} \varphi_k^2(r) = 1.
\end{equation}

Let $\psi \in C_c^{\infty}((0,T);[0,1])$ that satisfies $\psi(t) =1$ on $(T/3,2T/3)$. Let us choose $K\geq 1$ large enough such that $R^{-K} \leq h_0^2$. Then, we have that for every $k \geq K+1$, setting $h=R^{-k/2}$, $\varphi_{k}(H_{\A,V})$ coincides with $\Pi_{h,\rho_0,V} \varphi_{k}(H_{\A,V})$. One can then use the observability estimate \eqref{eq: semiclassical ob} on the time interval $(T/3,2T/3)$ to obtain
\begin{equation}
\label{eq:EstimationFrequencyCutOfft3}
    \norme{\varphi_{k}(H_{\A,V})u(T/3)}_{L^2(\T^2)}^2 \leq C \int_{\R} \psi(t)^2 \norme{1_{\omega} \varphi_{k}(H_{\A,V}) u(t)}_{L^2(\T^2)}^2 dt\qquad \forall k \geq K+1.
\end{equation}
Now using that $D_t u = -H_{\A,V} u$ and $D_t 1_{\omega} = 1_{\omega} D_t$, we deduce from \eqref{eq:EstimationFrequencyCutOfft3} that
\begin{align}
\label{eq:EstimationFrequencyCutOfft3Bis}
    \norme{\varphi_{k}(H_{\A,V})u(T/3)}_{L^2(\T^2)}^2 \leq C \norme{1_{\omega} \psi(t) \varphi_k(D_t) u}_{L^2(\R_t \times  \T^2)}^2 \qquad \forall k \geq K+1.
\end{align}

Let $\widetilde{\psi} \in C_c^{\infty}((0,T);[0,1])$ such that $\widetilde{\psi}=1$ on $\text{supp}(\psi)$. Setting $h=R^{-k/2}$, the semiclassical parameter, from the semiclassical calculus on $\R$, the asymptotic expansion holds
\begin{align}
    \psi(t) \varphi_k^2(D_t) &= \psi(t) \varphi^2(hD_t)\notag\\
    &=\psi(t) \varphi^2(hD_t) \widetilde{\psi}(t)+\psi(t) \varphi^2(hD_t) (1- \psi(t)) \notag \\
   &= \psi(t) \varphi^2(hD_t) \widetilde{\psi}(t) + E(t,hD_t)(1+|t|^2)^{-1}(1+|hD_t|^2)^{-1},\label{eq:psitildepsisemiclassical}
\end{align}
where $$E(t,hD_t) = \Op_h(c),\ c \in \mathcal{S}(\R^2)\ \text{and}\ \sup_{(t,\tau) \in \R\times \R} |(1+t^2)^{\alpha} (1+\tau^2)^{\beta} \partial_{t}^\gamma \partial_{\tau}^{\delta} c(t,\tau)| \leq C_{\alpha,\beta,\gamma,\delta} h^3,\ \alpha,\beta,\gamma,\delta \in \N.$$
Then, by using \eqref{eq:EstimationFrequencyCutOfft3Bis}, \eqref{eq:psitildepsisemiclassical},  Calder\'on-Vaillancourt's theorem and again $D_t u = -H_{\A,V} u$ in $(0,T)$ we have 
\begin{align*}
  \notag & \norme{\varphi_{k}(H_{\A,V})u(T/3)}_{L^2(\T^2)}^2 \\
 \notag   & \leq C \norme{1_{\omega} \varphi_k(D_t) \widetilde{\psi}(t) u}_{L^2(\R_t \times\T^2)}^2 + C  h^6 \norme{(1+t^2)^{-1} (1+|hD_t|^2)^{-1} u(t)}_{L^2(\R_t \times \T^2)}^2 \\
 & \leq C \norme{ \varphi_k(D_t) \widetilde{\psi}(t) 1_{\omega} u}_{L^2(\R_t \times\T^2)}^2 + C  h^6 \norme{(1+t^2)^{-1} (1+|hH_{\A,V}|^2)^{-1} u(t)}_{L^2(\R_t \times \T^2)}^2 \qquad \forall k \geq K+1.
\end{align*}
Therefore, by summing for $k \geq K+1$ the preceding estimate, remembering that $h=R^{-k/2}$, we get from   the conservation of the $L^2$-norm for the electromagnetic Schr\"odinger equation, and \eqref{eq:decompositiondyadique}
\begin{align}
\sum_{k=K+1}^{+\infty} \norme{\varphi_{k}(H_{\A,V})u(T/3)}_{L^2(\T^2)}^2  &\leq \int_{\R} \widetilde{\psi}(t)^2 \norme{1_{\omega}  u(t)}_{L^2(\T^2)}^2 dt + C \norme{(1+|H_{\A,V}|^2)^{-1} u(t)}_{L^{\infty}(\R_t;L^2(\T^2))}^2\notag \\
 & \leq C \int_{0}^T \norme{1_{\omega}  u(t)}_{L^2(\T^2)}^2 dt + C \norme{(1+|H_{\A,V}|^2)^{-1} u_0}_{L^2(\T^2)}^2 \notag \\
 & \leq C \int_{0}^T \norme{1_{\omega}  u(t)}_{L^2(\T^2)}^2 dt + C \norme{u_0}_{H^{-2}(\T^2)}^2
\label{eq:estimatehighfrequenciescutoff}
\end{align}

To sum up, we then have  \eqref{eq:estimatehighfrequenciescutoff}
\begin{multline*}
     \norme{u_0}_{L^2(\T^2)}^2 = \sum_{k=0}^{+\infty} \norme{\varphi_k(H_{\A,V})u_0}_{L^2(\T^2)}^2
      \leq C \norme{u_0}_{H^{-2}(\T^2)}^2 + \sum_{k=K+1}^{+\infty} \norme{e^{-i(T/3)H_{\A,V}}\varphi_{k}(H_{\A,V})u_0}_{L^2(\T^2)}^2\\
     \leq C \norme{u_0}_{H^{-2}(\T^2)}^2 + \sum_{k=K+1}^{+\infty} \norme{\varphi_{k}(H_{\A,V})u(T/3)}_{L^2(\T^2)}^2
     \leq C \int_{0}^T \norme{1_{\omega}  u(t)}_{L^2(\T^2)}^2 dt + C \norme{u_0}_{H^{-2}(\T^2)}^2,
\end{multline*}
which concludes the proof of \eqref{eq:obsandcompact}.
\end{proof}

\subsection{Remove the compact term in the weaker observability estimate}

The goal is to prove the true observability estimate without the error term.
\begin{proof}
Let us assume that \eqref{eq:obsandcompact} holds for any time $T>0$.\\

\textit{First step: an unique continuation property.} Let us consider the following space
\begin{equation*}
    \mathcal N_{T, \A, V}=\left\{u \in L^2(\T^2); \ e^{-itH_{\A,V}}u(x)=0\ \text{on}\ [0,T]\times \omega\right\},
\end{equation*}
First of all, let us show that the Hilbert space $(\mathcal N_{T, \A, V}, \|\cdot\|_{L^2(\T^2)})$ is a finite dimensional space. Thanks to \eqref{eq:obsandcompact}, we have that the norms $\|\cdot\|_{L^2(\T^2)}$ and $\|\cdot\|_{H^{-2}(\T^2)}$ are equivalent. As a consequence, one can readily show that the unit closed ball $\overline{B_{L^2(\T^2)}(0,1)} \cap \mathcal N_{T, \A, V}$ is compact, thanks to the Rellich Theorem. In particular, the Riesz Theorem provides that the dimension of $\mathcal N_{T, \A, V}$ is finite. 

Let us now show that $\mathcal N_{T, \A, V}=\left\{0\right\}$. To that end, we proceed by contradiction and assume $\mathcal N_{T, \A, V}\neq \left\{0\right\}$. We begin by noticing that $\mathcal N_{T, \A, V}$ is invariant by the action of $H_{\A,V}$. Indeed, if $u \in \mathcal N_{T, \A, V}$, then for all $0<\varepsilon< T$, $u_\varepsilon= \frac{e^{-i\varepsilon H_{\A,V}}u-u}{\varepsilon}$ belongs to $\mathcal N_{T-\varepsilon, \theta, V}$. Thus, by applying \eqref{eq:obsandcompact} at time $T-\varepsilon$, we obtain that for all $0< \varepsilon < T$,
\begin{equation*}
    \|u_{\varepsilon}\|_{L^2(\T^2)} \leq C{'} \|u_{\varepsilon} \|_{H^{-2}(\T^2)},
\end{equation*}
for some positive constant $C'>0$.
Since $u_{\varepsilon} \underset{\varepsilon\to 0^+}{\longrightarrow} \mathcal H_{\theta, V}u$ in $\mathcal D'(\T^2)$ and $H^{-2}(\T^2)$, it follows that $u \in D\left(H_{\A,V}\right)$ (see for instance \cite[Section 1.1]{Paz83}) and 
$$\|H_{\A,V}u\|_{L^2(\T^2)} \leq C'' \|u\|_{L^2(\T^2)},$$
for some positive constant $C''>0$.
Moreover, since $u_{\varepsilon}$ belongs to $\mathcal N_{T-\varepsilon, \A, V}$ for all $0< \varepsilon < T$, we deduce that $H_{\A,V}u \in \mathcal N_{T-\delta, \A, V}$ for all $0< \delta <T$. Then, $H_{\A,V}u \in \mathcal N_{T, \A, V}$ and $\mathcal N_{T, \A, V}$ is invariant by the action of $H_{\A,V}$. As a consequence, since $\mathcal {H_{\A, V}}_{\left| \mathcal N_{T, \A, V} \right.}$ is a self-adjoint operator on the finite dimensional Hilbert space $(\mathcal N_{T, \A, V}, \|\cdot\|_{L^2(\T^2)})$, there exists a nontrivial function $\phi \in L^2(\T^2)$ and $\lambda \in \rr$ such that
$$H_{\A, V}\phi = \lambda \phi \quad \text{and} \quad \phi \in \mathcal N_{T, \A, V}.$$
In particular, $\phi$ is an eigenfunction of $H_{\A, V}$ which vanishes on an open subset. By the unique continuation result, \cite[Theorem 5.2]{LLRR}, we deduce that $\phi\equiv 0$. This provides a contradiction and consequently, $\mathcal N_{T, \A, V}=\left\{0\right\}.$\\

\textit{Second step: we remove the $H^{-2}$-norm in the weak observability estimate.}
By now, we establish that there exists a positive constant $C{'}=C{'}(T,M)>0$ such that 
\begin{equation*}
    \forall u_0 \in L^2(\T^2), \quad \left\| u_0 \right\|_{L^2(\T^2)}^2 \leq C{'} \int_0^T \int_{\omega}  \left| e^{-itH_{\A,V}} u_0(z)\right|^2 dz dt.
\end{equation*}
Once again, we proceed by contradiction and it provides sequences $(u_n)_{n \in \nn} \subset L^2(\T^2)$ with $\|u_n\|_{L^2(\T^2)}=1$ for all $n \in \nn$, 
\begin{equation}\label{observability_contradiction}
    \int_0^T \int_{\omega} \left| e^{-it\mathcal{H}_{\theta_n,V_n}} u_n(z)\right|^2 dz dt \leq \frac1n.
\end{equation}
Since $(u_n)_{n \in \nn}$ is bounded in $L^2(\T^2)$, there exists $f\in L^2(\T^2)$ such that, up to a subsequence, $(u_n)_{n \in \nn}$ weakly converges to $f$ in $L^2(\T^2)$ and strongly converges to $f$ in $H^{-2}(\T^2).$ Thanks to the weak observability estimate \eqref{eq:obsandcompact}, it follows that 
\begin{equation}
    \label{eq:fnonnul}
    1\leq C \|f\|^2_{H^{-2}(\T^2)}.
\end{equation}
We also have that $e^{-itH_{\A,V}} u_n(z)$ weakly converges in $L^2(0,T;L^2(\T^2))$ to $e^{-itH_{\A,V}} f(z)$

From \eqref{observability_contradiction}, we then deduce that for all $t \in [0,T]$, 
\begin{equation*}
    \int_0^T \int_{\omega}  \left| e^{-itH_{\A,V}} f(z)\right|^2 dz dt \leq \liminf_{n \to +\infty}  \int_0^T \int_{\omega}   \left| e^{-itH_{\A,V}} u_n(z)\right|^2 dz dt = 0,
\end{equation*}
that implies that $f \in \mathcal N_{T, \A, V}=\left\{0\right\}$. This contradicts \eqref{eq:fnonnul} and ends the proof of the observability estimate.
\end{proof}

\section{Optimality}\label{sec: optimality}
In this section, we prove the optimality of {\bf(MGCC)}, in the context of Theorem \ref{thm: optimal}. 
\subsection{Reduction to the model equation }
We begin with some simplifications. By definition, if {\bf{(MGCC)}} is not satisfied, there exists a periodic direction $\vec{\gamma}$ such that $A_{\gamma}$ has at least one non-degenerate critical point outside the strips $\omega_{\gamma^{\perp}}$.  
By changing coordinate system, we may assume that $\vec{\gamma}=\zeta_0=(1,0)$.
Next, since the observability is invariant by the gauge $\mathbf{A}\mapsto \mathbf{A}+\nabla g$, we assume  that $\mathbf{A}=(A_1(y),A_2(x,y))$. By assumption, $A_{\gamma}=A_1$ has a critical point outside the horizontal strips $\overline{\omega}_{\gamma}^{\perp}$. By translation invariance, we assume that $y=0$ is a non-degenerate critical point outside $\overline{\omega}_{\gamma}^{\perp}$.
Given any $T>0$, our goal is to construct a sequence of solutions $(u_k(t))_{k\geq 1}$ of the magnetic Schr\"odinger equation, such that 
\begin{align}\label{optimal-1}
	\lim_{k\rightarrow\infty}\frac{\int_0^T\int_{\omega}|u_k(t,z)|^2 \d z \d t }{\|u_k(0)\|_{L^2}^2}=0.
\end{align}
The strategy is to construct well-prepared solutions for the model equation
\begin{align}\label{eq:model}
	i\partial_tu - (D_x^2+D_y^2)u +2A_1(y)D_xu+A_2(y)D_yu+D_y(A_2(y)u)-\langle V^{(1)}\rangle (y)u=0,
\end{align}
that are oscillated at the scale $|D_y|\leq h^{-\frac{1}{2}}, |D_x|\sim h^{-1}$ and concentrated near the non-degenerate critical point $y_0=0$ outside $\overline{\omega}_{\gamma^{\perp}}$. Then we take the normal form transform from the opposite direction to obtain the desired solutions of the original equation. Here we have to construct $o(h^2)$-quasimodes instead of $o(h^{\frac{3}{2}})$, comparing to the proof of Theorem \ref{thm: main}. To this end, we have to perform a finer normal form transformation as described below.

Set $g_2(x,y,\xi), G_2=\Op_h(g_2\vartheta(\eta)\eta )$ as in Proposition \ref{prop: 2-average}. Replacing $g_1(x,y), A_2^{(1)}(x,y), \langle A_1\rangle(y)$ there by $0, A_2(x,y),A_1(y)$, the computation in the proof of Proposition \ref{prop: 2-average} gives
\begin{align}\label{conjugate}
	e^{G_2}(P_h^{(1)}+Q_h^{(1)})e^{-G_2}=P_h^{(2)}+R_h^{(2)}+R_h^{(3)},
\end{align} 
where
\begin{align}\label{op:reminder} 
	&R_h^{(2)}:= h^2\Op_h\big(V^{(1)}(x,y)+2\ii g_2(x,y,\xi)\vartheta(\eta)A_1'(y)\xi \big)-h\Op_h(r_{1+2\alpha}(x,y,\xi)\eta^2),\\
	&R_h^{(3)}:=h^2\Op_h(\Theta(x,y,\xi,\eta)\eta)+R_{2+\alpha}^{(2)}+\mathcal{O}_{\mathcal{L}(L^2)}(h^3).
\end{align}
We recall the model operator $P_h^{(2)}$ defined in \eqref{eq:defP2}, the symbol $r_{1+2\alpha}(x,y,\xi)=2\ii \partial_yg_2\vartheta(\eta)$, and 
$\Theta(x,y,\xi,\eta)$ is a smooth symbol with compact support in all variables. The computation \eqref{conjugate} expands one more term in the Taylor expansion compared to the proof of Proposition \ref{prop: 2-average}. and the right hand of \eqref{conjugate} is organized in the way that, the first term $P_h^{(2)}$is the model operator, while the second term $R_h^{(2)}$ will eventually generate an $O(h^2)$ error applying to the well-prepared solutions, thus they have to be averaged one more time. The third term $R_h^{(3)}$ will contribute an acceptable error $o(h^2)$ when applying to the well-prepared solutions.

The point here is that $r_{1+2\alpha}(x,y,\xi)$ and $g_2(x,y,\xi)$ both have zero mean value when integrating over $\mathbb{T}$ with respect to $x$. Therefore, by successive normal form reduction, we will obtain the model equation \eqref{eq:model} after averaging these symbols. 

More precisely, set
$$ G_3=\Op_h(g_3(x,y,\xi)\vartheta(\eta)\eta^2+hg_4(x,y,\xi,\eta)),
$$
Then
\begin{align*}
	e^{G_3}(P_h^{(2)}+R_h^{(2)}+R_h^{(3)})e^{-G_3}=& P_h^{(2)}+R_h^{(2)}+\frac{h}{i}\Op_h(\{g_3\vartheta(\eta)\eta^2+hg_4,\xi^2\})+\widetilde{R}_h^{(3)},
\end{align*}
where here and in the sequel, $\widetilde{R}_h^{(3)}$ denotes operators with similar structure as $R_h^{(3)}$. Set
\begin{align*}
	g_3(x,y,\xi):=&\frac{\psi(\xi)}{2\ii \xi}\partial_x^{-1} (r_{1+2\alpha})(x,y,\xi),\\
	g_4(x,y,\xi,\eta):=&-\frac{\psi(\xi)}{2\ii \xi}\partial_x^{-1}(V^{(1)}-\langle V^{(1)}\rangle)(x,y)-\partial_x^{-1}(g_2)(x,y,\xi)\vartheta(\eta).
\end{align*}
We finally get
$$ e^{G_3}(P_h^{(2)}+R_h^{(2)}+R_h^{(3)})e^{-G_3}=P_h^{(2)}+h^2\langle V^{(1)}\rangle(y)+\widetilde{R}_h^{(3)}.
$$

Hence it suffices to construct well-prepared solutions for the model equation \eqref{eq:model}. 

Set $w_h(t,x,y):=v_h(t,y)e^{\ii kx}$, where $k=\frac{1}{h}\in\mathbb{Z}$. We ask for $w_h$ satisfying the equation
$$ ih^2\partial_tw_h-P_h^{(2)}w_h-h^2\langle V^{(1)}\rangle(y)w_h=0,
$$
which is equivalent to the model equation
\begin{align}\label{model:eq}
 \ii h\partial_tv_h-\frac{1}{h}v_h-(hD_y^2-2A_1(y))v_h+h\langle A_2\rangle(y)D_yv_h+hD_y(\langle A_2\rangle(y)v_h)-h\langle V^{(1)}\rangle(y)v_h=0.
\end{align}
Take $0<b<\pi$ small enough such that $\mathbb{T}_x\times (-b,b)\subset \mathbb{T}^2\setminus \overline{\omega}_{\gamma^{\perp}}$.

We have the following non-observability result of the model equation:
\begin{proposition}\label{non-ob:model}
Let $T>0$, $0<b<\pi$. There exist a sequence of small parameters $h_k\rightarrow 0$ and a sequence of approximate solutions $v_{k}(t,y)=v_{h_k}(t,y)$ to \eqref{model:eq}:
\begin{align}\label{model:eqapprox}
 \ii h_k\partial_tv_k-\frac{1}{h_k}v_k-(h_kD_y^2-2A_1(y))v_k+h_k\langle A_2\rangle(y)D_yv_k+h_kD_y(\langle A_2\rangle(y)v_k)-h_k\langle V^{(1)}\rangle(y)v_k=r_k,
 \end{align}
such that
\begin{align*}
\liminf_{k\rightarrow\infty}\|v_k(0)\|_{L^2(\T)}^2>0, \quad \sup_{t\in[0,T]}\|r_k(t)\|_{L^2(\T)}\leq Ch_k^{\frac{5}{4}}.
\end{align*}
Moreover, for any $m\in\mathbb{N}$ and $\epsilon_0\in(0,\pi)$, 
$$ \sup_{t\in[0,T]}\|(h_k^{\frac{1}{2}}\partial_y)^mv_k(t,y)\|_{L^2(\T)} \leq C_mh_k^{\frac{m}{4}}, \quad \sup_{t\in[0,T]}\|v_k(t)\|_{L^2(\epsilon_0<|y|<\pi)}\leq Ch_k^{100}.
$$
\end{proposition}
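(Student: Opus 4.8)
The plan is to construct $v_k$ by a WKB/coherent-state ansatz concentrated at the non-degenerate critical point $y_0=0$ of $A_1$, oscillating at the transversal scale $h_k^{-1/2}$. Write $h=h_k$ and introduce the rescaled variable $Y=y/h^{1/4}$, so that the principal symbol near $y=0$ becomes, after Taylor expansion $2A_1(y)=2A_1(0)+A_1''(0)y^2+O(y^3)$, a harmonic-oscillator type operator $-h^{1/2}\partial_Y^2+A_1''(0)h^{1/2}Y^2$ plus lower-order terms. The natural ansatz is
\begin{equation}
v_h(t,y)=h^{-1/8}e^{-\ii \lambda_h t/h}\,\Phi\!\left(\frac{y}{h^{1/4}}\right)\chi(y),\label{eq:ansatzplan}
\end{equation}
where $\chi\in C_c^\infty((-\epsilon_0,\epsilon_0))$ is a cutoff equal to $1$ near $0$, $\Phi$ is a fixed Schwartz function (the ground state of the model harmonic oscillator $-\partial_Y^2+A_1''(0)Y^2$, assuming without loss of generality $A_1''(0)>0$; otherwise one works near a local maximum and uses $Y\mapsto$ a Gaussian adapted to the sign), and $\lambda_h=1+2A_1(0)+\mu_1 h^{1/2}+\dots$ is a suitable eigenvalue expansion. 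The prefactor $h^{-1/8}$ normalizes the $L^2$-mass: $\|v_h(0)\|_{L^2(\mathbb T)}^2\to\|\Phi\|_{L^2(\mathbb R)}^2>0$, which gives the first claim $\liminf_k\|v_k(0)\|_{L^2}^2>0$.

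First I would plug \eqref{eq:ansatzplan} into the model equation \eqref{model:eqapprox} and organize the terms by powers of $h^{1/4}$. The term $\frac{1}{h}v_h$ and the constant $2A_1(0)$ are absorbed by the phase $e^{-\ii\lambda_h t/h}$; the operator $hD_y^2$ acting on $\Phi(y/h^{1/4})$ produces $h^{1/2}(D_Y^2\Phi)(y/h^{1/4})$, and the quadratic part of $-2A_1(y)$ produces $-A_1''(0)h^{1/2}Y^2\Phi$ at the same order $h^{1/2}$; choosing $\Phi$ and $\mu_1$ to solve the one-dimensional harmonic oscillator eigenvalue problem kills this order. The cubic and higher Taylor remainders of $A_1$ contribute $O(h^{3/4})$, the drift terms $h\langle A_2\rangle D_y v_h$ and $hD_y(\langle A_2\rangle v_h)$ contribute $O(h^{3/4})$ (one $h$ from the prefactor, one $h^{-1/4}$ from $\partial_y$), and $h\langle V^{(1)}\rangle v_h$ contributes $O(h)$. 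So a naive one-term ansatz only gives a remainder of size $O(h^{3/4})$, which is not good enough for the required $O(h^{5/4})$. The fix is to add correctors: write $\Phi=\Phi_0+h^{1/4}\Phi_1+h^{1/2}\Phi_2$ and $\lambda_h=1+2A_1(0)+\mu_1h^{1/2}+\mu_{3/2}h^{3/4}+\mu_2 h$, and solve the transport/Grushin equations at orders $h^{3/4},h$ by the Fredholm alternative for the harmonic oscillator (solvability holds after adjusting $\mu_{3/2},\mu_2$, since the obstruction is the projection onto $\Phi_0$). After three correction steps the interior remainder is $O(h^{5/4})$ uniformly in $t\in[0,T]$, because the phase is exact and $\Phi_j$ are time-independent. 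The cutoff $\chi$ contributes only commutator terms supported in $\epsilon_0/2<|y|<\epsilon_0$ where $\Phi(y/h^{1/4})$ is $O(h^{100})$ by Schwartz decay; this also yields $\|v_k(t)\|_{L^2(\epsilon_0<|y|<\pi)}\le Ch^{100}$ and the derivative bounds $\|(h^{1/2}\partial_y)^m v_k\|_{L^2}\le C_m h^{m/4}$ (each $h^{1/2}\partial_y$ on $\Phi(y/h^{1/4})$ gives $h^{1/4}(\partial_Y\Phi)(y/h^{1/4})$).

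The main obstacle I expect is bookkeeping the error at exactly order $h^{5/4}$: one must be careful that the transport equations at orders $h^{3/4}$ and $h$ are genuinely solvable, i.e. that the right-hand sides are orthogonal to $\ker(-\partial_Y^2+A_1''(0)Y^2-\mu_1)=\mathrm{span}(\Phi_0)$ after the correct choice of the scalar parameters $\mu_{3/2},\mu_2$; here the first-order drift terms $\langle A_2\rangle D_y$ have a definite parity in $Y$ which must be tracked to check the solvability condition rather than assumed. A secondary point is that $\lambda_h/h$ need not be an eigenvalue-type quantity but only appears through $e^{-\ii\lambda_h t/h}$, which is bounded, so no resonance issue arises; and that $k=1/h_k\in\mathbb Z$ forces $h_k$ to run over $1/\mathbb Z$, which is harmless. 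Once Proposition \ref{non-ob:model} is established, one transfers back through the normal form conjugations $e^{-G_3}e^{-G_2}$ (bounded on $L^2$, and preserving the frequency localization $|D_y|\lesssim h^{-1/2}$, $|D_x|\sim h^{-1}$) and the inverse gauge transform to obtain genuine approximate solutions $u_k$ of the original electromagnetic Schrödinger equation, whose $L^2$-mass concentrates in the strip $\mathbb T_x\times(-b,b)\subset\mathbb T^2\setminus\overline\omega_{\gamma^\perp}$, hence $\int_0^T\int_\omega|u_k|^2\,dz\,dt=o(\|u_k(0)\|_{L^2}^2)$, giving \eqref{optimal-1} and contradicting observability.
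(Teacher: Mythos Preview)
Your approach is essentially the paper's: it sets $\hbar_k=h_k^{1/2}$, states a quasimode lemma for $-\hbar^2\partial_y^2-2A_1(y)+(\text{lower order})$, proves it by expanding in Hermite functions $\phi_{0,\hbar}+v_{1,\hbar}+v_{2,\hbar}$ about $y=0$, and then multiplies by the explicit time-phase; your scaling $Y=y/h^{1/4}$ is precisely $\hbar^{-1/2}y$.

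Two slips worth correcting. First, the effective harmonic oscillator coming from $hD_y^2-2A_1(y)$ is $-\partial_Y^2\,-\,A_1''(0)Y^2$ (not $+A_1''(0)Y^2$), so confinement requires $A_1''(0)<0$, i.e.\ a local \emph{maximum} of $A_1$; the non-degenerate minimum case is not handled by ``adapting the Gaussian'' but by taking the $x$-frequency $k=-1/h$ instead of $+1/h$, which flips the sign of the term $2A_1(y)hD_x$ in $P_h^{(2)}$. Second, with your phase convention $e^{-\ii \lambda_h t/h}$ one has $\ii h\partial_t v=\lambda_h v$, so to cancel $\tfrac{1}{h}v$ and the constant one needs $\lambda_h=\tfrac{1}{h}-2A_1(0)+\mu_1 h^{1/2}+\Lambda_0 h$, not $\lambda_h=1+2A_1(0)+\cdots$. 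With these corrections the three-term expansion yields exactly the $O(\hbar^{5/2})=O(h^{5/4})$ remainder, and your parity remark is on point: the $O(h^{3/4})$ right-hand side $r_3^{(1)}y^3\phi_{0,\hbar}-2\ii r_0^{(2)}\hbar^2\partial_y\phi_{0,\hbar}$ lies in the odd Hermite sectors $E_1\oplus E_3$, so $\mu_{3/2}=0$ automatically and the first nontrivial eigenvalue correction $\Lambda_0$ appears at order $h$.
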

We postpone the proof of Proposition \ref{non-ob:model} in the next subsection and proceed on the proof of Theorem \ref{thm: optimal}. 
Indeed, for $h=h_k$, $w_h(t,x,y)=v_h(t,y)e^{\ii kx}$, $r_h(t,x,y)=r_h(t,y)e^{\ii kx}$, we have 
\begin{align*}
	\liminf_{h\rightarrow 0}\|w_h(0)\|_{L^2(\T^2)}^2>0.
\end{align*}
Moreover, for any $m\in\mathbb{N}$ and $\epsilon_0\in(0,\pi)$, 
$$ \sup_{t\in[0,T]}\|(h^{\frac{1}{2}}\partial_y)^mw_h(t)\|_{L^2(\T)} \leq C_mh^{\frac{m}{4}}, \quad \sup_{t\in[0,T]}\|w_h(t)\|_{L^2(\epsilon_0<|y|<\pi)}\leq Ch^{100}.
$$
In particular, when acting on $w_h$ by the operator reminder $\widetilde{R}_h^{(3)}$, similar as $R_h^{(3)}$ defined in \eqref{op:reminder}, we have 
$$ \sup_{t\in[0,T]}\|\widetilde{R}_h^{(3)}w_h \|_{L^2(\T^2)}+\sup_{t\in[0,T]}\|r_h\|_{L^2(\T^2)}=o(h^2).
$$
Set $\widetilde{u}_h=e^{-G_2}\circ e^{-G_3}w_h$, then
\begin{align*}
 e^{G_3}\circ e^{G_2}\circ(\ii h^2\partial_t +h^2H_{\mathbf{A},V})\widetilde{u}_h=&e^{G_3}\circ e^{G_2}\circ(\ii h^2\partial_t+ h^2H_{\mathbf{A},V})e^{-G_2}\circ e^{-G_3}w_h \\
 =&\big(\ii h^2\partial_t-P_h^{(2)}-h^2\langle V^{(1)}\rangle(y)\big)w_h-\widetilde{R}_h^{(3)}w_h\\
 =&-\widetilde{R}_h^{(3)}w_h+r_h.
\end{align*}
Since the closed geodesic $\gamma$ parametrized by $y=0$ stays completely outside the closure of the set $\overline{\omega}_{\gamma^{\perp}}$, there exists $\epsilon_0>0$, small enough such that the closed horizontal strip $S_{\epsilon_0}:=\mathbb{T}_x\times [-\epsilon_0,\epsilon_0]$ does not intersect with $\omega$.  In other words, $\overline{\omega}$ is contained in the complementary of the strip $S_{\epsilon_0}$. Let $\widetilde{\chi}$ be a smooth function, compactly supported in $\mathbb{T}^2\setminus S_{\epsilon_0}$ such that $\widetilde{\chi}>0$ on $\overline{\omega}$. From the concentration property of $w_h$, 
$$ \sup_{t\in[0,T]}\|\widetilde{\chi} w_h(t)\|_{L^2(\T^2)}\leq Ch^{100}.
$$
Hence
\begin{align*}
 \|\widetilde{u}_h\|_{L^2([0,T];L^2(\omega))}\leq &C\|\widetilde{\chi}\widetilde{u}_h\|_{L^2([0,T]\times\T^2)}\\ \leq &C\|e^{-G_3}\circ e^{-G_2}(\widetilde{\chi}w_h)\|_{L^2([0,T]\times\T^2)}+C\|[\widetilde{\chi},e^{-G_3}\circ e^{-G_2}]w_h \|_{L^2([0,T]\times\T^2)}.
\end{align*}
Note that $G_2,G_3$ are bounded $h$-pseudo differential operators, the commutator term is bounded by $Ch\|w_h\|_{L^2([0,T]\times\T^2)}$. Hence we have
\begin{align*}
\liminf_{h\rightarrow 0}\|\widetilde{u}_h(0)\|_{L^2(\T^2)}>0,\quad \limsup_{h\rightarrow 0}\|\widetilde{u}_h\|_{L^2([0,T]\times\omega)}\leq O(h).
\end{align*}
As $\widetilde{u}_h$ is not the exact solution of the original Schr\"odinger equation, to conclude, we assume that $u_h$ is the solution to $(i\partial_t+H_{\mathbf{A},V})u_h=0$ with $u_h(0)=\widetilde{u}_h(0)$. It follows that
$$ \|u_h(t)-\widetilde{u}_h(t)\|_{L^2([0,T]\times\T^2)}\leq Ch^{-2}\|\widetilde{R}_h^{(3)}w_h\|_{L^2([0,T]\times\T^2)}+Ch^{-2}\|r_h\|_{L^2([0,T]\times\T^2)} = o(1).
$$
In particular,
$$ \limsup_{h\rightarrow 0}\|u_h\|_{L^2([0,T]\times\omega)}=o(1),\quad h\rightarrow 0.
$$
This contradicts the observability \eqref{eq:SchrodingerObs}.
\subsection{One-dimensional non-observability result}\label{sec: 1d non-ob}
First we construct suitable quasi-modes for a semiclassical Schr\"odinger operator:
\begin{lemma}\label{lem: quasimode-construction}
	Assume that $A_1,A_2\in C^{4}(\T_y)$ are two real-valued functions and $A_1$ attains its maximum at $y=0$ with $A_1'(0)=0$ and $A_1''(0)<0$. Assume that $0<b<\pi$.
	Then, there exist a complex number $\Lambda_0$, a sequence $\hbar_k\rightarrow0$, and a sequence of smooth functions $v_k\in C^{\infty}(\T)$ such that 
	\begin{equation*}
		\begin{split}
			(-\hbar_k^2\p_y^2-2A_1(y))v_k+&\ii \hbar_k^2 A_2(y)\p_y v_k+\ii \hbar_k^2\p_y(A_2(y) v_k)+\hbar_k^2W(y)v_k\\=&\sqrt{-A_1''(0)}\hbar_kv_k+(-2A_1(0)+\Lambda_0 \hbar_k^2)v_k+\bigO{\hbar_k^{\frac{5}{2}}}{L^2(\T)}.
		\end{split}
	\end{equation*}
	with 
	\begin{equation*}
		\|v_k\|_{L^2(\T)}=1,\; \|v_k\|_{L^2(\T\setminus (-b,b))}=\mathcal{O}(\hbar_k^{\infty}).
	\end{equation*}
Moreover, for any $m,\ell\in\mathbb{N}$,
\begin{align}\label{localization} \|(\partial_y)^my^{\ell}v_k\|_{L^2((-b,b))}+\|y^{\ell}(\partial_y)^mv_k\|_{L^2((-b,b))} =\mathcal{O}(\hbar_k^{-\frac{m}{2}+\frac{\ell}{2}}),\quad \|(\partial_y)^mv_k\|_{L^2(\T)}=\mathcal{O}(\hbar_k^{-\frac{m}{2}}).
\end{align}
\end{lemma}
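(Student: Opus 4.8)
The plan is to construct $v_k$ by a WKB expansion attached to the harmonic oscillator at the non-degenerate maximum $y=0$ of $A_1$. Write $\hbar=\hbar_k$, $\omega:=\sqrt{-A_1''(0)}>0$, and denote by
$$P_\hbar:=-\hbar^2\p_y^2-2A_1(y)+\ii\hbar^2A_2(y)\p_y+\ii\hbar^2\p_y(A_2(y)\,\cdot\,)+\hbar^2W(y)$$
the operator in the statement. Since $A_1(y)=A_1(0)+\tfrac12A_1''(0)y^2+\mathcal O(|y|^3)$, near $y=0$ the leading part of $P_\hbar$ is the shifted oscillator $-\hbar^2\p_y^2+\omega^2y^2-2A_1(0)$, whose natural length scale is $\hbar^{1/2}$ and whose ground energy is $-2A_1(0)+\omega\hbar$. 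Fix $\chi\in C^\infty_c((-b,b))$ with $\chi\equiv1$ near $0$, and look for
$$v_k(y)=\hbar^{-1/4}\,\chi(y)\,\Psi_\hbar\!\big(\hbar^{-1/2}y\big),\qquad \Psi_\hbar=U_0+\hbar^{1/2}U_1+\hbar\,U_2,$$
where $U_0,U_1,U_2$ are Schwartz functions on $\R$ (Hermite functions times polynomials) to be determined; the prefactor $\hbar^{-1/4}$ makes $\|v_k\|_{L^2(\T)}\to\|U_0\|_{L^2(\R)}$.

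Next I would substitute this ansatz into $P_\hbar v_k$, Taylor-expand $A_1(\hbar^{1/2}Y)$ to fourth order (using $A_1\in C^4$ and $A_1'(0)=0$) and $A_2(\hbar^{1/2}Y),W(\hbar^{1/2}Y)$ to the orders needed, and collect powers of $\hbar^{1/2}$ in the rescaled variable $Y=\hbar^{-1/2}y$. Setting $H_0:=-\p_Y^2+\omega^2Y^2$ and $E_\hbar:=-2A_1(0)+\omega\hbar+\Lambda_0\hbar^2$, the target $(P_\hbar-E_\hbar)v_k=\bigO{\hbar^{5/2}}{L^2(\T)}$ reduces, after factoring out $\hbar^{-1/4}$ and one power of $\hbar$, to the transport hierarchy
\begin{align*}
(H_0-\omega)U_0&=0,\\
(H_0-\omega)U_1&=\tfrac13A_1'''(0)Y^3U_0-2\ii A_2(0)U_0',\\
(H_0-\omega)U_2&=F_2[U_0,U_1]-\Lambda_0U_0,
\end{align*}
where $F_2$ is an explicit (polynomial in $Y$)$\times$Gaussian expression built from $A_1^{(4)}(0),A_1'''(0),A_2'(0),A_2(0),W(0)$ and from $U_0,U_1$. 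The first line is solved by the normalized ground state $U_0(Y)=c_\omega e^{-\omega Y^2/2}$, which forces the $\mathcal O(\hbar)$ part of the quasi-eigenvalue to be $\omega=\sqrt{-A_1''(0)}$. The right-hand side of the second line is an \emph{odd} function of $Y$ (both $Y^3U_0$ and $U_0'$ are odd against the even $U_0$), hence orthogonal to $\ker(H_0-\omega)=\R U_0$; it is therefore solvable with no correction to the quasi-eigenvalue — this is exactly why no $\hbar^{3/2}$ term occurs in the statement — and I take $U_1\perp U_0$. At the third line the Fredholm alternative forces $\langle F_2[U_0,U_1],U_0\rangle=\Lambda_0\|U_0\|_{L^2}^2$, which \emph{defines} $\Lambda_0$; then $U_2$ exists and is chosen $\perp U_0$.

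Collecting the error is the final step. The commutators $[-\hbar^2\p_y^2,\chi]$ and $[\ii\hbar^2(A_2\p_y+\p_yA_2),\chi]$ are supported in $\{|y|\sim b/2\}=\{|Y|\gtrsim\hbar^{-1/2}\}$, where $\Psi_\hbar$ and its derivatives are $\mathcal O(\hbar^\infty)$, so they are negligible; this also gives $\|v_k\|_{L^2(\T\setminus(-b,b))}=0=\mathcal O(\hbar^\infty)$ and makes $v_k$ a smooth periodic function. Once the three transport equations hold, the remaining terms of the expansion — the overflow of the cubic/quartic terms applied to $U_1,U_2$, the subleading pieces of the first-order and $W$ terms, and the fourth-order Taylor remainder of $A_1$, which by its integral form is $\mathcal O(\hbar^2Y^4)$ with vanishing prefactor and is controlled against the Gaussian tails cut off by $\chi$ — are all $\bigO{\hbar^{5/2}}{L^2(\T)}$ after undoing the rescaling (using $\|\hbar^{-1/4}G(\hbar^{-1/2}\,\cdot\,)\|_{L^2(dy)}=\|G\|_{L^2(dY)}$). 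Dividing $v_k$ by $\|v_k\|_{L^2(\T)}=1+\mathcal O(\hbar^{1/2})$ affects neither the residual bound nor the quasi-eigenvalue. The weighted estimates \eqref{localization} are then immediate from the pointwise form of the ansatz: each $\p_y$ costs $\hbar^{-1/2}$, each multiplication by $y$ gains $\hbar^{1/2}$, and all objects lie in the Schwartz class, so $\|(\p_y)^my^\ell v_k\|_{L^2((-b,b))}+\|y^\ell(\p_y)^mv_k\|_{L^2((-b,b))}=\mathcal O(\hbar^{(\ell-m)/2})$ and $\|(\p_y)^mv_k\|_{L^2(\T)}=\mathcal O(\hbar^{-m/2})$ (derivatives landing on $\chi$ contribute only $\mathcal O(\hbar^\infty)$). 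The construction works for every small $\hbar>0$, a fortiori along any sequence $\hbar_k\to0$.

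The main obstacle is the error bookkeeping in the rescaled variable: tracking the orders produced by the truncated Taylor expansions of $A_1,A_2,W$, verifying the parity that makes the half-integer-order solvability conditions automatic, and — most delicately — estimating the $C^4$ Taylor remainder of $A_1$ (a genuine function, not a polynomial) against the Gaussian profiles cut by $\chi$ so that it enters only at order $\hbar^{5/2}$. By contrast, solving the transport hierarchy and reading off $\Lambda_0$ is routine once the oscillator spectral picture is in place.
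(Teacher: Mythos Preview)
Your proof is correct and follows essentially the same WKB strategy as the paper: both build the quasimode as a three-term perturbation of the harmonic-oscillator ground state attached to the non-degenerate maximum, solve the resulting hierarchy by the Fredholm alternative (which fixes $\Lambda_0$), and cut off to the torus. The only cosmetic differences are that you work in the rescaled variable $Y=\hbar^{-1/2}y$ and invoke parity to see why the $\hbar^{3/2}$ solvability condition is automatic, whereas the paper stays in the $y$ variable and tracks the Hermite components $E_j$ explicitly; these are equivalent formulations of the same construction.
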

	Set $\hbar_k=h_k^{\frac{1}{2}}$.  Let
	$$ v_k(t,y):=e^{-\frac{\ii t}{\hbar_k^4} +2\ii \frac{A_1(0)t}{\hbar_k^2}-\ii \sqrt{-A_1''(0)} \frac{t}{\hbar_k}-\ii \Lambda_0 t}v_k(y),
	$$
	where $v_k$ be the sequence of quasi-modes in Lemma \ref{lem: quasimode-construction}.
	Hence $v_k(t,y)$ satisfies the approximate equation \eqref{model:eqapprox}. 
	 Consequently, Proposition \ref{non-ob:model} follows directly from Lemma \ref{lem: quasimode-construction}.

Finally, we prove Lemma \ref{lem: quasimode-construction}.
\begin{proof}
The proof follows from the standard WKB approximation method. For simplicity, we omit the subscription $k$ in $\hbar_k$, and without loss of generality, we also assume that $A_1(0)=0$ and $W(0)=0$ (up to changing the value of $\Lambda_0$).
	Let $$\beta:=\sqrt{-A_1''(0)},\quad  \mathcal{L}_{\hbar}:=-\hbar^2\partial_y^2+\beta^2y^2-\beta\hbar.$$
	Denote $H_j(y)$ be the $j$-th Hermite polynomial and $\phi_{j,\hbar}(y)=c_{j,\hbar}H_j(\beta^{\frac{1}{2}}\hbar^{-\frac{1}{2}}y)e^{-\frac{\beta y^2}{2\hbar}}$ is the normalized eigenfunction of $\mathcal{L}_{\hbar}$ associated to the eigenvalue $2j\beta \hbar$. Denote $E_j:=\mathrm{span}_{\C}(\phi_{j,\hbar})$
	 Note that $\mathrm{Ker}(\mathcal{L}_{\hbar})=E_0$, and $\mathcal{L}_{\hbar}$ is invertible on orthogonal spaces $E_j$ for all $j\geq 1$ such that $\mathcal{L}_{\hbar}^{-1}|_{E_j}=(2j\beta\hbar)^{-1}\mathrm{Id}_{E_j}$ . Moreover, the operator of the multiplication $y$ and the derivation $\partial_y$ send $E_j$ to $E_{j-1}\oplus E_{j+1}$ (with the convention $E_{-1}=\{0\}$) and for all $m\in\mathbb{N}$, 
	 \begin{align}\label{eq:localization}
	 \| y^m|_{E_j}\|_{\mathcal{O}(L^2)}\leq C_{m,j}\hbar^{\frac{m}{2}}, \quad  
	 \| \partial_y^{m}|_{E_j}\|_{\mathcal{O}(L^2)}\leq C_{m,j}\hbar^{-\frac{m}{2}}.
	 \end{align}
	 We would like to approximate the 
	 the following equation
	\begin{align}\label{quasimode:vh-1}
		\mathcal{L}_{\hbar}v_{\hbar}=R_1(y)v_{\hbar}
		-\ii \hbar^2A_2(y)\p_yv_{\hbar} -\ii \hbar^2\partial_y(A_2(y)v_{\hbar})-\hbar^2W(y)v_{\hbar}
	\end{align}
with an acceptable error $\mathcal{O}(\hbar^{\frac{5}{2}})$. 
Taylor expanding $A_1$ and $A_2$, we have
\begin{align*}
	&A_1(y)=-\frac{\beta^2}{2}y^2+R_1(y), \text{ with }R_1(y)=\sum_{j=3}^4r^{(1)}_jy^j+\BigO{|y|^{5}}=\BigO{|y|^3},\\
	&A_2(y)=\sum_{j=0}^4 r^{(2)}_jy^j+\BigO{|y|^{5}},\quad W(y)=\mathcal{O}(|y|).
\end{align*}
We will first construct the WKB approximation on $\R$ and later cut it on the circle $\T$. Set
$$ v_{\hbar}=v_{0,\hbar}+v_{1,\hbar}+v_{2,\hbar},
$$
where $v_{0,\hbar}=\phi_{0,\hbar}$ and $v_{1,\hbar},v_{2,\hbar}\in\oplus_{j=0}^6E_j$ to be determined. 
Plugin \eqref{quasimode:vh-1} with the Taylor expansion, the right hand side equals to
\begin{align}\label{rhs-1}
\sum_{j=3}^4r_{j}^{(1)}y^jv_{\hbar}-2\ii \hbar^2 \sum_{j =0}^1r_{j}^{(2)} y^j\partial_yv_{\hbar}-\ii \hbar^2 r_1^{(2)} v_{\hbar}+\mathcal{O}_{L^2}(\hbar^{\frac{5}{2}})
\end{align}
The leading order $\mathcal{O}(\hbar^{\frac{3}{2}})$ or \eqref{rhs-1} equals to
\begin{align*}
r_3^{(1)}y^3\phi_{0,\hbar}-2\ii r_0^{(2)}\hbar^2 \partial_y\phi_{0,\hbar}\in E_1\oplus E_2\oplus  E_3=\mathcal{O}_{L^2}(\hbar^{\frac{3}{2}}).
\end{align*}
There exists complex coefficients $\beta_{j}^{(1)},j=1,2,3$, with $v_{1,\hbar}:=\sum_{j=1}^3\hbar^{\frac{1}{2}}\beta^{(1)}_j\phi_{j,\hbar}$, such that 
$$ \mathcal{L}_{\hbar}v_{1,\hbar}=r_3^{(1)}y^3\phi_{0,h}-2\ii r_0^{(2)}\hbar^2\partial_y\phi_{0,\hbar}.
$$
The next order $\mathcal{O}(\hbar^2)$ in \eqref{rhs-1} now becomes
$$ -\ii \hbar^2 r_1^{(2)}v_{0,\hbar}
+(r_3^{(1)}y^3-2\ii r_0^{(2)}\hbar^2\partial_y)v_{1,\hbar}+r_4^{(1)}y^4v_{0,h}.
$$
Since $(r_3^{(1)}y^3-2\ii r_0^{(2)}\hbar^2\partial_y)v_{1,\hbar}+r_4^{(1)}y^4v_{0,\hbar}\in \oplus_{j=0}^6E_j$, there exist complex coefficients $c_0$ and $\beta_j^{(2)}, j=1,\cdots,6$, such that with $v_{2,\hbar}:=\sum_{j=1}^6 \hbar \beta_j^{(2)}\phi_{j,\hbar}$,
$$ \mathcal{L}_{\hbar}v_{2,\hbar}=(r_3^{(1)}y^3-2\ii r_0^{(2)}\hbar^2\partial_y)v_{1,\hbar}+r_4^{(1)}y^4v_{0,\hbar}-c_0\hbar^2v_{0,\hbar}.
$$
Note that
\begin{align*}
\hbar^2c_0:=&\int_{-\infty}^{\infty}\big((r_3^{(1)}y^3-2\ii r_0^{(2)}\hbar^2\partial_y)v_{1,\hbar}(y)+r_4^{(1)}y^4v_{0,\hbar}(y)\big)\cdot  v_{0,\hbar}(y)\; \d y\\
=&\hbar^2\int_{-\infty}^{\infty}\phi_{0,\hbar}(y)\cdot \Big(r_3^{(1)}(\hbar^{-\frac{1}{2}}y)^3\sum_{j=1}^3\beta_j\phi_{j,\hbar}(y)-2\ii r_0^{(2)}\sum_{j=1}^3 \hbar^{\frac{1}{2}}\partial_y \phi_{j,\hbar}(y) 
 \Big)\; \d y\\
 +&\hbar^2 \int_{-\infty}^{\infty} (\phi_{0,\hbar}(y))^2\cdot (\hbar^{-\frac{1}{2}}y)^4 \; \d y.
\end{align*}
In particular, $c_0$ is a complex constant independent of $\hbar$.
Hence for $v_{\hbar}=\sum_{j=0}^2v_{j,\hbar}$,
\begin{align*}
 \mathcal{L}_{\hbar}v_{\hbar}=&(r_3^{(1)}y^3-2\ii r_0^{(2)}\hbar^2\partial_y)v_{0,\hbar}\\
 +&r_4^{(1)}y^4v_{0,\hbar}+(r_3^{(1)}y^3-2\ii r_0^{(2)}\hbar^2\partial_y)v_{1,\hbar}+\hbar^2(-c_0-\ii r_1^{(2)})v_{0,\hbar}\\
 =& \sum_{j=3}^4 r_j^{(1)}y^jv_{\hbar}-2\ii \hbar^2\sum_{j=0}^1r_j^{(2)}y^j\partial_yv_{\hbar}-(\ii r_1^{(2)}+c_0)\hbar^2v_{\hbar}+\mathcal{O}_{L^2}(\hbar^{\frac{5}{2}}).
\end{align*}
Set $\Lambda_0=-\ii r_1^{(2)}-c_0$, we obtain the desired quasi-mode equation on $\R$. 
To complete the proof, we introduce a cut-off function $\varphi\in C_c^{\infty}((-\pi,\pi))$ such that $\varphi(\xi)=1$ for $|\xi|<b$. As $v_{\hbar}$ constructed above decays of order $\mathcal{O}(\hbar^{\infty})$ on supp$(\varphi')$, the function $\varphi(y)v_{\hbar}(y)$ satisfies the desired quasi-mode equation. Moreover, the localization property \eqref{localization} follows from the properties of \eqref{eq:localization}. 
The proof of Lemma \ref{lem: quasimode-construction}
is now complete.
\end{proof}

\appendix
\section{Semiclassical measures for electromagnetic Schr\"odinger equations}\label{sec: semiclassical measure}
\subsection{Semiclassical Weyl quantization on \texorpdfstring{$\T^2$}{T2}}
\label{sec:semiclassicaltorus}
In this part, we recall the semiclassical Weyl quantization on $\T^2$. A symbol $a=a(z, \zeta)\in C^{\infty}(T^*\T^2,\C)$ is periodic with respect to $z$-variable. Following the convention in \cite[Chapter 5]{Zwo12}, the quantization is explicitly given by

\begin{equation}\label{semi-quantiWeylcl}
\Op_h(a(z,\zeta))g:=\sum_{k\in\Z^2}A^{\mathrm{w}}_kg(z),\;\;A^{\mathrm{w}}_kg(z):=
\int_{\R^2\times\T^2}e^{\frac{\ii(z-z'+2k\pi)\cdot\zeta}{h}}a(\frac{z+z'}{2},\zeta)
g(z')\frac{\d z'\d\zeta}{(2\pi h)^2}.
\end{equation}
Then $A^{\mathrm{w}}_k=\mathbf{1}_{\T^2}\tau_{-2k\pi}\Op_h(a)$, where $\tau_{z_0}g(z):=g(z-z_0)$. 
The following theorem, borrowed from \cite{Zwo12}, states the basic property of the semiclassical Weyl quantization on the torus.
\begin{theorem}\label{thm:sc_results_torus}
Suppose that $a, b \in C_b^{\infty}(T^* \T^2)$.
\begin{enumerate}
\item The operator $\Op_h(a)$ can be extended as an operator mapping $L^2(\T^2)$ to $L^2(\T^2)$. Moreover, we have
    \begin{equation*}
            \|\Op_h(a)\|_{\mathcal L (L^2(\mathbb T^d))} \leq C \|a\|_{L^{\infty}(T^* \T^d)} + \mathcal O(h^{\frac{1}{2}}).
    \end{equation*}
\item If $a \geq 0$, then there exist constants $C \geq 0$ and $h_0 >0$ such that 
\begin{equation*}
    \langle \Op_h(a) u, u \rangle \geq - C h \|u\|_{L^2(\T^2)}^2\qquad \forall 0 < h < h_0,\ \forall u \in L^2(\T^2).
\end{equation*}
\item There exists $c \in \mathcal{S}(\R^{4})$ such that
\begin{equation*}
    \Op_h(a) \Op_h(b) = \Op_h(c),
\end{equation*}
where
\begin{equation*}
c(z,\zeta) = e^{i \frac{h}{2} \sigma(D_z, D_{\zeta}, D_{\Tilde{z}}, D_{\Tilde{\zeta}})} (a(z, \zeta) b(\Tilde{z},\Tilde{\zeta}))_{|(\Tilde{z},\Tilde{\zeta})=(z,\zeta)},
\end{equation*}
where the symplectic product is defined by 
$$\sigma(u,v) = \langle \zeta, \Tilde{z} \rangle - \langle z, \Tilde{\zeta}\rangle,\text{ for }u=(z,\zeta),v=(\Tilde{z},\Tilde{\zeta}).$$
Moreover, we have
\begin{equation*}
    c= ab + \frac{h}{2i} \{a,b\} + \mathcal{O}_{\mathcal S}(h^2),
\end{equation*}
and
\begin{equation*}
    [ \Op_h(a),  \Op_h(b)] =  \frac h\ii \Op_h( \{a, b\}) + \mathcal{O}_{\mathcal S}(h^3).
\end{equation*}
\item Given a polynomial function $P$ of degree $2$, we have
\begin{equation}\label{eq: commutators_laplacian}
    [\Op_h(a), \Op_h{P(\zeta)}] = -\frac hi \Op_h(\nabla_{\zeta}P(\zeta) \cdot \nabla_z a)
\end{equation}
\end{enumerate} 
\end{theorem}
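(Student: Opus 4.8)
The plan is to reduce the whole statement to the corresponding facts for the semiclassical Weyl quantization $\Op_h^{\R^2}$ on $\R^2$, which are classical (see \cite{Zwo12}), and then transfer them to $\T^2$ through the periodization identity $A_k^{\mathrm w}=\mathbf 1_{\T^2}\tau_{-2k\pi}\Op_h^{\R^2}(a)$ underlying \eqref{semi-quantiWeylcl}. The structural point is that, for a symbol $a$ that is $2\pi\Z^2$-periodic in $z$ and bounded with all derivatives in $\zeta$, the Schwartz kernel of $\Op_h(a)$ on $\T^2$ is the $2\pi\Z^2$-periodization of the kernel of $\Op_h^{\R^2}(a)$. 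Hence every identity that is \emph{local} at the kernel level — composition of operators, the action of $hD_z$, and differentiation in $z$ — descends verbatim, while the quantitative statements (boundedness, positivity) require only controlling the lattice sum, which converges uniformly in $h$ because the $\R^2$ kernel decays rapidly off the diagonal.

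For (1) and (2) I would first prove the sharp Gårding inequality (2): the anti-Wick (Toeplitz) quantization built from the periodized Gaussian coherent states on $\T^2$ is a nonnegative operator when $a\ge 0$, and differs from $\Op_h(a)$ by an operator of norm $\mathcal O(h)$ (equivalently, one periodizes the $\R^2$ sharp Gårding estimate). Granting (2), the refined Calderón--Vaillancourt bound in (1) follows from a $TT^*$ argument: $\|\Op_h(a)\|^2=\|\Op_h(a)^*\Op_h(a)\|=\|\Op_h(|a|^2+h\,r_h)\|$ with $r_h$ uniformly bounded by part (3), and since $\|a\|_{L^\infty}^2-|a|^2\ge 0$, sharp Gårding gives $\Op_h(|a|^2)\le(\|a\|_{L^\infty}^2+Ch)\,\mathrm{Id}$, whence $\|\Op_h(a)\|\le\|a\|_{L^\infty}+\mathcal O(h^{1/2})$. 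For (3) I would import the Moyal expansion $a\#b=\sum_{n\ge0}\frac1{n!}\big(\frac{h}{2i}\big)^nA^n(a,b)$ on $\R^2$ with uniformly $\mathcal O_{\mathcal S}(h^N)$ truncated remainders, then periodize; the composition formula and $a\#b=ab+\frac{h}{2i}\{a,b\}+\mathcal O_{\mathcal S}(h^2)$ are immediate. The improvement to $\mathcal O_{\mathcal S}(h^3)$ for the commutator is the parity feature of the Weyl ordering: $A^n(b,a)=(-1)^nA^n(a,b)$, so in $a\#b-b\#a$ the even-order terms cancel and the first correction past $\frac{h}{i}\{a,b\}$ is of order $h^3$.

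For the final assertion (4) I would argue directly and exactly, with no asymptotic expansion. By linearity it suffices to treat $P(\zeta)\in\{1,\zeta_j,\zeta_j\zeta_k\}$. For $P\equiv 1$ both sides vanish. For $P=\zeta_j$ one has, exactly on $\T^2$, $\Op_h(\zeta_j)=hD_{z_j}$, and since differentiation in $z$ acts on $\Op_h(a)$ only through the symbol, $[\Op_h(a),hD_{z_j}]=-\frac{h}{i}\Op_h(\partial_{z_j}a)=-\frac{h}{i}\Op_h(\nabla_\zeta(\zeta_j)\cdot\nabla_z a)$. For $P=\zeta_j\zeta_k$ one has $\Op_h(\zeta_j\zeta_k)=hD_{z_j}\,hD_{z_k}$; using the \emph{exact} product rules for the linear symbol $\zeta_j$, namely $hD_{z_j}\Op_h(b)=\Op_h\!\big(\zeta_j b+\tfrac{h}{2i}\partial_{z_j}b\big)$ and $\Op_h(b)\,hD_{z_j}=\Op_h\!\big(\zeta_j b-\tfrac{h}{2i}\partial_{z_j}b\big)$ (no remainder, because the Moyal product of a degree-one symbol terminates), together with the Leibniz rule for commutators, gives
\[
[\Op_h(a),hD_{z_j}hD_{z_k}]=-\tfrac{h}{i}\big(\Op_h(\partial_{z_j}a)\,hD_{z_k}+hD_{z_j}\,\Op_h(\partial_{z_k}a)\big)=-\tfrac{h}{i}\Op_h\!\big(\zeta_k\partial_{z_j}a+\zeta_j\partial_{z_k}a\big),
\]
the two second-order contributions $\pm\tfrac{h}{2i}\partial_{z_j}\partial_{z_k}a$ cancelling exactly, and $\zeta_k\partial_{z_j}a+\zeta_j\partial_{z_k}a=\nabla_\zeta(\zeta_j\zeta_k)\cdot\nabla_z a$. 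Summing over the monomials of $P$ yields \eqref{eq: commutators_laplacian}.

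The main obstacle is not any individual computation but the bookkeeping of the descent to the torus: one must check, uniformly in $h$, that the lattice sums defining $\Op_h(a)$ and the composed and commuted operators converge and that periodization commutes with composition, and one must take care to keep the \emph{sharp} powers $h^{1/2}$ in (1) and $h^3$ in (3) rather than the cruder $h^0$ and $h^2$ that a careless argument would produce. Part (4), by contrast, is elementary once the quantization is in place, since it rests only on the exactness of the Moyal product on degree-one symbols.
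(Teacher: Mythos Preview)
The paper does not prove this theorem at all: it is stated in the appendix with the preamble ``The following theorem, borrowed from \cite{Zwo12}, states the basic property of the semiclassical Weyl quantization on the torus,'' and no argument is supplied. So there is no ``paper's own proof'' to compare against; the authors simply import the result from Zworski's book.

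That said, your outline is a sound way to actually establish the statements, and it is essentially the route taken in the reference itself: periodize the $\R^2$ Weyl calculus, get sharp G{\aa}rding via the anti-Wick comparison, bootstrap to the refined Calder\'on--Vaillancourt bound by $TT^*$, and read off the Moyal expansion together with the Weyl parity $A^n(b,a)=(-1)^nA^n(a,b)$ to obtain the $\mathcal O(h^3)$ commutator remainder. Your treatment of (4) by reducing to the monomials $1,\zeta_j,\zeta_j\zeta_k$ and using the exact termination of the Moyal product on affine symbols is correct and is in fact the cleanest way to see why \eqref{eq: commutators_laplacian} holds with no remainder. The only place to be careful in a full write-up is the descent step you flag yourself: checking that periodization commutes with composition and that the lattice sums converge uniformly in $h$, which is routine once one uses the off-diagonal decay of the $\R^2$ kernel.
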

Finally, we recall the definition of the semiclassical wavefront set, following the convention in \cite[Chapter 8]{Zwo12}. We say that $u=\{u_h\}_{h\in(0,h_0)}\subset\mathscr{S}'$ is a $h-$temptered family of distributions if there exists a constant $N_0\in\N$ such that $\|u_h\|_{L^2(\T^d)}=\BigO{h^{-N_0}}$.
\begin{definition}[Wavefront set]
We define the wavefront sets $\WF(u)$ and $\WFm{m}(u)$ as follows:
\begin{enumerate}
    \item The semiclassical wavefront set $\WF(u)$ associated with a $h-$temptered family $u=\{u_h\}_{h\in(0,h_0)}$ is the complement of the set of points $(z_0,\zeta_0)\in T^*\T^d$ for which there exists a symbol $a\in\mathcal{S}^0$ such that $a(z_0,\zeta_0)\neq0$ and $\Op_h(a)u_h=\bigO{h^N}{L^2}$ for all $N\in\N$.
    \item The semiclassical wavefront set of order $m$, denoted by $\WFm{m}(u)$, is the complement of the set of points $(z_0,\zeta_0)\in T^*\T^d$ for which there exists a symbol $a\in\mathcal{S}^0$ such that $a(z_0,\zeta_0)\neq0$ and $\Op_h(a)u_h=\bigO{h^m}{L^2}$.
\end{enumerate}
\end{definition}

\subsection{Semiclassical toolbox of exponentials of linear bounded operators}

Let $\h$ be an Hilbert space. For $A$, $B$ in $\mathcal{L}(\h)$, we first introduce the notations
\begin{equation}
    \label{eq:defadj}
  \ad_{A}^0(B)=B,\qquad    \ad_{A}^k(B)= [A, \ad_{A}^{k-1}(B)]\qquad \forall k \geq 1.
\end{equation}
For every $k \geq 0$, $\ad_{A}^k(B) \in \mathcal{L}(\h)$. We also define
\begin{equation}
    e^{A} = \sum_{k=0}^{\infty}\frac{A^k}{k!}.
\end{equation}
We have that $e^{A} \in \mathcal{L}(\h)$ is an invertible operator whose inverse is $e^{-A}$.

We have the following basic lemma.
\begin{lemma}{\cite[Lemma 5.1]{Sun-dampedwave}}\label{lem: exponential of bounded op}
Let $B \in \mathcal{L}(\h)$, $\{G_h\}_{h\in(0,1)}$ be a family of $h-$dependent, uniformly bounded operators on $\h$. The following Taylor expansion holds, for every $N\in\N^*$,
\[
e^{sG_h}Be^{-sG_h}=\sum_{k=0}^{N-1}\frac{s^k}{k!}\ad^k_{G_h}(B)+\frac{1}{(N-1)!}\int_0^1(1-s)^{N-1}e^{sG_h}\ad^N_{G_h}(B)e^{-sG_h}\d s,\qquad s \in \R.
\]
\end{lemma}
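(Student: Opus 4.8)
The plan is to fix $h\in(0,1)$, abbreviate $G:=G_h$, and study the $\mathcal{L}(\h)$-valued function $F(s):=e^{sG}Be^{-sG}$ by elementary Banach-algebra calculus; the asserted identity is then nothing but Taylor's formula with integral remainder for $F$. Since $G$ is bounded, the series $e^{sG}=\sum_{j\ge 0}\frac{(sG)^j}{j!}$ converges in operator norm uniformly for $s$ in compact intervals, so $s\mapsto e^{sG}$ is real-analytic (in particular $C^{\infty}$) as a map $\R\to\mathcal{L}(\h)$ with $\partial_s e^{sG}=Ge^{sG}=e^{sG}G$, and hence $F$ is $C^{\infty}$ as well.

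First I would compute the derivatives of $F$. Differentiating the product and using that $G$ commutes with $e^{sG}$ gives $F'(s)=Ge^{sG}Be^{-sG}-e^{sG}Be^{-sG}G=[G,F(s)]=\ad_G(F(s))$. Combining this with the elementary identity $[G,e^{sG}Xe^{-sG}]=e^{sG}[G,X]e^{-sG}$ and arguing by induction on $k$ yields $F^{(k)}(s)=e^{sG}\,\ad_G^k(B)\,e^{-sG}$ for all $k\ge 0$; in particular $F^{(k)}(0)=\ad_G^k(B)$. Then I would apply the operator-valued Taylor formula: starting from $F(1)=F(0)+\int_0^1 F'(s)\,\d s$ and integrating by parts $N-1$ times (using $\frac{(1-s)^{k-1}}{(k-1)!}=-\partial_s\frac{(1-s)^k}{k!}$) one obtains
\[
F(1)=\sum_{k=0}^{N-1}\frac{F^{(k)}(0)}{k!}+\frac{1}{(N-1)!}\int_0^1(1-s)^{N-1}F^{(N)}(s)\,\d s .
\]
Substituting $F^{(k)}(0)=\ad_G^k(B)$ and $F^{(N)}(s)=e^{sG}\ad_G^N(B)e^{-sG}$ gives the stated expansion at the parameter value $1$; the version carrying the powers $s^k/k!$ for general $s$ follows by applying this identity with $G$ replaced by $sG$ and using $\ad_{sG}^k=s^k\ad_G^k$, after the obvious affine rescaling in the remainder integral.

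I do not anticipate any real obstacle: this is a routine piece of operator calculus, essentially the Banach-algebra analogue of Taylor's theorem for the adjoint action $X\mapsto e^{G}Xe^{-G}$. The only points that call for (minor) care are the justification that the exponential series may be differentiated termwise in operator norm — immediate from its uniform-on-compacts convergence — and that the remainder is a genuine Bochner integral of the norm-continuous integrand $s\mapsto e^{sG}\ad_G^N(B)e^{-sG}$; if one prefers to avoid vector-valued integration altogether, one can instead pair with arbitrary $u,v\in\h$ and invoke the scalar Taylor formula for $s\mapsto\langle F(s)u,v\rangle$, then read off the operator identity.
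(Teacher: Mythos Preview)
The paper does not give a proof of this lemma; it is simply quoted from \cite[Lemma 5.1]{Sun-dampedwave}. Your argument is the standard one and is correct: differentiate $F(s)=e^{sG}Be^{-sG}$, observe $F^{(k)}(s)=e^{sG}\ad_G^k(B)e^{-sG}$, and apply Taylor's formula with integral remainder. You also rightly noticed the notational clash in the stated formula (the same letter $s$ appears as the outer parameter and as the dummy integration variable); your reading---prove it first at parameter value $1$, then rescale via $G\mapsto sG$---is the intended one and is consistent with how the lemma is actually invoked in the body of the paper.
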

The next result tells us that the wave front set is conserved under the action of $e^{G_h}$. Given the definition of the exponential of bounded operators, we shall prove that the transform $e^{G_h}v_h$ does not change the wavefront $\WFm{m}(v_h)$. 
\begin{lemma}\label{lem: wave front invariance}
Let $\{G_h\}_{h\in(0,1)}$ be a family of $h-$dependent, uniformly bounded operators on $L^2(\T^2)$,  $(v_h)_{h\in(0,1)}$ be a family of tempered distributions on $\T^2$ and $\widetilde{v}_h = e^{G_h} v_h$. Then $\WFm{m}(\widetilde{v}_h)=\WFm{m}(v_h)$ for every $ m \geq 1$.
\end{lemma}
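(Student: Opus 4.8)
The plan is to show that both $\WFm{m}(\widetilde v_h)\subset\WFm{m}(v_h)$ and the reverse inclusion hold, and by symmetry (since $v_h=e^{-G_h}\widetilde v_h$ and $-G_h$ is again a family of uniformly bounded operators) it suffices to prove one inclusion, say $\WFm{m}(\widetilde v_h)\subset \WFm{m}(v_h)$; equivalently, $(z_0,\zeta_0)\notin\WFm{m}(v_h)$ implies $(z_0,\zeta_0)\notin\WFm{m}(\widetilde v_h)$. So fix a point $(z_0,\zeta_0)$ outside $\WFm{m}(v_h)$. By definition there is a symbol $a\in\mathcal S^0$ with $a(z_0,\zeta_0)\neq 0$ and $\Op_h(a)v_h=\bigO{h^m}{L^2}$. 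One may shrink $a$ so that it is supported in a small conic neighborhood of $(z_0,\zeta_0)$ on which $a$ is (say) bounded below; then choose $b\in\mathcal S^0$ supported in that neighborhood with $b(z_0,\zeta_0)\neq 0$ and such that $b = b\,\tilde a$ for some $\tilde a\in\mathcal S^0$ which is "dominated'' by $a$, e.g. $b=a\,c$ with $c\in\mathcal S^0$ supported where $a\neq 0$. This is the standard microlocal elliptic-parametrix construction: on the support of $b$, the operator $\Op_h(a)$ is microlocally elliptic, so there exists $E_h=\Op_h(e)+r_h$ with $e\in\mathcal S^0$ supported near $(z_0,\zeta_0)$ and $r_h=\bigO{h^\infty}{\mathcal L(L^2)}$ such that $E_h\Op_h(a)=\Op_h(b)+\bigO{h^\infty}{\mathcal L(L^2)}$. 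I will use Theorem~\ref{thm:sc_results_torus} for the symbolic calculus on $\T^2$ that justifies this construction.

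Next I would estimate $\Op_h(b)\widetilde v_h=\Op_h(b)e^{G_h}v_h$. Write $\Op_h(b)e^{G_h}=e^{G_h}\bigl(e^{-G_h}\Op_h(b)e^{G_h}\bigr)$ and apply Lemma~\ref{lem: exponential of bounded op} with $B=\Op_h(b)$, $s=1$, and $N=1$:
\[
e^{-G_h}\Op_h(b)e^{G_h}=\Op_h(b)+\int_0^1 e^{-sG_h}[\,\Op_h(b),G_h\,]e^{sG_h}\,\d s.
\]
Since $\{G_h\}$ is uniformly bounded on $L^2(\T^2)$ and $e^{\pm sG_h}$ is uniformly bounded (by $e^{\|G_h\|_{\mathcal L(L^2)}}$, itself uniformly bounded), this expression equals $\Op_h(b)$ plus a uniformly bounded operator on $L^2(\T^2)$. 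In the applications of this lemma within the paper, $G_h=\Op_h(g)$ is a semiclassical pseudodifferential operator and $[\,\Op_h(b),G_h\,]=\bigO{h}{\mathcal L(L^2)}$; but even without that, the crude bound $[\,\Op_h(b),G_h\,]=\bigO{1}{\mathcal L(L^2)}$ already gives $\Op_h(b)e^{G_h}=e^{G_h}\Op_h(b)+\bigO{1}{\mathcal L(L^2)}$, which is \emph{not} enough for an $\bigO{h^m}$ bound. Therefore the argument must use that $G_h$ is pseudodifferential, so that $[\,\Op_h(b),G_h\,]=\bigO{h}{\mathcal L(L^2)}$ and, iterating the Taylor expansion to order $N=m+1$, $e^{-G_h}\Op_h(b)e^{G_h}=\sum_{k=0}^{m}\tfrac1{k!}\ad_{G_h}^k(\Op_h(b))+\bigO{h^{m+1}}{\mathcal L(L^2)}$, with $\ad_{G_h}^k(\Op_h(b))=\bigO{h^k}{\mathcal L(L^2)}$ and, crucially, $\ad_{G_h}^k(\Op_h(b))=h^k\Op_h(b_k)+\bigO{h^{k+1}}{}$ where each $b_k\in\mathcal S^0$ is again supported in the same neighborhood of $(z_0,\zeta_0)$ (iterated Poisson brackets of $b$ and $g$). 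Hence $\Op_h(b)e^{G_h}=e^{G_h}\sum_{k=0}^{m}h^k\Op_h(b_k)+\bigO{h^{m+1}}{\mathcal L(L^2)}$ with $b_0=b$.

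Then I combine: for each $k$, $\Op_h(b_k)v_h = \Op_h(b_k)E_h\Op_h(a)v_h + \Op_h(b_k)(1-E_h\Op_h(a)/\text{id near }(z_0,\zeta_0))v_h$; more cleanly, since $b_k$ and $b$ have the same (arbitrarily small) support, redo the elliptic-parametrix step to get $\Op_h(b_k)=E_h^{(k)}\Op_h(a)+\bigO{h^\infty}{\mathcal L(L^2)}$ for suitable uniformly bounded $E_h^{(k)}$, so $\Op_h(b_k)v_h=\bigO{h^m}{L^2}$ for each $k$, using $\Op_h(a)v_h=\bigO{h^m}{L^2}$. Summing, $\Op_h(b)e^{G_h}v_h=e^{G_h}\bigl(\sum_{k=0}^m h^k\bigO{h^m}{L^2}\bigr)+\bigO{h^{m+1}}{L^2}=\bigO{h^m}{L^2}$, again using uniform boundedness of $e^{G_h}$. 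Since $b(z_0,\zeta_0)\neq 0$ and $b\in\mathcal S^0$, this shows $(z_0,\zeta_0)\notin\WFm{m}(\widetilde v_h)$, proving $\WFm{m}(\widetilde v_h)\subset\WFm{m}(v_h)$. Applying the same argument with $(v_h,G_h)$ replaced by $(\widetilde v_h,-G_h)$ gives the reverse inclusion, and hence equality, for every $m\geq 1$.

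The main obstacle, and the point requiring care, is exactly the gap between the crude Lemma~\ref{lem: exponential of bounded op} estimate (which only yields $\bigO{1}{\mathcal L(L^2)}$ errors because $G_h$ need not be close to the identity) and the $\bigO{h^m}$ control we need: one has to exploit that in every use of this lemma in the paper $G_h$ is a (bounded) semiclassical pseudodifferential operator, so that the iterated commutators $\ad_{G_h}^k(\Op_h(b))$ genuinely gain powers of $h$ and stay microlocalized where $b$ is. If one wanted a statement valid for \emph{arbitrary} uniformly bounded $G_h$ the lemma would be false, so I would either add the hypothesis that $G_h$ is pseudodifferential or note that all $G_h$ appearing (namely $g_1$-gauge multiplication, $G_2$, $G_3$) are of this form; the rest is routine symbolic calculus via Theorem~\ref{thm:sc_results_torus} and Lemma~\ref{lem: exponential of bounded op}.
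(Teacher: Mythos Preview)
Your proof is correct and follows essentially the same route as the paper: reduce to one inclusion by symmetry, write $\Op_h(b)e^{G_h}=e^{G_h}\bigl(e^{-G_h}\Op_h(b)e^{G_h}\bigr)$, expand the conjugation via Lemma~\ref{lem: exponential of bounded op} into iterated commutators $\ad_{G_h}^k(\Op_h(b))$, and use that these are semiclassical operators with symbols supported in the same region, hence microlocally away from $\WFm{m}(v_h)$. The paper's version is slightly leaner in that it invokes directly the standard fact that \emph{any} semiclassical operator with symbol supported away from $\WFm{m}(v_h)$ maps $v_h$ to $\bigO{h^m}{L^2}$, whereas you rederive this via an explicit elliptic parametrix $E_h^{(k)}\Op_h(a)$ for each $b_k$; both are fine. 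You are also right to flag that the lemma as stated (only ``uniformly bounded $G_h$'') is not provable without the additional hypothesis that $G_h$ is a semiclassical pseudodifferential operator---the paper's own proof tacitly uses this when it writes ``using the standard symbolic calculus, $\ad^m_{G_h}(K_h)=\bigO{h^m}{\mathcal{L}(L^2)}$''.
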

\begin{proof}
The proof is similar to \cite[Proof of Lemma 5.3]{Sun-dampedwave}. 
It only suffices to show that $\WFm{m}(\widetilde{v}_h)\subset\WFm{m}(v_h)$ as the reverse inclusion follows by using that $v_h = e^{-G_h} \widetilde{v}_h$.

Let $\kappa(z,\zeta)$ be a symbol supported on a compact set of $T^*\T^2\setminus\WFm{m}(v_h)$ and $K_h=\Op_h(\kappa)$, by definition, it suffices to show 
\[
K_h\widetilde{v}_h=\bigO{h^m}{L^2},\mbox{ i.e., }K_he^{G_h}v_h=\bigO{h^m}{L^2}.
\]
From the definition of $\WFm{m}(v_h)$, for any semiclassical operator $L_h$ with its principal symbol supported away from $\WFm{m}(v_h)$, we have 
\[
L_hv_h=\bigO{h^m}{L^2}.
\]
Using the Taylor expansion for the conjugate operator, we write
\begin{align*}
K_he^{G_h}v_h&=e^{G_h}(e^{-G_h}K_he^{G_h})v_h\\
&=e^{G_h}\sum_{k=0}^{m-1}\frac{s^k}{k!}\ad^k_{G_h}(K_h)v_h+\frac{e^{G_h}}{(m-1)!}\int_0^1(1-s)^{m-1}e^{-sG_h}\ad^m_{G_h}(K_h)e^{sG_h}v_h\d s.
\end{align*}
Using the standard symbolic calculus, $\ad^m_{G_h}(K_h)=\bigO{h^m}{\mathcal{L}(L^2)}$, which implies that
\[
\frac{e^{G_h}}{(m-1)!}\int_0^1(1-s)^{m-1}e^{-sG_h}\ad^m_{G_h}(K_h)e^{sG_h}v_h\d s=\bigO{h^m}{L^2}.
\]
And for $k\leq m-1$, $\ad^k_{G_h}(K_h)$ is a semiclassical operator with its principal symbol supported away from $\WFm{m}(v_h)$ due to the definition of $K_h$. Hence, $\ad^k_{G_h}(K_h)v_h=\bigO{h^m}{L^2}$. Consequently, we obtain $K_he^{G_h}v_h=\bigO{h^m}{L^2}$. We conclude $\WFm{m}(\widetilde{v}_h)\subset\WFm{m}(v_h)$.
\end{proof}



\begin{thebibliography}{GMMP97}

\bibitem[AFKM15]{AFKM15}
N.~Anantharaman, C.~Fermanian~Kammerer, and F.~Maci\`a.
\newblock Semiclassical completely integrable systems: long-time dynamics and
  observability via two-microlocal {W}igner measures.
\newblock {\em Amer. J. Math.}, 137(3):577--638, 2015.

\bibitem[AM14]{AM14}
N.~Anantharaman and F.~Maci\`a.
\newblock Semiclassical measures for the {S}chr\"{o}dinger equation on the
  torus.
\newblock {\em J. Eur. Math. Soc. (JEMS)}, 16(6):1253--1288, 2014.

\bibitem[BBZ13]{BBZ13}
J.~Bourgain, N.~Burq, and M.~Zworski.
\newblock Control for {S}chr\"{o}dinger operators on 2-tori: rough potentials.
\newblock {\em J. Eur. Math. Soc. (JEMS)}, 15(5):1597--1628, 2013.

\bibitem[BCD11]{BCD11}
H.~Bahouri, J.~Chemin, and R.~Danchin.
\newblock {\em Fourier analysis and nonlinear partial differential equations},
  volume 343 of {\em Grundlehren der mathematischen Wissenschaften}.
\newblock Springer, Heidelberg, 2011.

\bibitem[BG20]{BG-stabilization}
N.~Burq and P.~G\'erard.
\newblock Stabilization of wave equations on the torus with rough dampings.
\newblock {\em Pure Appl. Anal.}, 2(3):627--658, 2020.

\bibitem[BLR92]{BLR92}
C.~Bardos, G.~Lebeau, and J.~Rauch.
\newblock Sharp sufficient conditions for the observation, control, and
  stabilization of waves from the boundary.
\newblock {\em SIAM J. Control Optim.}, 30(5):1024--1065, 1992.

\bibitem[BZ12]{BZ12}
N.~Burq and M.~Zworski.
\newblock Control for {S}chr\"{o}dinger operators on tori.
\newblock {\em Math. Res. Lett.}, 19(2):309--324, 2012.

\bibitem[BZ19]{BZ19}
N.~Burq and M.~Zworski.
\newblock Rough controls for {S}chr\"{o}dinger operators on 2-tori.
\newblock {\em Ann. H. Lebesgue}, 2:331--347, 2019.

\bibitem[BZ25]{burq-zhu}
N.~Burq and H.~Zhu.
\newblock Resolvent bounds imply observability from measurable time sets for
  schr\"odinger equations.
\newblock {\em arXiv preprint arXiv:2510.24517}, 2025.

\bibitem[CL25]{CL25}
L.~Charles and T.~Lefeuvre.
\newblock Semiclassical defect measures of magnetic laplacians on hyperbolic
  surfaces.
\newblock {\em arXiv preprint arXiv:2505.08584}, 2025.

\bibitem[Cor07]{Cor07}
J.-M. Coron.
\newblock {\em Control and nonlinearity}, volume 136 of {\em Mathematical
  Surveys and Monographs}.
\newblock American Mathematical Society, Providence, RI, 2007.

\bibitem[GMMP97]{GMMP}
P.~G\'erard, P.~A. Markowich, N.~J. Mauser, and F.~Poupaud.
\newblock Homogenization limits and {W}igner transforms.
\newblock {\em Comm. Pure Appl. Math.}, 50(4):323--379, 1997.

\bibitem[Har89]{Har89-2}
A.~Haraux.
\newblock Une remarque sur la stabilisation de certains syst\`emes du
  deuxi\`eme ordre en temps.
\newblock {\em Portugal. Math.}, 46(3):245--258, 1989.

\bibitem[Jaf90]{Jaf90}
S.~Jaffard.
\newblock Contr\^{o}le interne exact des vibrations d'une plaque rectangulaire.
\newblock {\em Portugal. Math.}, 47(4):423--429, 1990.

\bibitem[KL05]{KL05}
V.~Komornik and P.~Loreti.
\newblock {\em Fourier series in control theory}.
\newblock Springer Monographs in Mathematics. Springer-Verlag, New York, 2005.

\bibitem[LBM24]{LBM23}
K.~Le~Balc'h and J.~Martin.
\newblock {Observability estimates for the Schr{\"o}dinger equation in the
  plane with periodic bounded potentials from measurable sets}.
\newblock Annales de l'institut Fourier, To appear, September 2024.

\bibitem[Leb92]{Leb92}
G.~Lebeau.
\newblock Contr\^ole de l'\'equation de {S}chr\"odinger.
\newblock {\em J. Math. Pures Appl. (9)}, 71(3):267--291, 1992.

\bibitem[LRLR22]{LLRR}
J.~Le~Rousseau, G.~Lebeau, and L.~Robbiano.
\newblock {\em Elliptic {C}arleman estimates and applications to stabilization
  and controllability. {V}ol. {I}. {D}irichlet boundary conditions on
  {E}uclidean space}, volume~97 of {\em Progress in Nonlinear Differential
  Equations and their Applications}.
\newblock Birkh\"auser/Springer, Cham, 2022.
\newblock PNLDE Subseries in Control.

\bibitem[Mac09]{Macia09}
F.~Maci\`a.
\newblock Semiclassical measures and the {S}chr\"odinger flow on {R}iemannian
  manifolds.
\newblock {\em Nonlinearity}, 22(5):1003--1020, 2009.

\bibitem[Mac14]{macia2014}
F.~Maci{\`a}.
\newblock A semiclassical measure approach to the aharonov-bohm effect.
\newblock {\em RIMS Kokyuroku}, 1902:53--60, 2014.

\bibitem[Mac21]{Mac21}
F.~Maci\`a.
\newblock Observability results related to fractional {S}chr\"odinger
  operators.
\newblock {\em Vietnam J. Math.}, 49(3):919--936, 2021.

\bibitem[MR18]{MR18}
F.~Maci\`a and G.~Rivi\`ere.
\newblock Two-microlocal regularity of quasimodes on the torus.
\newblock {\em Anal. PDE}, 11(8):2111--2136, 2018.

\bibitem[MR24]{RM24}
L.~Morin and G.~Rivi{\`e}re.
\newblock Quantum unique ergodicity for magnetic laplacians on $\mathbb{T}^2$.
\newblock {\em arXiv preprint arXiv:2411.18449}, 2024.

\bibitem[Paz83]{Paz83}
A.~Pazy.
\newblock {\em Semigroups of linear operators and applications to partial
  differential equations}, volume~44 of {\em Applied Mathematical Sciences}.
\newblock Springer-Verlag, New York, 1983.

\bibitem[Sun23]{Sun-dampedwave}
C.~Sun.
\newblock Sharp decay rate for the damped wave equation with convex-shaped
  damping.
\newblock {\em Int. Math. Res. Not. IMRN}, (7):5905--5973, 2023.

\bibitem[Tao21]{Tao21}
Z.~Tao.
\newblock Exact control for the {S}chr\"odinger equation on the torus from
  small balls.
\newblock {\em Pure Appl. Anal.}, 3(2):387--401, 2021.

\bibitem[Tr{\'e}24]{Tre14}
E.~Tr{\'e}lat.
\newblock {\em Control in finite and infinite dimension}.
\newblock SpringerBriefs on PDEs and Data Science. Springer, Singapore, 2024.

\bibitem[TW09]{TW09}
M.~Tucsnak and G.~Weiss.
\newblock {\em Observation and control for operator semigroups}.
\newblock Birkh\"auser Advanced Texts: Basler Lehrb\"ucher. Birkh\"auser
  Verlag, Basel, 2009.

\bibitem[Wun12]{Wunsch}
J.~Wunsch.
\newblock Non-concentration of quasimodes for integrable systems.
\newblock {\em Comm. Partial Differential Equations}, 37(8):1430--1444, 2012.

\bibitem[Wun17]{Wunsch-wave}
J.~Wunsch.
\newblock Periodic damping gives polynomial energy decay.
\newblock {\em Math. Res. Lett.}, 24(2):571--580, 2017.

\bibitem[Zwo12]{Zwo12}
M.~Zworski.
\newblock {\em Semiclassical analysis}, volume 138 of {\em Graduate Studies in
  Mathematics}.
\newblock American Mathematical Society, Providence, RI, 2012.

\end{thebibliography}
\end{document}